\theoremstyle{plain}
\newtheorem{theorem}{Theorem}[section]
\newtheorem{lemma}[theorem]{Lemma}
\newtheorem{corollary}[theorem]{Corollary}
\newtheorem{proposition}[theorem]{Proposition}
\newtheorem{claim}[theorem]{Claim}
\theoremstyle{definition}
\newtheorem{definition}[theorem]{Definition}
\newtheorem{remark}[theorem]{Remark}
\numberwithin{equation}{section}
\numberwithin{figure}{section}
\newcommand{\N} {\mathbb N}
\newcommand{\R} {\mathbb R}
\renewcommand{\P} {\mathbb P}
\newcommand{\E} {\mathbb E}
\def \ind{\mathds{1}}
\newcommand{\ER} {Erd\H{o}s-R\'enyi }
\newcommand{\cadlag}{c\`adl\`ag}
\newcommand{\lpord}{\ell_p^\downarrow}
\newcommand{\ltwoord}{\ell_{2}^\downarrow}
\newcommand{\lthreeord}{\ell_3^\downarrow}
\newcommand{\linford}{\ell_\infty^\downarrow}
\newcommand{\lzeroord}{\ell_0^\downarrow}
\newcommand{\lzero}{\ell_0}
\newcommand{\loneord}{\ell_1^\downarrow}
\newcommand{\barg}{\bar{g}}
\newcommand{\Var}{\operatorname{Var}}
\def \dist {\mathrm{d}}
\newcommand{\bb}{\mathbf{b}}
\newcommand{\bm}{\mathbf{m}}
\newcommand{\ub}{\underline{b}}
\newcommand{\uc}{\underline{c}}
\newcommand{\um}{\underline{m}}
\newcommand{\cG}{\mathcal{G}}
\newcommand{\cU}{\mathcal{U}}
\newcommand{\cT}{\mathcal{T}}
\newcommand{\cM}{\mathcal{M}}
\newcommand{\cI}{\mathcal{I}}
\newcommand{\ORDX}{\mathcal{E}^\downarrow}
\newcommand{\EX}{\mathcal{E}}
\newcommand{\exmeasure}{\operatorname{Exp}}
\newcommand{\expheightgaps}{\operatorname{Exp^{\prec}}}
\newcommand{\Exp}{\operatorname{Exp}}
\newcommand{\isd}{\stackrel{d}{=}}
\newcommand{\tod}{\stackrel{d}{\to}}
\newcommand{\BMPD}{\mathrm{BMPD}} 
\newcommand{\argmin}{\operatornamewithlimits{argmin}}
\newcommand{\Down}{R^{\downarrow}}
\title
[Rigid representations of the multiplicative coalescent with
deletion]
{Rigid representations of the multiplicative coalescent with linear deletion}
\author {James B.\ Martin\textsuperscript{1}}
\author{Bal\'azs R\'ath\textsuperscript{2}}
\begin{document}

\begin{abstract}

We introduce the 
\emph{multiplicative coalescent with linear deletion}, 
a continuous-time
Markov process describing the evolution of a collection of blocks.
Any two blocks of sizes $x$ and $y$ merge
at rate $xy$, and any block of size $x$ is deleted 
with rate $\lambda x$ (where $\lambda\geq 0$ is a fixed parameter).
This process arises for example in connection with a variety 
of random-graph models which exhibit self-organised
criticality. We focus on results describing states
of the process in terms of collections of excursion lengths of random functions. For the case $\lambda=0$ (the coalescent without
deletion) we revisit and generalise 
previous works by authors including
Aldous, Limic, Armendariz, Uribe Bravo, and Broutin and Marckert,
in which the coalescence is related to a ``tilt" of a
random function, which increases with time; for $\lambda>0$ 
we find a novel representation in which this tilt
is complemented by a ``shift" mechanism which produces the
deletion of blocks. We describe and illustrate other 
representations which, like the tilt-and-shift representation,
are ``rigid", in the sense that the coalescent process 
is constructed as a projection of some process which
has all of its randomness in its initial state. We 
explain some applications of these constructions to models
including mean-field forest-fire and frozen-percolation processes.

\bigskip

\noindent \textsc{Keywords:} multiplicative coalescent, \ER random graph, frozen percolation \\
\textsc{AMS MSC 2010:} 60J99, 60B12, 05C80
\end{abstract}

\maketitle

\footnotetext[1]{Department of Statistics, University of Oxford, United Kingdom. Email: \texttt{martin@stats.ox.ac.uk}}
\footnotetext[2]{MTA-BME Stochastics Research Group, Hungary. Email: \texttt{rathb@math.bme.hu}}

\section{Introduction}

\subsection{The multiplicative coalescent and MCLD($\lambda$)}

The \textit{multiplicative coalescent} (or briefly MC) is a continuous-time
Markov process describing the evolution of a collection of blocks (components). 
The dynamics are as follows: for each pair of components with 
masses $m_i$ and $m_j$, the pair coalesces at rate $m_i m_j$ 
to form a single component of mass $m_i+m_j$. 

The process is simple to construct from any initial state
with finitely many components. In \cite[Section 1.5]{aldous_mc},
Aldous uses a graphical construction to show that the process 
is well-defined starting from any initial state 
in which the sum of the squares of the masses is finite. 
Writing the masses in decreasing order, 
define the space 
\begin{equation}\label{elltwodef}
\ltwoord=\{\um=(m_1, m_2,\dots): m_1\geq m_2\geq \dots \geq 0, \;
\sum_{i=1}^\infty m_i^2 <\infty\},
\end{equation}
with the distance
\begin{equation}\label{eq_def_d_metric}
 \dist(\um ,\um')=\Vert \um-\um' \Vert_2= \left( \sum_{i \geq 1} (m_i-m'_i)^2  \right)^{1/2}.
 \end{equation}
Then Proposition 5 of \cite{aldous_mc} says that the MC is a Feller process in $(\ltwoord, \dist)$. 

\medskip

We generalise the multiplicative coalescent 
to include \textit{deletion} as well as coalescence. 
Let $\lambda\geq 0$. Now we want the dynamics of the process 
to satisfy:
\begin{equation}\label{mcld_informal_def}
\begin{array}{l}
 \text{(i) any two components of mass $m_i$ and $m_j$ merge with rate $m_i  m_j$,} \\
 \text{(ii) any component of mass $m_i$ is deleted with rate $\lambda m_i$.}
 \end{array}
\end{equation}

We call such a process a \emph{multiplicative coalescent with linear deletion}, with deletion 
rate $\lambda$, and denote it by $\mathrm{MCLD}(\lambda)$.

Again, if the initial state has finitely many components, 
the process is easily seen to be well-defined. 
For initial states in $\ltwoord$, a graphical construction
is given in 
\cite{james_balazs_mcld_feller}, which gives rise 
to a well-behaved continuous-time Markov process 
taking values in $\ltwoord$. This process
has the Feller property with respect to the distance $\dist(\cdot,\cdot)$ --
see Theorem 1.2 of
\cite{james_balazs_mcld_feller}.

\medskip

The motivation of our study of the MCLD process is twofold. 

Firstly, ideas involving the representation of the state of 
a MC
process in terms of the set of excursion lengths 
of some random function began with Aldous \cite{aldous_mc},
and have been developed in a series of works including 
those of Aldous and Limic \cite{aldous_limic}, 
Armend\'ariz \cite{armendariz}, Uribe Bravo \cite{uribe_thesis},
Broutin and Marckert \cite{broutin-marckert} and
Limic \cite{limic_new}. We find that this theory 
has a surprising and elegant extension to 
processes involving deletion, which we build in this paper. 

Secondly, the MCLD  arises as a scaling limit
of certain discrete processes of coalescence and fragmentation
or deletion, such as the mean-field forest-fire model 
introduced by R{\'a}th and T{\'o}th in \cite{bt_br_forest}
and studied by Crane, Freeman and T{\'o}th in 
\cite{ec_nf_bt_forest},
and the mean-field frozen percolation process introduced
by R{\'a}th in \cite{br_frozen_2009}. 
These processes are of particular interest
because of their \textit{self-organised criticality}
properties. These scaling limits are the subject
of a future paper \cite{james_balazs_window_process},
but we discuss related properties here in Section \ref{section_applications}.

We further discuss related literature in Section \ref{subsec:related}.

\subsection{Tilt representations of  MC}
\label{section_intro_tilt_earlier_work_brief}
We start by revisiting previous results 
giving representations of the MC in terms of excursions of random functions. 

\smallskip

We first define 
\textit{Brownian motion with parabolic drift}.
\begin{definition}[Brownian motion with parabolic drift, $\BMPD(u)$]\label{brownian_parabolic_drift}
Let $B(x), x\geq 0$ denote standard Brownian motion. 
Given $u \in \R$, define
\begin{equation}\label{def_eq_bmpd}
h(x)=B(x)-\frac12 x^2+ux, \quad x \geq 0.
\end{equation}
Let us denote by $\BMPD(u)$ the law of the random function $h(x), x \geq 0$.
\end{definition}

For a function $h(x), x\geq 0$, let $\ORDX(h)$ 
denote the lengths of the excursions of $h$ above its
running minimum, written in non-increasing order
(see Section \ref{sec:main} below for a formal definition).
Aldous \cite{aldous_mc} showed that there
is an eternal multiplicative coalescent process,
whose marginal distributions can be written
in terms of excursions of a Brownian motion with parabolic drift.  

\begin{proposition}[Aldous]
\label{proposition:Aldous}
There exists a MC process
$(\bm_t, t\in\R)$ such that for each $t$,
the marginal distribution  $\bm_t$ is the same as 
that of $\ORDX(h_t)$ where $h_t\sim\BMPD(t)$. 
\end{proposition}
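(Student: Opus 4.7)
The plan is to realise the process as a scaling limit of the edge-addition dynamics on the near-critical Erd\H{o}s--R\'enyi random graph, following the strategy of Aldous. For each $n\in\N$ and each $t\in\R$, let $G_n(t)$ be the random graph on vertex set $\{1,\dots,n\}$ in which every potential edge is independently present with probability $p_n(t) = \min(1/n + t/n^{4/3}, 1)$, coupled so that $G_n(s) \subseteq G_n(t)$ whenever $s\le t$. Denote by $\widetilde{\bm}^{(n)}(t)\in\ltwoord$ the sequence of component sizes of $G_n(t)$, rescaled by $n^{-2/3}$ and arranged in non-increasing order. The goal is to show that $\widetilde{\bm}^{(n)}$ converges in distribution, as a process in $t$, to the required eternal multiplicative coalescent.

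First I would verify that for fixed $n$ the process $t\mapsto \widetilde{\bm}^{(n)}(t)$ is an approximate multiplicative coalescent on any bounded interval. Between times $t$ and $t+\mathrm{d}t$, two distinct components of rescaled sizes $\widetilde m_i$ and $\widetilde m_j$ acquire a new connecting edge with infinitesimal probability $\widetilde m_i\widetilde m_j\,\mathrm{d}t\,(1+o(1))$, since $p_n'(t)=n^{-4/3}$ and the number of cross-pairs between the two components is $\widetilde m_i\widetilde m_j n^{4/3}$; edges added within a single component leave $\widetilde{\bm}^{(n)}$ unchanged. These dynamics therefore converge to those of a multiplicative coalescent on $\ltwoord$.

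The analytic core is Aldous's marginal convergence theorem: for each fixed $t\in\R$,
\begin{equation*}
\widetilde{\bm}^{(n)}(t) \;\toinp\; \ORDX(h_t) \qquad \text{in } (\ltwoord,\dist),
\end{equation*}
where $h_t\sim\BMPD(t)$. This is proved via a breadth-first exploration of $G_n(t)$: the walk that counts ``active'' vertices, rescaled by $n^{-2/3}$ in value and by $n^{-1/3}$ along the vertex index, converges weakly to $h_t$, and its excursions above the running minimum are in bijection with the components of $G_n(t)$. Combining this with the Feller property of the multiplicative coalescent on $\ltwoord$ recalled just before the proposition, one upgrades marginal convergence at a fixed starting time $t_0$ to process-level convergence of $(\widetilde{\bm}^{(n)}(t))_{t\ge t_0}$ to a multiplicative coalescent whose marginal at each $t\ge t_0$ is $\ORDX(h_t)$. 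Letting $t_0\to-\infty$ and using that $\Vert \ORDX(h_{t_0})\Vert_2 \to 0$ in probability (so the initial state becomes negligible in $\ltwoord$), one patches these finite-horizon constructions into an eternal process $(\bm_t)_{t\in\R}$ with the required one-dimensional marginals.

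The main obstacle I anticipate is the marginal convergence in the $\ltwoord$ topology. The functional convergence of the breadth-first walk to $\BMPD(t)$ is a Donsker-type result, but translating it into convergence of \emph{ordered} excursion lengths in the $\ell_2$ sense requires controlling the aggregate squared length of small excursions uniformly in $n$, so that neither mass escapes into a ``dust'' of vanishing excursions nor an excursion in the limit arises from excursions not seen at finite $n$. The extension to an eternal process by sending $t_0\to-\infty$ is more routine, but still requires a consistency argument combining the Feller property with the vanishing of the initial mass; this is where the choice of the $\ell_2^\downarrow$ metric is essential, since it is in this metric that Aldous's marginal theorem is known to hold.
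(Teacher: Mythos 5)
Your proposal is correct, but it is essentially Aldous's original argument (breadth-first exploration of the near-critical dynamic \ER graph, marginal convergence of the walk to $\BMPD(t)$, then the Feller property and a $t_0\to-\infty$ consistency argument), which is precisely the proof the paper \emph{cites} for Proposition \ref{proposition:Aldous} rather than reproves. The route the paper itself offers is different: it first establishes the rigid tilt representation $\ORDX(f_t)$, $f_t(x)=f_0(x)+tx$, for an arbitrary initial state (Theorem \ref{thm:tilt}), built on the exponential excursion levels property and the particle/interval-coalescent representations; then, in Case 1 of Section \ref{subsection_sketch_proof_levy_excursions}, it obtains $\BMPD(0)$ directly as the functional limit of the step functions $f^{(n)}_{n^{1/3}}$ arising from $n$ equal blocks of size $n^{-2/3}$, with no reference to any underlying graph or exploration process. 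What your approach buys is the classical connection to the critical window of $\mathcal{G}(n,p)$ and it only yields one-dimensional marginals directly (upgrading to the process level still needs the Feller property, exactly as you say); what the paper's approach buys is the stronger sample-path statement of Proposition \ref{theorem:Armendariz} essentially for free, since the tilt construction is already a coupling of all times $t$ on a single realisation of $f_0$. Two small technical points in your sketch worth flagging: the Bernoulli$(p_n(t))$ coupling gives the multiplicative rates only after a deterministic time change (the appearance rate of a missing edge is $p_n'(t)/(1-p_n(t))$, not $p_n'(t)$), and the $\ell_2$-control of the dust of small excursions that you correctly identify as the main obstacle is exactly the content of Aldous's size-biased point process argument, so it should not be treated as routine.
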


This process is known as the \textit{standard multiplicative 
coalescent}. In her PhD thesis, Armend\'ariz \cite{armendariz_phd, armendariz}
showed that in fact the whole process can be constructed
as a function of a single realisation of Brownian motion. 

\begin{proposition}[Armend\'ariz]
\label{theorem:Armendariz}
Let $h_0\sim \BMPD(0)$, 
and define $h_t(x)=h_0(x)+tx$ for all $t\in\R$
(so that $h_t\sim \BMPD(t)$ for all $t$). 
Then the process $\ORDX(h_t), t\in\R$ is the 
standard MC. 
\end{proposition}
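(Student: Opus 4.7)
My plan is to identify the coupled process $(\ORDX(h_t))_{t\in\R}$ with the standard MC. Proposition~\ref{proposition:Aldous} already gives the one-dimensional marginal at each time $t$, so it remains to verify the joint dynamics. I would check (a) that $(\ORDX(h_t))_{t\in\R}$ only undergoes coalescent transitions, (b) that it is Markov in its own filtration, and (c) that the instantaneous rate at which two blocks of masses $m_i,m_j$ merge is $m_im_j$. Combined with Proposition~\ref{proposition:Aldous} and the Feller property of the MC, (a)--(c) pin down the coupled process as the standard MC.

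\emph{Step 1 (monotonicity).} Let $M_t:=\{x\geq 0 : h_t(x)=\inf_{y\leq x}h_t(y)\}$ be the running-minimum set of $h_t$. A short deterministic check shows $M_t\subseteq M_s$ for $s<t$: if $x\in M_t$ and $y\leq x$, rearranging $h_t(x)\leq h_t(y)$ yields $h_s(x)-h_s(y)\leq (t-s)(y-x)\leq 0$, so $x\in M_s$. The excursion intervals of $h_t$ above its running minimum are the connected components of $[0,\infty)\setminus M_t$, so each excursion of $h_t$ is a disjoint union of excursions of $h_s$ together with a subset of $M_s\setminus M_t$ which is Lebesgue-null (since $M_s$ itself is Lebesgue-null under $\BMPD$). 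Hence $(\ORDX(h_t))_{t\in\R}$ only makes mass-conserving mergers, settling~(a).

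\emph{Step 2 (Markov property and merger rates).} These form the technical core. In the continuum I would condition on $\ORDX(h_t)$ and use the fact that, given the excursion lengths, the shape of $h_t$ on each excursion has a universal conditional law (essentially that of a Brownian excursion of the prescribed length), with the pieces being conditionally independent. This ``universal'' description yields~(b), since the future $(\ORDX(h_s))_{s\geq t}$ then depends on the past $\sigma(\ORDX(h_u):u\leq t)$ only through $\ORDX(h_t)$. The infinitesimal merger event between two adjacent excursions of lengths $x$ and $y$ is precisely the event that the running-minimum separator between them is erased by the additional tilt, and a direct calculation using the conditional excursion-shape description gives the rate $xy\,dt+o(dt)$, settling~(c). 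A complementary route is via weak convergence: the analogous rigidity is transparent for discrete couplings (e.g.\ the \ER graph with edges turned on at independent uniform times), whose component sizes admit an explicit encoding as ordered excursion lengths of tilted random walks converging to $\BMPD$; passing to the limit jointly in $t$ transfers this rigidity to the continuum.

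\emph{Main obstacle.} The essential difficulty is the joint (rather than marginal) nature of the claim. In the continuum route, one must control the fine, Lebesgue-null but non-trivially structured set $M_t$ under the tilt operation, and in particular must verify that the merger rate is precisely $xy$ (and not some other function of $(x,y)$). In the discrete route, the obstacle is to upgrade marginal convergence of the encoding to joint-in-$t$ convergence as a rigid coupling, which is sensitive to the entire joint history rather than just the one-dimensional marginals.
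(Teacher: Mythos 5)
Your Step 1 is correct and matches the monotonicity observation used in the paper (cf.\ the proof of Lemma \ref{lemma:cadlag_tilt}). The gap is in Step 2, and it is substantive. The conditional structure that drives the dynamics of $\ORDX(h_t)$ is not the \emph{shape} of $h_t$ on each excursion but the joint law of the excursion \emph{levels} given the lengths: the shapes are irrelevant because tilting can only destroy running minima, never create them, so the evolution is entirely determined by when adjacent excursion levels collide. The key structural input is the \emph{exponential excursion levels} property of $\BMPD$ (Aldous's equation (72); Definition \ref{def_exp_exc_levels} here): conditionally on the lengths, the excursion of length $m$ sits at level $-E$ with $E\sim\Exp(m)$, independently. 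Equivalently, the excursions appear in size-biased order and the gaps between consecutive levels are exponential with rate equal to the total mass below. Your claimed merger rate is then wrong: two \emph{adjacent} excursions of lengths $x$ and $y$ do \emph{not} merge at rate $xy$. The level gap separating them is $\Exp(L)$ where $L$ is the total length of all excursions to the right of the first one, and it closes at speed $x$, so the merger rate is $xL$, not $xy$ (cf.\ \eqref{eq:merge_rate_ic} and \eqref{time_k_merges_with_k_plus_one}). If adjacent excursions merged at rate $xy$, non-adjacent blocks could never coalesce and the process could not be the MC. The identification with the multiplicative coalescent therefore requires two further non-trivial facts that your proposal does not supply: (i) the combinatorial identity showing that the interval process with rates $b_k\sum_{i>k}b_i$, started from a size-biased ordering, projects to the MC (Lemma \ref{lemma_two_procedures} and Theorem \ref{thm:intervalcoalescent}); and (ii) the preservation of the size-biased order / exponential-levels property under the dynamics (property P2($k$) in the proof of Theorem \ref{thm:intervalcoalescent}, via the memoryless property), which is also what makes your Markov claim (b) work.

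Your ``complementary route'' via discrete approximation is closer to what can be made rigorous (it is essentially Broutin--Marckert, and the paper's own route in Section \ref{sec:eternal} also passes through finite approximations, Theorem \ref{thm:tilt} and Claim \ref{claim:Levy_excursions}), but the sentence ``the analogous rigidity is transparent for discrete couplings'' hides exactly the hard point: a depth-first or breadth-first exploration of the dynamic \ER graph is \emph{not} consistent with the coalescent across times. One needs an ordering (the Prim order, or equivalently the exponential-clock ordering of Definition \ref{def_size_biased_with_exponentials}) under which components are intervals at \emph{all} times simultaneously while still producing the correct walk at each fixed time. Finally, a small point: your argument as stated only treats $t\geq$ some fixed time; to get the eternal process on all of $\R$ one lets the starting tilt parameter tend to $-\infty$ and applies Kolmogorov extension, as in the proof of Corollary \ref{cor:eternal_tilt}.
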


We call this representation a \textit{tilt} representation;
for each $t$, the function $h_t$ is obtained from $h_0$ by adding 
the linear function $tx$. It is easy to see that the tilt representation
indeed produces a coalescent process as $t$ increases. Also note that only adjacent excursions
can coalesce under the tilt representation.

Part of the approach of Armend\'ariz (described in \cite[Section 4]{armendariz})  is further elaborated 
in Chapter 4 of the PhD thesis \cite{uribe_thesis} of Uribe Bravo.

\smallskip

In \cite[Corollary 4]{broutin-marckert}, Broutin and Marckert give an alternative proof of 
Proposition \ref{theorem:Armendariz}. 
Their proof involves considering the connected components
of an \ER graph process on $n$ vertices, and then taking
the weak limit as $n\to\infty$.
The key idea is to define an ordering of the vertices
(the \emph{Prim ordering}, related to 
invasion percolation and to the minimal spanning tree) 
which is consistent with 
the coalescent process in the sense that at all times, the 
components are intervals of the Prim order (and thus
only adjacent intervals can coalesce); nonetheless,
for each fixed time, exploring the graph in the Prim ordering
yields a random walk with the same distribution as is
obtained from a standard traversal in, say, 
depth-first or breadth-first order.
\smallskip

Aldous and Limic \cite{aldous_limic}
characterized the set of all eternal multiplicative coalescents, 
i.e.\ those defined for all times  $t \in (-\infty, \infty)$.
They defined a three-parameter set of random processes
$W^{\kappa, \tau, \uc}(x), x\geq 0$, such that if 
$(\bm_t, t\in\R)$ is a non-constant eternal MC which is 
extremal (that is, its law is not the mixture of the laws of other eternal MC processes) then
 for each $t \in \R$,
the marginal distribution of $\bm_t$ is the same as 
that of the sequence $\ORDX(h_t)$ of excursion lengths of $h_t(x):=W^{\kappa,\tau,\uc}(x)+tx$
for some $(\kappa, \tau, \uc)$.
The processes $W^{\kappa,\tau,\uc}$ 
have been called \textit{L\'evy processes without replacement};
see Section \ref{sec:eternal} for their definition and the precise statement of the cited result of \cite{aldous_limic}.

\smallskip

%
%
%

We will further discuss the links between our approach and those of Armend\'ariz \cite{armendariz}, Uribe Bravo \cite{uribe_thesis},
Broutin and Marckert \cite{broutin-marckert} and
Limic \cite{limic_new} in
Section \ref{subsec:related}.

\subsection{Contributions of this paper}
\label{section_contributions}
In this section we summarize the main results of our paper.
The central results are the following:

\begin{itemize}
\item {\bf MC admits a tilt representation from any initial state:}
In Theorem \ref{thm:tilt} we show that for any possible initial state $\um \in \ltwoord$, there exists
a random function $f_0: [0, \infty) \to [-\infty,0]$ such that if we define 
\begin{equation}\label{f_t_intro}
f_t(x)=f_0(x)+tx
\end{equation}
then
$\ORDX(f_t), t\in\R$ is a multiplicative coalescent process  with initial state $\um$.

\item {\bf MCLD admits a ``tilt-and-shift" representation:}
Let $\um \in \ltwoord$, and define $f_t$ for all $t \geq 0$ as 
in (\ref{f_t_intro}). Let $\lambda>0$. In Theorem \ref{thm:mcld_extension_introduction} 
we show that there exists a $\sigma(f_0)$-measurable non-decreasing  function 
$\Phi: [0, \infty) \to [0, \infty)$ such that if we define
\begin{equation}\label{g_t_f_t_intro}
  g_t(x)=
 f_t(x+\Phi(t)) + \lambda t - \int_0^t \Phi(s)\, \mathrm{d}s,  \quad t \geq 0
 \end{equation}
then $\ORDX(g_t), t \geq 0$ is a $\mathrm{MCLD}(\lambda)$ process with initial state $\um$.
\end{itemize}

Here $\Phi$ is a non-decreasing pure jump process;
$\Phi(t)$ is the total weight of components deleted by time $t$.
In addition to the ``tilt" given by (\ref{f_t_intro}),
we now have a ``shift" since in (\ref{g_t_f_t_intro}),
the function $f_t$ is applied at $x+\Phi(t)$. 
If a component of size $a$ is deleted, then 
$\Phi(t)-\Phi(t-)=a$, and we see a shift to the left
of size $a$ at time $t$; that is, 
the graph of $g_{t-}$ on $[a,\infty]$ becomes the graph
of $g_t$ on $[0,\infty]$. 

Note that the tilt representation of the MC is the $\lambda=0$ case of the tilt-and-shift representation of
the MCLD. Under the tilt-and-shift representation,
only adjacent excursions coalesce and
only the ``leftmost'' excursion gets deleted.
We call the above representations ``rigid'' because all of the randomness is contained in the initial state $f_0$, and the rest of the evolution of $f_t$ (resp.\ $g_t$) is deterministic.

The function $f_0$ above is constructed so that
it has the \textit{exponential excursion levels} property: conditional on 
the sequence of excursion lengths,
the levels of the excursions  are independent, 
and an excursion of length $m$
occurs at level $-E$ where $E\sim \Exp(m)$. 
One of the central observations behind the rigid representation results outlined above is that
$f_t$ and $g_t$  for any $t \geq 0$ inherit the exponential excursion levels property.

\medskip

We prove these main results in two stages: we first consider the case of finitely many blocks in
Section \ref{section:rigid_finite_state_space}, then we extend the tilt-and-shift representation 
to $\ltwoord$ using truncation
and approximation arguments in Section \ref{section:extension_to_ltwo}.

The proof of the finite tilt-and-shift representation result in Section \ref{section:rigid_finite_state_space}  involves  various other representations which are also of independent interest.
Their constructions rely heavily on the notion of size-biased orderings, and on the
realisation of such orderings
using independent exponential random variables, c.f.\ Section \ref{sec:expsizebiased}.

Now we outline these  representations and how they relate to each other.
 
\begin{itemize}
\item {\bf Interval coalescent representation:} Given an initial state $\um$ with finitely many blocks, we
consider a process whose states are 
finite sequences of lengths (depicted as intervals arranged in some order).  
In the initial state, the interval lengths are the masses
of blocks of $\um$, and they are arranged 
in a size-biased random order. 
An interval $I$ merges with the interval on its right at a rate equal to the product of the length of $I$ with the combined length of all of the intervals to the right of $I$. The leftmost interval is deleted at a rate
equal to $\lambda$ times the total length of intervals. For an illustration, see Figure \ref{fig:icld}.
In Section \ref{section:interval_coalescent_repr} we show that the decreasing rearrangement
of interval lengths evolves like $\mathrm{MCLD}(\lambda)$.
(If $\lambda=0$, this gives the MC.)

\begin{figure}[h]
\begin{center}
\large
  \includegraphics[width=\textwidth]{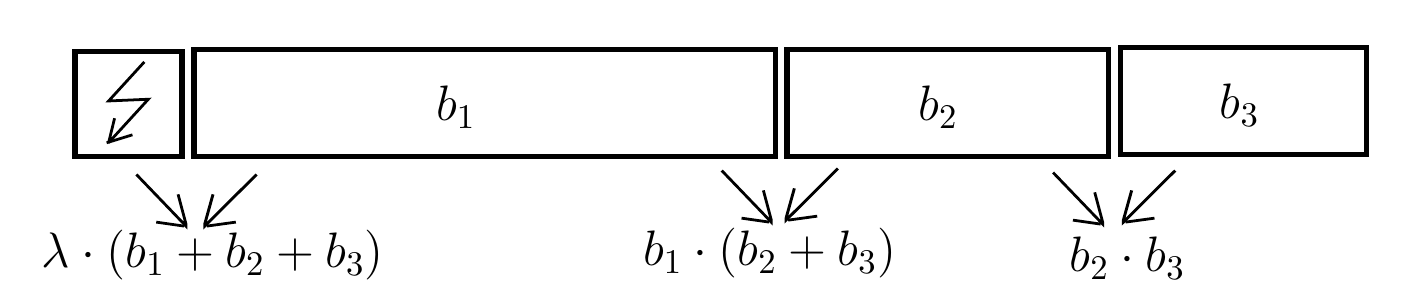}
\caption{An illustration of the rates of the interval coalescent process: the lengths of the three intervals (from left to right) are $b_1$, $b_2$ and $b_3$. The leftmost block marked with a lightning represents the ``cemetery". The rate at which the first interval gets deleted is $\lambda \cdot ( b_1+b_2+b_3)$.
The rate at which the first interval merges with the second one is $b_1 \cdot (b_2+b_3)$. The rate
at which the second and third intervals merge is $b_2 \cdot b_3$. }
\label{fig:icld}
\end{center}
\end{figure}

\item {\bf Particle representation:} 
Given an initial state $\um=(m_1,\dots,m_n)$ as above, we 
initially put a particle of mass $m_i$ at initial height $-E_i$, where 
$E_i \sim \Exp(m_i), 1 \leq i \leq n$ are independent. Now the particles start to move up until they reach height $0$
and die. The speed of a particle is equal to $\lambda$ plus the total weight of particles
strictly above it and strictly below $0$. Note that once a particle reaches the one above it, they
stick together until they die. Accordingly, we group the particles that share the same height into time-$t$ blocks.
In Section \ref{section_particle_representation} we show that the 
the vector of sizes of the time-$t$
blocks of the particle system, in decreasing
order of their height, evolves like the above described interval coalescent process.
See Figure \ref{fig:MCLDparticles} and 
Figure \ref{fig:MCparticles} for simulations;
in the first, $\lambda$ is positive and the 
system realises an MCLD, while in the second,
$\lambda$ is zero and the system realises a multiplicative
coalescent without deletion. 
\end{itemize} 
Note that the particle representation outlined above is also ``rigid''. 

\begin{figure}
\begin{center}
\includegraphics[width=\textwidth]{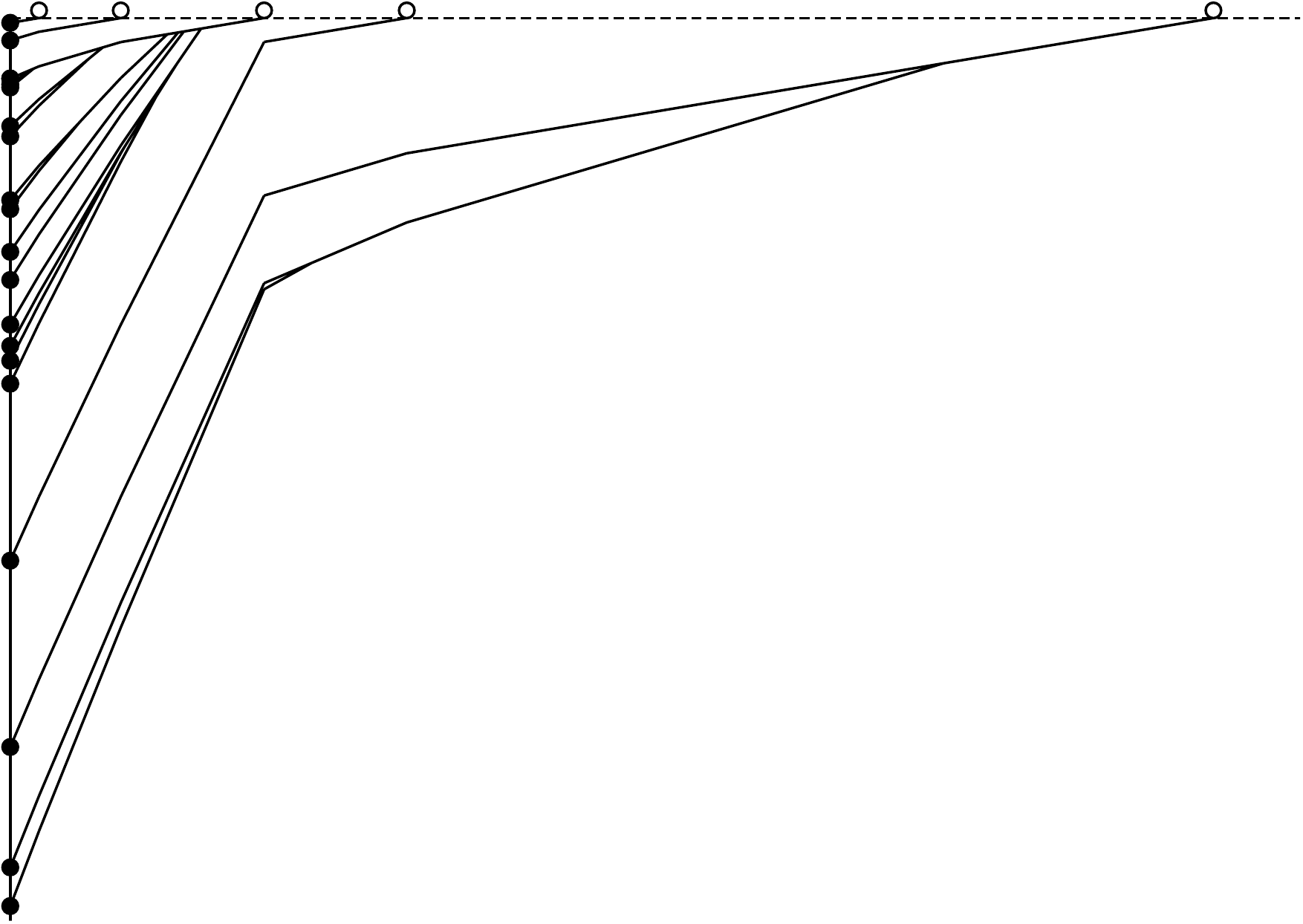}
\caption{
\label{fig:MCLDparticles}
A simulation of the particle system which realises the 
MCLD. Each particle moves upwards at rate equal to $\lambda$
plus the mass of particles strictly above it. When 
a particle reaches 0 it dies and is removed from the system.
Particles at the same height at time $t$ form a time-$t$ block. 
The figure shows a system with $n=20$ particles and 
$\lambda=1.3$, evolving on the time interval $[0,0.5]$.
We see 5 deletion events, and the deleted blocks have sizes 1, 1, 14, 1, 3 respectively. 
Compare to Figure \ref{fig:MCparticles}, where $\lambda=0$ 
(realising the multiplicative coalescent without deletion), and to Figure \ref{fig:FFparticles}, 
where deleted particles reenter at independent exponential heights
(realising the forest fire model).
}
\end{center}
\end{figure}

\begin{figure}
\begin{center}
\includegraphics[width=\textwidth]{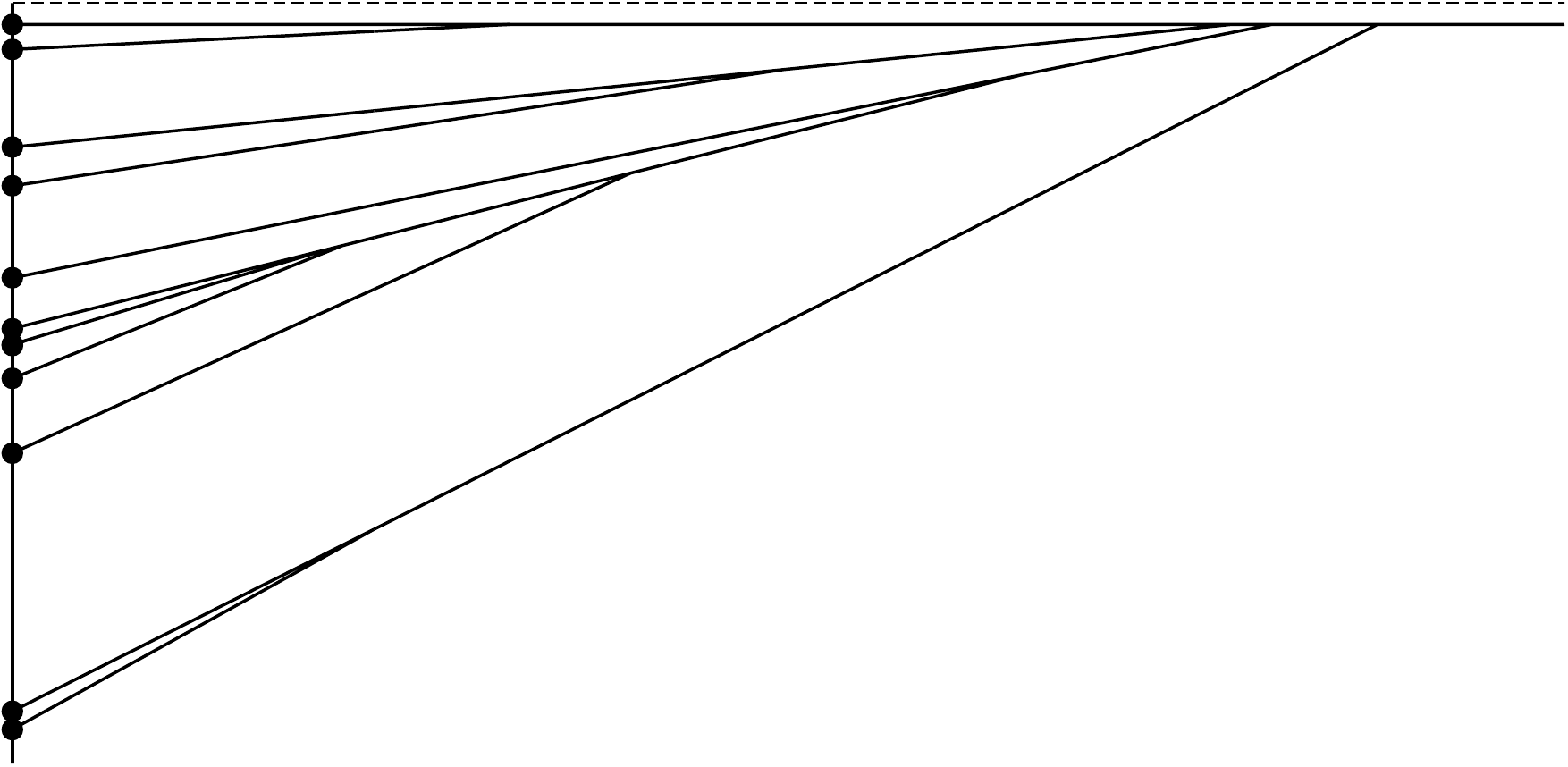}
\caption{
\label{fig:MCparticles}
This shows a similar system to the one in Figure
\ref{fig:MCLDparticles}, but now with $\lambda=0$ so that
the system realises the multiplicative coalescent, with no 
deletion. In this case $n=12$ and the figure shows the time interval 
$[0,0.25]$. By the end, all particles have coalesced into a single block. 
}
\end{center}
\end{figure}

In Section \ref{tiltproofl1} we explain how particles in the particle representation correspond to excursions
of $g_t$ in the tilt-and-shift representation, completing the 
 proof of the validity of the
 tilt-and-shift representation of 
$\mathrm{MCLD}(\lambda)$ with finitely many blocks.

\medskip

In Section \ref{section_applications} we give some applications of our rigid representation results:
\begin{itemize}

\item In Section \ref{sec:eternal} we give a sketch proof of the fact that the
L\'evy processes without replacement $W^{\kappa,\tau,\uc}$ have the 
exponential excursion levels property, and conclude that all eternal multiplicative coalescents
have a tilt representation. 
This gives an alternative way to approach the
 result of Limic \cite[Theorem 2]{limic_new}
(see Section \ref{subsec:related}).

\item In Section \ref{sec:BMPD} we apply the tilt-and-shift representation
to Brownian motion with parabolic drift. What we find can be non-rigorously summarized as follows:
if we start from $\BMPD(u)$ then at time $t \geq 0$ we see
$\BMPD(u+t-\Phi(t))$.
 We show that the resulting $\mathrm{MCLD}(\lambda)$ process 
is the scaling limit of the list of component sizes in the mean field frozen percolation model
of \cite{br_frozen_2009}
 started 
from a near-critical \ER graph, thus extending the result \cite[Corollary 24]{aldous_mc}, which is the $\lambda=0$ case.
 
\item In Section \ref{section_particle_forest} we give a particle representation of the mean field forest
fire model of \cite{bt_br_forest}. We demonstrate how this representation allows us to give a new
 probabilistic
interpretation of the non-linear controlled PDE problem 
 (and the associated characteristic curves) which played
a central role in the 
theory developed in \cite{bt_br_forest} and \cite{ec_nf_bt_forest}.
\end{itemize}

\subsection{Related work}
\label{subsec:related}

\subsubsection{Rigid representations of coalescent processes}

In this subsection we discuss links to previous work 
on rigid representations for the multiplicative coalescent, which is somewhat scattered (and in some cases unpublished). The various approaches and methods of Armend\'ariz \cite{armendariz}, Uribe Bravo \cite{uribe_thesis},
Broutin and Marckert \cite{broutin-marckert} and
Limic \cite{limic_new} resulting in versions of the 
``tilt'' representation are all related to ours, although these similarities are often implicit. Let us now try to sketch some of these connections.

\smallskip
In \cite{armendariz} Armend\'ariz draws attention to the fact that Brownian motion 
with parabolic drift has the
\textit{exponential excursion levels} property 
which we informally described in Section 
\ref{section_contributions}. This observation
had been made by Aldous in  \cite[equation (72)]{aldous_mc}
(see also Bertoin \cite{bertoin_frag}).
Based on this, 
Armend\'ariz constructs 
a representation of the MC with finitely many blocks in \cite[Section 4, Proposition 2]{armendariz} -- this representation
appears to be equivalent to the $\lambda=0$ case of our \emph{particle representation} (see Section \ref{section_particle_representation}), although the methods are different.

\smallskip

In Chapter 4.2 of his PhD thesis \cite{uribe_thesis}, Uribe Bravo recalls and further elaborates
the representation of \cite[Section 4]{armendariz} (without proofs). 
In particular, he points out that (what we call) the initial heights of particles are
in size-biased order and formulates the crucial 
property corresponding to our \eqref{height_gaps_exp_indeed} about the height gaps of particles.
Again, the representation outlined in \cite[Chapter 4.2]{uribe_thesis} is equivalent to the $\lambda=0$ case
of our particle representation; moreover the representation stated (without proof) in 
\cite[Chapter 4.3]{uribe_thesis} is equivalent to our tilt representation of MC 
(c.f.\ Section \ref{subsec:tilt}) in the case of finitely many blocks.
In \cite[Chapter 4.4]{uribe_thesis} Uribe Bravo gives a sketch proof of
Proposition \ref{proposition:Aldous} which is quite similar to our short sketch proof of Proposition \ref{theorem:Armendariz}
given in Section \ref{subsection_sketch_proof_levy_excursions}.

\smallskip
At first, the approach of Broutin and Marckert in \cite{broutin-marckert} seems quite different from ours.
Their construction relies on exploration processes for
random graphs, defined using the \emph{Prim ordering}
for a graph with edge-weights; our methods make no explicit
reference to the graph structure underlying the MC (or MCLD).
However, one can observe that the components of the dynamic \ER graph process, arranged in Prim order, give  
 the \emph{interval coalescent} process 
(see Section \ref{section:interval_coalescent_repr}) without deletion, started from an initial state with equal-sized blocks.
Broutin and Marckert also extend the representation to obtain
a construction of the \textit{standard augmented 
multiplicative coalescent}
of Bhamidi, Budhiraja and Wang \cite{BhBuWa}.

\smallskip
In the recent paper \cite{limic_new},
Limic obtains tilt representations of all
eternal multiplicative coalescents. 
Theorem 2 of \cite{limic_new} can be written as follows. 
Let $f_t(x)=W^{\tau, \kappa, \uc}(x)+tx$, where $W^{\tau, \kappa, \uc}$ is a
L\'evy processes without replacement. Then
the process $\ORDX(f_t), t\in\R$ is an eternal multiplicative
coalescent.  Limic's approach 
involves the generalisation of the 
\emph{breadth-first walks} used in \cite{aldous_mc}
and \cite{aldous_limic}, to give a system 
of \emph{simultaneous breadth-first walks}
relating to the state of the multiplicative coalescent
at different times, see \cite[Proposition 7]{limic_new}.
In \cite[Section 3]{limic_new}
Limic recalls from \cite[Section 4]{armendariz} and 
\cite[Chapter 4.2]{uribe_thesis} the definition of \emph{Uribe's diagram} (which is equivalent to the $\lambda=0$ case of our particle representation) and proves in \cite[Proposition 5]{limic_new}
(using what she calls a partition-valued ``look-down'' formulation)
that this representation is indeed a valid representation of MC.
In \cite[Section 4]{limic_new} Limic explores the connection between Uribe's diagram
and  simultaneous breadth-first walks. 
We describe an alternative approach to the 
tilt representation of the set of eternal coalescents
in Section \ref{sec:eternal}.
Note that the connection between 
Prim's algorithm \cite{broutin-marckert} and simultaneous breadth-first walks \cite{limic_new} 
is the topic of ongoing research \cite{uribe_future}, c.f.\ Open question 3 of \cite[Section 7]{limic_new}.

\smallskip

Finally we mention that also in the case of the
\emph{standard additive coalescent} 
\cite{Aldous-Pitman-additive},
a tilt representation can be given. 
Let $\mathrm{e}(x)$ be a standard Brownian excursion
on $[0,1]$, and for $\lambda>0$ define the function $h_{\lambda}$
on $[0,1]$ by $h_\lambda(x)=\mathrm{e}(x)-\lambda x$. 
A consequence of the results of Bertoin \cite{bertoin_frag} 
is that the process $\ORDX(h_{e^{-t}}), t\in\R$ is a version of 
the standard additive coalescent. An alternative
proof of this result is given by Broutin and Marckert
\cite{broutin-marckert}.

\subsubsection{Mean-field graph models of self-organized criticality (SOC)}

The mean-field frozen percolation model \cite{br_frozen_2009} is a dynamic random graph model
where the initial number of vertices is $n$, two connected components of size $k$ and $l$
merge at rate $\frac{kl}{n}$ and a connected component of size $k$ disappears at rate $\lambda(n)k$.
Note that up to scaling, this is MCLD$(\lambda)$. If $1/n \ll \lambda(n) \ll 1$ then the evolution of the 
densities of small components is asymptotically described by
Smoluchowski's coagulation differential equations with multiplicative kernel
 \cite[Theorem 1.2]{br_frozen_2009}, the solutions of which
exhibit SOC  \cite[Theorem 1.5]{br_frozen_2009}.

\smallskip

The definition of the mean-field forest fire model \cite{bt_br_forest} is the same as that of
the above described frozen percolation model, with one difference:
instead of removing a component, we only remove its edges, with
the vertices remaining as singletons.
A variant of Smoluchowski's system of equations describes the asymptotic densities of small components
\cite[Section 1.3]{bt_br_forest}, leading to SOC.
The proof of the well-posedness of this infinite system of differential equations involves
a non-linear PDE which is a controlled variant of the Burgers equation.

\smallskip

In \cite{ec_nf_bt_forest} Crane, Freeman and T\'oth  describe the 
time  evolution of the size of the component  of a fixed vertex in the above forest fire model in the $n \to \infty$ limit. They also give a probabilistic meaning to the characteristic curves of the 
controlled Burgers equation in  \cite[Remark 3.11]{ec_nf_bt_forest}. 
In Section \ref{section_controlled_burgers} we give a different probabilistic meaning
to these characteristic curves by comparing them to particle trajectories in the particle representation.

\smallskip

The model proposed by Aldous in \cite[Section 5.5]{Aldousfrozen} is studied
by Merle and Normand in \cite{merle_normand}: 
 two connected components of size $k$ and $l$
merge at rate $\frac{kl}{n}$ and  connected components disappear if their size exceeds
a threshold  $\omega(n)$. In order to achieve SOC, one chooses
 $1 \ll \omega(n) \ll n$. Theorem 1.1 of \cite{merle_normand} 
 states that the densities of small components converge to the solution of
 Smoluchowski's coagulation equations with multiplicative kernel.
 Note that this is a special case of the main result of Fournier and Laurencot in \cite{fournier_laurencot_smol},
 who study
 discrete models of Smoluchowski's coagulation equation with more general coagulation kernels.
 Theorem 1.3 of \cite{merle_normand} identifies the Benjamini-Schramm limit of the
 ``threshold deletion'' graph model as $n \to \infty$. 
 
 Note that by comparing
 \cite[Theorem 1.2]{br_frozen_2009} and \cite[Theorem 1.1]{merle_normand} 
one sees that (in the SOC regime) the densities of small components converge to the same hydrodynamic limit 
in the model with linear deletion and the model with threshold deletion.
  However, if one is interested in the scaling limit of big component dynamics, 
 the exact deletion mechanism does crucially enter the picture. We discuss scaling limits of the model with linear deletion
 in Section \ref{sec:BMPD}. The scaling limit of the threshold deletion model is not yet
 known, but in Remark \ref{remark_merle_normand_particle} we give a
  particle representation of it.

In \cite{merle_normand_2}  Merle and Normand identify the Benjamini-Schramm limit
of a different self-organized critical model of aggregation, where vertices
can only connect to a fixed number of other vertices.

\subsubsection{Scaling limits of critical random graph models}
In \cite[Corollary 2]{aldous_mc} Aldous identifies the scaling limit of component sizes in a near-critical 
\ER graph in terms of the excursion lengths of $\BMPD$ and in \cite[Corollary 24]{aldous_mc}
he shows that the standard MC is the scaling limit of the evolution of component sizes of
the dynamic \ER graph in the critical window. Let us now list some related results.

\smallskip

The papers \cite{basin, bh_ho_leeu, dembo_levit_v,   a_joseph,  nachmias_peres_reg_perc, riordan} explore the
universality class of  graph models 
 whose scaling limits are described by $\BMPD$ and the standard $\mathrm{MC}$.

\smallskip

The family of eternal multiplicative coalescent processes are characterized in \cite{aldous_limic}.
The scaling limits of some classes of inhomogeneous random graph models are given by L\'evy processes without replacement and non-standard eternal $\mathrm{MC}$ processes, see  
\cite{aldous_limic,  bh_novel, a_joseph}. 

\smallskip

The continuum scaling limit of the metric structure of critical random graphs is studied in the
``$\BMPD$'' universality class in \cite{goldschmidt_et_al,  bh_sen_rw, bhamidi_sen_wang}
and in the
``L\'evy processes without replacement'' universality class in \cite{ bh_h_ss_metric}.

\smallskip

Martin and Yeo \cite{martin_yeo} study the 
\ER random graph within the critical window,
conditioned to be acyclic; alternatively expressed,
this is the uniform distribution over forests with 
a given number of vertices and edges. Analogously to
Proposition \ref{proposition:Aldous},
they obtain a scaling limit for the collection of component sizes
which is described by the sequence of excursions
of an appropriate reflected diffusion; here the drift
of the diffusion depends on space as well as on time.

\smallskip

In the future paper \cite{james_balazs_window_process} we describe the possible scaling limits
that can arise from a frozen percolation process with lightning rate $\lambda n^{1/3}$, started from an empty graph. The possible limit objects are eternal $\mathrm{MCLD}(\lambda)$ processes.
The ``arrival'' of the process at the critical window gives rise to a non-stationary eternal $\mathrm{MCLD}(\lambda)$ scaling limit,
while the scaling limit in the ``self-organized critical'' regime is a stationary $\mathrm{MCLD}(\lambda)$.



\subsection{Plan of the paper}
\label{subsec:plan}

In Section \ref{sec:main} we give the main results 
about tilt representations for MC and tilt-and-shift representations
for MCLD, and discuss their interpretation.

\smallskip

In Section \ref{section:rigid_finite_state_space} we prove the tilt-and-shift representation
result of MCLD in the case of finitely many blocks. Along the way, we introduce the interval coalescent representation and the particle representation and show how all these representations relate to each other.

\smallskip

In Section \ref{section:prep_measure_excusrions} we collect some preparatory results about random point measures
and excursions that we will use in Section \ref{section:extension_to_ltwo}.

\smallskip

In Section \ref{section:extension_to_ltwo} we extend our rigid representation results to any
 initial state $\um\in \ltwoord$ by approximating $\um$ with a sequence of truncated
 initial states. Since the particle representation is essentially the same as the tilt-and-shift representation,
 our proof involves a careful analysis of the effect of the insertion of a new particle on the death times of other particles.

\smallskip
 
In Section \ref{section_applications} we present the applications of the theory of rigid representations that we mentioned in Section \ref{section_contributions}.

\section{Main results}\label{sec:main}
We begin by introducing some notation. Let
\begin{align*}
\linford &= \{ \;\um=(m_1, m_2, \dots) \; : \;
m_1\geq m_2\geq \dots \geq 0 \; \} \\
\lpord &= \{ \; \um \in \linford \; : \; \sum_{i=1}^\infty m_i^p<\infty \} \text{ for } 0<p<\infty
\} \\
\lzeroord &= \{ \; \um \in \linford \; : \; \exists \; i_0 \in \N \; : 
\; m_i=0 \; \text{ for all } \; i \geq i_0 \; \}
\end{align*}

We will use the topology of coordinate-wise convergence on $\linford$.

For $\um\in\lzeroord$ with $n$ non-zero entries, we will sometimes ignore 
the infinite trailing string of zeros
and regard $\um$ as an element of $\R_{>0}^n$.

For $\um,\um' \in \ltwoord$ we define the distance $\dist(\um ,\um')$ by \eqref{eq_def_d_metric}.
The metric space $\left( \ltwoord, \, \dist(\cdot,\cdot) \right)$ is complete and separable. 

Next, we introduce some definitions relating to excursions.

\begin{definition}\label{def:excursion}
Let $g:[0,\infty)\to\R\cup\{-\infty\}$
be a function which is \cadlag{} and lower semi-continuous
(i.e.\ all jumps are downards).
For $0\leq l< r<\infty$, the interval $[l,r)$ is an 
\emph{excursion above the minimum}
of $g$ if:
\begin{itemize}
\item[(i)] $g(x)>g(l)$ for all $x<l$.
\item[(ii)] $r=\inf\{ \, x:g(x)<g(l) \, \}$.
\end{itemize}
We say that $r-l$ is the \emph{length} of the excursion and
$g(l)$ is the \emph{level} of the excursion.
 We say that the excursion is \emph{strict} if $g(x)>g(l)$ for any $x\in(l,r)$.
\end{definition}
From now on, we say simply ``excursion" to mean excursion above the minimum,
and we say that $l$ is a ``minimum" if $l$ is the left endpoint of an excursion of $g$. 

\begin{definition}\label{def:ORDX}
Suppose that for any $\varepsilon>0$, $g$ has only finitely many
excursions with length greater than $\varepsilon$. 
Then let $\ORDX(g)\in\linford$ be the sequence
of the lengths of the excursions of $g$, arranged in non-increasing order.
\end{definition}

We  write $\barg$ for the function defined by 
 \begin{equation}\label{eq:def_barg}
 \barg(x)=\inf_{0\leq u\leq x} g(u).
 \end{equation} 
Note that if $g$
is a lower semi-continuous \cadlag{} function, then 
the excursions of $g$ and $\barg$ have the same lengths and levels;
in particular, 
 \begin{equation}\label{ordx_of_bar_is_ordx}
   \ORDX(g)=\ORDX(\barg).
 \end{equation}

\subsection{Tilt representation of  multiplicative coalescent}
\label{subsec:tilt}

The aim of this section is to formulate the ``rigid'' representation of the
 MC process from any initial condition in Theorem \ref{thm:tilt}. 

\begin{definition}\label{def_inverse_cdf}
 Given a locally finite measure $\mu$ on $(-\infty, 0]$, we define the \emph{inverse cumulative distribution function}
 $f_\mu: [0,+\infty) \to [-\infty, 0] $ of $\mu$ by
\begin{equation}\label{def_eq_inverse_sdf_of_a_measure}
f_{\mu}(x)=\sup \{\; y \leq 0 \; : \; \mu[y, 0) > x \; \}, \qquad x \geq 0.
\end{equation}
In particular, $f_{\mu}(x) = -\infty$ for any $x \geq \mu(-\infty,0)$.
\end{definition}
Note that $f_\mu$ is non-increasing, lower semi-continuous and 
\cadlag.

\begin{definition}\label{def_exp_measure_law}
Given $\um=(m_1,m_2, \dots) \in \ltwoord$, 
 we define the independent exponential random variables
\begin{equation}\label{exponentials_E_i}
 E_i \sim \mathrm{Exp}(m_i), \; i=1,2,\dots.
\end{equation}
(If $m_i=0$, we formally define $E_i=+\infty$.)
We say that the random measure $\mu$ has
$\exmeasure(\um)$ distribution if  $\mu$ is a
point measure
 with point masses of weight $m_i$ at locations $-E_i$, $i \in \N$:
\begin{equation}\label{mu_exp_point_measure_def}
\mu=\sum_{i=1}^{\infty} m_i \cdot \delta_{Y_i}, \quad Y_i=-E_i.
\end{equation}
\end{definition}

If $\mu \sim \exmeasure(\um)$, then
the total mass $\mu(-\infty,0]=\sum_i m_i$ is infinite if $\um \notin  \loneord$.
However, as long as $\um \in \ltwoord$, the mass distribution is 
locally finite: in Lemma \ref{lemma:exponential} we will show that almost surely 
$\mu(A)<\infty$ for every bounded set $A \subseteq (-\infty,0]$.

\begin{definition}\label{def:f_zero_from_exponentials}
Let $\um \in\ltwoord$ and $\mu_0 \sim \exmeasure(\um)$.
Let $f_0: [0,+\infty) \to [-\infty, 0]$ be the inverse  cdf of 
$\mu_0$, i.e.,
\begin{equation}\label{f0def_b}
f_0(x)\stackrel{ \eqref{def_eq_inverse_sdf_of_a_measure} }{=} f_{\mu_0}(x).
\end{equation}
\end{definition}

\begin{remark}\label{remark:f_0}
 Let $f_0$ be defined by Definition \ref{def:f_zero_from_exponentials}.

\begin{enumerate}[(i)]
\item \label{remark_f0_i_lebesgue} An alternative characterization of the function $f_0$  is as follows:
$f_0$ is the non-increasing \cadlag{} function
 such that the interval $I_j$ on which it takes the value $-E_j$
has length $m_j$, moreover the Lebesgue measure of the complement of
 $\cup_{j=1}^{\infty} I_j$ is zero. 

\item \label{remark_f0_ii_finite} If $\um=(m_1,\dots,m_n) \in \lzeroord$ and 
$E_{\sigma_1}<\dots< E_{\sigma_n}$ is the increasing rearrangement of $E_i, \, 1 \leq i \leq n$, then
 an equivalent way to write the function  $f_0$  is
 \begin{equation}\label{f0def_for_lzeroord}
 f_0(x)=\begin{cases} 
 -E_{\sigma_k} &\quad \text{if} \quad \sum_{l=1}^{k-1} m_{\sigma_l } \leq x <\sum_{l=1}^{k} m_{\sigma_l }, \quad
 1 \leq k \leq n,\\
 -\infty &\quad \text{if} \quad x \geq \sum_{i=1}^n m_{\sigma_l}.
 \end{cases}
 \end{equation}
 For an illustration of \eqref{f0def_for_lzeroord}, see Figure \ref{fig:f_zero_step}.
  
\item Recalling Definitions \ref{def:excursion} and \ref{def:ORDX} we see that 
the excursion lengths of $f_0$ are 
given by the entries
of $\um \in \ltwoord$; that is, $\ORDX(f_0)=\um$.  
 \end{enumerate}
 \end{remark}
 
\begin{figure}[h]
\begin{center}
\large
\includegraphics[width=\textwidth]{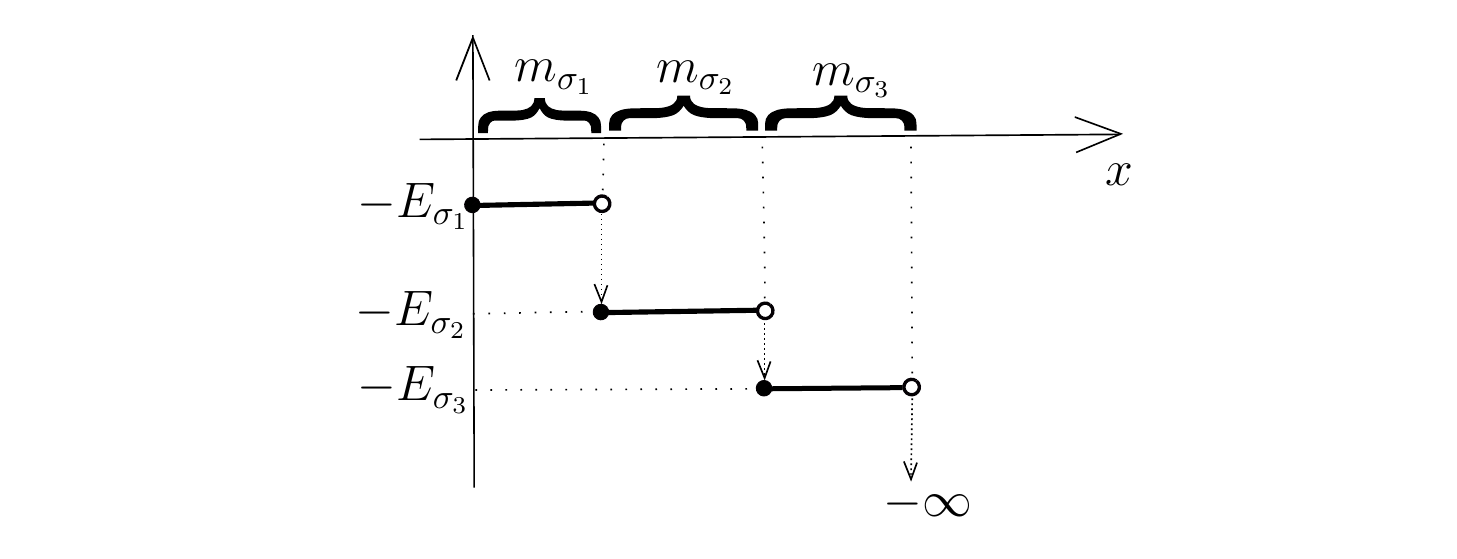}
\caption{An illustration of the function $f_0$ defined in \eqref{f0def_for_lzeroord} when $n=3$.
}
\label{fig:f_zero_step}
\end{center}
\end{figure}

\begin{definition}\label{def:f_t_tilt_from_f_zero} 
Let $f_0$ be defined by Definition \ref{def:f_zero_from_exponentials}.
Let us  define 
\begin{equation}\label{def_eq_f_t_from_f_0}
f_t(x) = f_0(x)+tx, \qquad x \geq 0.
\end{equation}
We say that the function $f_t$ 
is a ``tilt" of $f_0$. 
\end{definition}

In Lemma \ref{lemma:ftgood} we will show that, with probability 1, for all $t$
the function
$f_t$ satisfies the criteria of Definition \ref{def:ORDX}.

\begin{theorem}
\label{thm:tilt}
Let $\um \in \ltwoord$.  The process $\ORDX(f_t), t\geq 0$ has the law of the multiplicative 
coalescent  started from $\um$. 
\end{theorem}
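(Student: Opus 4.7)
The plan is two stages: prove Theorem \ref{thm:tilt} for $\um \in \lzeroord$, then extend to $\um \in \ltwoord$ by truncation.

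\textbf{Finite case.} Suppose $\um=(m_1,\dots,m_n) \in \lzeroord$. Let $\sigma$ be the permutation with $E_{\sigma_1}<\cdots<E_{\sigma_n}$, and $L_k := \sum_{l \leq k} m_{\sigma_l}$. Then $f_t$ is piecewise linear of slope $t$ on each $[L_{k-1},L_k)$ with $f_t(L_{k-1})=-E_{\sigma_k}+tL_{k-1}$, and downward jumps at the points $L_k$. Since $f_t$ is increasing on each piece, the running minimum on $[0,L_k)$ equals $\min_{i \leq k}(-E_{\sigma_i}+tL_{i-1})$, and a new excursion of $f_t$ starts at $L_{j-1}$ iff $-E_{\sigma_j}+tL_{j-1}<-E_{\sigma_i}+tL_{i-1}$ for all $i<j$. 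The excursions of $f_t$ are therefore the maximal consecutive runs in $\sigma$-order compatible with this criterion.

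\textbf{Identification with MC.} To identify $\ORDX(f_t)$ as a multiplicative coalescent I would use two standard properties of the exponentials $E_i \sim \Exp(m_i)$ (see Section \ref{sec:expsizebiased}): (a) the order $\sigma$ is a size-biased random permutation of the $m_i$; and (b) conditional on $\sigma$, the gaps $E_{\sigma_k}-E_{\sigma_{k-1}}$ are independent with $E_{\sigma_k}-E_{\sigma_{k-1}} \sim \Exp\bigl(\sum_{l\geq k}m_{\sigma_l}\bigr)$. Combined with the combinatorial description above, these translate directly into dynamics: the first pair of consecutive (in $\sigma$-order) excursions merges at an exponential rate equal to the product of their masses and the total remaining mass, matching the interval coalescent of Section \ref{section:interval_coalescent_repr}. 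By the Markov property the entire process $\ORDX(f_t)_{t \geq 0}$ equals in law the interval coalescent started from the size-biased ordering of $\um$, which once re-sorted in decreasing order is precisely the multiplicative coalescent started from $\um$.

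\textbf{Extension to $\ltwoord$ and main obstacle.} For general $\um\in\ltwoord$, set $\um^{(n)}=(m_1,\dots,m_n,0,\dots)\in\lzeroord$ and build $f_0^{(n)}$ using the same exponentials $E_1,\dots,E_n$ as enter the construction of $f_0$. By the finite case $\ORDX(f_t^{(n)})_{t\geq 0}$ is MC started from $\um^{(n)}$; since $\um^{(n)}\to\um$ in $\ltwoord$ and MC is Feller on $(\ltwoord,\dist)$ by Proposition 5 of \cite{aldous_mc}, the laws of these processes converge weakly to the law of MC started from $\um$. It remains to show that $\ORDX(f_t^{(n)}) \to \ORDX(f_t)$ in probability in $\ltwoord$ for each $t \geq 0$, which I expect to be the main obstacle. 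The inserted particles at the deep heights $-E_i$ for $i>n$ typically create only tiny new excursions, but they can also ``pinch off'' and shorten existing excursions of $f_t^{(n)}$, so the functional $\ORDX(\cdot)$ is not manifestly $\ell_2$-continuous. A quantitative stability estimate bounding the perturbation of excursion lengths by (a function of) $\sum_{i>n}m_i^2$ should suffice, and would rely on Lemma \ref{lemma:exponential} (local finiteness of $\mu_0$) together with a careful accounting of the effect of a single particle insertion on the global excursion decomposition, in the spirit of the particle-insertion analysis outlined in the paper's plan for Section \ref{section:extension_to_ltwo}.
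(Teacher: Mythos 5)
Your overall architecture is the same as the paper's: finite case via the exponential heights $-E_i$, size-biased ordering and the interval-coalescent rates, then truncation plus the Feller property for general $\um\in\ltwoord$. The finite case is essentially right (the merge time of consecutive excursions $k$ and $k+1$ is $\bigl(E_{\sigma_{k+1}}-E_{\sigma_k}\bigr)/m_{\sigma_k}\sim\Exp\bigl(m_{\sigma_k}\sum_{l>k}m_{\sigma_l}\bigr)$, which is the ICLD rate; your phrase ``product of their masses and the total remaining mass'' is garbled but the cited rates are correct, and the ``by the Markov property'' step is exactly the memorylessness-of-the-remaining-gaps induction in the paper's Proposition \ref{prop:particles}).

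The gap is in the extension step, which you correctly flag as the main obstacle but resolve with the wrong tools. First, $\ORDX(f_t^{(n)})\to\ORDX(f_t)$ cannot be expected \emph{for each} $t\geq 0$: at a time $t$ where $f_t$ has a non-strict excursion (its level is re-attained at an interior point), the approximants generically see two separate excursions and the limit of $\ORDX(f_t^{(n)})$ is strictly finer than $\ORDX(f_t)$. The paper handles this by showing the set of such times is a.s. countable (Corollary \ref{corollary_countably_many_double_arches}), restricting to a dense deterministic set $\cT$ of ``good'' times, and then invoking that both $t\mapsto\ORDX(f_t)$ (Lemma \ref{lemma:cadlag_tilt}) and the MC are \cadlag{} in the product topology, so that matching finite-dimensional distributions on $\cT$ suffices — two ingredients absent from your plan. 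Second, the quantitative stability estimate you propose, controlling the perturbation of excursion lengths by $\sum_{i>n}m_i^2$ via a particle-insertion analysis, is not what is needed here: that machinery (Lemma \ref{lemma_insertion_of_new_particle_death_times}) concerns death times in the MCLD and plays no role in the $\lambda=0$ extension. What actually works is softer: since $\um\notin\loneord$ the values $-E_i$ are dense, $f_0$ is continuous, and $f_0^{(n)}(x)=f_0(x+\delta)$ for a shift $\delta=\sum_{j>n:E_j<E_k}m_j\to 0$, giving uniform-on-compacts convergence $f_t^{(n)}\to f_t$ (Lemma \ref{lemma:ucbi}); combined with uniform goodness (Lemma \ref{lemma:tailsmall}) and the deterministic continuity lemma for $\ORDX$ at continuous functions with strict excursions (Lemma \ref{lemma_unif_good_conv_ordx}), this yields coordinatewise convergence — note also that convergence in the product topology, not in $\ltwoord$, is all that is required to identify the law against the Feller limit.
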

We will prove Theorem \ref{thm:tilt} for $\um \in \lzeroord$ in Section \ref{section:rigid_finite_state_space},
and extend this result to $\um \in \ltwoord$ in Section \ref{section:extension_to_ltwo}.

We say that Theorem \ref{thm:tilt} gives a ``rigid'' representation of the MC process, because
all of the randomness is contained in the initial state of the representation and the rest of the dynamics is rigid, i.e.,
deterministic.

\subsection{Tilt-and-shift representation of $\mathrm{MCLD}(\lambda)$}
\label{subsec:tilt-and-shift}
Similarly to the rigid representation of the MC
in terms of the excursion lengths of $(f_t(\cdot))$ in Theorem \ref{thm:tilt}, we
will give a rigid representation of the $\mathrm{MCLD}(\lambda)$ in terms of the excursion lengths
of another function $(g_t(\cdot))$ in Theorem \ref{thm:mcld_extension_introduction} below. 
We begin with the case of finitely many components.

\begin{definition}\label{def_tilt_shift_step_function}
Given $\um \in \lzeroord$, we define $g_0(x) \equiv f_0(x)$, were $f_0(x)$ is defined by
\eqref{f0def_b} (or, equivalently, \eqref{f0def_for_lzeroord}). We will now define $g_t(x)$ for $t, x \geq 0$ 
such that for all $x \geq 0$, the function $t \mapsto g_t(x)$ is \cadlag. 
 The time evolution of $g_t(\cdot)$ consists of two parts:
 \begin{enumerate}
 \item \emph{Tilt:} If $g_{t-}(0) < 0$ then we let $\frac{\mathrm{d}}{\mathrm{d}t} g_t(x) = \lambda + x$.
 \item \emph{Shift:} If $g_{t-}(0)=0$, then we let $g_{t}(x)=g_{t-}(x+x^*(t))$, where
\begin{equation}\label{eq_def_finite_shift}
 x^*(t)=\inf \{ \, x >0 \; : \; g_{t-}(x) <0 \}
\end{equation}
 is the length of the \emph{first excursion} of $g_{t-}(\cdot)$ (see Definition \ref{def:excursion}).
 \end{enumerate}
 Let us define $\nu$ to be the measure on $[0,\infty)$ given  by
  \begin{equation}\label{def_eq_nu_sum_dirac}
  \nu = \sum_{0 \leq t < \infty} x^*(t) \cdot \delta_{t} 
  \end{equation}
 where $x^*(t)>0$ is the size of the shift to the left at time $t$ (see \eqref{eq_def_finite_shift}); and if no shift occurred at time $t$, then we let $x^*(t)=0$. Let us also define 
 \begin{equation}\label{def_eq_Phi_from_nu}
 \Phi(t)=\nu[0,t],
 \end{equation}
  the total amount of left shifts up to time  $t$.
 \end{definition}
For an illustration of Definition \ref{def_tilt_shift_step_function} see Figure \ref{fig:tilt_and_shift}.

\begin{figure}[h]
\begin{center}
\large

\includegraphics[width=\textwidth]{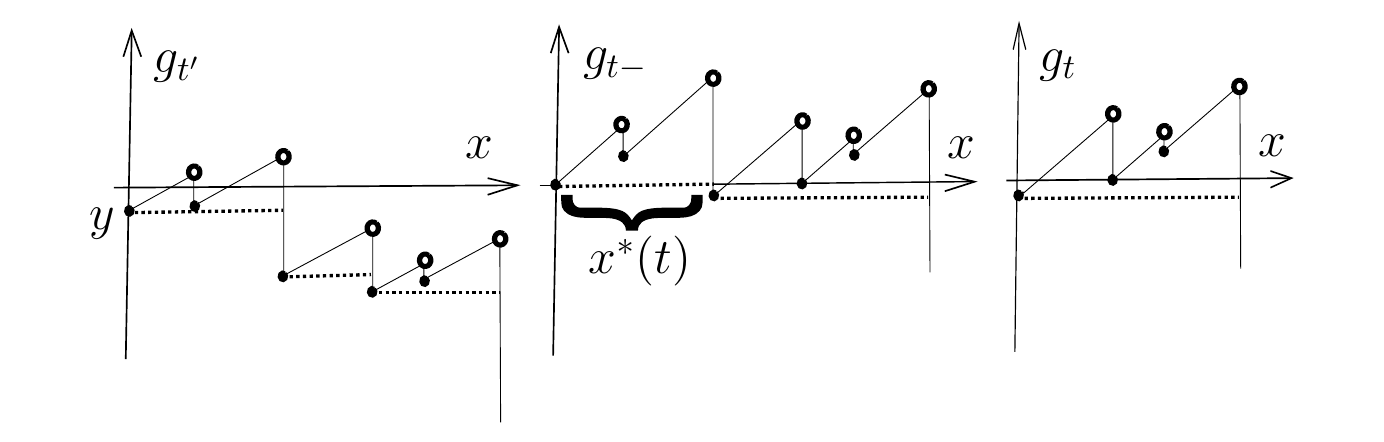}
\caption{An illustration of Definition \ref{def_tilt_shift_step_function}. 
Dashed lines are used to denote the
 running minimum. 
The second graph $g_{t-}$ is obtained from the first graph $g_{t'}$ using the \emph{tilt} operation.
Observe that some excursions of $g_{t'}$ got merged with the tilt operation. 
The third graph $g_{t}$
is obtained from the second graph by \emph{shifting} it  to the left by $x^*(t)$, i.e.,  the length
of the first excursion of the second graph. 
If we denote by $y$ the level of the first excursion of $g_{t'}$ then $t=t'+|y|/\lambda$ is the time instant
when this excursion reaches level zero and thus gets shifted.
}
\label{fig:tilt_and_shift}
\end{center}
\end{figure}

Recall the definition of the $\mathrm{MCLD}(\lambda)$ from \eqref{mcld_informal_def} and the notion of $\ORDX$ from 
Definition \ref{def:ORDX}.
\begin{proposition}
\label{prop:tilt_and_shift_l0} Let $\um \in \lzeroord$, $\lambda>0$ and let $f_0$ be defined by Definition \ref{def:f_zero_from_exponentials}. If we define $g_t(\cdot)$ by Definition \ref{def_tilt_shift_step_function}, then
the process $\ORDX(g_t), t\geq 0$ has the law of the $\mathrm{MCLD}(\lambda)$ process
 $\bm_t, t \geq 0$ started from $\bm_0=\um$. 
\end{proposition}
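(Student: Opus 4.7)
The plan is to reduce Proposition \ref{prop:tilt_and_shift_l0} to the analogous statement already proved for the particle representation in Section \ref{section_particle_representation}. The point is that both the tilt-and-shift process and the particle system are deterministic functions of the same initial data (the masses $m_i$ together with the exponential clocks $E_i$), so it suffices to exhibit a pathwise bijection between the excursions of $g_t$ above its running minimum and the particle blocks, and to show that this bijection is preserved under the dynamics on both sides.

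At time $0$, by Remark \ref{remark:f_0}(\ref{remark_f0_ii_finite}), $g_0=f_0$ is a non-increasing step function taking the value $-E_{\sigma_k}$ on the interval $[A_{k-1}^0,A_k^0)$, where $\sigma$ is the random permutation with $E_{\sigma_1}<\dots<E_{\sigma_n}$ and $A_k^0=\sum_{l\leq k}m_{\sigma_l}$; hence the $k$-th excursion of $g_0$ from the left has length $m_{\sigma_k}$ and level $-E_{\sigma_k}$, matching the $k$-th particle from the top, which has mass $m_{\sigma_k}$ and initial height $-E_{\sigma_k}$. The invariant to be propagated by induction over the discrete events (coalescences and shifts, of which there are almost surely only finitely many on any bounded interval) is: if at time $t$ the particle system consists of blocks with masses $b_1^t,\dots,b_{n(t)}^t$ and heights $h_1^t>\dots>h_{n(t)}^t$ listed from top to bottom, then $g_t$ has $n(t)$ excursions above its running minimum, and its $k$-th excursion from the left has length $b_k^t$ and level $h_k^t$.

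Between events, $g_t(x)=g_{t_*}(x)+(t-t_*)(\lambda+x)$ is affine with positive slope $(t-t_*)$ on each current excursion interval, so the left-endpoint value of the $k$-th excursion moves upwards at rate $\lambda+A_{k-1}^{t_*}$, matching $\lambda$ plus the total mass strictly above the $k$-th block, i.e.\ the speed of that block. A coalescence of adjacent particles $k$ and $k+1$ occurs exactly when the $(k+1)$-th height catches the $k$-th; on the tilt side, simultaneously the level of the $(k+1)$-th excursion reaches that of the $k$-th, the intermediate point $A_k$ ceases to be a running minimum, and the two excursions merge into one of length $b_k+b_{k+1}$ at the common level. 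A shift is triggered precisely when $g_{t-}(0)=0$, i.e.\ when the top particle reaches height $0$; by definition the shift removes the leftmost excursion, whose length equals the mass $b_1^{t-}$ of the top block, consistently with the deletion event in the particle system.

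The main subtlety, I expect, is to rule out unexpected behaviour between events — in particular, to verify that non-adjacent excursions cannot merge directly, that no new excursion spontaneously appears, and that the running minimum of $g_t$ can drop only at a right endpoint of a current excursion interval and only through the adjacent-merge mechanism above. All of this follows from the strict positivity of the slope $(t-t_*)$ of $g_t$ on each current excursion interval for $t>t_*$, which forces $g_t$ to be strictly increasing on each such interval and hence restricts running-minimum jumps to interval endpoints. Once the invariant is established pathwise, $\ORDX(g_t)$ coincides sample-path by sample-path with the decreasing rearrangement of block masses in the particle system, and Proposition \ref{prop:tilt_and_shift_l0} follows immediately from the result that the particle representation realises $\mathrm{MCLD}(\lambda)$.
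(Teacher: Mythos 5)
Your proposal is correct and follows essentially the same route as the paper: Section \ref{tiltproofl1} proves exactly this pathwise correspondence between excursions of $g_t$ (lengths and levels) and particle blocks (masses and heights), propagated inductively over the jump times, with the speed matching $\lambda+x_{k-1}$ versus $\lambda$ plus the mass above (compare \eqref{dY} and \eqref{dg}), and then concludes via the particle/interval-coalescent results (Proposition \ref{prop:particles}, Corollary \ref{corollary_particle_mcld}, Theorem \ref{thm:intervalcoalescent}). The subtlety you flag is handled in the paper exactly as you suggest, by noting that excursion levels stay strictly ordered up to the first event time so that the excursion intervals cannot change except through adjacent merges or the leftmost shift.
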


We will prove Proposition \ref{prop:tilt_and_shift_l0} in Section \ref{section:rigid_finite_state_space}.

 \begin{remark}\label{remark_mass_deleted_Phi_t}
In the $\mathrm{MCLD}(\lambda)$ interpretation, $\Phi(t)$  corresponds to the total amount of mass deleted up to time $t$.
\end{remark}

\medskip

 From Definition \ref{def_tilt_shift_step_function} it follows that we have
 \begin{align}\label{g_t_from_g_0}
 g_t(x)&=g_0\big(x+\Phi(t)\big)+\lambda t + \int_0^t \big(x + \Phi(t)-\Phi(s) \big) \, \mathrm{d}s
 \\
 \nonumber
 &=g_0\big(x+\Phi(t)\big) + \big(x+\Phi(t)+\lambda\big)t
 -\int_0^t\Phi(s) \, \mathrm{d}s.
 \end{align}

\begin{remark}\label{remark_no_first_excursion}
 Extending the dynamics of $g_t$ for  initial states in $\um \in \ltwoord$
  will amount to finding  $\Phi(\cdot)$  corresponding to $g_0:=f_0$, c.f.\ Definition \ref{def:f_zero_from_exponentials}.
We will then define
$g_t$ using the formula \eqref{g_t_from_g_0}: the question  is how to define $\Phi(\cdot)$ in a way that will appropriately extend Definition \ref{def_tilt_shift_step_function} from $\um \in \lzeroord$ to $\um \in \ltwoord$. 
The technical issue that we have to overcome is that for a typical $t \geq 0$ our functions $g_{t-}(\cdot)$ 
 does not have a ``first excursion'' (c.f.\ \eqref{eq_def_finite_shift}) if $\um \in \ltwoord \setminus \loneord$.
  For example, a Brownian motion with parabolic drift does not have a first excursion, 
 yet in Section \ref{sec:BMPD} we will apply our  tilt-and-shift representation  to
  $\BMPD(u)$.
 \end{remark}

We are ready to state the main result of the paper.

\begin{theorem}\label{thm:mcld_extension_introduction}
For any $\um \in \ltwoord$ let us define $g_0(x) \equiv f_0(x)$, 
where $f_0$ is constructed using Definition \ref{def:f_zero_from_exponentials}.
There exists a random  measure $\nu$ such that if we define  $\Phi(t)=\nu[0,t]$ 
and $g_t(x)$  by \eqref{g_t_from_g_0} then
\begin{enumerate}[(i)]
\item \label{mcld_rep_tilt_shift_statement_i}
the process $\ORDX(g_t), t\geq 0$ has the law of the $\mathrm{MCLD}(\lambda)$ process
  started from $\um$,
  \item \label{g_t_zero_is_zero}   if $\um \in \ltwoord \setminus \loneord$, then $g_t(0) = 0$ for any $t \geq 0$,
 \item \label{Phi_stopping_time_ii}  for any $t, x \geq 0$,
the event $\{\, \Phi(t) \leq x \, \}$ is measurable 
with respect to 
the $\sigma$-algebra
$\mathcal{F}^+_x:=
\bigcap_{\varepsilon>0}  \sigma \left( g_0(x'), 0 \leq x' \leq x+\varepsilon \right)$.
 \end{enumerate}  
\end{theorem}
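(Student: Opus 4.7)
The plan is to prove Theorem \ref{thm:mcld_extension_introduction} by a truncation-and-approximation argument building on Proposition \ref{prop:tilt_and_shift_l0}, working primarily in the particle representation, which by Section \ref{section:rigid_finite_state_space} is essentially equivalent to the tilt-and-shift construction. First I would truncate: for each $n$, set $\um^{(n)}=(m_1,\dots,m_n,0,0,\dots)\in\lzeroord$, and couple all the $n$-particle systems using a single realisation of the exponentials $E_1,E_2,\dots$ from Definition \ref{def_exp_measure_law}. Proposition \ref{prop:tilt_and_shift_l0} then produces, for each $n$, a random measure $\nu^{(n)}$ and a cumulative shift function $\Phi^{(n)}(t)=\nu^{(n)}[0,t]$, together with functions $g_t^{(n)}$ such that $(\ORDX(g_t^{(n)}))_{t\geq 0}$ realises $\mathrm{MCLD}(\lambda)$ started from $\um^{(n)}$.

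The core technical step, flagged in Section \ref{subsec:plan}, is to control how the death times of individual particles shift when one extra particle is added. Going from the $n$-particle to the $(n+1)$-particle system, the newly inserted particle at height $-E_{n+1}$ affects only the speeds of particles lying below it (and above $0$), which now see additional weight $m_{n+1}$ above them until the new particle either dies or merges with them. I would quantify this by writing the death times in the $(n+1)$-particle system as explicit functionals of the $n$-particle data, plus corrections bounded in terms of $m_{n+1}$ and $E_{n+1}$. Since $\um \in \ltwoord$ and $E_{n+1} \sim \Exp(m_{n+1})$, summing these insertion corrections should yield monotonicity/tightness estimates strong enough to show that $\nu^{(n)}$ converges (in the vague topology on $[0,\infty)$) to a limiting random measure $\nu$, with $\Phi^{(n)}(t)\to \Phi(t):=\nu[0,t]$ at continuity points of $\Phi$.

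Given $\nu$, I would \emph{define} $g_t$ by the formula \eqref{g_t_from_g_0}. For part \eqref{mcld_rep_tilt_shift_statement_i}, combining the convergence $\Phi^{(n)}\to\Phi$ with continuity of the $\ORDX$ map on the relevant set of functions yields $\ORDX(g_t^{(n)})\to\ORDX(g_t)$ in $\ltwoord$; since $\um^{(n)}\to\um$ in $\ltwoord$ and $\mathrm{MCLD}(\lambda)$ is Feller on $\ltwoord$ by Theorem~1.2 of \cite{james_balazs_mcld_feller}, the law of $(\ORDX(g_t))_{t\geq 0}$ must coincide with $\mathrm{MCLD}(\lambda)$ started from $\um$. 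For part \eqref{g_t_zero_is_zero}, when $\um\in\ltwoord\setminus\loneord$ the total mass is infinite while $\mu_0$ is locally finite (Lemma \ref{lemma:exponential}), so excursions of $g_t$ accumulate at $0$ and the shift mechanism continually consumes the leftmost excursion, forcing $g_t(0)=0$ for all $t\geq 0$. For part \eqref{Phi_stopping_time_ii}, in the finite case the event $\{\Phi^{(n)}(t)\leq x\}$ is determined by $f_0^{(n)}$ restricted to $[0,x+\varepsilon]$ for every $\varepsilon>0$ (locating the first-excursion endpoint that triggers each shift requires only an arbitrarily small margin past $x$), and this measurability is preserved under the passage to the limit, giving the required $\mathcal{F}^+_x$-measurability.

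The main obstacle I expect is the quantitative insertion estimate in the second paragraph: one needs bounds on how the death times of all existing particles react to the addition of a single new particle that are summable in $n$ under the $\ltwoord$ assumption, and one needs this control to be strong enough that the convergence of $\Phi^{(n)}$ holds not just in marginal distribution but for the full random measures $\nu^{(n)}$, so that the formula \eqref{g_t_from_g_0} yields a process jointly satisfying \eqref{mcld_rep_tilt_shift_statement_i}--\eqref{Phi_stopping_time_ii}.
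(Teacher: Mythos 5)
Your proposal follows essentially the same route as the paper: truncation with a shared coupling of the exponentials, a quantitative perturbation bound on particle death times under insertion of one new particle (the paper's Lemma \ref{lemma_insertion_of_new_particle_death_times}), almost-sure vague convergence of $\nu^{(n)}$ to $\nu$ (Lemma \ref{lemma_vague_conv}), and then continuity of $\ORDX$ plus the Feller property for part (i), density of particle heights for part (ii), and locality of death times for part (iii). The only place your sketch is noticeably thinner than the paper is part (iii), where the paper needs an extra device (thresholding atoms by mass via the subsequence of strictly decreasing weights, and the observation that a particle's death time does not depend on its own mass) to show that the relevant death times are genuinely $\mathcal{F}^+_x$-measurable rather than merely "preserved in the limit".
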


We will prove Theorem \ref{thm:mcld_extension_introduction} in 
Section \ref{section:extension_to_ltwo}.

\begin{remark}\label{remark_how_we_extend_mcld_to_l2}
\begin{enumerate}[(i)]
\item 
Put another way, Theorem \ref{thm:mcld_extension_introduction}(iii)
says that for any fixed $t$, the random variable
 $\Phi(t):=\nu[0,t]$ is a stopping time with respect to the filtration
$\left(\mathcal{F}^+_x\right)_{x \geq 0}$. We will make good
use of this together with the strong Markov property of
Brownian motion in Section \ref{sec:BMPD}.
 \item 
By Theorem \ref{thm:mcld_extension_introduction}(iii),
The control function $\Phi(\cdot)$
 is measurable with respect to the $\sigma$-algebra 
$\sigma \left( g_0(x), x \geq 0 \right)$. Therefore we have
a ``rigid'' representation of $\mathrm{MCLD}(\lambda)$, since
the function $g_t(\cdot)$ defined by \eqref{g_t_from_g_0} is determined by the initial state $g_0(\cdot)$.

\item \label{remark_iii_how_we_truncate} We construct the  measure $\nu$ that appears in Theorem \ref{thm:mcld_extension_introduction}
by extending our earlier construction given in Definition \ref{def_tilt_shift_step_function}
from $\lzeroord$ to $\ltwoord$ in the sense that we obtain $\nu$ as the weak limit as $ n \to \infty$ 
of the measures
 $\nu^{(n)}$ corresponding to initial conditions $\um^{(n)}$ truncated at index $n$,
 see Lemma \ref{lemma_vague_conv} and Corollary \ref{corollary_portemanteau}. 
\item If $\um \in \ltwoord \setminus \loneord$, then $g_0$ is a continuous function satisfying
 $g_0(x)/x \to -\infty$ as $x \to \infty$
(see Lemmas \ref{lemma:ftgood}, \ref{lemma:ucbi}).
We conjecture that for any such function $g_0$,
  there is a unique measure $\nu$ such that the function
 $g_t(x)$ controlled  by $\Phi(t)=\nu[0,t]$ according to \eqref{g_t_from_g_0} satisfies
\begin{itemize}
\item  $g_t(0)\equiv 0$ for all $t \geq 0$,
\item $g_{t-}(x) \geq 0$ for any $0 \leq x \leq \nu(\{t\})$ for all $t \geq 0$.
\end{itemize} 
\item\label{equivalent_start}
The function $g_0$ is constant on each of its excursions.
Suppose that $h_0$ is another \cadlag{} and lower semi-continuous function such that $g_0=\bar{h}_0$,
 that is,
the excursions of $h_0$ have the same lengths and levels
as those of $g_0$. 
Define, analogously to \eqref{g_t_from_g_0},
\begin{equation}\label{h_tilt_shift}
h_t(x)=h_0\big(x+\Phi(t)\big)+\lambda t + \int_0^t \big(x + \Phi(t)-\Phi(s) \big) \, \mathrm{d}s,
\end{equation}
so that $h_t$ is constructed using the same tilt-and-shift
procedure as $g_t$ (and using the same control function $\Phi$).
Then it is straightforward to see that
$\bar{h}_t=\bar{g}_t$, and so, as at 
\eqref{ordx_of_bar_is_ordx},
$\ORDX(g_t)$ and $\ORDX(h_t)$
are the same process. We will 
make use of this observation in Section 
\ref{sec:BMPD} when we consider the tilt-and-shift 
construction started from Brownian motion with a parabolic drift. 
\end{enumerate}
\end{remark}

\section{Rigid representations: finite state space}
\label{section:rigid_finite_state_space}

In this section we restrict the MCLD($\lambda$) 
process to the space $\lzeroord$ of 
states with only finitely many blocks.
The ultimate goal of this section is to prove Proposition \ref{prop:tilt_and_shift_l0}.

\smallskip

In Section \ref{sec:expsizebiased} we recall the notion of a size-biased rearrangement of $\um \in \lzeroord$
and how this notion can be extended to $\ltwoord$ using an appropriate family of independent exponential random variables.

\smallskip

In Section \ref{section:interval_coalescent_repr} we define the interval coalescent with linear deletion 
(or briefly $\mathrm{ICLD}(\lambda)$) and show that an $\mathrm{ICLD}(\lambda)$ with a size-biased initial state
gives a representation of the $\mathrm{MCLD}(\lambda)$.

\smallskip

In Section \ref{section_particle_representation} we define our particle representation and show that
if the initial heights of particles form an appropriate family of independent exponential random variables, then
we obtain a representation of of the $\mathrm{ICLD}(\lambda)$ with a size-biased initial state.

\smallskip

In Section \ref{tiltproofl1} we show that a copy of the particle system is embedded in the tilt-and-shift
representation introduced in Definition \ref{def_tilt_shift_step_function}.

\subsection{Construction of size-biased sequences
using independent exponential random variables}
\label{sec:expsizebiased}

Now we recall some useful definitions from \cite[Section 3.3]{aldous_mc}.

\begin{definition}\label{def:size_biased_for_lzero}
Let $\um=(m_1, m_2, \dots, m_n) \in \lzeroord$. 
A random total linear order $\prec$ on $[n]$
is a \textit{size-biased order} 
(with respect to $\um$)
if for each 
permutation $i_1, i_2, \dots, i_n$ of $[n]$, 
\begin{equation}\label{sizebiased}
\P(i_1\prec i_2\prec\dots\prec i_n)=
\prod_{r=1}^n \frac{m_{i_r}}{m_{i_r}+m_{i_{r+1}}+\dots+m_{i_n}}.
\end{equation}
We say that $\underline{b}=(b_1,\dots,b_n)$ is the size-biased reordering of $\um$ if $b_{k}=m_{i_k}$, where
 $i_1\prec i_2\prec\dots\prec i_n$.
\end{definition}

\begin{definition}\label{def_size_biased_with_exponentials}
Suppose $\um \in\linford$. 
Let $E_i\sim\Exp(m_i)$ independently for each $i$. 
Define a random linear order on $\N$ by $i\prec j$ if and only if $E_i<E_j$.
\end{definition}

The proof of the next claim follows from the memoryless property of exponential random variables and we omit it.
\begin{claim}\label{claim:sb}
The law of $\prec$ introduced in Definition \ref{def_size_biased_with_exponentials}
 is size-biased (with respect to the sizes $m_i$),
in the sense that for any $n \in \N$ the restriction of $\prec$ to $[n]$ is size-biased 
with respect to $(m_1, m_2, \dots, m_n)$, c.f.\
 Definition
\ref{def:size_biased_for_lzero}.
\end{claim}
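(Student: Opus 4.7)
The plan is to reduce to a finite calculation with independent exponentials and then argue by induction on $n$. For the reduction, observe that the restriction of $\prec$ to $[n]$ depends only on the values $E_1, \dots, E_n$, since for $i, j \in [n]$ the order $i \prec j$ is determined solely by whether $E_i < E_j$. So it suffices to verify, for every permutation $(i_1, \dots, i_n)$ of $[n]$, the identity
\[
\P\bigl(E_{i_1} < E_{i_2} < \cdots < E_{i_n}\bigr) \;=\; \prod_{r=1}^n \frac{m_{i_r}}{m_{i_r} + m_{i_{r+1}} + \cdots + m_{i_n}}.
\]

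I would prove this by induction on $n$, leaning on two classical properties of independent exponentials: (a) if $E_{i_j} \sim \Exp(m_{i_j})$ are independent, then $\min_j E_{i_j} \sim \Exp\bigl(\sum_j m_{i_j}\bigr)$ and the location of the minimum is $i_1$ with probability $m_{i_1}/(m_{i_1} + \cdots + m_{i_n})$; (b) by the memoryless property, conditional on $\arg\min_j E_{i_j} = i_1$, the overshoots $\{E_{i_k} - E_{i_1}\}_{k=2}^n$ are independent with $E_{i_k} - E_{i_1} \sim \Exp(m_{i_k})$. The base case $n = 1$ is immediate.

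For the inductive step, I condition on which index attains the minimum among $\{E_{i_1}, \dots, E_{i_n}\}$. The event $\{E_{i_1} < \cdots < E_{i_n}\}$ requires the minimum to be $E_{i_1}$; by (a) this contributes the leading factor $m_{i_1}/(m_{i_1}+\cdots+m_{i_n})$. By (b), the conditional law of $(E_{i_2}, \dots, E_{i_n})$ given this event is that of independent exponentials with the original rates $m_{i_2}, \dots, m_{i_n}$, so the inductive hypothesis supplies the remaining product $\prod_{r=2}^n m_{i_r}/(m_{i_r}+\cdots+m_{i_n})$. Multiplying through yields the desired formula.

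There is no real obstacle; the only delicate point is the convention $E_i = +\infty$ when $m_i = 0$, which forces all zero-mass indices to lie at the end of $\prec$ almost surely. Any permutation $(i_1, \dots, i_n)$ that places a zero-mass index before a positive-mass one then has probability zero on the left, and the corresponding factor on the right also vanishes (its numerator is zero while its denominator is positive). So both sides agree and the induction closes, yielding the size-biased formula \eqref{sizebiased} for the restriction of $\prec$ to $[n]$.
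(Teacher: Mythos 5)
Your proof is correct, and it is precisely the argument the paper has in mind: the paper omits the proof of Claim \ref{claim:sb}, noting only that it "follows from the memoryless property of exponential random variables," which is exactly the induction on the minimum plus memoryless overshoots that you carry out. Your handling of the $m_i=0$ convention is a reasonable bonus, though note that the fully degenerate permutations (several zero-mass indices permuted among themselves at the end) make both sides of \eqref{sizebiased} ill-defined rather than zero; the paper implicitly sidesteps this by regarding elements of $\lzeroord$ as vectors of positive entries.
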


\begin{remark}\label{remark_size_biased_extensions}
\begin{enumerate}[(i)]
\item \label{dense_size_biased} 
 There is a smallest element with respect to the order $\prec$
if and only if $\um \in \loneord$. 
If $\um \in \linford \setminus \loneord$  then  the values $E_i$ are dense in $\R_+$, see Lemma \ref{lemma:dense}. 
\item \label{exponential_size_biased_ii}
 The excursions (see Definition \ref{def:excursion}) of the random function $f_0$ defined in Definition \ref{def:f_zero_from_exponentials} appear in size-biased order.
\end{enumerate}
\end{remark}

\subsection{A size-biased interval representation of $\mathrm{MCLD}(\lambda)$ }
\label{section:interval_coalescent_repr}

We introduce a related process $\bb_t, t\geq 0$
in which only neighbouring blocks are allowed to merge,
and only the leftmost block is allowed to be deleted.


\begin{definition}\label{def:ic_state_space}
With some abuse of notation, 
we denote by $\lzero=\bigcup_{n\geq 0} \R_{>0}^n$ 
the space of finite sequences with positive entries.
Given 
$\ub=(b_1,\dots,b_n) \in \lzero$, let $\Down(\ub) \in \lzeroord$ denote the reordering of $\ub$
into non-increasing order. 
\end{definition}

\begin{definition}[Interval coalescent with linear deletion, $\mathrm{ICLD}(\lambda)$]
\label{def:interval_coalescent}
The state space of the continuous-time Markov process $(\bb_t)$ is $\lzero$. 
   The dynamics consist of coalescence and deletion:
\begin{enumerate}[(i)]
\item  If $\ub,\ub' \in \lzero$ where 
$\ub' \in \R_{>0}^{n-1}$ arises from $\ub \in \R_{>0}^{n}$ by merging the blocks
$b_k$ and $b_{k+1}$ for some $1 \leq k <n$;
that is
\begin{equation}\label{icchange}
b'_i=\begin{cases}
b_i, &i=1,2,\dots, k-1\\
b_{k}+b_{k+1}, &i=k\\
b_{i+1}, &i=k,\dots,n-1
\end{cases}
\end{equation}
  then the rate of the transition from $\ub$ to $\ub'$ 
  is 
  \begin{equation} \label{eq:merge_rate_ic}
   \mathcal{R}_{IC}(\ub,\ub')=b_k \cdot \sum_{i=k+1}^n b_{i}.
  \end{equation}
\item If $\ub,\ub' \in \lzero$ where $\ub'\in 
\R_{>0}^{n-1}$ arises from $\ub \in \R_{>0}^{n}$ by
 deleting the leftmost block, i.e., 
 \[b'_i=b_{i+1},\qquad 1 \leq i \leq n-1\] 
then the rate of this transition is 
\begin{equation} \label{eq:deletion_rate_ic}
\mathcal{R}_{IC}(\ub,\ub')=\lambda \cdot \sum_{i=1}^n b_{i}.
\end{equation}
\end{enumerate}
All other rates of the $\mathrm{ICLD}(\lambda)$ are $0$.
\end{definition}

We are now ready to state the main result of Section \ref{section:interval_coalescent_repr}.

\begin{theorem}
\label{thm:intervalcoalescent}
Let $\lambda \geq 0$ and $\um \in \lzeroord$. 
Let $\bb_0$ be a random size-biased reordering (see Definition \ref{def:size_biased_for_lzero}) of $\um$. 
Let $\bb_t, t\geq 0$ be an  $\mathrm{ICLD}(\lambda)$
started from the initial state $\bb_0$. Then
\begin{enumerate}[(i)]
\item  the law of the process $\Down(\bb_t), t\geq 0$ 
is that of the $\mathrm{MCLD}(\lambda)$ process $\bm_t, t\geq 0$ started from $\bm_0=\um$, and
\item given the $\sigma$-algebra $\sigma( \Down(\bb_{s}), 0 \leq s \leq t)$,
the conditional distribution of $\bb_{t}$ is that of a 
random size-biased reordering of $ \Down(\bb_{t})$.
\end{enumerate}
\end{theorem}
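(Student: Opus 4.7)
The plan is to prove (i) and (ii) simultaneously using an intertwining in the sense of Rogers and Pitman. Write $\pi(\cdot \mid \um)$ for the law on $\lzero$ of a size-biased reordering of $\um$ (see Definition \ref{def:size_biased_for_lzero}), so that \eqref{sizebiased} gives
\[
\pi(\ub \mid \um) \;=\; \prod_{r=1}^{n} \frac{b_r}{b_r + b_{r+1} + \dots + b_n}
\]
for any ordering $\ub$ of the entries of $\um$. The key step is to establish, for each $\um \in \lzeroord$ and each $\bb' \in \lzero$ reachable from some $\bb$ with $\Down(\bb) = \um$ in one ICLD jump, the infinitesimal identity
\begin{equation}\label{eq:intertwining_proposal}
\sum_{\bb \,:\, \Down(\bb) = \um} \pi(\bb \mid \um)\, \mathcal{R}_{IC}(\bb, \bb') \;=\; q\bigl(\um, \Down(\bb')\bigr) \cdot \pi\bigl(\bb' \mid \Down(\bb')\bigr),
\end{equation}
where $q(\um, \um') \in \{\lambda m_j, \; m_j m_l\}$ is the $\mathrm{MCLD}(\lambda)$ rate from $\um$ to the corresponding $\um'$ prescribed by \eqref{mcld_informal_def}. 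Granting \eqref{eq:intertwining_proposal}, a forward induction on successive jump times (equivalently, the Rogers--Pitman intertwining criterion) will show that when $\bb_0 \sim \pi(\cdot \mid \um)$ the projection $\Down(\bb_t)$ is Markov with the $\mathrm{MCLD}(\lambda)$ dynamics, and that the conditional law of $\bb_t$ given $\sigma(\Down(\bb_s), 0 \leq s \leq t)$ remains $\pi(\cdot \mid \Down(\bb_t))$; these are exactly (i) and (ii).

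For deletion transitions \eqref{eq:intertwining_proposal} is nearly immediate. If $\bb'$ arises from $\bb$ by removing the first entry then $\bb = (m_j, \bb')$ for some $j$, the ICLD rate $\lambda S$ (with $S = \sum_i m_i$) depends only on $\um$, and the familiar size-biased recursion $\pi\bigl((m_j, \bb') \mid \um\bigr) = (m_j / S)\,\pi(\bb' \mid \um \setminus \{m_j\})$ collapses the left side of \eqref{eq:intertwining_proposal} to $\lambda m_j \cdot \pi(\bb' \mid \um \setminus \{m_j\})$, matching the right side.

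The merge case is the main calculation. Fix $\bb'$ with $\Down(\bb') = \um^{(j,l)}$, the shape obtained from $\um$ by replacing $\{m_j, m_l\}$ by $m_j + m_l$, and let $k$ be the position of $m_j + m_l$ in $\bb'$; set $T_k := \sum_{r \geq k} b'_r$. Exactly two preimages of $\bb'$ under ICLD merges exist, namely $\bb_1$ (resp.\ $\bb_2$) obtained by splitting $b'_k$ at position $k$ into $(m_j, m_l)$ (resp.\ $(m_l, m_j)$). Substituting into \eqref{sizebiased} and noting that all factors outside positions $k,k+1$ of $\bb_1$ coincide with the corresponding factors of $\bb'$, a short computation gives
\[
\pi(\bb_1 \mid \um) \;=\; \pi\bigl(\bb' \mid \um^{(j,l)}\bigr) \cdot \frac{m_j\, m_l}{(m_j + m_l)(T_k - m_j)}.
\]
Multiplying by the ICLD merge rate $\mathcal{R}_{IC}(\bb_1, \bb') = m_j(T_k - m_j)$ makes the factor $T_k - m_j$ cancel, leaving $\pi(\bb' \mid \um^{(j,l)}) \cdot m_j^2 m_l / (m_j + m_l)$. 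The analogous computation for $\bb_2$ yields $\pi(\bb' \mid \um^{(j,l)}) \cdot m_j m_l^2 / (m_j + m_l)$, and summing the two contributions recovers $m_j m_l \cdot \pi(\bb' \mid \um^{(j,l)})$, which is the right side of \eqref{eq:intertwining_proposal}.

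The main point requiring care is precisely this algebraic cancellation. The choice of ICLD merge rate as $b_k \sum_{i>k} b_i$, rather than (say) the naive $b_k b_{k+1}$, is exactly what furnishes the factor $T_k - m_j$ needed to cancel the awkward $T_k - m_j$ in the denominator coming from the size-biased weights. The rates of Definition \ref{def:interval_coalescent} are therefore the unique choice for which the $\Down$-projection of a size-biased state evolves as $\mathrm{MCLD}(\lambda)$ while the size-biased conditional structure is propagated in time; once both cases of \eqref{eq:intertwining_proposal} are checked, the intertwining principle closes the proof.
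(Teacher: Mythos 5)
Your proposal is correct and follows essentially the same route as the paper: the intertwining identity \eqref{eq:intertwining_proposal} is exactly the paper's key identity \eqref{ic_mc_identity} in Lemma \ref{lemma_two_procedures}, verified by the same two-preimage computation for merges and one-preimage computation for deletions, and your appeal to the Rogers--Pitman criterion corresponds to the paper's explicit induction on jump times (properties P1($k$), P2($k$)) together with Lemma \ref{lemma_total_jump_rates} matching the total exit rates.
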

We will prove Theorem \ref{thm:intervalcoalescent} in Section \ref{subsection:proof_intervalcoalescent}.

\begin{remark}
\label{remark_interval_coalescent_l1}
\begin{enumerate}[(i)]
\item To the best of our knowledge, even the $\lambda =0$ case of Theorem \ref{thm:intervalcoalescent} is novel, and it gives an interval coalescent representation
of the multiplicative coalescent process (on the state space $\lzeroord$).
\item  Theorem \ref{thm:intervalcoalescent} generalizes in 
a natural way to any initial condition $\um \in \loneord$. For initial conditions with infinite total mass,
the situation is more complicated since 
under the natural extension of the concept of size-biased order
(see Definition \ref{def_size_biased_with_exponentials}) there is no 
smallest element of the order, and any
two elements are separated by infinitely many other elements in the order,
c.f.\ Remark \ref{remark_size_biased_extensions}\eqref{dense_size_biased}.
\end{enumerate}
\end{remark}

\subsubsection{Proof of Theorem \ref{thm:intervalcoalescent} }
\label{subsection:proof_intervalcoalescent}

\begin{lemma}\label{lemma_total_jump_rates}
Let $\ub\in\lzero$ and $\um=\Down(\ub)$. 
Then the total jump rate of the $\mathrm{MCLD}(\lambda)$
out of the state $\um$ is the same as the total 
jump rate of the $\mathrm{ICLD}(\lambda)$ out of the state $\ub$. 
\end{lemma}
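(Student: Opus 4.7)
The plan is to compute both total jump rates explicitly and observe that they coincide because they depend on $\ub$ only through its multiset of values. Write $S=\sum_i m_i = \sum_i b_i$ for the total mass, which is the same for $\um$ and $\ub$ since $\Down(\ub)=\um$ means $\ub$ is a permutation of the nonzero entries of $\um$.

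First I would compute the total MCLD rate out of $\um=(m_1,\dots,m_n)$. By \eqref{mcld_informal_def}, the coalescence contribution is $\sum_{1\le i<j\le n} m_i m_j = \tfrac{1}{2}\bigl(S^2 - \sum_i m_i^2\bigr)$, and the deletion contribution is $\sum_i \lambda m_i = \lambda S$. So the total MCLD rate equals $\tfrac12(S^2-\sum_i m_i^2)+\lambda S$.

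Next I would compute the total ICLD rate out of $\ub=(b_1,\dots,b_n)$. By \eqref{eq:merge_rate_ic}, the coalescence rates sum to $\sum_{k=1}^{n-1} b_k\sum_{i=k+1}^n b_i = \sum_{1\le k<i\le n} b_k b_i = \tfrac12(S^2-\sum_i b_i^2)$. By \eqref{eq:deletion_rate_ic}, the (single) deletion rate contributes $\lambda S$. So the total ICLD rate equals $\tfrac12(S^2-\sum_i b_i^2)+\lambda S$.

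Finally, since $\ub$ and $\um$ are rearrangements of one another, $\sum_i b_i^2 = \sum_i m_i^2$, so the two totals agree, completing the proof. There is no real obstacle here; the content of the lemma is the elementary identity $\sum_{i<j} b_ib_j = \sum_{i<j} m_im_j$ for any permutation of the same multiset, combined with the fact that the total deletion rate $\lambda S$ depends only on the total mass.
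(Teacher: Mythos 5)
Your proof is correct and takes essentially the same approach as the paper: both compute the total ICLD rate as $\sum_{k<i}b_kb_i+\lambda\sum_i b_i$ and identify it with the MCLD total $\sum_{i<j}m_im_j+\lambda\sum_i m_i$ via the permutation invariance of these sums. The explicit rewriting as $\tfrac12(S^2-\sum_i m_i^2)+\lambda S$ is a harmless extra step the paper omits.
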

\begin{proof}
The total jump rate in 
the $\mathrm{MCLD}(\lambda)$ is $\lambda\sum m_i+\sum_{i<j}m_i m_j$.
Since $\um$ is a reordering of $\ub$, 
we obtain the same total rate for the $\mathrm{ICLD}(\lambda)$ by adding 
the deletion rate in (\ref{eq:deletion_rate_ic})
to the sum over $1\leq k<n$ of the
coalescence rate in (\ref{eq:merge_rate_ic}).
\end{proof}
%


%
%

\begin{lemma}\label{lemma_two_procedures}
Let $\um\in\lzeroord$. 
The following two procedures give the same
distribution of $\ub'$:
\begin{itemize}
\item[(1)]
Let $\ub$ be a size-biased reordering of $\um$,
and then, given $\ub$, obtain $\ub'$ from $\ub$ 
by performing a single step of the jump chain of the $\mathrm{ICLD}(\lambda)$. 
\item[(2)]
Obtain $\um'$ from $\um$ by performing 
a single step of the jump chain of the $\mathrm{MCLD}(\lambda)$,
and then, given $\um'$, let $\ub'$ be a size-biased reordering of $\um'$.
\end{itemize}
In particular, in (2) it is the 
case that the conditional distribution of $\ub'$
given $\Down(\ub')$ is that of a size-biased reordering
of $\Down(\ub')$, so the same is also true in (1).
\end{lemma}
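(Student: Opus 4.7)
The plan is a direct computation: compare the probability of producing each possible target $\ub'$ in the two procedures and verify they agree. For bookkeeping I will assign distinct labels $1,\dots,n$ to the blocks of $\um$ so that $\ub$ and $\ub'$ become sequences of labels carrying known masses; the unlabeled statement follows by projection. By Lemma \ref{lemma_total_jump_rates}, the total rate $R$ out of the starting configuration is the same in both chains, so in each procedure a transition to a specific target occurs with probability (rate)$/R$.

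The deletion case is essentially immediate. Fix a labeled target $\ub'=(\ell_1,\dots,\ell_{n-1})$ obtained by deleting label $i$. In procedure (1) the unique predecessor is $\ub=(i,\ell_1,\dots,\ell_{n-1})$, whose size-biased probability is $\frac{m_i}{S}$ times the size-biased probability of $(\ell_1,\dots,\ell_{n-1})$ with respect to the mass vector $\um\setminus\{m_i\}$, where $S=\sum_j m_j$. Multiplying by the deletion rate $\lambda S$ and dividing by $R$, the factor $S$ cancels and the contribution becomes $\frac{\lambda m_i}{R}$ times the size-biased probability of $\ub'$ in $\um\setminus\{m_i\}$, which is exactly the contribution from procedure (2).

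The merger case is the main obstacle and rests on an algebraic identity. Fix a labeled target $\ub'$ whose $k$-th entry is the merged label $\{i,j\}$, and write $\tilde{\um}$ for the new mass vector in which $\{m_i,m_j\}$ is replaced by $\{m_i+m_j\}$. In procedure (1) there are exactly two predecessors $\ub$: one, $\ub^{(1)}$, with $(i,j)$ at positions $k,k+1$, and one, $\ub^{(2)}$, with $(j,i)$ there. Let $T_k$ be the sum of masses from position $k$ onward in $\ub^{(1)}$, which equals the corresponding sum in $\ub^{(2)}$ and (using $\tilde{\um}$) in $\ub'$. The size-biased factors at positions $k,k+1$ for $\ub^{(1)}$ are $\frac{m_i}{T_k}\cdot\frac{m_j}{T_k-m_i}$ and the ICLD merge rate at position $k$ is $m_i(T_k-m_i)$; their product equals $\frac{m_i^2 m_j}{T_k}$. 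The analogous product for $\ub^{(2)}$ is $\frac{m_i m_j^2}{T_k}$, and their sum is $\frac{m_i m_j(m_i+m_j)}{T_k}$, which is exactly $m_i m_j$ times the size-biased factor $\frac{m_i+m_j}{T_k}$ for the merged block in $\tilde{\um}$ at position $k$. The prefix factors (positions $<k$) and suffix factors (positions $>k+1$) of the size-biased products only depend on running sums from each position to the end, and those sums are unchanged when $(m_i,m_j)$ at consecutive positions is replaced by $m_i+m_j$ at a single position; hence these factors coincide with the corresponding size-biased factors of $\ub'$ in $\tilde{\um}$. Dividing by $R$, procedure (1) assigns $\ub'$ probability $\frac{m_i m_j}{R}$ times the size-biased probability of $\ub'$ in $\tilde{\um}$, matching procedure (2).

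Combining the two cases, the marginal laws of $\ub'$ in the two procedures agree. The final clause of the lemma --- that conditionally on $\Down(\ub')$, $\ub'$ is a size-biased reordering of $\Down(\ub')$ --- is then automatic, since this property holds by construction in procedure (2) and we have just shown that the two procedures produce identically distributed $\ub'$.
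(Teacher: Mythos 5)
Your proof is correct and follows essentially the same route as the paper's: both reduce the claim to the rate identity $\sum_{\ub}\pi_{\um}(\ub)\mathcal{R}_{IC}(\ub,\ub')=\mathcal{R}_{MC}(\um,\um')\pi_{\um'}(\ub')$, treat deletion via the unique predecessor and coalescence via the two predecessors with the key cancellation $\frac{m_i^2 m_j}{T_k}+\frac{m_i m_j^2}{T_k}=m_i m_j\cdot\frac{m_i+m_j}{T_k}$, and invoke Lemma \ref{lemma_total_jump_rates} to pass from rates to jump-chain probabilities. The only (immaterial) difference is that you handle possible ties among block masses by labelling the blocks, whereas the paper assumes distinct masses and appeals to continuity.
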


Before proving Lemma \ref{lemma_two_procedures}, we use it to prove Theorem 
\ref{thm:intervalcoalescent}.

\begin{proof}[Proof of Theorem 
\ref{thm:intervalcoalescent}]
Assume that the initial state has
$n$ non-zero blocks. Then the process $\bb_t$
will make $n$ jumps before being absorbed in the 
all-$0$ state. 
Let $\tau_0=0$ and let $\tau_1, \dots, \tau_n$ be the jump 
times of the process, 
so that 
$\left(\bb_{\tau_0}, \bb_{\tau_1}, \dots, \bb_{\tau_n}\right)$
is the jump-chain.

Let $\bm_t=\Down(\bb_t)$. 
 We claim that
for $0\leq k\leq n$, the following properties hold:
\begin{itemize}
\item[P1($k$):]
$\left(\bm_{\tau_0}, \bm_{\tau_1},\dots, \bm_{\tau_k}\right)$
has the distribution of the first $k$ steps
of the jump-chain of the $\mathrm{MCLD}(\lambda)$. 
\item[P2($k$):]
Given $\bm_{\tau_1},\dots, \bm_{\tau_k}$,
the distribution of $\bb_{\tau_k}$ is that of a 
random size-biased reordering of $\bm_{\tau_k}$. 
\end{itemize}

As soon as we prove these properties, the proof of Theorem 
\ref{thm:intervalcoalescent} follows using
 Lemma
\ref{lemma_total_jump_rates} and the memoryless property.

We now prove P1($k$) and P2($k$) by induction on $k$.
The case $k=0$ is immediate from the definition 
of $\bb_0$ in the statement of Theorem \ref{thm:intervalcoalescent}.

For the induction step, suppose that P1($k$) and P2($k$) hold for a given $k$ with $0\leq k<n$. 
Now we condition on $\bm_{\tau_1},\dots, \bm_{\tau_k}$ and apply Lemma \ref{lemma_two_procedures}
with the choices
\[ \um=\bm_{\tau_k}, \qquad \ub=\bb_{\tau_k}, \qquad \um'=\bm_{\tau_{k+1}}, \qquad \ub'=\bb_{\tau_{k+1}}. \]

From P2($k$), we know that given $\bm_{\tau_1},\dots, \bm_{\tau_k}$, the distribution of $\bb_{\tau_k}$
is a random size-biased reordering of $\bm_{\tau_k}$;
then, as in (1) of Lemma \ref{lemma_two_procedures},
we obtain $\bb_{\tau_{k+1}}$ by taking a step of 
the $\mathrm{ICLD}(\lambda)$ chain from $\bb_{\tau_k}$. 

Hence, using the final observation of Lemma 
\ref{lemma_two_procedures}, the distribution of 
$\bb_{\tau_{k+1}}$ given 
$\bm_{\tau_1},\dots, \bm_{\tau_k}, \bm_{\tau_{k+1}}$,
is a random size-biased reordering of $\bm_{\tau_{k+1}}$,
and P2($k+1$) holds. 

But also, comparing with (2) of Lemma \ref{lemma_two_procedures}, 
the distribution we obtain of 
$\bm_{\tau_{k+1}}=\Down(\bb_{\tau_{k+1}})$
is the same as we would have obtained by taking a 
step of the $\mathrm{MCLD}(\lambda)$ chain from $\bm_{\tau_k}$. 
So indeed we can extend P1($k$) to give P1($k+1$) also.

This completes the induction step. In this way 
we obtain that P1($n$) and P2($n$) hold, and this completes
the proof of Theorem \ref{thm:intervalcoalescent}.
\end{proof}

 
 \medskip
 
\begin{proof}[Proof of Lemma \ref{lemma_two_procedures}] 
To avoid awkward notation, we will write the proof under the extra
assumption that $\um$ has all its block sizes
distinct, and so does $\um'$ for each $\um'$
that can be obtained from $\um'$ by a coalescence
step. The general statement
can then be easily obtained by continuity. 

Under the above stated extra assumption we can write the transition rates
of the $\mathrm{MCLD}(\lambda)$ chain as follows. 
If $\um'$ is obtained from 
$\um$ by merging the two blocks of size $m_I$ and $m_J$,
then the rate of the transition from $\um$ to $\um'$ is
\begin{equation}\label{mcld_rate_merge_def}
\mathcal{R}_{MC}(\um,\um')=m_I m_J.
\end{equation}
If $\um'$ is obtained from $\um$ by deleting the block of size $m_I$,
then the rate of the transition from $\um$ to $\um'$ is 
\begin{equation}\label{mcld_rate_delete_def}
\mathcal{R}_{MC}(\um,\um')=\lambda m_I.
\end{equation}

Let $\pi_{\um}$ denote the probability distribution on $\lzero$ 
which arises from the size-biased reordering of
$\um$. Again if $\um$ has distinct block sizes, we can 
write
\begin{equation}\label{eq:def_pi_sizebiased}
\pi_{\um}(\ub)= \ind[\, \Down(\ub)=\um \, ]\cdot \prod_{i=1}^n \frac{b_i}{\sum_{j=i}^n b_j }.
\end{equation}

Now, to prove Lemma \ref{lemma_two_procedures}
it is enough to demonstrate the following claim:
if $\um \in \lzeroord$ and $\ub' \in \lzero$,
with $\um'=\Down(\ub')$, then
\begin{equation}\label{ic_mc_identity}
\sum_{\ub:\, \Down(\ub)=\um} \pi_{\um}(\ub)\mathcal{R}_{IC}(\ub,\ub')=
\mathcal{R}_{MC}(\um,\um') \pi_{\um'}(\ub').
\end{equation}
(Rescaled by the total jump rate, 
which by Lemma \ref{lemma_total_jump_rates}
are the same for the $\mathrm{ICLD}(\lambda)$ and the $\mathrm{MCLD}(\lambda)$,
the left side represents the probability  
of obtaining a given value $\ub'$ 
using procedure (1) in Lemma \ref{lemma_two_procedures},
while the right side represents the same for  
procedure (2).)

\medskip

To prove (\ref{ic_mc_identity}), there are
two cases that we have to handle, corresponding to coalescence and deletion.

We first treat the case of coalescence, that is we assume that
 the state $\Down(\ub')=\um'$ arises from $\um$ by
  merging some blocks with sizes $m_I$ and $m_J$.
Then $\ub'$ has an interval of size $m_I+m_J$, say $b'_k=m_I+m_J$.
There are exactly two reorderings $\ub$ of $\um$ for which $\mathcal{R}_{IC}(\ub,\ub')>0$, namely
\begin{align*}
 \ub^1 &=(b'_1, \dots, b'_{k-1}, m_I, m_J, b'_{k+1}, \dots, b'_n ),\\
 \ub^2 &=(b'_1, \dots, b'_{k-1}, m_J, m_I, b'_{k+1}, \dots, b'_n ). 
\end{align*}
Now let us rewrite the two sides of \eqref{ic_mc_identity}. The left side is
\begin{multline*}
\sum_{\ub: \Down(\ub)=\um} \pi_{\um}(\ub)\mathcal{R}_{IC}(\ub,\ub')=
\pi_{\um}(\ub^1)\mathcal{R}_{IC}(\ub^1,\ub')+\pi_{\um}(\ub^2)\mathcal{R}_{IC}(\ub^2,\ub')
\stackrel{ \eqref{eq:merge_rate_ic},\eqref{eq:def_pi_sizebiased} }{=}
\\
\left( \prod_{i=1}^{k-1} \frac{b'_i}{\sum_{j=i}^n b'_j } \right) 
 \frac{m_I}{\sum_{j=k}^n b'_j }
 \frac{m_J}{\sum_{j=k}^n b'_j -m_I} \cdot \\
 \left(\prod_{i={k+1}}^{n} \frac{b'_i}{\sum_{j=i}^n b'_j }\right) 
 \cdot m_I \cdot \left(\sum_{j=k}^n b'_j -m_I\right)+ \\
 \left( \prod_{i=1}^{k-1} \frac{b'_i}{\sum_{j=i}^n b'_j } \right) 
 \frac{m_J}{\sum_{j=k}^n b'_j } 
 \frac{m_I}{\sum_{j=k}^n b'_j -m_J} \cdot \\
 \left(\prod_{i={k+1}}^{n} \frac{b'_i}{\sum_{j=i}^n b'_j }\right) 
 \cdot m_J \cdot \left(\sum_{j=k}^n b'_j -m_J\right).
\end{multline*}
Using \eqref{mcld_rate_merge_def} and \eqref{eq:def_pi_sizebiased}, the right side can be rewritten as
\begin{equation*}
\mathcal{R}_{MC}(\um,\um') \pi_{\um'}(\ub') \stackrel{
 }{=} m_I  m_J \cdot
 \left( \prod_{i=1}^{k-1} \frac{b'_i}{\sum_{j=i}^n b'_j } \right) 
 \frac{m_I+m_J}{\sum_{j=k}^n b'_j }
 \left(\prod_{i={k+1}}^{n} \frac{b'_i}{\sum_{j=i}^n b'_j }\right) .
\end{equation*}
These are the same, so \eqref{ic_mc_identity} holds in the case of coalescence.

\medskip

We now turn to the case of deletion; that is, we assume that the state $\Down(\ub')=\um'$ arises from $\um$ by 
deleting a block of size $m_I$.
There is one rearrangement $\ub$ of $\um$ for which 
$\mathcal{R}_{IC}(\ub,\ub')>0$, namely
\begin{align}
\ub^0 &=(m_I, b'_1, \dots, b'_n ).
\end{align}
Thus 
\begin{align*}
\sum_{\ub:\um=\Down(\ub)} \pi_{\um}(\ub)\mathcal{R}_{IC}(\ub,\ub')
&=
\pi_{\um}(\ub^0)\mathcal{R}_{IC}(\ub^0,\ub')
\\
&\stackrel{ \eqref{eq:deletion_rate_ic}, \eqref{eq:def_pi_sizebiased}  }{=}
\frac{m_I}{m_I + \sum_{j=1}^n b'_j  } \pi_{\um'}(\ub') \cdot \lambda \cdot 
\left(m_I+ \sum_{j=1}^n b'_j \right)\\
&=
\lambda m_I  \cdot \pi_{\um'}(\ub')
\\
&=\mathcal{R}_{MC}(\um,\um') \pi_{\um'}(\ub')
\end{align*}
So \eqref{ic_mc_identity} holds in this case also.
This completes the proof of  Lemma \ref{lemma_two_procedures}.
\end{proof}

\subsection{Particle representation}
\label{section_particle_representation}

The representation of the $\mathrm{MCLD}(\lambda)$
in Section  \ref{section:interval_coalescent_repr} moved some of the randomness
of the process into the choice of an initial condition
(using a size-biased reordering). Thereafter the possible
transitions of the process were restricted
(only neighbouring blocks were allowed to merge,
and only the first block could be deleted).

In this section we take this to an extreme by giving
a natural construction of the process in which 
\emph{all} the randomness is in the initial condition; 
the evolution of the process thereafter is entirely
deterministic, but nonetheless the process 
projects to the $\mathrm{MCLD}(\lambda)$. 
Such processes might be called ``rigid".


 \begin{definition}[Particle system]
 \label{def_particle_rep}
 Let $\um=(m_1,\dots, m_n) \in \lzeroord$.
 Let $E_1, \dots,$ $E_n$ be independent
 with $E_i\sim \Exp(m_i)$. 
For $i=1,\dots, n$, 
let $Y_i(0)=-E_i$ be the initial height of a 
particle $i$
with mass $m_i$.
The heights of particles evolve over time; we describe the joint evolution of the heights  $Y_1(t), \dots, Y_n(t)$ using a system of ordinary differential equations.
  
Analogously to the definition  \eqref{mu_exp_point_measure_def}, we define 
 \begin{equation}\label{def_eq_mu_t}
 \mu_t=\sum_{i=1}^{n} m_i \cdot \delta_{Y_i(t)}.
 \end{equation}
 The system of differential equations governing $Y_1(t), \dots, Y_n(t)$ is
\begin{equation}\label{particle_dynamics_lambda}
\frac{\mathrm{d}}{\mathrm{d}t} Y_i(t)= 
\lambda \cdot \ind  [ \, Y_i(t)<0 \, ]+\mu_t( Y_i(t),0).
\end{equation}
We say that the particle $i$ ``dies'' at time $t_i$, where $t_i$ is defined by 
\begin{equation}\label{def_eq_t_i_death_time}
t_i:= \min\{\, t \, : \, Y_i(t)=0 \, \}.
\end{equation}

A ``time-$t$ block'' consists of the union of all the particles 
that share the same (strictly negative) height at time $t$. 
\end{definition}
 In words, particles start at negative locations and move up. If a particle reaches zero 
  then it stops there and dies.
  Before it dies, the speed of a particle  is equal to $\lambda$ plus the total weight
 of particles strictly above it and strictly below zero. 
Observe that if two particles ever reach the same height, then they stay together forever.

\medskip

Recall the definition of the $\mathrm{ICLD}(\lambda)$ process from Definition \ref{def:interval_coalescent}.

\begin{proposition}
\label{prop:particles}
Let $\bb_t\in\lzero$ 
be the vector of sizes of the time-$t$
blocks of the particle system, in decreasing
order of their height. 
Then the process $\bb_t, t\geq 0$ has the law 
of $\mathrm{ICLD}(\lambda)$, started from an initial
state $\bb_0$.
\end{proposition}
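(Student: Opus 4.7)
\textbf{Proof proposal for Proposition \ref{prop:particles}.} The plan is to show that $(\bb_t)_{t\geq 0}$ is a continuous-time Markov chain on $\lzero$ whose transition rates agree with those of the $\mathrm{ICLD}(\lambda)$ (Definition \ref{def:interval_coalescent}), and that the initial state $\bb_0$ has the law of a size-biased reordering of $\um$.

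The initial state is easy to identify. Let $\sigma=(\sigma_1,\dots,\sigma_n)$ be the (a.s.\ unique) permutation of $[n]$ such that $E_{\sigma_1}<E_{\sigma_2}<\dots<E_{\sigma_n}$, so that the particles, listed from highest to lowest, have masses $b_k:=m_{\sigma_k}$. By Claim \ref{claim:sb}, the random order $\prec$ defined by $E_i<E_j$ is size-biased with respect to $\um$, hence $\bb_0=(b_1,\dots,b_n)$ is a size-biased reordering of $\um$, matching the initial state in Theorem \ref{thm:intervalcoalescent}. Next I would analyse the \emph{gaps} at time $0$: set $G_0:=E_{\sigma_1}$ (distance from the top particle to $0$) and $G_k:=E_{\sigma_{k+1}}-E_{\sigma_k}$ for $1\leq k\leq n-1$. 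A standard application of the memoryless property of exponentials shows that, conditional on $\sigma$, the random variables $G_0,G_1,\dots,G_{n-1}$ are independent with
\[
G_k\sim\Exp(b_{k+1}+b_{k+2}+\dots+b_n), \qquad 0\leq k\leq n-1.
\]

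Between events, by \eqref{particle_dynamics_lambda}, the $k$-th (from the top) block travels with speed $\lambda+b_1+\dots+b_{k-1}$, so $G_0$ shrinks at rate $\lambda$ and $G_k$ shrinks at rate $b_k$ for $k\geq 1$. Thus the first-event times, defined as $\tau_0:=G_0/\lambda$ and $\tau_k:=G_k/b_k$, are independent with
\[
\tau_0\sim\Exp\bigl(\lambda\textstyle\sum_{i=1}^n b_i\bigr), \qquad \tau_k\sim\Exp\bigl(b_k\textstyle\sum_{i=k+1}^n b_i\bigr),
\]
which exactly reproduces the deletion rate \eqref{eq:deletion_rate_ic} and the merge rates \eqref{eq:merge_rate_ic} of the $\mathrm{ICLD}(\lambda)$ started from $\bb_0$. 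The first event, occurring at time $T=\min_k\tau_k$, is therefore a deletion of the topmost block (if $k=0$ attains the minimum) or a merge of blocks $k$ and $k+1$ (if $k\geq 1$ attains the minimum), with the correct probabilities and correct time.

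To propagate this analysis beyond the first event, I would use memorylessness. Conditional on $T$ and the identity $k^*$ of the first event, the remaining times $\tau_j-T$ (for $j\neq k^*$) are independent and still exponential with their original rates. After relabeling, the new top-to-zero gap and the new inter-block gaps coincide with appropriate old gaps at time $T$; a direct computation (using $G_k(T)=b_k(\tau_k-T)$ resp.\ $G_0(T)=\lambda(\tau_0-T)$ together with the rescaling rule $cX\sim\Exp(r/c)$ when $X\sim\Exp(r)$) shows that each new gap is exponential with precisely the rate demanded by the new block structure, and the new gaps are independent of each other. Thus the ``gap configuration is a product of exponentials with the correct rates'' property is preserved at every event, so the argument of the previous paragraph can be iterated. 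This shows that $(\bb_t)_{t\geq 0}$ is Markov with exactly the transition rates of $\mathrm{ICLD}(\lambda)$.

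The most delicate step will be the bookkeeping in the last paragraph: one must track how the numerical size of each surviving gap at time $T$ relates to its exponential parameter both before and after the relabeling, and verify that the rescaling produced by the change of closure rate (e.g.\ from $b_{k+1}$ to $b_k+b_{k+1}$ when blocks $k,k+1$ merge) combines with memorylessness to yield exactly the exponential distribution associated to the new block structure. Everything else is a routine application of properties of order statistics of independent exponentials, together with Claim \ref{claim:sb}.
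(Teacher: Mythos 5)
Your proposal is correct and follows essentially the same route as the paper's proof: conditioning on the ordering, computing the independent exponential gap distributions (what the paper packages as $\expheightgaps(\ub)$ in Definition \ref{def_heightgap_distribution}), converting gaps to event times via the constant closing speeds, matching the rates \eqref{eq:merge_rate_ic}--\eqref{eq:deletion_rate_ic}, and iterating by memorylessness. The "delicate bookkeeping" you flag at the end is exactly the verification the paper carries out around \eqref{Y_i_after_jump}--\eqref{b_i_after_jump}, and it goes through as you describe.
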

Before proving Proposition \ref{prop:particles}, let us state the following corollary.
\begin{corollary}
\label{corollary_particle_mcld}
 The process $\bm_t=\Down(\bb_t)$ 
has the law of MCLD($\lambda$) started from the
state $\um$. 
\end{corollary}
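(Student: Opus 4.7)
The plan is to chain together Proposition \ref{prop:particles} and Theorem \ref{thm:intervalcoalescent}(i), so the only real content is to check that the initial state $\bb_0$ of the interval coalescent obtained from the particle system is distributed as a size-biased reordering of $\um$.

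First I would observe that almost surely the initial heights $Y_i(0)=-E_i$ are pairwise distinct (since each $E_i$ is continuously distributed), so at time $0$ every particle forms its own time-$0$ block. Reading off the vector of block sizes in decreasing order of height therefore amounts to permuting the entries of $\um$ according to the random linear order $\prec$ introduced in Definition \ref{def_size_biased_with_exponentials}: namely $i \prec j$ iff $E_i<E_j$, so that $Y_i(0)>Y_j(0)$. Hence, writing $\bb_0=(b_1,\dots,b_n)$, we have $b_k=m_{i_k}$ where $i_1\prec i_2\prec\cdots\prec i_n$, which is precisely the definition of a size-biased reordering of $\um$ (Definition \ref{def:size_biased_for_lzero}) once combined with Claim \ref{claim:sb}.

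Next I would apply Proposition \ref{prop:particles} to conclude that the process $\bb_t$, $t\geq 0$, is a realisation of $\mathrm{ICLD}(\lambda)$ started from the just-identified size-biased reordering $\bb_0$ of $\um$. Finally, Theorem \ref{thm:intervalcoalescent}(i) immediately gives that $\bm_t=\Down(\bb_t)$ has the law of $\mathrm{MCLD}(\lambda)$ started from $\um$, which is the desired statement.

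No serious obstacle is expected: both Proposition \ref{prop:particles} and Theorem \ref{thm:intervalcoalescent} are already in hand, and the only thing to verify is the size-biased-distribution of the initial block configuration, which is built into the construction of the particle representation via the exponential initial heights.
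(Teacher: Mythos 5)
Your proposal is correct and follows exactly the paper's own argument: identify $\bb_0$ as a size-biased reordering of $\um$ via Claim \ref{claim:sb} and the exponential initial heights, then chain Proposition \ref{prop:particles} with Theorem \ref{thm:intervalcoalescent}. The only difference is that you spell out the identification of $\bb_0$ in slightly more detail than the paper does.
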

\begin{proof}[Proof of Corollary \ref{corollary_particle_mcld}]
It follows from Definition \ref{def_particle_rep} and Claim \ref{claim:sb} that
$\bb_0$ is a size-biased reordering
of $\um$. Now the statement of  Corollary \ref{corollary_particle_mcld}
 follows from Theorem \ref{thm:intervalcoalescent}
and Proposition \ref{prop:particles}.
\end{proof}

Before we prove Proposition \ref{prop:particles}, let us introduce a useful notation.

\begin{definition}\label{def_heightgap_distribution}
Given $\ub=(b_1,\dots,b_n) \in \lzero$ we say that the the random vector 
\[(Y^{(1)}, \dots, Y^{(n)})\]
 has law
$\expheightgaps(\ub)$ if $0>Y^{(1)}>\dots > Y^{(n)}$,
\begin{align}
\nonumber
-Y^{(1)}&\sim \Exp(b_1+b_2+\dots+b_n)\\
\nonumber
Y^{(1)}-Y^{(2)}&\sim \Exp(b_2+\dots+b_n)\\
\nonumber
&\vdots\\
\label{heightgaps}
Y^{(k)}-Y^{(k+1)}&\sim 
\Exp\left(\sum_{i=k+1}^n b_i\right)\\
\nonumber
&\vdots\\
\nonumber
Y^{(n-1)}-Y^{(n)}&\sim \Exp(b_n),
\end{align}
and all these gaps are independent.  
\end{definition}

\begin{proof}[Proof of Proposition \ref{prop:particles}]
Suppose there are $n(t)$ time-$t$ blocks. 
Let 
\begin{equation}\label{rearranged_particle_eq_def}
\left( Y^{(1)}(t), \dots, Y^{(n(t))}(t) \right)
\end{equation}
be 
the vector of their heights in decreasing order.
Then  $\mu_t$, defined at \eqref{def_eq_mu_t},
is also given by
\begin{equation}\label{alt_mu_t}
\mu_t=\sum_{i=1}^{n(t)} \left(\bb_t\right)_i \delta_{Y^{(i)}(t) }.
\end{equation}

Observe that in particular, $\left(Y^{(1)}(0), \dots, Y^{(n)}(0)\right)$ is the decreasing rearrangement of the initial 
heights of the particles, and $Y^{(i)}(0)$ 
is the initial height of a particle of mass 
$(\bb_0)_i$. 

From here onwards, let us condition 
throughout on $\bb_0=\ub=(b_1, \dots, b_n)$.

By repeatedly applying the memoryless
property of the exponential distribution, 
we see that given $\bb_0=\ub$, we have
\begin{equation}\label{height_gaps_exp_indeed}
 \left(Y^{(1)}(0), \dots, Y^{(n)}(0)\right) \sim \expheightgaps(\ub), 
 \end{equation}
where $\expheightgaps(\cdot)$ was introduced in Definition \ref{def_heightgap_distribution}.

Let us consider the time $\tau_1$ at which the first
jump of the process $(\bb_t)$ occurs. 
This jump may be either a deletion 
(if a block reaches height 0 and so dies)
or it may be a coalescence 
(if two blocks reach the same height). 
Between times $0$ and $\tau_1$, the $k$th 
highest particle moves at speed 
$\lambda+\sum_{i=1}^{k-1}b_i$.
Hence the gap
$Y^{(k)}(t)-Y^{(k+1)}(t)$ decreases at rate $b_k$
for $1\leq k\leq n-1$, and the gap $-Y^{(1)}(0)$
decreases at rate $\lambda$. 

Let us therefore define 
\begin{align}
\label{time_first_reaches_top}
t^{(0)}&:= \frac{Y^{(1)}(0)}{\lambda}
\sim \Exp\big(\lambda(b_1+\dots+b_n)\big)\\
\label{time_k_merges_with_k_plus_one}
t^{(k)}&:=\frac{Y^{(k)}(0)-Y^{(k+1)}(0)}{b_k}
\sim \Exp\left(b_k\sum_{i=k+1}^n b_i\right)
\text{ for }1\leq k\leq n-1.
\end{align}
These variables are all independent. 

Then $\tau_1=\min_{0\leq k\leq n-1} t^{(k)}$,
and we have
\[
\tau_1 \sim \Exp\left( \lambda\sum_{i=1}^n b_i
+\sum_{1\leq i<j\leq n} b_i b_j\right).
\]
Further, let $\kappa=\argmin_{0\leq k\leq n-1}t^{(k)}$.
This argmin is uniquely defined with probability 1. 
If $\kappa=0$, then the event at time $\tau_1$ 
is a deletion of the highest block, while
if $\kappa=k\geq 1$ then the event at time $\tau_1$
is a coalescence of the $k$th and $(k+1)$st highest blocks.
Then $\kappa$ and $\tau_1$ are independent; 
the probability that $\kappa=k$ is proportional 
to $\lambda\sum_{i=1}^n b_i$ for $k=0$, 
and to $b_k\sum_{i=k+1}^n b_i$ for 
$1\leq k\leq n-1$. 

Comparing the previous paragraph with the
rates of the $\mathrm{ICLD}(\lambda)$ at \eqref{eq:merge_rate_ic}
and \eqref{eq:deletion_rate_ic},
we see that the distribution of the new
state $\ub'=\bb_{\tau_1}$,
together with the time $\tau_1$ at which it occurs,
is the same as for the $\mathrm{ICLD}(\lambda)$ process.

From here on, let us condition further on 
$\tau_1$ and $\bb_{\tau_1}=\ub'$ as well as on
$\bb_0=\ub$. Again applying
the memoryless property, the conditional
distribution of the remaining height gaps
just before $\tau_1$,
excluding the one which reaches $0$ at that time,
is unchanged from what it was at time $0$. 

The heights and masses of blocks at time $\tau_1$ are given by
\begin{equation}\label{Y_i_after_jump}
Y^{(i)}(\tau_1)=\begin{cases}
Y^{(i)}(\tau_1-)&\text{ for }i\leq \kappa,\\
Y^{(i+1)}(\tau_1-)&\text{ for }i>\kappa,
\end{cases}
\end{equation}
and
\begin{equation}\label{b_i_after_jump}
b_i'=\begin{cases}
b_i&\text{ for }i<\kappa,\\
b_\kappa+b_{\kappa+1}&\text{ for }i=\kappa,\\
b_{i+1}&\text{for }i>\kappa.
\end{cases}
\end{equation}
We therefore obtain that 
\begin{align*}
-Y^{(1)}(\tau_1) &\sim 
\Exp\left(\sum_{i=1}^{n-1}b_i'\right)
\\
Y^{(k)}(\tau_1)-Y^{(k+1)}(\tau_1) 
&\sim
\Exp\left(\sum_{i=k+1}^{n-1}b'_i\right)
\text{ for } k=1,\dots, n-2,
\end{align*}
and all these gaps are independent. 
Thus we have shown that 
\[ \left(Y^{(1)}(\tau_1), \dots, Y^{(n-1)}(\tau_1)\right) \sim \expheightgaps(\ub'). \]

So we can repeat the argument above starting
from time $\tau_1$, until the time $\tau_2$ of the next jump, obtaining that $\bb_t$ continues to evolve as
an $\mathrm{ICLD}(\lambda)$ process. Continuing recursively
until the time of the $n$th jump 
(when the last death occurs and $\bb_t$ becomes identically zero), we obtain the desired result.
The proof of Proposition \ref{prop:particles} is complete. 
\end{proof}

We will now  prove a corollary of the results and methods developed in Sections \ref{section:interval_coalescent_repr}
 and \ref{section_particle_representation}. This corollary will only be used in Section \ref{section_applications}.

\medskip

We consider the particle system introduced in Definition \ref{def_particle_rep}.
Let $\bb_t\in\lzero$ 
be the vector of sizes of the time-$t$
blocks of the particle system, in decreasing
order of their height and let $\bm_t=\Down(\bb_t)$.
Let $\tau_0=0$ and denote by $\tau_k, k \geq 1$ the $k$th jump time of the process $(\bb_t)$.
Recall the notion of the point measure $\mu_t$ from \eqref{def_eq_mu_t} (or from \eqref{alt_mu_t}) 
and notion of $\exmeasure(\um)$ from
Definition \ref{def_exp_measure_law}.

\begin{corollary}\label{corollary_exp_measure_mcld}
\begin{enumerate}[(i)]
\item \label{cor_exp_meas_mcld_discrete} For any $k \geq 0$,  the conditional distribution of $\mu_{\tau_k}$ given
$\left( \bm_{\tau_i}, 0 \leq i \leq k \right)$ is $\exmeasure(\bm_{\tau_k})$.
\item \label{cor_exp_meas_mcld_continuous} For any $t \geq 0$,  the conditional distribution of $\mu_{t}$ given
$\left( \bm_{s}, 0 \leq s \leq t \right)$ is $\exmeasure(\bm_t)$.
\end{enumerate}
\end{corollary}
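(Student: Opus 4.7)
The plan is to exploit a single distributional bridge between $\expheightgaps(\cdot)$ and $\exmeasure(\cdot)$, and then promote it from jump times to arbitrary times using the memoryless property of the exponential gaps between particle heights.

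The core observation is the following lemma, which I would state and prove first: if $\um \in \lzeroord$ and $\ub=(b_1,\dots,b_n)$ is a size-biased reordering of $\um$ (Definition \ref{def:size_biased_for_lzero}), and if, conditionally on $\ub$, the vector $(Y^{(1)},\dots,Y^{(n)})$ has law $\expheightgaps(\ub)$, then the point measure $\sum_{k=1}^n b_k\,\delta_{Y^{(k)}}$ has law $\exmeasure(\um)$. This is essentially Claim \ref{claim:sb} read in the other direction: starting from independent $E_i\sim\Exp(m_i)$ as in Definition \ref{def_exp_measure_law}, sorting the points $-E_i$ in decreasing order and recording the permutation $\sigma$ with $E_{\sigma_1}<\dots<E_{\sigma_n}$ decouples, by the memoryless property, into a size-biased order $\sigma$ on $[n]$ and independent exponential gaps with precisely the rates appearing in \eqref{heightgaps}. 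Having both representations at hand, part (i) follows by induction on $k$. For $k=0$, the claim is immediate from Definition \ref{def_particle_rep}. For the induction step, the computation in the proof of Proposition \ref{prop:particles} (see \eqref{height_gaps_exp_indeed}, \eqref{Y_i_after_jump}, \eqref{b_i_after_jump} and the paragraph that follows) shows that conditional on $(\bb_{\tau_i})_{0\le i\le k+1}$, the height vector $(Y^{(j)}(\tau_{k+1}))_j$ has law $\expheightgaps(\bb_{\tau_{k+1}})$. Averaging over $\bb_{\tau_{k+1}}$, which by Theorem \ref{thm:intervalcoalescent}(ii) (more precisely, by property P2 in its proof applied to the jump chain) is a size-biased reordering of $\bm_{\tau_{k+1}}$ conditional on $(\bm_{\tau_i})_{0\le i\le k+1}$, the lemma above gives $\mu_{\tau_{k+1}}\sim\exmeasure(\bm_{\tau_{k+1}})$.

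For part (ii), I would fix $t\ge 0$ and let $k=k(t)$ be the random index with $\tau_k\le t<\tau_{k+1}$ (almost surely $t$ is not a jump time). Conditioning on $(\bm_s,0\le s\le t)$ is equivalent to conditioning on $(\bm_{\tau_i})_{0\le i\le k}$, on the jump times $\tau_1,\dots,\tau_k$, and on the event $\{\tau_{k+1}>t\}$. Between $\tau_k$ and $t$ the particles move deterministically: the first-highest moves up at speed $\lambda$, and the gap $Y^{(j)}-Y^{(j+1)}$ decreases at the constant rate $b_j=(\bb_{\tau_k})_j$, so
\begin{equation*}
-Y^{(1)}(t)=-Y^{(1)}(\tau_k)-\lambda(t-\tau_k),\qquad
Y^{(j)}(t)-Y^{(j+1)}(t)=Y^{(j)}(\tau_k)-Y^{(j+1)}(\tau_k)-b_j(t-\tau_k),
\end{equation*}
and the event $\{\tau_{k+1}>t\}$ is exactly the intersection of the $n(\tau_k)$ events saying that each of these affine expressions is still positive. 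By part (i) the vector $(Y^{(j)}(\tau_k))_j$ has law $\expheightgaps(\bb_{\tau_k})$ (given $\bb_{\tau_k}$), and these events are independent under that law because the gaps in \eqref{heightgaps} are independent. The memoryless property of exponentials, applied coordinate-by-coordinate and taking into account the obvious rate scaling $t^{(0)}=-Y^{(1)}(\tau_k)/\lambda$, therefore shows that the shifted vector $(Y^{(j)}(t))_j$ again has law $\expheightgaps(\bb_{\tau_k})=\expheightgaps(\bb_t)$. Averaging over $\bb_t$ via the lemma above yields $\mu_t\sim\exmeasure(\bm_t)$, as desired.

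The main point that requires care is the interplay of the two kinds of conditioning in part (ii): the past $\sigma$-algebra generated by $(\bm_s)_{s\le t}$ records not only the discrete skeleton $(\bm_{\tau_i})_{i\le k}$ but also the precise jump times and the ``no-jump on $(\tau_k,t]$'' event. What rescues the argument is that, by part (i), the conditional law of the height vector at $\tau_k$ is already $\expheightgaps(\bb_{\tau_k})$, under which the $n(\tau_k)$ independent events appearing in $\{\tau_{k+1}>t\}$ touch the coordinates separately, so memorylessness applies componentwise and preserves independence. Once this is verified, part (ii) reduces to part (i) plus a deterministic shift, and no further computation is needed.
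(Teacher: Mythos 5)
Your proposal is correct and follows essentially the same route as the paper: your ``core observation'' is exactly the paper's Claim \ref{claim_exp_ord_size_biased}, part (i) is obtained by combining it with property P2 from the proof of Theorem \ref{thm:intervalcoalescent} and the $\expheightgaps$ computation in the proof of Proposition \ref{prop:particles}, and part (ii) is deduced from part (i) via the constant-speed gap decay between jumps and componentwise memorylessness. Your write-up is in fact somewhat more explicit than the paper's (notably on why conditioning on $\{\tau_{k+1}>t\}$ is harmless), but the argument is the same.
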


Before we prove Corollary \eqref{corollary_exp_measure_mcld}, let us state the key claim needed for the proof.
The proof of this claim follows from the memoryless property  and we omit it.
\begin{claim}\label{claim_exp_ord_size_biased}
Let $\um=(m_1,\dots,m_n) \in \lzeroord$ and let $\bb$ be a size-biased reordering of $\um$.
Conditioned on $\bb=\ub$, let 
 $(Y^{(1)}, \dots, Y^{(n)}) \sim \expheightgaps(\ub)$ (c.f.\ Definition \ref{def_heightgap_distribution}). 
 Then the law of $\mu=\sum_{i=1}^n (\bb)_i \delta_{Y^{(i)} } $ is $\exmeasure(\um)$ (c.f.\ Definition
 \ref{def_exp_measure_law}).
\end{claim}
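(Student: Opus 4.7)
The plan is to compare, on both sides of the claim, the joint law of the ordered pair sequence
\[
\bigl( (\bb_1, Y^{(1)}),\, (\bb_2, Y^{(2)}),\, \dots,\, (\bb_n, Y^{(n)}) \bigr),
\]
and check that they coincide. On the $\exmeasure(\um)$ side, we start from independent $E_i \sim \Exp(m_i)$ and set $Y_i = -E_i$; then $(Y^{(1)} > Y^{(2)} > \dots > Y^{(n)})$ is the decreasing rearrangement, equivalently the order statistics $E_{\sigma(1)} < \dots < E_{\sigma(n)}$ of the $E_i$ via a random permutation $\sigma$, and $\bb_k := m_{\sigma(k)}$ is the mass carried by the $k$th highest atom. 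The task is to show that (a) $(\bb_1, \dots, \bb_n)$ is a size-biased reordering of $\um$ in the sense of Definition \ref{def:size_biased_for_lzero}, and (b) conditional on $\bb = \ub$, the vector $(Y^{(1)}, \dots, Y^{(n)})$ has law $\expheightgaps(\ub)$.

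For (a), I would condition on the event $\{\sigma(1) = i_1\}$ and compute
\[
\P(\sigma(1) = i_1) = \P\bigl(E_{i_1} = \min_j E_j\bigr) = \frac{m_{i_1}}{m_1 + \dots + m_n},
\]
which is the standard competing-exponentials identity. Then, by the memoryless property applied to the minimum, conditionally on $\sigma(1) = i_1$ and $E_{\sigma(1)} = y$, the shifted variables $\{E_j - y : j \neq i_1\}$ are independent $\Exp(m_j)$, so the problem for the remaining $n-1$ atoms is a fresh copy of the same setup with index set $\{1,\dots,n\}\setminus\{i_1\}$. Iterating this $n$ times yields
\[
\P(\sigma(1) = i_1, \dots, \sigma(n) = i_n)
= \prod_{r=1}^n \frac{m_{i_r}}{m_{i_r} + m_{i_{r+1}} + \dots + m_{i_n}},
\]
which matches exactly \eqref{sizebiased}.

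For (b), the same inductive use of memorylessness gives conditional independence of the height-gaps. Conditional on $\sigma(1) = i_1$, we have $-Y^{(1)} = E_{i_1} \sim \Exp(m_{i_1} + \dots + m_{i_n}) = \Exp(b_1 + \dots + b_n)$, and moreover $-Y^{(1)}$ is independent of the subsequent evolution because, by memorylessness, the conditional law of the remaining $E_j$'s shifted by $-Y^{(1)}$ is again a product of exponentials with the original rates $m_j$ for $j \ne i_1$. Applying the first step of (a) to this reduced system gives the next gap $Y^{(1)} - Y^{(2)} \sim \Exp(\sum_{j \neq i_1} m_j) = \Exp(b_2 + \dots + b_n)$, independent of $-Y^{(1)}$. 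Iterating matches \eqref{heightgaps} term by term, giving $(Y^{(1)},\dots,Y^{(n)}) \sim \expheightgaps(\ub)$ conditionally on $\bb = \ub$.

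The only real obstacle is keeping the book-keeping of the two layers of conditioning straight — namely, that at the $r$th stage of the induction we simultaneously identify the next element in size-biased order and the next height gap, and that these two pieces of information together carry all the relevant randomness of the $r$th step while leaving the residual system distributionally equivalent (by memorylessness) to a fresh $\exmeasure$ construction on the remaining indices. Once that is organised cleanly, (a) and (b) follow together in one induction, and the claim is obtained by combining them: the joint law of $(\bb, Y^{(1)}, \dots, Y^{(n)})$ factors as the size-biased law on $\bb$ times the $\expheightgaps(\bb)$ law on the heights, which is exactly the two-stage description in the claim.
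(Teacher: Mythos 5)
Your proposal is correct and is essentially the argument the paper has in mind: the paper omits the proof of Claim \ref{claim_exp_ord_size_biased}, saying only that it follows from the memoryless property, and your induction (competing exponentials identifying the size-biased order, then memorylessness giving the independent $\Exp$ height gaps) is exactly the standard elaboration of that remark, matching Claim \ref{claim:sb} and the derivation of \eqref{height_gaps_exp_indeed} elsewhere in the paper.
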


\begin{proof}[Proof of Corollary \ref{corollary_exp_measure_mcld}]

Given $\bm_{\tau_1},\dots, \bm_{\tau_k}$,
the distribution of $\bb_{\tau_k}$ is that of a 
random size-biased reordering of $\bm_{\tau_k}$ 
(c.f.\ P2($k$) in the proof of Theorem \ref{thm:intervalcoalescent}
and
the proof of Corollary \ref{corollary_particle_mcld}).

In the proof of Proposition \ref{prop:particles} we have seen that given 
$\bb_{\tau_1},\dots, \bb_{\tau_k}$, the conditional joint distribution of the heights is
\[\left(Y^{(1)}(\tau_k), \dots, Y^{(n(\tau_k))}(\tau_k) \right) \sim \expheightgaps(\bb_{\tau_k}),
\qquad n(\tau_k)=n-k.\]

Combining the above observations with Claim \ref{claim_exp_ord_size_biased} we obtain that the statement of Corollary \ref{corollary_exp_measure_mcld}\eqref{cor_exp_meas_mcld_discrete} indeed holds.

Now Corollary \ref{corollary_exp_measure_mcld}\eqref{cor_exp_meas_mcld_continuous} follows from
the combination of part \eqref{cor_exp_meas_mcld_discrete}, the observation
that between the jumps of the process $(\bb_t)$ the height gaps between particles decrease at constant speed
and the memoryless property of the height gap distribution (c.f.\ Definition \ref{def_heightgap_distribution}).
\end{proof}

\subsection{Tilt-and-shift representation}
\label{tiltproofl1}

In this section we connect the 
particle system introduced in Definition \ref{def_particle_rep}
to the ``tilt-and-shift" dynamics 
presented in Definition \ref{def_tilt_shift_step_function} and prove
Proposition \ref{prop:tilt_and_shift_l0}.

\begin{definition}\label{def:EX}
Assume  $g:[0,\infty)\to\R\cup\{-\infty\}$ has only finitely many  excursions.
Denote by \[\EX(g)\in \lzero\] the sequence
of the lengths of the excursions of $g$, in order of appearance. 
\end{definition}

Let $\um=(m_1, \dots, m_n)\in \lzeroord$. 
Let $E_i\sim \Exp(m_i)$ independently for $i=1,\dots, n$. 

Consider the particle system of Section \ref{section_particle_representation} in which
a particle of size $m_i$ starts at height $Y_i(0)=-E_i$
for each $i=1,\dots, n$. 

Define the function $g_0\equiv f_0$ as at 
\eqref{f0def_b} or \eqref{f0def_for_lzeroord}, and then 
$g_t, t>0$ using the tilt-and-shift procedure of Definition
\ref{def_tilt_shift_step_function}.

\begin{proposition}
\label{prop:tilt_and_shift_ic} 
Let $\bb_t\in\lzero$ be the vector of sizes of
the time-$t$ blocks of the particle system 
in decreasing order of height. 
Then
\begin{equation}
\label{eq:tilt_particles}
\left(\bb_t, t\geq 0\right) = \left(\EX(g_t), t\geq 0\right) 
\text{ with probability }1. 
\end{equation}
\end{proposition}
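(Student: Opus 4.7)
The plan is to prove a stronger coupling statement: for every $t \geq 0$, there is a canonical bijection between the time-$t$ blocks of the particle system (listed in decreasing order of height) and the excursions of $g_t$ (listed from left to right), under which the length of the $i$th excursion equals the mass $(\bb_t)_i$ of the $i$th highest block, \emph{and} the level of the $i$th excursion equals the height $Y^{(i)}(t)$ of the $i$th highest block. Once this invariant is established for all $t$, equation \eqref{eq:tilt_particles} follows immediately from Definition \ref{def:EX}.

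I would argue by induction on the successive jump times $0 = \tau_0 < \tau_1 < \dots < \tau_n$ of the process $(\bb_t)$ from Proposition \ref{prop:particles}. The base case $t=0$ is immediate from Remark~\ref{remark:f_0}\eqref{remark_f0_ii_finite} and Definition~\ref{def_particle_rep}: since $g_0 = f_0$ is a non-increasing step function, each of its steps is an excursion of $g_0$ above the running minimum, and reading them from left to right yields steps at levels $-E_{\sigma_i} = Y^{(i)}(0)$ and of lengths $m_{\sigma_i} = (\bb_0)_i$.

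For the inductive step, assume the invariant holds at $t = \tau_k$, and let $0 = l_1 < l_2 < \dots$ denote the left endpoints of the excursions. On the interval $[\tau_k, \tau_{k+1})$ the function $\Phi$ is constant, so \eqref{g_t_from_g_0} gives $\tfrac{\mathrm{d}}{\mathrm{d}t} g_t(x) = \lambda + x$. Hence $g_t$ remains piecewise linear with slope $t$ on the unchanged partition $\{[l_i,l_{i+1})\}$, and
$$\frac{\mathrm{d}}{\mathrm{d}t}\, g_t(l_i) \;=\; \lambda + l_i \;=\; \lambda + \sum_{j<i}(\bb_t)_j,$$
which coincides with the speed of the $i$th highest particle under \eqref{particle_dynamics_lambda}. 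So the identity $g_t(l_i)=Y^{(i)}(t)$ is preserved, and as long as the particle heights remain strictly ordered and strictly negative, each $[l_i,l_{i+1})$ remains an excursion of $g_t$, so the invariant persists up to time $\tau_{k+1}^-$. At $\tau_{k+1}$ one of two matched things happens: either two adjacent particles coalesce, which by the invariant occurs exactly when $g_{\tau_{k+1}^-}(l_j) = g_{\tau_{k+1}^-}(l_{j+1})$, causing the two adjacent excursions of $g_t$ to merge into a single excursion of the combined length; or the top particle dies, i.e. $Y^{(1)}(\tau_{k+1}) = 0$, which by the invariant means $g_{\tau_{k+1}^-}(0) = 0$ and triggers a shift of size $x^*(\tau_{k+1}) = (\bb_{\tau_k})_1$ in Definition \ref{def_tilt_shift_step_function}. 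Comparing with \eqref{time_first_reaches_top}--\eqref{time_k_merges_with_k_plus_one}, the candidate event times $t^{(j)}$ derived from $g_t$ coincide exactly with those of the particle system, and the renumbering of surviving blocks after a deletion is exactly the renumbering of excursions after the shift.

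The main obstacle is essentially careful bookkeeping at the jump times: one must verify that a step $[l_i,l_{i+1})$ ceases to be an excursion of $g_t$ \emph{precisely} when the corresponding pair of adjacent particles coalesces, and that the shift operation on $g_t$ commutes with the relabelling of the remaining particles after a deletion. Genericity (distinct jump times, no simultaneous events) holds almost surely from the independence of the exponential clocks $t^{(j)}$, so there is no probabilistic difficulty beyond what was already established in Sections \ref{section:interval_coalescent_repr} and \ref{section_particle_representation}; the content of Proposition \ref{prop:tilt_and_shift_ic} is the deterministic matching of two algorithms run on the same initial data.
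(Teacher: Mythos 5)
Your proposal is correct and follows essentially the same route as the paper's proof: establishing the invariant that excursion lengths and levels of $g_t$ match the masses and heights of the time-$t$ blocks, verifying that the tilt derivative $\lambda + l_i$ agrees with the particle speed between jump times, and checking the two cases (coalescence of adjacent excursions, deletion via shift) at each jump time by induction. The paper carries out exactly this bookkeeping, identifying $\tau_{k+1}$ with the first time $g_{t-}(0)=0$ or two consecutive excursion levels meet, just as you describe.
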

Then from Proposition \ref{prop:particles}, Corollary \ref{corollary_particle_mcld}
and Theorem \ref{thm:intervalcoalescent}
we can immediately deduce the following result:
\begin{corollary}
\label{cor:tilt_ic}
$\left(\EX(g_t), t\geq 0\right)$ has the law of ICLD($\lambda$) 
started from an initial state which is a size-biased 
reordering of $\um$. 
Hence $\left(\ORDX(g_t), t\geq 0\right)$ has the law of MCLD($\lambda$) 
started from $\um$ and thus Proposition \ref{prop:tilt_and_shift_l0} holds.
\end{corollary}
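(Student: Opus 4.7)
\textbf{Proof plan for Proposition \ref{prop:tilt_and_shift_ic}.} The strategy is to exhibit, at each time $t$, an explicit bijective correspondence between excursions of $g_t$ and time-$t$ blocks of the particle system, matching \emph{lengths} with block masses and \emph{levels} (in the sense of Definition \ref{def:excursion}) with particle heights. The identity \eqref{eq:tilt_particles} will then follow by induction over the (finitely many) jump times $0=\tau_0<\tau_1<\dots<\tau_n$ of $(\bb_t)$.

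First, I verify the statement at $t=0$. By Remark \ref{remark:f_0}\eqref{remark_f0_ii_finite}, $g_0=f_0$ is a non-increasing step function which is constant equal to $-E_{\sigma_k}$ on the interval $I_{\sigma_k}$ of length $m_{\sigma_k}$, where $E_{\sigma_1}<\dots<E_{\sigma_n}$ is the increasing rearrangement of $E_1,\dots,E_n$. The excursions of $g_0$ listed in order of appearance therefore have lengths $m_{\sigma_1},\dots,m_{\sigma_n}$ and levels $-E_{\sigma_1},\dots,-E_{\sigma_n}$; these are exactly the masses and heights (in decreasing order of height) of the particles at time $0$, so $\EX(g_0)=\bb_0$ and the levels match the heights $Y^{(1)}(0),\dots,Y^{(n)}(0)$.

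Next, assume the matching holds at a jump time $\tau_{k-1}$ with $k\geq 1$: the number of current excursions equals the number of blocks $n(\tau_{k-1})$, and if the excursion intervals are $[a_j,a_j+b_j)$ with levels $L_j$, then $b_j=(\bb_{\tau_{k-1}})_j$ and $L_j=Y^{(j)}(\tau_{k-1})$ for $j=1,\dots,n(\tau_{k-1})$. I analyse the pure-tilt phase $t\in(\tau_{k-1},\tau_k)$, during which no shift occurs and $\Phi$ is constant. The formula \eqref{g_t_from_g_0} shows that $g_t$ differs from $g_{\tau_{k-1}}$ by the addition of the affine function $(\lambda+x)(t-\tau_{k-1})$. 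Since this addition is strictly increasing in $x$ on each $I_{\sigma_k}$-component and the excursion intervals of $g_{\tau_{k-1}}$ remain well-defined, the $a_j$ and $b_j$ do not change, and each level evolves according to $\tfrac{d}{dt}L_j(t)=\lambda+a_j(t)=\lambda+\sum_{i<j}b_i$, which is precisely the particle speed in \eqref{particle_dynamics_lambda} for the $j$-th highest particle. Hence the matching persists throughout $(\tau_{k-1},\tau_k)$, and at $\tau_k^-$ either two adjacent levels $L_j(\tau_k^-)=L_{j+1}(\tau_k^-)$ coincide (a coalescence event, matching two particles reaching a common height), or $L_1(\tau_k^-)=0$, i.e.\ $g_{\tau_k^-}(0)=0$ (a deletion event, matching the leftmost particle reaching height $0$).

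Finally I check that the tilt-and-shift procedure at $\tau_k$ produces the correct updated state. In the coalescence case nothing is done beyond the tilt, and two adjacent excursions of equal level merge automatically into a single excursion of length $b_j+b_{j+1}$ and common level, matching the merger of the $j$-th and $(j+1)$-st highest blocks in the particle system, c.f.\ \eqref{Y_i_after_jump}--\eqref{b_i_after_jump}. In the deletion case, Definition \ref{def_tilt_shift_step_function} applies the shift: $x^*(\tau_k)$ is by construction the length of the first excursion (which equals $b_1=(\bb_{\tau_k^-})_1$), and $g_{\tau_k}(x)=g_{\tau_k^-}(x+x^*(\tau_k))$ removes this first excursion entirely while leaving the other excursion lengths and levels unchanged, matching the disappearance of the topmost particle. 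In either case, the hypothesis holds at $\tau_k$, completing the induction step.

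\textbf{Expected main obstacle.} The only slightly delicate point is the bookkeeping at jump times, namely to verify that simultaneous jumps do not occur (so the events listed above exhaust the possibilities) and that the \cadlag{} conventions of Definitions \ref{def:excursion} and \ref{def_tilt_shift_step_function} produce exactly the updates in \eqref{Y_i_after_jump}--\eqref{b_i_after_jump}. Almost-sure absence of ties follows from the independence and continuity of the exponential initial heights $E_i$, which guarantee that the times $t^{(0)},t^{(1)},\dots,t^{(n-1)}$ defined in \eqref{time_first_reaches_top}--\eqref{time_k_merges_with_k_plus_one} are almost surely distinct; the rest is a direct check from the formulas.
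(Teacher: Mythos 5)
Your proposal is correct and follows essentially the same route as the paper: it establishes the embedding of the particle system in the tilt-and-shift dynamics (Proposition \ref{prop:tilt_and_shift_ic}) by matching excursion lengths and levels with block masses and heights at $t=0$, checking that the derivatives of the excursion levels and the particle speeds agree between jump times, and handling the merge and shift events by induction over the $\tau_k$, with the corollary then following immediately from Proposition \ref{prop:particles}, Corollary \ref{corollary_particle_mcld} and Theorem \ref{thm:intervalcoalescent} exactly as in the paper. Your identification of the only delicate point (almost-sure absence of simultaneous jumps, via the distinctness of the $t^{(k)}$) also matches the paper's treatment.
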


\begin{proof}[Proof of Proposition \ref{prop:tilt_and_shift_ic}]

We will show how to find a copy of the particle system
embedded within the tilt-and-shift process. 
Namely, the excursions of $g_t$ correspond to the 
time-$t$ blocks in the particle system. 
Specifically, an excursion $[x, x')$ of $g_t$
corresponds to a time-$t$ block whose
size is the length $x'-x$ of the excursion, and 
whose height is the level $g_t(x)$ of the excursion.

The idea is that the ``tilt" part of the
construction produces an upward drift of the 
excursion levels which corresponds to the upward movement
of the blocks in the particle system; 
this drift causes neighbouring excursions to merge,
corresponding to the merging of blocks 
in the particle system when they reach the same height. 
Meanwhile the ``shift" mechanism
which removes the leftmost excursion when its level reaches
0 corresponds to the death of a block when it reaches
height 0 in the particle system.

\medskip

Recall that $Y^{(1)}(t),\dots,Y^{(n(t))}(t)$ are the 
heights of the time-$t$ blocks of the particle system
in decreasing order, and $\bb_t$ is the vector of the
sizes of those blocks (in decreasing order of height).

Let us denote $\bb_0=(b_1, \dots, b_n)$. Write also
\begin{equation}\label{eq:def:minima_x_k}
 x_0=0, \quad x_1=b_1, \quad x_2=b_1+b_2, \quad \dots,\quad  x_n=b_1+\dots+b_n.
 \end{equation}
Then the excursions of $g_0$ are the intervals
\begin{equation}\label{g0excursions}
[x_0, x_1), \quad [x_1, x_2), \quad \dots, \quad [x_{n-1}, x_n)
\end{equation}
and the levels of these excursions are
\begin{equation}\label{gYsame}
g_0(x_0)=Y^{(1)}(0), \quad \dots,\quad g_0(x_{n-1})=Y^{(n)}(0)
\end{equation}
(with $g_0(x_n)=-\infty$). 

Recall the time $\tau_1$, the time of the first
merging or deletion in the particle system, defined in
the proof of Proposition \ref{prop:particles},
which we can also write as
\begin{equation}\label{tau1again}
\tau_1=\inf\big\{t:Y^{(1)}(t-)=0 
\text{ or } 
Y^{(k)}(t-)=Y^{(k+1)}(t-) \text{ for some }1\leq k<n\big\}.
\end{equation}
Since each block moves upwards at a rate equal
to the sum of the sizes of blocks above it plus $\lambda$,
we have that for $1\leq k\leq n$ and $t\in[0,\tau_1)$,
\begin{align}
\nonumber
\frac{\mathrm{d}}{\mathrm{d}t} Y^{(k)}(t) 
&=b_1+\dots+b_{k-1}+\lambda\\
&=x_{k-1} + \lambda.
\label{dY}
\end{align}

But also if we define
\begin{equation}\label{tildetau1def}
\tilde{\tau}_1=
\inf\big\{t:g_{t-}(0)=0
\text{ or } 
g_{t-}(x_{k-1})=g_{t-}(x_k) \text{ for some }1\leq k<n\big\},
\end{equation}
then by the definition of the tilt mechanism
in Definition \ref{def_tilt_shift_step_function},
we have that for $1\leq k\leq n$ and
$t\in[0,\tilde{\tau}_1)$,
\begin{equation}\label{dg}
\frac{\mathrm{d}}{\mathrm{d}t} g_t(x_{k-1})=x_{k-1}+\lambda.
\end{equation}

Since the derivatives on the right-hand sides of \eqref{dY}
and \eqref{dg} are the same, and the values at $t=0$ are
also the same by \eqref{gYsame}, we have that
$\tau_1=\tilde{\tau}_1$ and that 
\begin{equation}\label{particle_height_excursion_level_agrees}
 Y^{(k)}(t)=g_t(x_{k-1}), \quad
 k=1,\dots, n, \quad t\in[0,\tau_1).
\end{equation} 
By \eqref{tildetau1def}, also $g_t(x_{k-1})>g_t(x_k)$ 
for all such $k$ and $t$, and so for all $t\in[0,\tau_1)$,
the excursions of $g_t$ are again given by 
\eqref{g0excursions}. So indeed we find that 
throughtout $[0,\tau_1)$ the
block heights and the excursion levels continue to correspond,
and the block sizes and excursion lengths do not change.

Now we look at what happens at time $\tau_1$, which
is the first time that the particle system has a merge
or deletion event. Recall from the proof of 
Proposition \ref{prop:particles} the value $\kappa$
which describes which kind of event happens at time $\tau_1$.

\begin{itemize}
\item If $1\leq \kappa\leq n-1$ then
the event is a merge of the blocks with sizes $b_{\kappa}$
and $b_{\kappa+1}$. In that case we have that 
$Y^{\kappa}(\tau_1-)=Y^{\kappa+1}(\tau_1-)$
 and so also 
(by \eqref{particle_height_excursion_level_agrees}) we have
$g_{\tau_1-}(x_{k-1})=g_{\tau_1-}(x_k)$.
Hence at time $\tau_1$ also the excursions
$[x_{\kappa-1}, x_{\kappa})$ and $[x_{\kappa}, x_{\kappa+1})$ 
merge into a single 
excursion which is $[x_{\kappa-1}, x_{\kappa+1})$.

\item If instead $\kappa=0$ then $Y^{(1)}(\tau_1-)=0$  
and $g_{\tau_1 -}(0)=0$.
Then  time $\tau_1$ is the death time of the block of size $b_1$;
also, following \eqref{eq_def_finite_shift},
at time $\tau_1$ there is a shift of size
$x^*(\tau_1 -)= x_1=b_1$.
Then we obtain $g_{\tau_1}(x)=g_{\tau_1 -}(x+b_1)$.
\end{itemize}
In both of these cases the heights and masses of the particle system at time $\tau_1$ are given by
\eqref{Y_i_after_jump} and \eqref{b_i_after_jump}. If we now define
\begin{equation}\label{eq:def:minima_x_k_after_jump}
 x'_0=0, \quad x'_1=b'_1, \quad x'_2=b'_1+b'_2, \quad \dots\quad  x'_{n-1}=b'_1+\dots+b'_{n-1}, 
 \end{equation}
then we obtain that the excursions of $g_{\tau_1}$
are the intervals
\begin{equation}\label{new-excursions}
[x'_0, x'_1), \quad [x'_1, x'_2), \quad \dots, \quad [x'_{n-2}, x'_{n-1})
\end{equation}
and the levels of these excursions are
\begin{equation}\label{gYsameagain}
g_{\tau_1}(x'_0)=Y^{(1)}(\tau_1), \quad \dots,\quad g_{\tau_1}(x'_{n-2})=Y^{(n-1)}(\tau_1).
\end{equation}
From here, proceeding from 
\eqref{new-excursions} and \eqref{gYsameagain}
just as we did from 
\eqref{g0excursions} and \eqref{gYsame},
we can repeat the argument above
from time $\tau_1$ until the time $\tau_2$ of the next jump of $\EX(g_t)$. 
Continuing recursively
until the time of the $n$th jump 
(when the last death occurs), we obtain that
the excursion lengths and block sizes continue to correspond,
as required for Proposition \ref{prop:tilt_and_shift_ic}. 
\end{proof}

\begin{remark}
Observe that written symbolically, we have shown that
\begin{equation}\label{eq_particle_tilt_shift_equiv}
\bar{g}_t(\cdot) \equiv f_{\mu_t}(\cdot), \qquad t \geq 0,
\end{equation}
where $\bar{g}_t$ is the minimum process of $g_t$ as defined at
\eqref{eq:def_barg}; 
from the expression \eqref{alt_mu_t} for $\mu_t$,
and the definition of $f_\mu$ in Definition 
\ref{def_inverse_cdf},
we have that $f_{\mu_t}$ is the 
non-increasing piecewise constant \cadlag{} function
taking value $Y^{(i)}(t)$ on an interval of length $(\bb_t)_i$
for $i=1,\dots, n(t)$, and otherwise takes the value $-\infty$. 

In this formulation the 
statement of Proposition \ref{prop:tilt_and_shift_ic} 
can be seen to follow immediately, since 
$\bb_t=\EX(f_{\mu_t})$ and $\EX(\bar{g}_t)=\EX(g_t)$.
\end{remark}

\begin{corollary}
Recalling the definition of $\nu$ from \eqref{def_eq_nu_sum_dirac} and $t_i$ from 
\eqref{def_eq_t_i_death_time}, we  have
\begin{equation}\label{def_eq_death_time_mass_measure}
\nu= \sum_{i=1}^n m_i \cdot \delta_{t_i}.
\end{equation}
\end{corollary}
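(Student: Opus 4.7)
The plan is to deduce the identity directly from the correspondence between excursions of $g_t$ and time-$t$ blocks of the particle system established in the proof of Proposition \ref{prop:tilt_and_shift_ic}. Specifically, I will use that correspondence to match up the atoms of $\nu$ (which come from the shift events in the tilt-and-shift dynamics) with the death events in the particle system.

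First, I would recall from Definition \ref{def_tilt_shift_step_function} that the atoms of $\nu$ are located at times $t$ where $g_{t-}(0) = 0$, with weight $x^*(t)$ equal to the length of the first excursion of $g_{t-}$. By Proposition \ref{prop:tilt_and_shift_ic}, the excursions of $g_{t-}$ are in order-preserving bijection with the time-$(t-)$ blocks of the particle system (arranged in decreasing order of height), the length of an excursion equalling the mass of the corresponding block and the level of the excursion equalling the height of the block. In particular, $g_{t-}(0) = 0$ holds precisely when the topmost (leftmost) block has height $0$ at time $t-$, i.e.\ precisely when the topmost block dies at time $t$; and in that case, the length $x^*(t)$ of the first excursion equals the mass of that dying block.

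Next, I would rewrite the sum over the atoms of $\nu$ as a sum over distinct death times. If a block dies at time $t$, then by Definition \ref{def_particle_rep} it consists of exactly those particles $i$ whose death time satisfies $t_i = t$ (particles that have merged stay together forever once they share a height, and they all reach height $0$ simultaneously). Since the mass of a block equals the sum of the masses of its constituent particles, we have
\begin{equation*}
x^*(t) = \sum_{i : t_i = t} m_i
\end{equation*}
at every shift time $t$. Summing the Dirac masses $x^*(t) \cdot \delta_t$ over all such $t$ then gives
\begin{equation*}
\nu = \sum_{t : x^*(t) > 0} \Bigl(\sum_{i : t_i = t} m_i\Bigr) \delta_t = \sum_{i=1}^n m_i \cdot \delta_{t_i},
\end{equation*}
which is the desired identity.

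There is no real obstacle here: once the bijection ``excursions of $g_{t-} \leftrightarrow$ time-$(t-)$ blocks'' from Proposition \ref{prop:tilt_and_shift_ic} is in hand, the equality is purely bookkeeping. The only small point to verify carefully is that every shift event of the tilt-and-shift process is indeed a block death in the particle system (and vice versa), which is exactly the content of the correspondence $g_{t-}(0) = 0 \Leftrightarrow Y^{(1)}(t-) = 0$ established in the proof of Proposition \ref{prop:tilt_and_shift_ic}.
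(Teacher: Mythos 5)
Your proof is correct and takes essentially the same route as the paper: the corollary is stated there without a written proof, as an immediate consequence of the correspondence between shift events of $g_t$ and block deaths in the particle system established in the proof of Proposition \ref{prop:tilt_and_shift_ic}, which is exactly the bookkeeping you carry out. The only implicit point (holding with probability 1) is that distinct blocks have distinct death times, so each shift corresponds to exactly one dying block; your argument handles this correctly via the sum over $\{i : t_i = t\}$.
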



\begin{remark}
\label{remark_tilt_l1}
We observed in Remark \ref{remark_interval_coalescent_l1} that the $\mathrm{ICLD}(\lambda)$ process
naturally generalizes to any initial condition $\um \in \loneord$. Similarly, the definitions and 
 results of Sections \ref{section_particle_representation} and \ref{tiltproofl1} also extend to  $\um \in \loneord$.
 As a corollary, Proposition \ref{prop:tilt_and_shift_l0} generalizes to $\um \in \loneord$.
 For initial conditions with infinite total mass we cannot naively extend Definition \ref{def_tilt_shift_step_function},
  as explained in Remark \ref{remark_no_first_excursion}.
  The extension of the tilt-and-shift representation to $\um \in \ltwoord \setminus \loneord$ will be carried out in Section \ref{section:extension_to_ltwo}.
  
\end{remark}

\section{Preparatory results about $\mu_0$ and excursions}
\label{section:prep_measure_excusrions}

In this section we collect preliminary results that we will later use in Section \ref{section:extension_to_ltwo}
when we extend the rigid representation result of Section \ref{section:rigid_finite_state_space} from $\lzeroord$
to $\ltwoord$.

In Section \ref{section_of_lemma:exponential} we state and prove some of the analytic properties
of the random point measure $\mu \sim \exmeasure(\um)$ (c.f.\ Definition \ref{def_exp_measure_law})  given some $\um \in \ltwoord$.

In Section \ref{subsection_good_functions} we state and prove results related to excursions and the
$\ORDX$ functional (c.f.\ Definitions \ref{def:excursion}, \ref{def:ORDX}).

\subsection{Some facts about random point measures}
\label{section_of_lemma:exponential}

\begin{lemma}
\label{lemma:exponential}
Let $\um \in\ltwoord$ and $\mu \sim \exmeasure(\um)$.
  With probability 1,
\begin{enumerate}[(i)]
\item \label{exponential_i_bounded}
 $\mu(A)<\infty$ for every bounded set $A \subseteq (-\infty,0]$.
\item \label{exponential_ii_sparser}
  $\mu[y,y+1]\to 0$, as $y\to -\infty$.
\end{enumerate}
\end{lemma}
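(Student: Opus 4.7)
For part~(\ref{exponential_i_bounded}), the plan is to reduce to integer balls and use a first-moment estimate. Fix a positive integer $M$. Using $1-e^{-x}\leq x$ for $x\geq 0$,
\[
\expect{\mu([-M,0])}=\sum_{i} m_i\,\prob{E_i\leq M}=\sum_{i} m_i(1-e^{-m_i M}) \leq M\sum_i m_i^2<\infty,
\]
since $\um\in\ltwoord$. Hence $\mu([-M,0])<\infty$ almost surely for every integer $M$, and the probability-one event on which this holds for all $M\in\N$ simultaneously gives $\mu(A)<\infty$ for every bounded $A\subseteq(-\infty,0]$, since any such $A$ is contained in some $[-M,0]$.

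For part~(\ref{exponential_ii_sparser}), I would first prove the statement along the integer sub-sequence. Set $I_N:=[-N-1,-N]$ and $p_i^{(N)}:=\prob{E_i\in[N,N+1]}=e^{-m_iN}-e^{-m_i(N+1)}$, so that $\mu(I_N)=\sum_i m_i\ind\{E_i\in[N,N+1]\}$ is a sum of \emph{independent} bounded random variables. Using $p_i^{(N)}\leq m_i e^{-m_iN}$,
\[
\expect{\mu(I_N)}=\sum_i m_i p_i^{(N)}\leq\sum_i m_i^2 e^{-m_i N}\xrightarrow[N\to\infty]{}0
\]
by dominated convergence: each summand tends to $0$ (when $m_i>0$) and is dominated by the summable sequence $m_i^2$.

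The heart of the argument is the following summability of variances, which follows from the telescoping identity $\sum_N p_i^{(N)}=1$ (for $m_i>0$):
\[
\sum_{N=0}^{\infty} \Var(\mu(I_N))=\sum_N \sum_i m_i^2 p_i^{(N)}(1-p_i^{(N)})\leq \sum_i m_i^2 \sum_N p_i^{(N)} = \sum_i m_i^2<\infty.
\]
By Chebyshev, for any $\eta>0$, $\sum_N \prob{|\mu(I_N)-\expect{\mu(I_N)}|>\eta}\leq \eta^{-2}\sum_N \Var(\mu(I_N))<\infty$; the first Borel--Cantelli lemma then gives $\mu(I_N)-\expect{\mu(I_N)}\to 0$ almost surely, and combined with $\expect{\mu(I_N)}\to 0$ this yields $\mu(I_N)\to 0$ a.s.

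To extend to continuous $y\to-\infty$, observe that for any real $y$ with $y\in[n,n+1)$, $n\in\Z$, one has $[y,y+1]\subseteq[n,n+2]$ and hence $\mu[y,y+1]\leq\mu[n,n+1]+\mu[n+1,n+2]$. The almost sure integer statement therefore implies $\sup_{y\leq -N}\mu[y,y+1]\to 0$.

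The step I expect to be the main obstacle is verifying that Chebyshev together with Borel--Cantelli actually delivers an almost sure statement: a direct Markov estimate $\sum_N \prob{\mu(I_N)>\eta}\leq\eta^{-1}\sum_N\expect{\mu(I_N)}$ fails when $\um\in\ltwoord\setminus\loneord$, because $\sum_N a_N$ need not be finite. The crux is the telescoping identity $\sum_N p_i^{(N)}=1$, which, after interchanging the order of summation, upgrades the quadratic estimate into a bound by $\|\um\|_2^2$ without any loss, and this is exactly where the $\ell_2$ hypothesis is used.
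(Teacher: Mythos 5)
Your proposal is correct and follows essentially the same route as the paper: a first-moment bound using $1-e^{-x}\le x$ for part (i), and for part (ii) the decomposition into unit integer intervals, the estimate $\expect{\mu(I_N)}\le\sum_i m_i^2 e^{-m_iN}\to 0$, the summability $\sum_N\Var(\mu(I_N))\le\sum_i m_i^2$ via $\sum_N p_i^{(N)}=1$, and Chebyshev plus Borel--Cantelli. Your explicit covering step upgrading the integer subsequence to continuous $y\to-\infty$ is a minor point the paper leaves implicit, but it is not a different argument.
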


\begin{proof}
For any $0\leq a<b$, 
\begin{multline*}
\E \left( \mu[-b,-a] \right) \stackrel{ \eqref{mu_exp_point_measure_def} }{=} \sum_i m_i\P(E_i\in[a,b]) 
\stackrel{ \eqref{exponentials_E_i} }{=}\\
\sum_i m_i e^{-a m_i}\left(1-e^{-(b-a)m_i}\right)
<\sum_i m_i^2 (b-a)
<\infty
\end{multline*}
since $\um \in\ltwoord$, and this is already enough to give (i).

For (ii), we have 
\begin{multline*}
\E \left( \mu[-k-1,-k] \right)
=\sum_i m_i e^{-k m_i}\left(1-e^{-m_i}\right)
\leq \\
\sum_i m_i^2 e^{-k m_i}
\to 0 \quad \text{ as }\; k\to\infty,
\end{multline*}
and also 
\begin{multline*}
\Var \left( \mu[-k-1,-k] \right) =
\sum_i  m_i^2 \Var \left( \ind [ k \leq E_i \leq k+1] \right) \leq \\
\sum_i  m_i^2 \mathbb{P} \left(  k \leq E_i \leq k+1 \right).
\end{multline*}
Thus $\sum_k \Var \left( \mu[-k,-k-1] \right) \leq \sum_i m_i^2<\infty$.

Then let $k$ be large enough such that
$\E \left(\mu[-k-1,-k]\right)  \leq \delta/2$.
Then by Chebyshev's inequality,
\begin{equation*}
\P( \mu[-k,-k-1]>\delta)
\leq \frac{\Var \left( \mu[-k,-k-1] \right) }{\big(\delta- \delta/2 \big)^2}.
\end{equation*}
Hence $\sum_k \P( \mu[-k,-k-1]>\delta) <\infty$ and 
the result in (ii) follows from Borel-Cantelli.
\end{proof}

\begin{lemma}\label{lemma:dense}
If $\um \in \ltwoord \setminus \loneord$ and $\mu \sim \exmeasure(\um)$, 
then with probability 1 we have $\mu[-b,-a]>0$ for any $0<a<b$.
\end{lemma}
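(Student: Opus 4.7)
The plan is to reduce to a single pair $0<a<b$ by countable intersection, and then use a Borel--Cantelli / infinite-product argument driven by the fact that $\sum m_i=\infty$.

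First I would note that it suffices to show, for each \emph{fixed} pair of rationals $0<a<b$, that $\mu[-b,-a]>0$ almost surely. Indeed, the event that $\mu[-b,-a]>0$ for every rational pair has probability $1$ as a countable intersection of probability-$1$ events, and on this event, for arbitrary real $0<a<b$ we can choose rationals with $a<a'<b'<b$ so that $\mu[-b,-a]\geq\mu[-b',-a']>0$.

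So fix $0<a<b$. By the independence of the $E_i$'s in \eqref{exponentials_E_i} together with Definition \ref{def_exp_measure_law}, the event $\{\mu[-b,-a]=0\}$ coincides with $\bigcap_i\{E_i\notin[a,b]\}$, and so
\begin{equation*}
\prob{\mu[-b,-a]=0}=\prod_{i=1}^{\infty}\bigl(1-p_i\bigr),\qquad p_i:=e^{-am_i}-e^{-bm_i}.
\end{equation*}
This product vanishes iff $\sum_i p_i=\infty$, so the task reduces to proving $\sum_i p_i=\infty$.

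The key observation is that since $\um\in\ltwoord$, we must have $m_i\to 0$ (otherwise $\sum m_i^2$ would diverge). Writing $p_i=e^{-am_i}\bigl(1-e^{-(b-a)m_i}\bigr)$, for $i$ large enough that $m_i$ is sufficiently small we have the elementary bound $p_i\geq c\,m_i$ for some constant $c=c(a,b)>0$ (for instance $c=\tfrac{b-a}{2}$). Since $\um\notin\loneord$, i.e.\ $\sum_i m_i=\infty$, the tail sum $\sum_{i\geq N}m_i$ is still infinite for every $N$, and so $\sum_i p_i=\infty$ as required. The main (and essentially only) obstacle is this asymptotic estimate on $p_i$, which is straightforward given the two hypotheses $\sum m_i^2<\infty$ and $\sum m_i=\infty$ that are built into $\ltwoord\setminus\loneord$.
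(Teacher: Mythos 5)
Your proof is correct and follows essentially the same route as the paper: reduce to rational pairs $0<a<b$, then show $\sum_i \P(a\leq E_i\leq b)=\infty$ via the asymptotic $e^{-am}(1-e^{-(b-a)m})\sim (b-a)m$ as $m\to0$ together with $\sum_i m_i=\infty$; your infinite-product formulation is just the explicit form of the second Borel--Cantelli lemma that the paper invokes. No gaps.
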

\begin{proof}
It is enough to prove that for all pairs of rational numbers $0<a<b$ we have $\mu[-b,-a]>0$ with probability
1. This follows from the second Borel-Cantelli lemma and the fact that
\[ \sum_{i=1}^{\infty} \mathbb{P}(\, a \leq E_i \leq b \,)
\stackrel{ \eqref{exponentials_E_i} }{=}
 \sum_{i=1}^{\infty} e^{-a m_i}\left(1-e^{-(b-a)m_i}\right) \stackrel{(*)}{=} +\infty \quad \text{if} \quad
 \um \in \ltwoord \setminus \loneord,
 \]
 where $(*)$ follows from  $e^{-a m}\left(1-e^{-(b-a)m}\right) \approx (b-a)m$ as $m \to 0$.
\end{proof}

\subsection{Good functions}
\label{subsection_good_functions}

We define a set $\cG$ of ``good" functions. 
Recall the notion of excursions from Definition \ref{def:excursion}.

\begin{definition}\label{def:good_functions}
If $g$ is a function from $[0,\infty)$ to $\R\cup\{-\infty\}$,
we say $g\in\cG$ if:
\begin{enumerate}[(i)]
\item\label{good_i_cadlag} $g$ is lower semi-continuous and \cadlag.
\item\label{good_ii_minusinf} If $g(x)=-\infty$ then $g(x')=-\infty$ for all $x'>x$.
\item\label{good_iii_arrangable} For any $\varepsilon>0$, 
$g$ has only finitely many excursions above its minimum with 
length greater than or equal to $\varepsilon$.
\item\label{good_iv_lebesgue} Let $x_{\max}=\sup\{ \, x:x>-\infty \, \}\leq \infty$. 
The Lebesgue measure of the set of points in $(0,x_{\max})$ 
which are not contained in some excursion above the minimum is 0. 
\end{enumerate}
\end{definition}

  If $g\in\cG$, then $\ORDX(g)$  (see Definition \ref{def:ORDX}) is well-defined.

\begin{lemma}\label{lemma:ftgood}
Suppose $f_0$ is defined from $\um \in\ltwoord$ by Definition \ref{def:f_zero_from_exponentials}.
Define $f_t$ by \eqref{def_eq_f_t_from_f_0}. Then with probability 1, we have $f_t\in\cG$ for all $t\geq 0$.
\end{lemma}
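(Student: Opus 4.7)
The plan is to verify conditions (i)--(iv) of Definition \ref{def:good_functions} in turn, with almost all of the work going into (iii). Properties (i) and (ii) are inherited from $f_0$: as the inverse cdf of the positive measure $\mu_0$, the function $f_0$ is non-increasing, \cadlag{} and lower semi-continuous; the additive perturbation $x\mapsto tx$ is continuous, so $f_t$ inherits (i), and $f_t(x)=-\infty$ iff $f_0(x)=-\infty$, which by Definition \ref{def_inverse_cdf} holds on a right-infinite interval. For (iv), I use that $f_0$ is constant with value $-E_i$ on each block $I_i$ of length $m_i$ (Remark \ref{remark:f_0}), so $\bigcup_i I_i$ has full Lebesgue measure in the support $[0,x_{\max})$; on any single block $f_t$ is strictly increasing with slope $t$, and a short case analysis (according to whether the left endpoint of the block is a new running minimum of $f_t$) shows that the open interior of every block lies inside some excursion of $f_t$. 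The set of points not lying in any excursion is then contained in the countable set of block endpoints, hence has measure zero.

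The main step is (iii). Fix $\varepsilon>0$ and suppose for contradiction that $f_t$ has infinitely many disjoint excursions $[l_j,r_j)$ with $r_j-l_j\geq\varepsilon$; then necessarily $l_j\to\infty$. Applying the defining excursion inequality at the intermediate point $m_j:=l_j+\varepsilon/2\in[l_j,r_j)$ gives $f_t(m_j)\geq f_t(l_j)$, which after subtracting the linear term becomes
\begin{equation*}
0\ \leq\ f_0(l_j)-f_0(m_j)\ \leq\ t\varepsilon/2.
\end{equation*}
Taking one-sided limits in Definition \ref{def_inverse_cdf} gives the inverse-cdf bounds $\mu_0\bigl([f_0(m_j),0)\bigr)\geq m_j$ and $\mu_0\bigl((f_0(l_j),0)\bigr)\leq l_j$, whence
\begin{equation*}
\mu_0\bigl([f_0(m_j),f_0(l_j)]\bigr)\ \geq\ m_j-l_j\ =\ \varepsilon/2.
\end{equation*}
If $\um\in\loneord$, then the support of $f_0$ is the bounded interval $[0,\sum_i m_i)$, which cannot contain infinitely many disjoint length-$\geq\varepsilon$ excursions. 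Otherwise $\um\in\ltwoord\setminus\loneord$, and infinite total mass combined with the local finiteness of $\mu_0$ (Lemma \ref{lemma:exponential}(i)) forces $f_0(l_j)\to-\infty$; then the intervals $[f_0(m_j),f_0(l_j)]$ drift to $-\infty$ while each carries $\mu_0$-mass at least $\varepsilon/2$ inside an interval of length at most $t\varepsilon/2$. This contradicts Lemma \ref{lemma:exponential}(ii) summed over $\lceil t\varepsilon/2\rceil+1$ consecutive unit intervals, whose total mass vanishes as one moves to $-\infty$.

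The main obstacle I anticipate is upgrading the contradiction from ``for each fixed $t\geq 0$, almost surely'' to ``almost surely, for every $t\geq 0$ simultaneously'', since $t$ ranges over an uncountable set. The resolution is that the contradiction is driven entirely by deterministic properties of the single random object $\mu_0$ --- namely the two conclusions of Lemma \ref{lemma:exponential} --- while the tilt parameter $t$ enters only through the length bound $t\varepsilon/2$ on the intervals $[f_0(m_j),f_0(l_j)]$. On the single almost-sure event on which $\mu_0$ satisfies Lemma \ref{lemma:exponential}, the contradiction argument runs verbatim for every $t\geq 0$ and every $\varepsilon>0$, delivering $f_t\in\cG$ for all $t\geq 0$ simultaneously.
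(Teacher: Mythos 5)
Your proof is correct and follows essentially the same route as the paper's: properties (i), (ii) and (iv) are handled identically (every excursion of $f_0$ sits inside an excursion of $f_t$, so (iv) is inherited), and for (iii) both arguments rest on Lemma \ref{lemma:exponential}(ii), the only difference being that the paper argues directly (far enough out, $f_0(x)-f_0(x-\varepsilon)<-1$, so the tilt cannot flatten a window of width $\varepsilon<1/t$) while you run the contrapositive through the $\mu_0$-mass of the preimage intervals. Your closing remark on uniformity in $t$ is exactly the point that makes the paper's ``for all $t\geq 0$'' conclusion work as well.
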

\begin{proof}
Properties \eqref{good_i_cadlag}, \eqref{good_ii_minusinf} 
in Definition \ref{def:good_functions}
can be deduced for $f_t$ directly from the definitions \eqref{f0def_b}  and 
\eqref{def_eq_f_t_from_f_0}. 
Property \eqref{good_iv_lebesgue} for $f_t$ follows from the fact that
 \eqref{good_iv_lebesgue} holds for $f_0$ (see Remark \ref{remark:f_0}\eqref{remark_f0_i_lebesgue})
 and the observation that every excursion of $f_0$ is contained in an excursion of $f_t$.

It remains to justify property \eqref{good_iii_arrangable}.
 The function $f_0$ is
non-increasing, and Lemma \ref{lemma:exponential}\eqref{exponential_ii_sparser}
tells us that the length of the interval on which $f_0$ takes
values in $[y,y+1]$ tends to 0 as $y\to -\infty$. 
Hence for every $\varepsilon$ there exists a 
$K_\varepsilon$ such that 
$f_0(x)-f_0(x-\varepsilon)<-1$ for all $x \geq K_\varepsilon$.
As a result, $f_t(x)-f_t(x-\varepsilon)<-1+t\varepsilon$. 
If $\varepsilon<1/t$, we find that $f_t(x)<f_t(x-\varepsilon)$,
so all excursions intersecting $(K_\varepsilon,\infty)$ must have length 
less than $\varepsilon$, as desired.

\end{proof}

\begin{lemma}\label{lemma:cadlag_tilt}
Given some $f_0 \in \cG$ define $f_t$ by \eqref{def_eq_f_t_from_f_0} and assume that $f_t\in\cG$ for all $t\geq 0$.
The function $\ORDX(f_t)$ from $[0,\infty)$ to $\ell_\infty^{\downarrow}$
is \cadlag.
\end{lemma}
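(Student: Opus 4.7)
The goal is to prove that for every $k \geq 1$ the $k$-th largest excursion length $\ell_k(t) := \big(\ORDX(f_t)\big)_k$ is a càdlàg function of $t$; this implies the stated càdlàg property of $\ORDX(f_t)$ since $\linford$ is equipped with the topology of coordinate-wise convergence.

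The first step is the monotonicity observation that the set $M_t$ of minima of $f_t$ is non-increasing in $t$: ``$l \in M_t$'' amounts to $f_0(x) - f_0(l) > t(l-x)$ for all $x < l$, which is stricter when $t$ is larger, so $M_t \subseteq M_s$ for $s \leq t$. Consequently every excursion of $f_t$ is (modulo a Lebesgue null set, by Definition~\ref{def:good_functions}(iv)) the union of a run of consecutive excursions of $f_s$ for $s \leq t$, and the excursion structure evolves only by ``mergers''. For each potential minimum $l$ there is a threshold $\rho(l) := \inf_{x<l}(f_0(x) - f_0(l))/(l-x)$ at which $l$ leaves $M_t$. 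Using the standing assumption $f_t \in \cG$ for all $t$, which forces only finitely many excursions of $f_t$ of length exceeding any $\varepsilon > 0$, I would show that on any bounded time interval only finitely many merger events producing an excursion of length $\geq \varepsilon$ can occur.

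For right-continuity at $t_0$, fix $k$, set $\varepsilon := \ell_k(t_0)/2$ (treating $\ell_k(t_0) = 0$ separately), and identify the finitely many excursions of $f_{t_0}$ of length $\geq \varepsilon$. One then shows that there exists $\delta > 0$ such that on $[t_0, t_0 + \delta]$ no new merger among these ``significant'' excursions is completed; the corresponding excursions of $f_t$ have endpoints which move continuously in $t$, so their lengths converge to those of $f_{t_0}$ as $t \downarrow t_0$. A merger taking place exactly at $t_0$ is already reflected in $\ell_k(t_0)$ by the strict inequality in the definition of a minimum, which is how the right-continuous convention arises. The existence of the left limit at $t_0$ is obtained analogously along any sequence $t_n \uparrow t_0$, using again finiteness of merger events and continuity of endpoints between them.

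The principal obstacle is the interplay between two mechanisms driving the evolution of $\ell_k(t)$: discrete jumps at merger times and the continuous drift of excursion endpoints between mergers. Particular care is needed in the borderline case where $\rho(l) = t_0$ with the infimum not attained, so that $l \in M_{t_0}$ but $l \notin M_t$ for every $t > t_0$; here the length of the excursion of $f_t$ containing what had been $l$ still varies continuously in $t$, but this requires a careful local analysis using continuity of $(t, x) \mapsto f_t(x)$ on compacts. The hypothesis $f_t \in \cG$ for all $t$ is essential throughout, as it rules out pathological accumulation of merger times from the right at any fixed time.
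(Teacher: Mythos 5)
Your overall picture (nested sets of minima, excursions evolving only by mergers, finitely many long excursions) matches the paper's starting point, but the argument as organised has a genuine gap at the decisive step, namely the upper bound $\ell_k(t)\leq \ell_k(t_0)+\varepsilon$ for $t$ slightly above $t_0$. You only track the finitely many ``significant'' excursions of $f_{t_0}$ and mergers among them. But an excursion of $f_t$ need not contain any significant excursion of $f_{t_0}$: it can be assembled entirely from excursions of $f_{t_0}$ of length $<\varepsilon$ glued together by the tilt, and nothing in your argument bounds the length of such an excursion. (Also, your claim that only finitely many mergers ``producing an excursion of length $\geq\varepsilon$'' occur on a bounded time interval is false as stated for general $f_0\in\cG$: in the Brownian-type case a long excursion absorbs infinitely many tiny ones on every time interval; and even a corrected count of significant mergers would not control excursions built purely out of small pieces.) The paper closes exactly this hole: fix a reference time $T>t_0$, note that any excursion of $f_{t_0+\delta}$ of length $>m_1(t_0)+\varepsilon$ must lie inside one of the finitely many excursions of $f_T$ of that length, whose union $\cU$ has finite total length $L$; then cover $\cU$ up to leftover length $\varepsilon$ by finitely many excursions of $f_{t_0}$ and apply the local estimate to each of those. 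Without an argument of this type the right-continuity claim is unproved.

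Two further points. First, the ``continuity of endpoints'' you invoke is not an available input but essentially the statement to be proved for a single excursion; the paper proves it directly (for an excursion $[l,r)$ of $f_{t_0}$ and small $\delta$, the function $f_{t_0+\delta}$ has a minimum in $[l-\gamma,l]$ and another in $[r,r+\gamma]$), and that argument also disposes of your ``borderline case'' $\rho(l)=t_0$ without tracking individual thresholds. Second, the left limits are not ``analogous'': $\ell_k(t)$ is not monotone (a merger of the two largest excursions can make $\ell_2$ drop), so existence of left limits requires its own argument. The paper's device is to work with the partial sums $\sum_{i=1}^k \ell_i(t)$, which \emph{are} non-decreasing in $t$ because excursions only merge; combined with the one-sided bound $\sum_{i=1}^k\ell_i(t_0+\delta)\leq\sum_{i=1}^k\ell_i(t_0)+\varepsilon$ this yields both right-continuity and existence of left limits for the partial sums, hence for each $\ell_k$. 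I recommend restructuring your proof around this monotonicity.
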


\begin{proof}
Let us write $\um(t)=(m_1(t), m_2(t), \dots)=\ORDX(f_t)$. 
Since we use the topology of coordinatewise convergence on $\ell_\infty^{\downarrow}$,
it is enough to show that the function $ t \mapsto m_i(t)$ is \cadlag{} for all $i$. 

Consider $0\leq t'<t$. Since $f_t$ is obtained from $f_{t'}$ by 
adding on an increasing function, any minimum of $f_t$ is also 
a minimum of $f_{t'}$, and any excursion of $f_{t'}$ 
is a sub-interval of an excursion of $f_t$. 

\smallskip

Fix $t$ and suppose $[l,r)$ is an excursion of $f_t$. 
Take $\varepsilon$ with $0<\varepsilon<2l$.
Recalling the notion of $\bar{f}$ from \eqref{eq:def_barg}, we have
 $\bar{f}_t(l-\varepsilon/2)>f_t(l)$; 
hence if $\delta$ is sufficiently small, then 
\[\bar{f}_{t+\delta}(l-\varepsilon/2)>f_t(l)+\delta l=f_{t+\delta}(l),\]
and so $f_{t+\delta}$ has a minimum in $[l-\varepsilon/2,l]$. 

Also, there is some $x\in (r,r+\varepsilon/2)$ with $f_t(x)<f_t(l)$. 
Hence if $\delta$ is sufficiently small, then $f_{t+\delta}$ has a minimum 
in $[r,r+\varepsilon/2]$. 

So for any $\varepsilon$, we can find $\delta$
such that the length of the excursion of $f_{t+\delta}$ 
which includes $(l,r)$ is at most $r-l+\varepsilon$. 

\smallskip

Now we will argue that for any $\varepsilon>0$ there exists small enough  $\delta$ such that the 
length $m_1(t+\delta)$ is at most $m_1(t)+\varepsilon$. 

Fix any $T>t$ and consider $\delta\in(0, T-t)$. Since the excursions of $f_{t+\delta}$ are contained in the excursions of $f_T$, any excursion of $f_{t+\delta}$ of length more than
 $m_1(t)+\varepsilon$ must be contained in an excursion of $f_T$ whose length also exceeds that. 
There are only finitely many such excursions of $f_T$ since $f_T \in \cG$. Let $\cU$ be 
the union of those excursions, which has finite total length, say $L$.

Now let us look at all the excursions of $f_t$ contained in $\cU$. 
There are at most countably many. We can take a finite number of them
whose total length is at least $L-\varepsilon$. 
Each of them has length no more than $m_1(t)$. From the property above,
if we choose $\delta$ small enough, then at time $t+\delta$,
none of them is contained in an excursion of length more than 
$m_1(t)+\varepsilon$. But also, since the remaining length of $\cU$ 
outside this set is only $\varepsilon$, then also no other 
point in $\cU$ is contained in an excursion of length more than 
$m_1(t)+\varepsilon$.

It follows that $m_1(t+\delta)\leq \varepsilon+ m_1(t)$ as desired.

In similar fashion we can also obtain 
that $\sum_{i=1}^k m_i(t+\delta)\leq \varepsilon+\sum_{i=1}^k m_i(t)$
for sufficiently small $\delta$, for any $k$. 
But note that $\sum_{i=1}^k m_i(t)$ is non-decreasing in $t$. 
So for each $k$, $\sum_{i=1}^k m_i(t)$ is right-continuous with left limits,
and hence the same is true for $m_i(t)$ for each $i$.
\end{proof}

\begin{corollary}\label{corollary_countably_many_double_arches}
Given some $f_0 \in \cG$ define $f_t$ by \eqref{def_eq_f_t_from_f_0} and assume that $f_t\in\cG$ for all $t\geq 0$.
Then the set of times $t$ such that $f_t$ has a non-strict excursion (c.f.\ Definition \ref{def:excursion}) is
countable.
\end{corollary}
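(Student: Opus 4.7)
My plan is to use Lemma~\ref{lemma:cadlag_tilt} to reduce the claim to counting jumps of the real-valued coordinates of $\ORDX(f_t)$. Writing $m_i(t)$ for the $i$th coordinate of $\ORDX(f_t)$, Lemma~\ref{lemma:cadlag_tilt} guarantees that each $m_i$ is a real-valued \cadlag{} function, hence has at most countably many jump discontinuities; consequently
\[
J := \bigcup_{i\geq 1}\bigl\{t>0 : m_i(t^-)\neq m_i(t)\bigr\}
\]
is a countable union of countable sets, and hence countable. It therefore suffices to show that every $t^*>0$ at which $f_{t^*}$ has a non-strict excursion lies in $J$.

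Let $[l,r)$ be such a non-strict excursion of $f_{t^*}$, witnessed by a point $l'\in(l,r)$ with $f_{t^*}(l')=f_{t^*}(l)$, and write $L:=r-l$. Since $l'\in[l,r)$ and $[l,r)$ is an excursion with left endpoint $l$, we have $f_{t^*}(l')\leq f_{t^*}(x)$ for every $x\leq l'$. Combined with the identity
\[
f_s(l')-f_s(x)=\bigl[f_{t^*}(l')-f_{t^*}(x)\bigr]-(t^*-s)(l'-x),
\]
this yields $f_s(l')<f_s(x)$ for all $s<t^*$ and $x<l'$; in particular $f_s(l')<f_s(l)$. Hence for every $s<t^*$ the interval $[l,r)$ is \emph{not} an excursion of $f_s$: any excursion of $f_s$ starting at $l$ must terminate at or before $l'$, and so has length strictly less than $L$.

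By Definition~\ref{def:good_functions}\eqref{good_iv_lebesgue} applied to $f_s$, the excursions of $f_s$ contained in $[l,r)$ cover $[l,r)$ up to a Lebesgue-null set, so their lengths sum to $L$; since $[l,r)$ itself is not one of them, this decomposition consists of at least two pieces, each of length strictly less than $L$. At time $t^*$ these pieces merge into the single excursion $[l,r)$ of length $L$, while excursions of $f_s$ disjoint from $[l,r)$ contribute unchanged lengths on both sides. The multiset of excursion lengths therefore changes discontinuously at $t^*$: in the coordinate-wise limit $s\uparrow t^*$ the multiset contains several positive entries each strictly smaller than $L$ that sum to $L$, and at $t^*$ these are replaced by a single entry $L$. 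Since two sorted sequences with all coordinates equal represent the same multiset, this multiset discontinuity forces $m_i(t^{*-})\neq m_i(t^*)$ for some $i$, i.e.\ $t^*\in J$. Adjoining the (at most one) additional point $t=0$, the set of non-strict excursion times is contained in the countable set $J\cup\{0\}$.

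\textbf{Main obstacle.} The one point requiring care is verifying that at every $s<t^*$ the interval $[l,r)$ really does contain at least two distinct positive-length sub-excursions of $f_s$; this is immediate from Definition~\ref{def:good_functions}\eqref{good_iv_lebesgue} together with the inequality $f_s(l')<f_s(l)$ derived above. Once this splitting is established, the jump in some $m_i$ at $t^*$ follows automatically from the change of multiset, so there is no substantial obstacle.
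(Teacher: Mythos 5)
Your strategy is the same as the paper's: reduce to the countability of jumps of the \cadlag{} process $t\mapsto\ORDX(f_t)$ via Lemma \ref{lemma:cadlag_tilt}, and then show that a non-strict excursion at time $t^*$ forces a jump there. Your geometric core is correct and matches the paper: since $f_{t^*}(l')\leq f_{t^*}(x)$ for all $x\leq l'$ and the tilt is strictly smaller at earlier times, $l'$ is a strict running minimum of $f_s$ for every $s<t^*$, so no excursion of $f_s$ contains $l'$ in its interior and $[l,r)$ decomposes into at least two sub-excursions (each of length at most $\max(l'-l,\,r-l')=L-\delta<L$) at every earlier time.

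The gap is in the last step, where you pass from ``the excursion decomposition coarsens non-trivially at $t^*$'' to ``some coordinate $m_i$ jumps at $t^*$.'' Two things are asserted without justification. First, the claim that excursions disjoint from $[l,r)$ ``contribute unchanged lengths on both sides'' is false in general: other mergers (including other non-strict excursions) may occur at the very same instant $t^*$, so the rest of the multiset need not be constant across $t^{*-}$ and $t^*$. Second, even granting that the passage from $t^{*-}$ to $t^*$ is a pure coarsening with at least one non-trivial group, the inference that the two sorted sequences in $\linford$ must differ in some coordinate is exactly the point that needs proof for an infinite collection of excursions; ``the multiset changes'' is the conclusion restated, not an argument. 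The paper closes this by exhibiting a concrete monotone functional: with $k_0$ the smallest index such that $m_{k_0}(t^*)<r-l$, the function $t\mapsto\sum_{i=1}^{k_0-1}m_i(t)$ is non-decreasing in $t$ (sums of the top $k$ lengths only grow under merging), and one checks that for every $s<t^*$ it is bounded above by its value at $t^*$ minus a fixed positive quantity (using that each sub-excursion of $[l,r)$ has length at most $L-\delta$ and that $m_{k_0}(t^*)<L$), so its left limit at $t^*$ is strictly smaller than its value at $t^*$. Replacing your final ``multiset discontinuity'' sentence with this monotone-functional argument repairs the proof; as written, the step does not go through.
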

\begin{proof}
By Lemma \ref{lemma:cadlag_tilt} the function $t \mapsto \ORDX(f_t)$ 
is \cadlag, therefore it has countably many jumps, c.f.\ \cite[Section 12, Lemma 1]{billingsley}.
Thus we only need to show that if $f_{t_0}$ has a non-strict excursion for some $t_0>0$ then $t \mapsto \ORDX(f_t)$
jumps at $t_0$. If $[l,r)$ is a non-strict excursion of $f_{t_0}$, then
$f(l)=f(x)$ for some $x \in (l,r)$. Now if $t<t_0$ and $[l',r')$ is an excursion of $f_t$, then $x$ is not in the interior of 
$[l',r')$. This implies that for any $t<t_0$ the function $f_t$  has at least two disjoint excursions contained in $[l,r)$ that are separated by $x$, and these excursions
 merge at time $t_0$, thus if $k_0$ is the smallest index $k$ for which $m_k(t_0)<r-l$ then the
non-decreasing function $t \mapsto \sum_{i=1}^{k_0-1} m_i(t)$ jumps at time $t_0$. 
\end{proof}

\begin{definition}\label{def_uniformly_good}
A family of good functions $f^{(i)} \in \cG, i \in \mathcal{I}$ is said to be \emph{uniformly good} if
 for any $\varepsilon$ there exists $K_{\varepsilon} \in \R$ such that for any  $i \in \mathcal{I}$
the excursions of $f^{(i)}$ intersecting $[K_{\varepsilon},\infty)$ are all shorter than $\varepsilon$.
\end{definition}

\begin{lemma}\label{lemma_unif_good_conv_ordx}
Let  $f \in \cG$ be continuous and assume that all of the excursions of
$f$ are strict (c.f.\ Definition \ref{def:excursion}). Let $f^{(n)} \in \cG$, $n \in \N$ be a sequence of (not necessarily continuous) functions that converge to $f$ uniformly on bounded intervals. Let us also assume that the family consisting of $f$ and $f^{(n)}, n \in \N$ is uniformly good.
Then $\ORDX(f^{(n)}) \to \ORDX(f)$ as $n \to \infty$ in the product topology on $\linford$.
\end{lemma}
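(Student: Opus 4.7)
The plan is to prove $m_k^{(n)} \to m_k$ coordinatewise, where I write $(m_k) = \ORDX(f)$ and $(m_k^{(n)}) = \ORDX(f^{(n)})$, by separately establishing $\liminf m_k^{(n)} \geq m_k$ and $\limsup m_k^{(n)} \leq m_k$. Two standing inputs will be used throughout: first, $f^{(n)} \to f$ uniformly on bounded intervals implies $\bar f^{(n)} \to \bar f$ uniformly on bounded intervals, which follows immediately from the definition of running minimum; and second, uniform goodness forces every excursion of length $\geq \varepsilon$ (of $f$ or of any $f^{(n)}$) to lie in the compact set $[0, K_\varepsilon]$. Continuity of $f$ makes $\bar f$ continuous, so the excursions of $f$ correspond to the maximal closed plateaus of $\bar f$; strictness will enter crucially only for the upper bound.

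For the lower bound (the case $m_k = 0$ being trivial), I would enumerate the $M \geq k$ excursions $[l_1, r_1), \dots, [l_M, r_M)$ of $f$ of length $\geq m_k$, with levels $v_i = f(l_i)$. For $\delta > 0$ small enough that the intervals $[l_i - \delta, r_i + \delta]$ are pairwise disjoint, continuity, strictness and the excursion property yield some $\alpha > 0$ with $f \geq v_i + 3\alpha$ on $[l_i + \delta, r_i - \delta]$, with $f(x_i^+) \leq v_i - 3\alpha$ for some $x_i^+ \in (r_i, r_i + \delta)$, and with $\bar f(l_i - \delta) \geq v_i + 3\alpha$ whenever $l_i > 0$. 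Once $n$ is large enough that $\|f^{(n)} - f\|_\infty < \alpha$ on $[0, \max_i r_i + \delta]$, the midpoint $(l_i + r_i)/2$ satisfies $f^{(n)}((l_i+r_i)/2) \geq v_i + 2\alpha > \bar f^{(n)}((l_i+r_i)/2)$, so it lies in some excursion $[\ell_i^{(n)}, r_i^{(n)})$ of $f^{(n)}$. Tracking its level and using the monotonicity of $\bar f^{(n)}$, the above inequalities force $\ell_i^{(n)} \in (l_i - \delta, l_i + \delta]$ and $r_i^{(n)} \in (r_i - \delta, r_i + \delta)$, producing $M$ distinct excursions of $f^{(n)}$ of length at least $m_k - 2\delta$. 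Letting $\delta \to 0$ gives $\liminf m_k^{(n)} \geq m_k$.

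For the upper bound I would argue by contradiction: suppose some $\eta > 0$ and subsequence $(n_j)$ give $k$ excursions $[\ell_i^{(n_j)}, r_i^{(n_j)})$ of $f^{(n_j)}$ of length $\geq m_k + \eta$ with levels $v_i^{(n_j)}$. Uniform goodness keeps the endpoints in a fixed compact interval and bounds the levels, so after a further subsequence I obtain limits $\ell_i^*, r_i^*, v_i^*$ with $r_i^* - \ell_i^* \geq m_k + \eta$; sorting so $\ell_1^* \leq \dots \leq \ell_k^*$, disjointness of excursions of a common $f^{(n_j)}$ forces $r_i^* \leq \ell_{i+1}^*$. Passing to the limit in the defining inequalities gives $f(\ell_i^*) = v_i^* = \bar f(\ell_i^*)$, $\bar f \geq v_i^*$ on $[0, \ell_i^*]$ and $\bar f(r_i^*) \leq v_i^*$, so each $\ell_i^*$ lies in the closure of some excursion $[a_i, b_i)$ of $f$ at level $v_i^*$ with $b_i \geq r_i^*$.

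I expect the crux to be showing that these $k$ excursions $[a_i, b_i)$ are truly distinct, which yields $k$ excursions of $f$ of length $\geq m_k + \eta > m_k$, the desired contradiction. This is where the strictness hypothesis is essential. If $a_i < \ell_i^*$, then $\ell_i^*$ would be an interior point of $[a_i, b_i)$ with $f(\ell_i^*) = v_i^*$, violating strictness; so $a_i = \ell_i^*$. If $a_i = a_j$ for $i < j$, then $\ell_i^* = \ell_j^*$, and combined with $r_i^* \leq \ell_{i+1}^* \leq \dots \leq \ell_j^* = \ell_i^*$ this would force $r_i^* = \ell_i^*$, contradicting $r_i^* - \ell_i^* \geq m_k + \eta > 0$. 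Hence the $a_i$ are distinct, yielding $\limsup m_k^{(n)} \leq m_k$ and completing the proof.
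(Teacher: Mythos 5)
Your proof is correct, and its first half coincides with the paper's argument: the $\liminf$ bound is exactly the paper's local stability step, in which continuity, strictness and the definition of an excursion produce an $\alpha$-corridor around each long excursion $[l_i,r_i)$ of $f$ forcing $f^{(n)}$ to have an excursion with left endpoint in $(l_i-\delta,l_i+\delta]$ and right endpoint in $(r_i-\delta,r_i+\delta)$. (Note that strictness is therefore already essential here, not only in the upper bound as your opening remark suggests: without it, $f^{(n)}$ could dip below its excursion level at an interior near-minimum of $f$, and the matched excursion could be much shorter.) Where you genuinely diverge is the $\limsup$ direction. The paper never isolates this as a separate inequality; instead it invokes Definition \ref{def:good_functions}\eqref{good_iv_lebesgue} to cover $[0,K_\varepsilon+\varepsilon]$ by finitely many excursions of $f$ up to Lebesgue measure $\varepsilon/2$, matches each to an excursion of $f^{(n)}$ via the stability step, and observes that every unmatched excursion of $f^{(n)}$ must either fit into the small leftover length or intersect $[K_\varepsilon,\infty)$, hence is short; this accounting yields both inequalities at once and is subsequence-free. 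You instead run a compactness argument: extract convergent endpoints and levels of $k$ putative long excursions of $f^{(n_j)}$, pass to the limit to land inside excursions of $f$, and use strictness twice, to force $a_i=\ell_i^*$ and to separate the limiting excursions. Both routes work; yours makes the role of strictness more transparent and essentially avoids property \eqref{good_iv_lebesgue}, which you need only to exclude the degenerate case $\inf\{x:f(x)<v_i^*\}=+\infty$ (in which $[a_i,b_i)$ would not be an excursion at all), while the paper's covering argument gives the cleaner quantitative conclusion that the $i$th largest excursion lengths of $f^{(n)}$ and $f$ differ by at most $\varepsilon$ for all $i$ simultaneously. Two routine points to spell out when writing this up: that a point $x$ with $f^{(n)}(x)>\bar f^{(n)}(x)$ lies in an excursion of $f^{(n)}$ uses right-continuity to rule out $x$ being a right endpoint, and the inequality $r_i^*\leq \ell_{i+1}^*$ requires fixing the left-to-right order of the $k$ excursions along a further subsequence before passing to the limit.
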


\begin{proof}

Suppose $(l,r)$ is an excursion of $f$. 
For any given $\gamma>0$ (with $\gamma<l$),
there is a $\delta>0$ such that the following properties hold:
\begin{itemize}
\item[(i)]
$f(x)\geq f(l)+\delta$ for all $x\in[0,l-\gamma]$;
\item[(ii)]
$f(x)\geq f(l)+\delta$ for all $x\in [l+\gamma, r-\gamma]$;
\item[(iii)]
$f(x)\leq f(l)-\delta$ for some $x\in[r,r+\gamma]$.
\end{itemize}
Here (i) holds since $f$ has a minimum at $l$ and,
being continuous, must achieve its bounds on $[0,l-\gamma]$;
(ii) holds since the excursion is strict, and
(iii) holds since by  the definition of excursions,
there must be points arbitrarily close to the right of $r$
which take value lower than $f(l)$.

Now suppose $n$ is large enough that
$|f^{(n)}(x)-f(x)|<\delta/2$ for all $x\in[0,r+\gamma]$. 
Then we obtain the following properties:
\begin{itemize}
\item[(i)]
$f^{(n)}(x)\geq f(l)+\delta/2$ for all $x\in[0,l-\gamma]$;
\item[(ii)]
$f^{(n)}(x)\geq f(l)+\delta/2$ for all $x\in [l+\gamma, r-\gamma]$;
\item[(iii)]
$g^{(n)}(x)\leq f(l)-\delta/2$ for some $x\in[r,r+\gamma]$;
\item[(iv)]
$g^{(n)}(l)\in(f(l)-\delta/2, f(l)+\delta/2)$.
\end{itemize}
Then $f^{(n)}$ must have an excursion which starts somewhere
in $[l-\gamma, l+\gamma]$ and ends somewhere in 
$[r-\gamma, r+\gamma]$.  

Now let $\varepsilon>0$ and choose $K_\varepsilon$ such that
 the excursions of $f$ and $f^{(n)}, n \in \N$  intersecting $[K_{\varepsilon},\infty)$ are all shorter than $\varepsilon$.
 
Now by Definition \ref{def:good_functions}\eqref{good_iv_lebesgue}
there exists a finite collection
of excursions $(l_j, r_j)$ of $f$, whose union
covers all of $[0,K_\varepsilon+\varepsilon]$ except for a set of total length
less than $\varepsilon/2$. Let $k$ be the total number of these
excursions. 
Apply the above argument to all of these excursions
with $\gamma=\varepsilon/4k$.
Then if $n$ is sufficiently large, we have that
for each of these excursions of $f$,
there is a corresponding excursion of $f^{(n)}$ 
whose length is within $\varepsilon/2k$; 
the remaining length in $[0,K_\varepsilon+\varepsilon]$
amounts to no more than $\varepsilon$; and
we know that outside $[0,K_\varepsilon+\varepsilon]$, 
all excursions (either of $f^{(n)}$ or $f$)
have length less than or equal to $\varepsilon$.

It follows that for any $i>0$, the $i$th largest 
excursion of $f^{(n)}$ and the $i$th largest excursion
of $f$ differ by at most $\varepsilon$. 
Hence indeed $\ORDX(f^{(n)})$ converges componentwise 
to $\ORDX(f)$, as desired.

\end{proof}

\section{Extension of rigid representation to $\ltwoord$}
\label{section:extension_to_ltwo}

In this section we will extend the rigid representation 
results of Section \ref{section:rigid_finite_state_space} to
 any initial condition $\um \in \ltwoord$. As we have discussed in Remark \ref{remark_tilt_l1}, extension
 from $\lzeroord$ to $\loneord$ is automatic, so in this section we will assume that
 $\um \in \ltwoord \setminus \loneord$.

In Section \ref{subsection_extension_mc_tilt_to_l2} we prove that the MC admits a tilt representation, i.e., Theorem \ref{thm:tilt}.

In Section \ref{subsection:mcld_extension_to_ltwo_proof} we prove that the MCLD admits a tilt-and-shift representation,
i.e., Theorem \ref{thm:mcld_extension_introduction}.

\subsection{Extension of MC tilt representation to $\ltwoord$}
\label{subsection_extension_mc_tilt_to_l2}

The aim of this section is to prove Theorem \ref{thm:tilt}.

\medskip

 $f_0$ is defined from $\um \in\ltwoord$ by Definition \ref{def:f_zero_from_exponentials}.
 $f_t$ is defined by \eqref{def_eq_f_t_from_f_0}.
\begin{definition}\label{def_good_times}
By Lemma \ref{lemma:ftgood} and Corollary \ref{corollary_countably_many_double_arches} 
the (random) set of times $t$ such that $f_t$ has a non-strict excursion (c.f.\ Definition \ref{def:excursion}) is
countable.
Hence for all but countably many $t$, 
the probability that all excursions of $f_t$ are strict is equal to $1$.
 Let $\cT$ denote this (deterministic)  set of ``good" times $t$.
\end{definition}

From Lemma \ref{lemma:ftgood} and
Lemma \ref{lemma:cadlag_tilt} it follows that
$t \mapsto \ORDX(f_t)$ is a \cadlag{} process with respect to the product topology on $\linford$.
The graphical representation  of the multiplicative coalescent $\bm_t$ constructed in
 \cite[Section 1.5]{aldous_mc}
  is also a \cadlag{} process with respect to the
topology of the $\dist(\cdot,\cdot)$-metric defined in \eqref{eq_def_d_metric} (see \cite[Lemma 2.8]{james_balazs_mcld_feller}), thus it is
\cadlag{} with respect to the weaker product topology on $\linford$.

Hence, since $\cT$ is dense, 
if we can show that for any finite 
collection $t_1, \dots, t_r\in \cT$, 
we have 
\begin{equation}\label{tiltconclusion}
(\ORDX(f_{t_i}), 1\leq i\leq r)
\isd
(\bm_{t_i}, 1\leq i\leq r),
\end{equation}
then indeed the law of $\ORDX(f_t)$ is that of the MC.

For each $n$, let $\um^{(n)}$ be given by 
\begin{equation}
 \label{truncation_of_initial_state}
m^{(n)}_i = \begin{cases} m_i,&i\leq n\\ 0,&i>n\end{cases}.
\end{equation}
For each $n$, 
$\um^{(n)}\in\lzeroord$, and $\um^{(n)}\to\um$
in $\ltwoord$ as $n\to\infty$.

We couple the processes starting from $\um^{(n)}$, $n\geq 1$,
by using the same height variables $Y_i=-E_i$ throughout. 
If we define \[f^{(n)}_t(x)=f_0^{(n)}(x)+tx,\] 
then by the $\lambda =0$ case of Proposition \ref{prop:tilt_and_shift_l0} we have
 \begin{equation}\label{finite_dim_agree_for_truncated_mc}
 (\ORDX(f_{t_i}^{(n)}), 1\leq i\leq r)\isd (\bm^{(n)}_{t_i}, 1\leq i\leq r),
 \end{equation}
where $\bm^{(n)}_t, t \geq 0$ is the MC started from $\um^{(n)}$.

By the Feller property of the MC (see \cite[Proposition 5]{aldous_mc})  we have
\begin{equation}\label{eq_truncated_conv_feller_mc}
(\bm^{(n)}_{t_i}, 1\leq i\leq r)\tod (\bm_{t_i}, 1\leq i\leq r), \qquad n \to \infty 
\end{equation}
(with respect to the topology of $\ltwoord$ and hence also coordinatewise).

We will let $n\to\infty$, and show that  
\begin{equation}\label{mc_convergence_ORDX_ft}
\begin{array}{c}
\ORDX(f^{(n)}_t) \to \ORDX(f_t) \quad \text{for  all} \quad  t\in\cT \\
\text{coordinate-wise with probability 1.}
\end{array}
 \end{equation}

Putting together \eqref{finite_dim_agree_for_truncated_mc},\eqref{mc_convergence_ORDX_ft} and
\eqref{eq_truncated_conv_feller_mc}
we obtain \eqref{tiltconclusion} as required.

\medskip

It remains to show \eqref{mc_convergence_ORDX_ft}. We will achieve this by checking that
the conditions of Lemma \ref{lemma_unif_good_conv_ordx} almost surely hold if $t \in \cT$.
We may assume that $\um \in \ltwoord \setminus \loneord$, 
as discussed in the first paragraph of Section \ref{section:extension_to_ltwo}.

\begin{lemma}\label{lemma:tailsmall}
Fix $t>0$. With probability 1, the family of functions that consists of
$f_t^{(n)}$, $n\geq 1$ and $f_t$ is uniformly good (c.f.\ Definition \ref{def_uniformly_good}).
\end{lemma}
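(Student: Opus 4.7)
The plan rests on two ingredients. First, Lemma \ref{lemma:exponential}(ii) states that $\mu_0[y,y+1]\to 0$ as $y\to-\infty$ almost surely, which controls the density of mass of $\mu_0$ at deep levels. Second, since $\mu_0^{(n)}$ uses only a subset of the point masses of $\mu_0$, we have $\mu_0^{(n)}\le\mu_0$ as measures, so the same deep-level density bound holds uniformly in $n$ for the truncations. Translating this into a statement about excursion lengths of $f_t^{(n)}$ requires relating the $x$-length of an excursion to the mass that $\mu_0^{(n)}$ places in the corresponding band of levels.

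Concretely, given $\varepsilon>0$, I would fix $\delta>0$ with $\delta<\min(\varepsilon/2,1/(4t))$ and, by Lemma \ref{lemma:exponential}(ii), choose $y^*<0$ (depending on $\omega$) so that $\mu_0[y,y+1]\le\delta$ for all $y\le y^*$. Subadditivity over unit subintervals then gives $\mu_0[y,y+s]\le\lceil s\rceil\delta$ whenever both endpoints lie at or below $y^*$. Let $C:=\mu_0(y^*,0]$, which is finite almost surely by Lemma \ref{lemma:exponential}(i). Now for any excursion $[l,r)$ of $f_t^{(n)}$ of length $\ell:=r-l$, set $y_1:=f_0^{(n)}(l)$ and $y_0:=f_0^{(n)}(r-)$. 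Two elementary facts follow directly from Definition \ref{def:excursion} and the piecewise-constant structure of $f_0^{(n)}$: the excursion inequality $f_t^{(n)}(x)\ge f_t^{(n)}(l)$ on $[l,r)$ rearranges to $y_1-y_0\le t\ell$, while the plateaus of $f_0^{(n)}$ at levels in $[y_0,y_1]$ must cover $[l,r)$, so
\[
\ell\le\mu_0^{(n)}[y_0,y_1]\le\mu_0[y_0,y_1].
\]

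I then split into two cases. If $y_1\le y^*$, the chain $\ell\le\mu_0[y_1-t\ell,y_1]\le\lceil t\ell\rceil\delta\le(t\ell+1)\delta$ gives $\ell(1-t\delta)\le\delta$, hence $\ell\le 2\delta<\varepsilon$ thanks to the choice of $\delta$. If instead $y_1>y^*$, the monotonicity of $f_0^{(n)}$ forces $l\le\mu_0^{(n)}(y^*,0]\le C$; splitting $[y_0,y_1]$ at $y^*$ and applying the deep-level bound to the part below $y^*$ yields $\ell\le\lceil t\ell\rceil\delta+C$, leading to $\ell\le 2(C+\delta)$ and therefore $r\le 3C+2\delta$. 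Setting $K:=3C+2\delta+1$ (a random constant depending on $\omega,\varepsilon,t$ but not on $n$), any excursion with $r\ge K$ must fall in the first case and thus have length less than $\varepsilon$. The same argument with $\mu_0$ in place of $\mu_0^{(n)}$ covers $f_t$ itself.

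The main obstacle is the fact that knowing the right endpoint $r$ is large does not directly constrain the starting level $y_1$: an excursion could in principle start far to the left at a moderately high level and extend across the whole region. The case split above handles precisely this issue by ruling out ``high-starting'' excursions from the region $[K,\infty)$ via a crude length bound, and then applying the deep-level estimate only to the residual ``low-starting'' case, where the bound is sharp. A minor bookkeeping point is to verify that all the unit subintervals arising from the subadditivity step have their left endpoints at or below $y^*$, which is immediate since in both cases the interval being split has upper endpoint at most $y^*$.
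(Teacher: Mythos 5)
Your proof is correct and rests on exactly the same two pillars as the paper's: the almost sure decay $\mu_0[y,y+1]\to 0$ from Lemma \ref{lemma:exponential}(ii) and the domination $\mu_0^{(n)}\leq\mu_0$, which together make the excursion-length bound of Lemma \ref{lemma:ftgood} uniform in $n$. The paper states this in one line by saying the argument of Lemma \ref{lemma:ftgood} "repeats uniformly in $n$"; your case split on whether the excursion's starting level lies above or below $y^*$ is just a more explicit quantitative rendering of that same argument, so no further comment is needed.
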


\begin{proof}
Recalling Definitions \ref{def_inverse_cdf} and \ref{def:f_zero_from_exponentials} we see that
 \[ f_0^{(n)}=f_{\mu_0^{(n)}}, \qquad \text{where} \qquad
\mu_0^{(n)}=\sum_{i=1}^{n} m_i \cdot \delta_{Y_i}.\] 
Now
$ \mu_0-\mu_0^{(n)}=\sum_{n<i} m_i \cdot \delta_{Y_i}$ is a non-negative measure for each $n \in \N$, 
thus $\mu_0$ dominates $\mu_0^{(n)}$ for each $n$ and we obtain the proof of Lemma \ref{lemma:tailsmall}
 by repeating the argument of proof Lemma \ref{lemma:ftgood},
 uniformly in $n$.
\end{proof}

\begin{lemma}\label{lemma:ucbi} 
If $\um \in \ltwoord \setminus \loneord$ then for any $t \geq 0$ the
 function $f_t(\cdot)$ is continuous and $f_t^{(n)}\to f_t$ 
uniformly on bounded intervals. 
\end{lemma}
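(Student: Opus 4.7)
The plan is to split the lemma into two parts. Since $f_t(x) = f_0(x) + tx$, continuity of $f_t$ is equivalent to continuity of $f_0$; likewise $f_t^{(n)}(x) - f_t(x) = f_0^{(n)}(x) - f_0(x)$, so uniform convergence on $[0,K]$ transfers automatically from $t=0$ to any $t \geq 0$. Thus I would prove (i) that $f_0$ is continuous, and (ii) that $f_0^{(n)} \to f_0$ uniformly on every bounded interval $[0,K]$.

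For (i), I will use the characterization of $f_0$ from Remark \ref{remark:f_0}\eqref{remark_f0_i_lebesgue}: on the interval $I_j$ of length $m_j$ the function takes the value $-E_j$, so the range of $f_0$ contains every $-E_j$. By Lemma \ref{lemma:dense}, when $\um \in \ltwoord \setminus \loneord$ this set is dense in $(-\infty,0)$ almost surely. A non-increasing \cadlag{} function with dense range admits no jumps: a jump at some $x_0$ of size $\Delta > 0$ would make the open interval $(f_0(x_0), f_0(x_0-))$ disjoint from the range of $f_0$, contradicting density. I also need to check $f_0(x) > -\infty$ for every $x \geq 0$; this follows from $\um \notin \loneord$ (giving $\sum m_i = \infty$) combined with Lemma \ref{lemma:exponential}\eqref{exponential_i_bounded} (bounded regions have finite mass), so $\mu_0[y, 0) \to \infty$ as $y \to -\infty$ and $f_0(x)$ is finite for every $x$.

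For (ii), the first step is monotone pointwise convergence. The measures satisfy $\mu_0^{(n)} \leq \mu_0^{(n+1)} \leq \mu_0$, from which the definition \eqref{def_eq_inverse_sdf_of_a_measure} gives $f_0^{(n)}(x) \leq f_0^{(n+1)}(x) \leq f_0(x)$. Conversely, for any $y < f_0(x)$ there exists $y' \in (y, f_0(x)]$ with $\mu_0[y', 0) > x$, so $\mu_0[y, 0) > x$; since $\mu_0^{(n)}[y, 0) \uparrow \mu_0[y, 0)$ by monotone convergence of the defining partial sums, $\mu_0^{(n)}[y, 0) > x$ eventually, giving $f_0^{(n)}(x) \geq y$ eventually. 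Hence $f_0^{(n)}(x) \uparrow f_0(x)$ pointwise. Uniform convergence on $[0, K]$ then follows from the classical P\'olya version of Dini's theorem for monotone functions: a sequence of monotone functions converging pointwise on a compact interval to a continuous limit converges uniformly. The usual $\varepsilon/2$ partition argument works --- partition $[0,K]$ so $f_0$ oscillates by at most $\varepsilon/2$ on each sub-interval, then combine pointwise convergence at the partition endpoints with the monotonicity of each $f_0^{(n)}$ to sandwich $f_0^{(n)}(x)$ at every intermediate $x$. The only subtlety is that $f_0^{(n)}$ may take the value $-\infty$ on a right portion of $[0,K]$, but this disappears once $n$ is large enough that $\sum_{i=1}^n m_i > K$, and the argument is applied for such $n$.

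The main obstacle is (i): translating the density of $\{-E_i\}$ into continuity of $f_0$ via the ``no jumps'' argument above is the heart of the proof and the only place where the hypothesis $\um \notin \loneord$ enters in an essential way. After that, (ii) is a routine monotone-convergence-plus-Dini argument, with only a mild issue about the value $-\infty$ that is handled by taking $n$ large.
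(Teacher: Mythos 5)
Your proof is correct. The reduction to $t=0$ and the continuity argument (monotone function whose range is dense by Lemma \ref{lemma:dense}, hence no jumps) coincide with the paper's; your explicit check that $f_0(x)>-\infty$ via $\sum_i m_i=\infty$ is a point the paper leaves implicit. Where you genuinely diverge is the uniform-convergence half. The paper exploits the specific structure of the inverse cdf: it shows $f_0^{(n)}(x)=f_0(x+\delta)$ where $\delta=\sum_{j>n:\,E_j<E_k}m_j$ is the mass of discarded atoms lying above the level $f_0^{(n)}(x)$, bounds $\delta$ uniformly over $x\in[0,U]$ by $\sum_{j>n:\,E_j<S}m_j\to 0$, and concludes by uniform continuity of $f_0$. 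You instead observe the monotonicity $\mu_0^{(n)}\uparrow\mu_0$, deduce $f_0^{(n)}(x)\uparrow f_0(x)$ pointwise, and invoke the P\'olya/Dini theorem for monotone functions converging to a continuous limit. Both are sound; the paper's route gives the slightly more quantitative statement that $f_0^{(n)}$ is an exact horizontal shift of $f_0$ by a vanishing amount, while yours is softer and arguably cleaner, requiring only the order structure of the measures and a classical compactness argument. Your handling of the $-\infty$ tail of $f_0^{(n)}$ (discard $n$ with $\sum_{i\le n}m_i\le K$) plays the same role as the paper's choice of $n_0$.
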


\begin{proof}
Since $f_t^{(n)}(x)=f_0^{(n)}(x)+tx$,
and $f_t(x)=f_0(x)+tx$, it is enough to show the statements of the lemma for $t=0$. 

The function $f_0$ is non-increasing, moreover by Lemma \ref{lemma:dense}
the values $E_i, i \geq 0$ are dense in $[0,\infty)$, thus
 by Remark \ref{remark:f_0}\eqref{remark_f0_i_lebesgue} the values
$f_0$ takes are dense in $(-\infty,0)$. 
Hence $f_0$ is continuous.

Since $f_0$ is a continuous function on $[0,\infty)$, it is uniformly continuous on any bounded sub-interval.

Fix any $U<\infty$. Let us define $n_0= \min\{ \, n \, : \, \sum_{i=1}^n m_i >U \, \}$.
For all $n \geq n_0$ we have $f^{(n)}_0(U)\geq f^{(n_0)}_0(U)=: -S$.

Consider $x \leq U$, $n \geq n_0$. 
By \eqref{f0def_for_lzeroord} have $f^{(n)}_0(x)=Y_k$, where $k$ is such that 
\[
x\in\left[
\sum_{j\leq n\, :\, E_j<E_k} m_j\,,
\,\,
m_k + \sum_{j\leq n \, : \, E_j<E_k} m_j\right).
\]
By considering the interval on which 
the function $f_0$ takes the same value $Y_k$, we have 
\[ f_0^{(n)}(x)=f_0(x+\delta) \qquad \text{where} \qquad
\delta=\sum_{j>n \, : \, E_j<E_k} m_j.
\]
For all $n \geq n_0$ we have $\delta \leq \sum_{j>n\, :\, E_j<S} m_j$.
This goes to 0 as $n\to\infty$ by Lemma \ref{lemma:exponential} and dominated convergence.

Then by the uniform continuity of $f_0$ on bounded intervals,
we have that $f_0^{(n)}\to f_0$ uniformly on $[0,U]$, as desired.
\end{proof}

Finally, we can insert the properties derived in
 Definition \ref{def_good_times} and  Lemmas \ref{lemma:tailsmall}, \ref{lemma:ucbi} into Lemma \ref{lemma_unif_good_conv_ordx}
 to obtain \eqref{mc_convergence_ORDX_ft}.
This completes the proof of  Theorem \ref{thm:tilt}.

\subsection{Extension of MCLD tilt-and-shift representation to $\ltwoord$}
\label{subsection:mcld_extension_to_ltwo_proof}
\label{section_approx_by_truncation}

The aim of this section is to prove Theorem \ref{thm:mcld_extension_introduction}.

\medskip

In Section \ref{subsection_perturbation} we give quantitative bounds on the effect of the insertion of a new 
particle in a finite particle system on the death times of other particles. 

In Section \ref{section_truncation_and_approximation} we construct the death times of each particle in the infinite particle
system associated to the initial state $\um \in \ltwoord$ by inserting particles one by one and showing  that death times converge.

In Section \ref{section_tilt_shift_cont_lemmas} we collect some technical lemmas that allow us to deduce
the convergence of the ordered sequence of excursion lengths from
the approximation results of Section \ref{section_truncation_and_approximation}.

In Section \ref{section_proof_of_tilt_and_shift_mcld_extension} we extend the MCLD tilt-and-shift representation from
$\loneord$
to $\ltwoord$, i.e., we prove Theorem \ref{thm:mcld_extension_introduction}.

\subsubsection{Perturbation of the particle system}
\label{subsection_perturbation}

Recall the particle system introduced in Section \ref{section_particle_representation}.
The main result of Section \ref{subsection_perturbation} is Lemma \ref{lemma_insertion_of_new_particle_death_times}, which
  quantifies the effect of the insertion of a new particle on the death times of other particles.
This perturbation result will play an important
role when we extend our rigid representation 
of MCLD($\lambda$) from $\lzeroord$ to $\ltwoord$
in Section \ref{section_truncation_and_approximation} using truncation and approximation. Note that Lemma \ref{lemma_insertion_of_new_particle_death_times}
is a \emph{deterministic} result, i.e., it holds for any initial configuration of particles.

\begin{lemma}\label{lemma_insertion_of_new_particle_death_times}
For any $\um=(m_1,\dots, m_n) \in \lzeroord$ and  $Y_1(0),\dots, Y_n(0)$, let us define $Y_1(t),\dots, Y_n(t)$,
 $\mu_t$ and $t_1,\dots,t_n$  as in Definition \ref{def_particle_rep}. 
 
Let us initialize a new particle system $ \widetilde{Y}_0(t),\dots, \widetilde{Y}_{n}(t)$
by letting $\widetilde{Y}_i(0)=Y_i(0)$ for any $i=1,\dots,n$ and by adding a new particle
with initial height $\widetilde{Y}_0(0)$ and mass $\widetilde{m}$. 

Let us then define  $\widetilde{Y}_0(t),\dots, \widetilde{Y}_{n}(t)$,
 $\widetilde{\mu}_t$ and $\widetilde{t}_0,\dots,\widetilde{t}_{n}$ 
  analogously to Definition \ref{def_particle_rep}. Then  we have
  \begin{equation}\label{eq_insertion_of_new_particle_death_times}
  |\widetilde{t}_i-t_i| \leq \ind  [ \widetilde{Y}_{0}(0) > Y_i(0)]
  \frac{\widetilde{m} |Y_i(0)| }{ \lambda^2 } \exp \left( \frac{ \mu_0( Y_i(0),0) }{\lambda} \right), \quad
  i=1,\dots, n.
  \end{equation}
\end{lemma}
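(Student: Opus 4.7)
If $\widetilde{Y}_0(0) \le Y_i(0)$, the indicator on the right-hand side vanishes. Since particles in this system cannot cross (two meeting at a common height merge into a single block), the new particle stays at height $\le Y_i(t)$ for all $t$, contributes nothing to the mass strictly above $i$, and leaves the subsystem of $i$ together with every particle initially above $i$ unchanged. Hence $\widetilde t_i = t_i$.

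Now assume $\widetilde{Y}_0(0) > Y_i(0)$, and write $\delta_j := |\widetilde{t}_j - t_j|$ for each original particle $j$. The plan is to establish, and then to solve by induction on rank from the top, the recurrence
\begin{equation*}
\delta_j \;\le\; \frac{\widetilde m\,|Y_j(0)|}{\lambda^2} + \frac{1}{\lambda}\sum_{k\,:\,Y_k(0) > Y_j(0)} m_k\,\delta_k.
\end{equation*}
To derive this, let $v_j, \widetilde v_j$ be the upward speeds of $j$ in the two systems (taken to be zero after $j$ dies). Each is at least $\lambda$ while the corresponding particle is alive, so comparing integrated trajectories gives $\lambda\,\delta_j \le \int_0^{t_j \vee \widetilde t_j}|\widetilde v_j - v_j|\,\mathrm ds$. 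Decomposing $|\widetilde v_j(s) - v_j(s)|$ according to source---direct contribution of the new particle (at most $\widetilde m$), plus, for each original $k$ with $Y_k(0) > Y_j(0)$, a contribution $m_k$ exactly when $k$ is strictly above $j$ and alive in one of the two systems and not the other---and integrating, one finds
\begin{equation*}
\int_0^{t_j \vee \widetilde t_j}|\widetilde v_j - v_j|\,\mathrm ds \;\le\; \widetilde m\,(t_j \vee \widetilde t_j) + \sum_{k\,:\,Y_k(0) > Y_j(0)} m_k\,|\tau_{k,j} - \widetilde\tau_{k,j}|,
\end{equation*}
where $\tau_{k,j}$ is the first time in the original system at which $k$ either dies or merges with $j$ (and analogously for $\widetilde\tau_{k,j}$). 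Combined with the a priori bound $t_j \vee \widetilde t_j \le |Y_j(0)|/\lambda$ and the key estimate $|\tau_{k,j} - \widetilde\tau_{k,j}| \le \delta_k$, this produces the recurrence.

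To solve the recurrence, order the original particles above $i$ as $p_1, p_2, \ldots$ from top to bottom and set $\pi_l = \prod_{l' < l}(1 + m_{p_{l'}}/\lambda)$. The telescoping identity $1 + (1/\lambda)\sum_{l'<l} m_{p_{l'}}\pi_{l'} = \pi_l$, together with $|Y_{p_{l'}}(0)| \le |Y_{p_l}(0)|$ for $l' < l$, yields by induction $\delta_{p_l} \le (\widetilde m\,|Y_{p_l}(0)|/\lambda^2)\,\pi_l$. Applying this with $j = i$ and $\prod(1 + x) \le \exp(\sum x)$ gives the claimed bound $\delta_i \le (\widetilde m\,|Y_i(0)|/\lambda^2)\exp(\mu_0(Y_i(0),0)/\lambda)$.

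The principal technical obstacle will be the estimate $|\tau_{k,j} - \widetilde\tau_{k,j}| \le \delta_k$: it asserts that the time at which $k$ merges with $j$ (or dies, whichever is first) is perturbed by at most $\delta_k$ when the new particle is inserted. The most natural justification treats $\tau_{k,j}$ as a ``death time'' for $k$ within the subsystem consisting of $k$ and the particles above it (a subsystem in which $j$ plays the role of the cemetery), and invokes a parallel argument at that level; setting up this scheme without circularity---e.g.\ by a joint induction ordered by initial height, or by a single induction running over the full chronological sequence of events in both coupled systems---is the bulk of the work.
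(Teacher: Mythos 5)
Your overall architecture (a top-down recurrence for the perturbations, solved by induction to give a product that is then bounded by an exponential, plus the trivial case $\widetilde Y_0(0)\le Y_i(0)$) matches the paper's, and your algebra for solving the recurrence is correct. But the derivation of the recurrence itself has a genuine gap, and the step you flag as "the principal technical obstacle" is not just unproved — the estimate $|\tau_{k,j}-\widetilde\tau_{k,j}|\le\delta_k$ is false. Take $j$ at height $-10$ with a single particle $k$ of mass $m_k>\lambda$ at height $-5$ above it, and insert the new particle at height $-7$, i.e.\ \emph{between} them. Then $k$'s trajectory, hence its death time, is completely unchanged, so $\delta_k=0$; but the gap $Y_k-Y_j$ now closes at rate $m_k+\widetilde m$ instead of $m_k$, so the merge time drops from $5/m_k$ to $5/(m_k+\widetilde m)$ and $|\tau_{k,j}-\widetilde\tau_{k,j}|>0=\delta_k$. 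So no "parallel argument at the subsystem level" can deliver the inequality you need; any correct version must bound merge-time perturbations by something other than $\delta_k$, and at that point the circularity you worry about becomes real.

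The paper sidesteps merge times entirely. Integrating the speed equation and splitting into the cases $t_i>t_{i-1}$ (no merging with anything above before death, so particle $j$ contributes exactly on $[0,t_j)$) and $t_i=t_{i-1}$ (where one only needs an inequality), it obtains the \emph{exact} closed recursion
\begin{equation*}
t_i \;=\; t_{i-1}\,\vee\,\frac{|Y_i(0)|-\sum_{j<i}m_j t_j}{\lambda},
\end{equation*}
in which only death times appear — the $\vee$ with $t_{i-1}$ absorbs all the merging bookkeeping. It then runs a Lipschitz-in-$\widetilde m$ analysis on this recursion (using that a max of Lipschitz functions is Lipschitz with the max of the constants), arriving at essentially your recurrence with $\widetilde m L_j$ in place of $\delta_j$, and concludes as you do. If you want to salvage your trajectory-comparison route, you should first prove this exact death-time recursion; comparing the two recursions (original and perturbed) then gives your inequality directly, without ever estimating $\tau_{k,j}$.
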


The rest of Section \ref{subsection_perturbation} is devoted to the proof of Lemma \ref{lemma_insertion_of_new_particle_death_times}.

\medskip

Without loss of generality we may assume
$ Y_1(0)$, $Y_2(0)$, \dots $Y_n(0)$ are all distinct,
 because if $Y_i(0)=Y_j(0)$ for some $i \neq j$ then
we can replace our particle system by another one with fewer particles in which
these two particles are merged.

With a slight abuse of notation, for the rest of Section \ref{subsection_perturbation} we will
relabel our particles so that we have $Y_1(0) > Y_2(0) > \dots > Y_n(0)$ and
denote by $m_i$ the weight and $t_i$ the death time of the particle with initial location $Y_i(0)$. 
Thus we have \[t_1 \leq t_2 \leq \dots \leq t_n.\]

The next lemma gives a recursive formula for $t_1,\dots,t_n$.
\begin{lemma} Let us formally define $t_0=0$. We have
\begin{equation}\label{recursive_formula_for_death_times}
t_i= t_{i-1} \vee \frac{ |Y_i(0)| - \sum_{j=1}^{i-1} m_j t_j  }{\lambda}, \qquad 1 \leq i \leq n.
\end{equation}
\end{lemma}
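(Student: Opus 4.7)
The plan is to prove the formula by induction on $i$, where the central computation is the evaluation of $Y_i(t_{i-1})$ via integrating the speed $v_i(s) = \lambda + \mu_s(Y_i(s),0)$ of particle $i$ up to time $t_{i-1}$. The base case $i=1$ is immediate: particle $1$ has no particles above it at any time, so it moves at constant speed $\lambda$ starting from $Y_1(0)$ and dies at $t_1 = |Y_1(0)|/\lambda$, which matches the right-hand side (with $t_0 = 0$ and an empty sum).

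Two preliminary observations will simplify the inductive step. First, by order preservation (any two particles that reach the same height merge and stay together thereafter), $Y_i(t) \geq Y_j(t)$ for all $i<j$ and all $t$, so in particular $t_1 \leq t_2 \leq \dots \leq t_n$. Second, if particle $i$ is ever merged into the same block as some $j<i$, then by order preservation every index between them (including $i-1$) lies in that block, so $i$ and $i-1$ must share a common death time. Contrapositively, $t_i > t_{i-1}$ forces particle $i$ to remain strictly below every alive particle $j<i$ throughout $[0,t_{i-1}]$.

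Next I would establish the speed bound
\[
v_i(s) \;\leq\; \lambda + \sum_{\substack{j<i\\ t_j > s}} m_j, \qquad s \in [0, t_{i-1}],
\]
with equality precisely when particle $i$ is strictly below every alive $j<i$ at time $s$; the inequality can be strict only when some mass of index $j<i$ has been absorbed at the same height as $i$ via merging, and therefore no longer counts as \emph{strictly} above. Integrating over $[0,t_{i-1}]$ and using $\int_0^{t_{i-1}} \ind[t_j > s]\, \mathrm{d}s = \min(t_{i-1}, t_j) = t_j$ (since $t_j \leq t_{i-1}$ by the inductive hypothesis), this gives
\[
Y_i(t_{i-1}) - Y_i(0) \;\leq\; \lambda\, t_{i-1} + \sum_{j=1}^{i-1} m_j t_j,
\]
with equality iff particle $i$ has not merged with any $j<i$ on $[0, t_{i-1}]$.

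The induction step then splits into two cases. If $t_i = t_{i-1}$, then $Y_i(t_{i-1}) = 0$ (either because $i$ belongs to the block of $i-1$ that reaches height $0$, or because $i$ reaches $0$ on its own exactly at $t_{i-1}$), and the displayed inequality rearranges to $(|Y_i(0)| - \sum_{j<i} m_j t_j)/\lambda \leq t_{i-1}$, so the right-hand side of the claimed formula equals $t_{i-1}$. If instead $t_i > t_{i-1}$, the second preliminary observation forces equality above, yielding $Y_i(t_{i-1}) = Y_i(0) + \lambda t_{i-1} + \sum_{j<i} m_j t_j < 0$; from $t_{i-1}$ onward all $j<i$ are dead and $i$ is the topmost alive particle, so $v_i \equiv \lambda$ on $[t_{i-1}, t_i]$, which rearranges to $t_i = (|Y_i(0)| - \sum_{j<i} m_j t_j)/\lambda > t_{i-1}$. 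The main subtlety is justifying the speed comparison cleanly across merger events; this is handled by carefully distinguishing ``mass strictly above $i$'' from ``mass of alive indices $<i$'', which differ exactly by the mass that has been absorbed into $i$'s block.
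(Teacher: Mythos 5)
Your proof is correct and follows essentially the same route as the paper's: both integrate the particle's speed, use that $\ind[\,Y_i(t)<Y_j(t)<0\,]$ is bounded by (and, when $t_i>t_{i-1}$, equal to) $\ind[\,t<t_j\,]$, and split into the cases $t_i=t_{i-1}$ and $t_i>t_{i-1}$. The only cosmetic differences are that you frame it as an induction (which you only use for the monotonicity $t_j\leq t_{i-1}$, itself immediate from order preservation) and split the integral at $t_{i-1}$ rather than integrating directly to $t_i$.
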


\begin{proof} First observe that we have
\begin{multline}\label{formula_relating_Y_i_0_and_death_times}
|Y_i(0)| \stackrel{ \eqref{def_eq_t_i_death_time} }{=} Y_i(t_i)-Y_i(0)=
\int_0^{t_i} \dot{Y}_i(t)\, \mathrm{d}t \stackrel{ \eqref{particle_dynamics_lambda} }{=}
\int_0^{t_i} \left( \lambda+ \mu_t( Y_i(t),0) \right)\, \mathrm{d} t 
\stackrel{ \eqref{def_eq_mu_t} }{=}\\
\int_0^{t_i} \left( \lambda+ \sum_{j=1}^{i-1} m_j \ind [\, Y_i(t)<Y_j(t)<0 \, ]  \right) \, \mathrm{d} t.
\end{multline}
We will prove \eqref{recursive_formula_for_death_times} by considering two cases separately.

\smallskip

{\bf First case:}
If $t_i>t_{i-1}$ then 
\begin{equation}\label{t_i_greater_than_t_i_minus_one_case}
|Y_i(0)| \stackrel{(*)}{=}\\
\int_0^{t_i} \left( \lambda+ \sum_{j=1}^{i-1} m_j \ind [ t < t_j ]  \right) \, \mathrm{d} t=
\lambda t_i+ \sum_{j=1}^{i-1} m_j t_j,
\end{equation}
where in $(*)$ we used \eqref{formula_relating_Y_i_0_and_death_times} and the fact that $t_i>t_{i-1}$ implies $Y_j(t)>Y_i(t)$ for any $t <t_i$ and $j \leq i-1$. Rearranging 
\eqref{t_i_greater_than_t_i_minus_one_case} we obtain that if $t_i>t_{i-1}$ then we have $t_i=t^*_i$, where
\[ t^*_i:=\frac{ |Y_i(0)| - \sum_{j=1}^{i-1} m_j t_j  }{\lambda}, \]
therefore \eqref{recursive_formula_for_death_times} holds.

\smallskip

{\bf Second case:}
Now we assume $t_i=t_{i-1}$. First note that  
\begin{equation}\label{t_i_equal_to_t_i_minus_one_case}
 |Y_i(0)| \stackrel{\eqref{formula_relating_Y_i_0_and_death_times}}{\leq} 
 \int_0^{t_i} \left( \lambda+ \sum_{j=1}^{i-1} m_j \ind [ t < t_i ]  \right) \, \mathrm{d} t=
\lambda t_i+ \sum_{j=1}^{i-1} m_j t_j.
\end{equation}
Rearranging \eqref{t_i_equal_to_t_i_minus_one_case} we obtain $t_i^* \leq t_i$. Now if $t_i=t_{i-1}$, then
$t_i^* \leq t_{i-1}$, and therefore \eqref{recursive_formula_for_death_times} holds.
\end{proof}

Recall the notion introduced in the statement of Lemma \ref{lemma_insertion_of_new_particle_death_times}.
   Denote by
\begin{equation}\label{def_eq_i_star}
i^*= \inf \{ \, i \; : \; Y_{i}(0) < \widetilde{Y}_0(0)\, \}.
 \end{equation}
In particular, we define $i^*=\infty$ if $Y_{i}(0) \geq \widetilde{Y}_0(0) $ for all $i \in [1,n]$.

  By \eqref{particle_dynamics_lambda}
the speed of $Y_{i}(t)$ only depends on the locations of particles strictly above it,
 so we have $\widetilde{Y}_i(t) \equiv Y_i(t)$ for any $1 \leq i <i^*$, thus we have
 $t_i = \widetilde{t}_i$   for any $1 \leq i <i^*$, and
 \eqref{eq_insertion_of_new_particle_death_times} trivially
 follows for these particles.

\smallskip
For any $i^* \leq i \leq n$ we will think about $\widetilde{t}_i=\widetilde{t}_i(\widetilde{m})$ as a function
of the variable $\widetilde{m} \geq 0$, which represents the weight of the inserted particle.
 With this definition we have $\widetilde{t}_i(0)=t_i$.
Let us define the Lipschitz constant $L_i$ by 
\[ L_i= \sup_{0 \leq \widetilde{m} < \widetilde{m}'}
 \frac{ |\widetilde{t}_i(\widetilde{m}')-\widetilde{t}_i(\widetilde{m})| }{\widetilde{m}'-\widetilde{m}}.
\]
In order to prove \eqref{eq_insertion_of_new_particle_death_times}, we only need to show 
\begin{equation}\label{lipschitz_explicit_final_bound}
L_i \leq \frac{ |Y_i(0)| }{ \lambda^2 } \exp \left( \frac{ \mu_0( Y_i(0),0) }{\lambda} \right), \quad
  i=i^*,\dots, n.
\end{equation}

We begin  with the following claim. The proof is trivial and we omit it.

\begin{claim}\label{claim_lipschitz_max}


If $F$ and $G$ are both Lipschitz-continuous functions of 
the variable $\widetilde{m}$ with Lipschitz-constants $L_F$ and and $L_G$, and 
$H(\widetilde{m}):=F(\widetilde{m}) \vee G(\widetilde{m})$, then the Lipschitz-constant
$L_H$ satisfies $L_H \leq L_G \vee L_H$.



\end{claim}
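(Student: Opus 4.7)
The plan is to verify the Lipschitz bound directly from the definition, treating the maximum pointwise. Fix arbitrary $\widetilde m < \widetilde m'$ and, without loss of generality, assume $H(\widetilde m') \geq H(\widetilde m)$ (the reverse case is handled symmetrically by swapping the roles of $\widetilde m$ and $\widetilde m'$). The value $H(\widetilde m')$ is realised either by $F$ or by $G$, and I will split into these two cases.

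Suppose $H(\widetilde m') = F(\widetilde m')$. Since $H(\widetilde m) = F(\widetilde m) \vee G(\widetilde m) \geq F(\widetilde m)$, we obtain
\[
0 \leq H(\widetilde m') - H(\widetilde m) \leq F(\widetilde m') - F(\widetilde m) \leq L_F (\widetilde m' - \widetilde m).
\]
The case $H(\widetilde m') = G(\widetilde m')$ is identical with $F$ replaced by $G$ and yields the bound $L_G(\widetilde m' - \widetilde m)$. Combining the two cases gives $|H(\widetilde m') - H(\widetilde m)| \leq (L_F \vee L_G)(\widetilde m' - \widetilde m)$, and taking the supremum over pairs $\widetilde m < \widetilde m'$ produces $L_H \leq L_F \vee L_G$ (noting that the statement in the excerpt appears to contain a typo: the intended right-hand side is $L_F \vee L_G$, not $L_G \vee L_H$).

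There is no real obstacle to this argument: the only subtlety is recognising that we should bound $H(\widetilde m)$ from below by whichever of $F(\widetilde m), G(\widetilde m)$ matches the function that achieves the maximum at $\widetilde m'$, which is what produces a telescoping difference of a single Lipschitz function.
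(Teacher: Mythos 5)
Your proof is correct and is exactly the standard argument the paper has in mind when it declares the claim trivial and omits the proof: compare $H$ at the two points via whichever of $F,G$ achieves the maximum at the larger-valued point, and use that $H$ dominates that same function at the other point. You are also right that the stated conclusion $L_H \leq L_G \vee L_H$ is a typo for $L_H \leq L_F \vee L_G$, which is the bound actually used in the subsequent recursion \eqref{rec_lip}.
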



Denote by $\widetilde{t}_0(\widetilde{m})$ the death time of particle $\widetilde{Y}_0(t)$ in the particle 
system $\widetilde{Y}_0(t), \widetilde{Y}_1(t), \dots, \widetilde{Y}_n(t)$. Note that we have
 $\widetilde{t}_0(\widetilde{m})=\widetilde{t}_0(0)$, i.e., the death time of the inserted particle
 does not depend on its weight, so we will omit dependence of $\widetilde{t}_0$ on $\widetilde{m}$.

\begin{lemma}\label{lemma_exact_infinitesimal_change}
For any  $i^* \leq i \leq n$ we have
\begin{equation}
\label{L_i_recursive_ineq}
L_i  \leq 
 \frac{1}{\lambda} \left( \widetilde{t}_0 + \sum_{j=i^*}^{i-1} m_j \cdot L_j \right).
\end{equation}
\end{lemma}
\begin{proof}
For $i^* \leq i \leq n$, the recursion \eqref{recursive_formula_for_death_times} for the new particle system reads
\begin{equation}\label{recursive_death_with_inserted_particle}
\widetilde{t}_i(\widetilde{m})
=
\widetilde{t}_{i-1}(\widetilde{m}) 
\vee
\frac{ |Y_i(0)| - \sum_{j=1}^{i^*-1} m_j t_j - \widetilde{m} \widetilde{t}_0 -  \sum_{j=i^*}^{i-1} m_j \widetilde{t}_j(\widetilde{m}) }{\lambda}.
\end{equation}

We will prove \eqref{L_i_recursive_ineq} by induction on $i$.
We begin with $i=i^*$. Since $\widetilde{t}_{i^*-1}(\widetilde{m})= t_{i^*-1}$ and thus $L_{i^*-1} =0$, 
we obtain from Claim \ref{claim_lipschitz_max} and \eqref{recursive_death_with_inserted_particle} that
$L_{i^*} \leq \widetilde{t}_0 / \lambda$, thus \eqref{L_i_recursive_ineq} holds if $i=i^*$.

As for the induction step, we obtain from Claim \ref{claim_lipschitz_max} and \eqref{recursive_death_with_inserted_particle} that
\begin{equation}\label{rec_lip}
L_i \leq L_{i-1} \vee \frac{1}{\lambda} \left( \widetilde{t}_0 + \sum_{j=i^*}^{i-1} m_j \cdot L_j \right).
\end{equation}
Now \eqref{L_i_recursive_ineq} holds for $i$ by \eqref{rec_lip} and the induction hypothesis (i.e., the assumption that
\eqref{L_i_recursive_ineq} holds for $i-1$).

\end{proof}

From the recursive inequalities \eqref{L_i_recursive_ineq} one readily deduces by induction on $i$ the following
explicit bound: 
\begin{equation}\label{L_i_explicit_bound}
L_i \leq  
 \frac{ \widetilde{t}_0}{\lambda} \cdot \prod_{j=i^*}^{i-1} \left( 1+ \frac{m_j}{\lambda} \right),
\qquad
i^* \leq i \leq n.
\end{equation}

Now we are ready to prove \eqref{lipschitz_explicit_final_bound}
 for any $i^* \leq i \leq n$:
\begin{multline*}
L_i
\stackrel{  \eqref{L_i_explicit_bound} }{\leq} 
\frac{ t_i}{\lambda} \cdot \prod_{j=1}^{i-1} \left( 1+ \frac{m_j}{\lambda} \right)
\leq
\frac{ t_i}{\lambda} \cdot 
\exp\left( \sum_{j=1}^{i-1}  \frac{m_j}{\lambda} \right)
\stackrel{ \eqref{particle_dynamics_lambda}, \eqref{def_eq_t_i_death_time} }{\leq} \\
 \frac{ |Y_i(0)| }{ \lambda^2 } \exp \left(  \sum_{j=1}^{i-1}  \frac{m_j}{\lambda} \right)=
   \frac{ |Y_i(0)| }{ \lambda^2 } \exp \left( \frac{ \mu_0( Y_i(0),0) }{\lambda} \right).
\end{multline*}
The proof of \eqref{lipschitz_explicit_final_bound} and Lemma \ref{lemma_insertion_of_new_particle_death_times} is complete.

\subsubsection{Truncation and approximation}
\label{section_truncation_and_approximation}

The main result of Section \ref{section_truncation_and_approximation} is Lemma \ref{lemma_vague_conv}
which extends the ``shift'' operator from $\lzeroord$ to $\ltwoord$
using truncations and approximation, c.f.\ Remarks \ref{remark_no_first_excursion} and 
 \ref{remark_how_we_extend_mcld_to_l2}\eqref{remark_iii_how_we_truncate}.

\medskip

Given some $\um \in \ltwoord$, let us generate $E_1,E_2,\dots$ as in \eqref{exponentials_E_i}.
Define the
 truncation $\um^{(n)}$ by \eqref{truncation_of_initial_state}.
 We define
  $g^{(n)}_0(\cdot) \equiv f^{(n)}_0(\cdot)$ using  $E_1,E_2,\dots, E_n$ by \eqref{f0def_b}. 
  Note that we still denote by $f_t(\cdot)$ the function
  constructed from the un-truncated $\um$ by \eqref{f0def_b} and \eqref{def_eq_f_t_from_f_0}, and also 
 that we use the same sequence of  random variables $E_1,E_2,\dots$ to obtain a coupling of
 $g^{(n)}_0(\cdot), n \in \N$.
    We define $g^{(n)}_t(\cdot)$ from $g^{(n)}_0(\cdot)$ using
   Definition \ref{def_tilt_shift_step_function}.
This gives rise to the measure $\nu^{(n)}$ by \eqref{def_eq_nu_sum_dirac} and the function $\Phi^{(n)}$ by
\eqref{def_eq_Phi_from_nu}. By \eqref{g_t_from_g_0} we have
\begin{equation}\label{g_t_from_g_0_n}
 g_t^{(n)}(x)=g_0^{(n)}(x+\Phi^{(n)}(t))+\lambda t + 
 \int_0^t \left(x + \Phi^{(n)}(t)-\Phi^{(n)}(s) \right) \, \mathrm{d}s.
 \end{equation}
Our next result states that $\mathbb{P}$-almost surely $\nu^{(n)}$ vaguely converges to some $\nu$ as $n \to \infty$.

\begin{lemma}\label{lemma_vague_conv}
If $\nu^{(n)}, \, n \in \N$ is defined as above then $\mathbb{P}$-almost surely
 there exists a locally finite measure $\nu$ on $[0, \infty)$
 such that 
 for any
compactly supported continuous function $h: [0,\infty) \to \R$ we have
\begin{equation}\label{vague_conv_of_measures}
\lim_{n \to \infty} \int_0^{\infty} h(t)\, \mathrm{d}\nu^{(n)}(t)=\int_0^{\infty} h(t)\, \mathrm{d}\nu(t), \qquad
\mathbb{P}-\text{a.s.}
\end{equation}
\end{lemma}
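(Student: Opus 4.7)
The plan is to define the candidate limit measure $\nu = \sum_{i=1}^\infty m_i \delta_{t_i}$ by constructing the atoms $t_i$ as pointwise limits of the death times $t_i^{(n)}$ of particle $i$ in the $n$-truncated particle system, then verify local finiteness of $\nu$ and finally the vague convergence $\nu^{(n)} \to \nu$.

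First I would show that for each fixed $i$, the sequence $(t_i^{(n)})_{n \geq i}$ is monotone non-increasing, and hence converges to some $t_i \in [0,\infty)$. Passing from $\um^{(n)}$ to $\um^{(n+1)}$ inserts one particle of mass $m_{n+1}$ at initial height $-E_{n+1}$: if $E_{n+1} > E_i$, the new particle starts and remains below particle $i$'s block (any subsequent merger enlarges the block containing the new particle but leaves the dynamics of blocks above it unchanged), so $t_i^{(n+1)} = t_i^{(n)}$; while if $E_{n+1} < E_i$, the new particle strictly contributes to the mass above $i$ while alive, speeding up $i$'s upward motion, so $t_i^{(n+1)} \leq t_i^{(n)}$. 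Alternatively, the summable quantitative bound
\[
\sum_{n \geq i} \bigl|t_i^{(n+1)} - t_i^{(n)}\bigr| \;\leq\; \frac{E_i}{\lambda^2}\,\exp\!\left(\frac{\mu_0(-E_i, 0)}{\lambda}\right) \sum_{j>i} m_j \ind[E_j < E_i]
\]
follows from Lemma \ref{lemma_insertion_of_new_particle_death_times} and is a.s.\ finite by Lemma \ref{lemma:exponential}(i). Moreover \eqref{particle_dynamics_lambda} gives upward speed at least $\lambda$ before death, so $t_i^{(n)} \leq E_i/\lambda$ uniformly in $n$, and therefore $0 \leq t_i \leq E_i/\lambda < \infty$ a.s.

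Next I would set $\nu := \sum_i m_i \delta_{t_i}$ and verify local finiteness. For any $T > 0$,
\[
\mathbb{E}\bigl[\nu([0,T])\bigr] \;\leq\; \sum_i m_i \,\P\bigl(E_i \leq \lambda T\bigr) \;=\; \sum_i m_i\bigl(1 - e^{-\lambda T m_i}\bigr) \;\leq\; \lambda T \sum_i m_i^2 \;<\; \infty,
\]
so $\nu([0,T]) < \infty$ a.s., and taking $T$ along the positive integers shows that $\nu$ is a.s.\ locally finite on $[0,\infty)$.

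Finally, to establish \eqref{vague_conv_of_measures}, fix a continuous $h$ supported in $[0,T]$ with $\|h\|_\infty \leq C$. The monotonicity $t_i^{(n)} \downarrow t_i$ implies $\ind[t_i^{(n)} \leq T] \leq \ind[t_i \leq T]$, giving $\nu^{(n)}([0,T]) \leq \nu([0,T])$ uniformly in $n$. For any cutoff $N$, write
\[
\int h\,d\nu^{(n)} - \int h\,d\nu \;=\; \sum_{i=1}^N m_i\bigl(h(t_i^{(n)}) - h(t_i)\bigr) \;+\; \sum_{i=N+1}^n m_i\bigl(h(t_i^{(n)}) - h(t_i)\bigr) \;-\; \sum_{i>n} m_i h(t_i).
\]
The first sum vanishes as $n \to \infty$ for each fixed $N$ by continuity of $h$ and $t_i^{(n)} \to t_i$, while the latter two sums are each bounded by $2C \sum_{i>N} m_i \ind[t_i \leq T]$, which tends to $0$ as $N \to \infty$ by local finiteness of $\nu$. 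Letting first $N$ and then $n$ tend to infinity concludes the proof. The main point to handle carefully is the uniform tail control, which relies crucially on the monotone coupling of the truncated particle systems: that coupling is what forces $\nu$ itself to dominate every $\nu^{(n)}$ on any compact interval.
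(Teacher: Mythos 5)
Your overall architecture (atoms of $\nu$ as limits of the death times $t_i^{(n)}$, then local finiteness, then vague convergence via a cutoff $N$) is reasonable, and your quantitative route to the convergence of each $t_i^{(n)}$ via Lemma \ref{lemma_insertion_of_new_particle_death_times} is essentially what the paper does. But two of your three steps rest on claims that are false, and both failures have the same root: you repeatedly use the bound ``speed $\geq\lambda$'' (which gives an \emph{upper} bound $t_i^{(n)}\leq E_i/\lambda$ on death times) where the argument needs a \emph{lower} bound on death times, i.e.\ an upper bound on speeds.

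First, the monotonicity $t_i^{(n+1)}\leq t_i^{(n)}$ is not true. Inserting a particle above $i$ does add mass above $i$, but it also accelerates the particles already above $i$, making them die earlier and thereby \emph{reducing} their total push on $i$; the net effect can go either way. Concretely, take $\lambda=1$, particle $A$ of mass $M$ at height $-\varepsilon$ and particle $B$ of mass $1$ at height $-10$ (with $M\varepsilon<10-\varepsilon$, $M>1$), and insert a particle of mass $\delta<1$ at height $-\varepsilon/2$. A direct computation gives $\tilde t_B-t_B=\tfrac{\delta\varepsilon}{2}(M-1)>0$, so $B$ dies strictly \emph{later} after the insertion. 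Consequently the domination $\nu^{(n)}([0,T])\leq\nu([0,T])$ and the tail bound $\sum_{i>N}m_i\,\ind[t_i^{(n)}\leq T]\leq\sum_{i>N}m_i\,\ind[t_i\leq T]$ — which you correctly identify as the crux — are unjustified. Second, your local finiteness estimate is reversed: since the speed is \emph{at least} $\lambda$, one has $\{E_i\leq\lambda T\}\subseteq\{t_i\leq T\}$, so $\P(t_i\leq T)\geq\P(E_i\leq\lambda T)$, not $\leq$. The paper repairs both points simultaneously with the opposite inequality: by Lemma \ref{lemma:exponential}\eqref{exponential_ii_sparser} there is a (random, a.s.\ finite) level $Y=Y(T)$ such that $\mu_0(y,0)\leq|y|/(2T)$ for $y\leq Y$, whence the speed of a particle started at $Y_i(0)<Y$ is at most $\lambda+|Y_i(0)|/(2T)$ in \emph{every} truncated system, so such a particle cannot die before time $T$ for any $n$. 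This gives the uniform-in-$n$ tail control $\sum_{i>N}m_i\,\ind[t_i^{(n)}\leq T]\leq\sum_{i>N}m_i\,\ind[E_i\leq|Y|]\to 0$ (by Lemma \ref{lemma:exponential}\eqref{exponential_i_bounded}) and, combined with the Lipschitz bound you already invoke, a Cauchy argument for $\int h\,\mathrm{d}\nu^{(n)}$; local finiteness of the limit then also follows. You should replace your monotone-coupling step and your local finiteness computation by this speed upper bound.
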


\begin{corollary}\label{corollary_portemanteau}
By the portemanteau theorem \eqref{vague_conv_of_measures} implies 
\begin{equation}\label{eq_Phi_n_conv_to_Phi}
\lim_{n \to \infty} \Phi^{(n)}(t) = \Phi(t) \quad \text{if} \quad \nu(\{t\})=0, \quad
\text{where} \quad \Phi(t): = \nu([0,t])
\end{equation}
\end{corollary}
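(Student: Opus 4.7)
The plan is to deduce the pointwise convergence of the cumulative distribution functions $\Phi^{(n)}$ at continuity points of $\nu$ from the vague convergence \eqref{vague_conv_of_measures} via a standard sandwich argument. Fix $t > 0$ with $\nu(\{t\})=0$; the argument below adapts trivially for $t=0$ by using only the one-sided (upper) bump. For $\varepsilon \in (0, t)$ I will construct two continuous compactly supported functions $h^-_\varepsilon, h^+_\varepsilon : [0,\infty) \to [0,1]$ that sandwich the indicator $\mathbf{1}_{[0,t]}$, namely: $h^-_\varepsilon$ equals $1$ on $[0, t-\varepsilon]$, interpolates linearly from $1$ to $0$ on $[t-\varepsilon, t]$, and equals $0$ on $[t, \infty)$; while $h^+_\varepsilon$ equals $1$ on $[0, t]$, interpolates linearly from $1$ to $0$ on $[t, t+\varepsilon]$, and equals $0$ on $[t+\varepsilon, \infty)$. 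Both are continuous and compactly supported, and one has $h^-_\varepsilon \leq \mathbf{1}_{[0,t]} \leq h^+_\varepsilon$ pointwise.

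Applying \eqref{vague_conv_of_measures} to $h^\pm_\varepsilon$ and integrating the sandwich against $\nu^{(n)}$ yields, for each fixed $\varepsilon$,
\begin{equation*}
\int h^-_\varepsilon \, \mathrm{d}\nu \;=\; \lim_{n\to\infty} \int h^-_\varepsilon \, \mathrm{d}\nu^{(n)} \;\leq\; \liminf_{n\to\infty} \Phi^{(n)}(t) \;\leq\; \limsup_{n\to\infty} \Phi^{(n)}(t) \;\leq\; \lim_{n\to\infty} \int h^+_\varepsilon \, \mathrm{d}\nu^{(n)} \;=\; \int h^+_\varepsilon \, \mathrm{d}\nu,
\end{equation*}
where the outer equalities use that $h^\pm_\varepsilon$ are continuous and compactly supported, while the middle inequalities use $h^-_\varepsilon \leq \mathbf{1}_{[0,t]} \leq h^+_\varepsilon$ together with $\Phi^{(n)}(t)=\int \mathbf{1}_{[0,t]} \, \mathrm{d}\nu^{(n)}$.

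Now I let $\varepsilon \downarrow 0$. Monotone convergence (applied to the increasing family $h^-_\varepsilon$) gives $\int h^-_\varepsilon \, \mathrm{d}\nu \to \nu([0,t))$, while monotone convergence (applied to the decreasing family $h^+_\varepsilon$, using local finiteness of $\nu$ to get integrability on a compact neighbourhood of $t$) gives $\int h^+_\varepsilon \, \mathrm{d}\nu \to \nu([0,t])$. The hypothesis $\nu(\{t\})=0$ yields $\nu([0,t))=\nu([0,t])=\Phi(t)$, so the squeeze forces $\liminf_n \Phi^{(n)}(t) = \limsup_n \Phi^{(n)}(t) = \Phi(t)$, establishing \eqref{eq_Phi_n_conv_to_Phi}.

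There is no substantial obstacle here; the only points requiring care are checking that the interpolating functions $h^\pm_\varepsilon$ are continuous and compactly supported on $[0,\infty)$ (straightforward), and invoking local finiteness of $\nu$ (guaranteed by Lemma \ref{lemma_vague_conv}) to ensure the dominated/monotone convergence as $\varepsilon \downarrow 0$ is valid.
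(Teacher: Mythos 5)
Your sandwich argument with continuous compactly supported approximations of $\mathbf{1}_{[0,t]}$ is correct and is exactly the standard proof of the portmanteau-type implication that the paper invokes by name without spelling out. So you have simply made explicit the same argument the paper relies on, including the correct use of $\nu(\{t\})=0$ and local finiteness of $\nu$; nothing is missing.
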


\begin{proof}[Proof of Lemma \ref{lemma_vague_conv}]

We will use the particle representation (see Section \ref{section_particle_representation})
 \[Y_1^{(n)}(t), \dots, Y_n^{(n)}(t)\]  of $g_t^{(n)}(\cdot)$. We can then write
$\nu^{(n)}= \sum_{i=1}^n m_i \cdot \delta_{t^{(n)}_i}$, see \eqref{def_eq_death_time_mass_measure}.
Note that 
\[Y_i^{(n)}(0)=Y_i(0) \qquad \text{for any} \qquad 1 \leq i \leq n.
\]

Let us assume that the function $h: [0,\infty) \to \R$ for which we want to show \eqref{vague_conv_of_measures} is supported on $[0,T]$.

Recalling  the definition 
$\mu_0=\sum_{i=1}^{\infty} m_i \cdot \delta_{Y_i(0)}$, we analogously define 
$\mu_0^{(n)}=\sum_{i=1}^{n} m_i \cdot \delta_{Y_i(0)}$. Then by Lemma \ref{lemma:exponential} there exists a $\mathbb{P}$-almost surely finite random variable  $K_0$ such that
\begin{equation}\label{eq_sparse_weight}
 \sup_{n \geq 0} \frac{ \mu_0^{(n)}[-K,0]}{K} = 
 \frac{ \mu_0[-K,0]}{K} \leq \frac{1}{2T}, \qquad \text{for any} \quad K \geq K_0.
\end{equation}
If $|Y_i(0)|=E_i > K_0$ then for any $t \geq 0$ we have
\begin{equation}\label{speed_ineq}
\frac{\mathrm{d}}{\mathrm{d}t} Y_i^{(n)}(t) \stackrel{ \eqref{particle_dynamics_lambda} }{\leq} 
\lambda  + \mu^{(n)}_t(Y_i^{(n)}(t),0 )  \leq 
 \lambda + \mu^{(n)}_0(Y_i(0),0 ) 
\stackrel{ \eqref{eq_sparse_weight} }{\leq} 
\lambda + \frac{ |Y_i(0)|}{2T}.
\end{equation}
This implies that if $Y_i(0)< Y := - (K_0 \vee 2 \lambda T)$, then 

\[ Y^{(n)}_i(T) 
\stackrel{ \eqref{speed_ineq} }{\leq} 
Y_i(0) + 
\left(\lambda + \frac{ |Y_i(0)|}{2T}\right) \cdot T <0,
\]
which implies that the time of death  $t_i^{(n)}$ of particle $i$ (see \eqref{def_eq_death_time_mass_measure})
satisfies 
\begin{equation}\label{far_starting_point_late_death}
h(t_i^{(n)})=0 \quad \text{if} \quad Y_i(0) < Y.
\end{equation}

Our aim is to show that the sequence $\int_0^{\infty} h(t)\, \mathrm{d}\nu^{(n)}(t), n \in \N$ is Cauchy.
 In order to show this we let $n \leq m$ and bound
\begin{multline}\label{bound_for_vague}
\left| \int_0^{\infty} h(t)\, \mathrm{d}\nu^{(m)}(t) - \int_0^{\infty} h(t)\, \mathrm{d}\nu^{(n)}(t) \right| 
 \stackrel{ \eqref{def_eq_death_time_mass_measure}, \eqref{far_starting_point_late_death} }{\leq} \\
  \sum_{i=1}^n m_i \cdot \left| h(t^{(m)}_i)-h(t^{(n)}_i) \right| \cdot \ind  [ Y_i(0) \geq Y] +
  \Vert h \Vert_{\infty} \cdot \sum_{i=n+1}^m m_i \cdot \ind  [ Y_i(0) \geq Y] .
\end{multline}
In order to bound the first term on the right-hand side of \eqref{bound_for_vague} we observe that if  
$1\leq i \leq n$ and
$Y_i(0) \geq Y$ then 
\begin{multline}\label{bound_for_death_time_error}
\left| t^{(m)}_i-t^{(n)}_i \right| \leq \sum_{k=n}^{m-1} \left| t^{(k+1)}_i-t^{(k)}_i \right|
\stackrel{ \eqref{eq_insertion_of_new_particle_death_times} }{\leq} \\
 \sum_{k=n}^{m-1} \ind  [ Y_{k+1}(0) \geq Y]
 \frac{m_{k+1} |Y_i(0)| }{ \lambda^2 } \exp \left( \frac{ \mu^{(k)}_0( Y_i(0),0) }{\lambda} \right)
 \leq \\
 \frac{|Y|}{\lambda^2} \exp \left( \frac{ \mu_0( Y,0) }{\lambda} \right) 
 \sum_{k=n}^{\infty}  m_{k+1} \cdot \ind  [ Y_{k+1}(0) \geq Y].
\end{multline}
Note that Lemma \ref{lemma:exponential} implies that with probability 1 we have
\[
\sum_{i=n}^{\infty} m_i \cdot \ind  [ Y_i(0) \geq Y]  \to 0 \quad \text{ as } \quad n \to \infty.
\] 
If we combine this with \eqref{bound_for_vague}, \eqref{bound_for_death_time_error} and the fact that
 $h(\cdot)$ is uniformly continuous, we can conclude that $\int_0^{\infty} h(t)\, \mathrm{d}\nu^{(n)}(t), \, n \in \N$ is 
 a Cauchy sequence for any $h \in \mathcal{C}_0(\R)$, from which it follows that the exists $\nu$ for which 
 \eqref{vague_conv_of_measures} holds.
 \end{proof}

\begin{lemma}\label{lemma_only_excursions_burn}
If $\nu$ is the random measure obtained in Lemma \ref{lemma_vague_conv}, then
for every $t \geq 0$ there exists $y \in (-\infty,0]$ such that
\begin{equation}\label{nu_mu_corresp}
\nu[0,t]=\mu_0[y,0].
\end{equation}
\end{lemma}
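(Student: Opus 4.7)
The plan is to exploit the particle representation from Section \ref{section_particle_representation}: $\nu^{(n)} = \sum_{i=1}^n m_i \delta_{t_i^{(n)}}$, where $t_i^{(n)}$ is the death time of particle $i$ in the $n$-particle system. I will show each sequence $(t_i^{(n)})_{n\ge i}$ converges to a limit $t_i^*\in[0,\infty)$, identify $\nu=\sum_i m_i\delta_{t_i^*}$, and use the preserved monotonicity of death times with respect to initial height. This will force the dead set $S(t):=\{i:t_i^*\le t\}$ to be upward-closed in the height ordering, yielding the desired $y$.

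For the convergence $t_i^{(n)}\to t_i^*$, apply Lemma \ref{lemma_insertion_of_new_particle_death_times}: inserting particle $n+1$ changes $t_i$ only if $Y_{n+1}>Y_i$, and then by at most $C_i\,m_{n+1}$ with $C_i:=|Y_i|\lambda^{-2}\exp(\mu_0(Y_i,0)/\lambda)$ $\P$-a.s.\ finite by Lemma \ref{lemma:exponential}\eqref{exponential_i_bounded}. Telescoping, $\sum_n |t_i^{(n+1)}-t_i^{(n)}|\le C_i\,\mu_0[Y_i,0]<\infty$ a.s. Next, to identify $\nu$: following the proof of Lemma \ref{lemma_vague_conv}, for each $T>0$ there is a $\P$-a.s.\ finite threshold $Y<0$ (using Lemma \ref{lemma:exponential}\eqref{exponential_ii_sparser}) below which no particle dies by time $T$ in any truncated system, so $\Phi^{(n)}(T)=\sum_{i\le n,\,Y_i\ge Y} m_i\,\ind[t_i^{(n)}\le T]$. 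For $T$ outside the countable exceptional set $\{t:\nu(\{t\})>0\}\cup\{t_i^*:i\in\N\}$ the indicators converge pointwise, and dominated convergence (dominating sum $\sum_{Y_i\ge Y} m_i=\mu_0[Y,0]<\infty$) yields $\Phi^{(n)}(T)\to\sum_i m_i\,\ind[t_i^*\le T]$. Comparing with Corollary \ref{corollary_portemanteau} on a dense set of $T$ identifies $\nu=\sum_i m_i\delta_{t_i^*}$.

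Finally, the finite-system monotonicity $t_i^{(n)}\le t_j^{(n)}$ for $Y_i>Y_j$ (Section \ref{subsection_perturbation}) passes to the limit, so $S(t)$ is upward-closed. Setting $y^*=\inf\{Y_i:i\in S(t)\}$ (with $y^*=0$ if $S(t)=\emptyset$, and noting $\Phi(t)<\infty$ by local finiteness of $\nu$, so $y^*>-\infty$), one has $\{Y_i>y^*\}\subseteq S(t)\subseteq \{Y_i\ge y^*\}$. If either $y^*\notin\{Y_i\}$ or $y^*=Y_{i_0}\in S(t)$, then $S(t)=\{Y_i\ge y^*\}$, so $\Phi(t)=\mu_0[y^*,0]$ and $y=y^*$ works. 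The main technical obstacle is ruling out the remaining case $y^*=Y_{i_0}\notin S(t)$, in which $\Phi(t)=\mu_0(Y_{i_0},0]$ and (by density of atoms, Lemma \ref{lemma:dense}) no real $y$ satisfies $\mu_0[y,0]=\Phi(t)$; this case must be excluded by a continuity argument showing that in the infinite system atoms with initial heights arbitrarily close above $Y_{i_0}$ merge with $i_0$ and hence share $t_{i_0}^*$, so they cannot belong to $S(t)$ whenever $t_{i_0}^*>t$, contradicting $y^*$ being the infimum from $S(t)$.
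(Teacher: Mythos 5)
Your approach is the same as the paper's: the paper's proof also identifies $\nu$ as the atomic measure $\sum_i m_i\delta_{t_i}$ with $t_i=\lim_n t_i^{(n)}$ and invokes the monotonicity ``$Y_i\geq Y_j\Rightarrow t_i\leq t_j$'', after which it simply asserts that \eqref{nu_mu_corresp} ``readily follows''. Your write-up is in fact more careful than the paper's on two points: you justify the convergence $t_i^{(n)}\to t_i^*$ by telescoping Lemma \ref{lemma_insertion_of_new_particle_death_times} (with the summable bound $\sum_n m_{n+1}\ind[Y_{n+1}>Y_i]\le\mu_0(Y_i,0)<\infty$), and you identify the limit measure via dominated convergence on $\Phi^{(n)}(T)$ rather than reading atom locations off vague convergence directly. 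Both of these are correct.

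The one step you flag as unresolved --- ruling out $y^*=Y_{i_0}\notin S(t)$ --- is a genuine subtlety that the paper's ``readily follows'' glosses over, and it does need an argument, but the mechanism you propose is not quite the right one: particles starting just above $Y_{i_0}$ need not merge with $i_0$ (they may reach $0$ and die first), so they need not share $t^*_{i_0}$. The correct and elementary argument is via gap monotonicity: by \eqref{particle_dynamics_lambda} the gap between two particles is non-increasing in time, so if $Y_j\in(Y_{i_0},Y_{i_0}+\varepsilon)$ then at the death time $t_j^{(n)}$ of $j$ one has $Y_{i_0}^{(n)}(t_j^{(n)})\geq-\varepsilon$, and since $i_0$ moves with speed at least $\lambda$ this gives $t_{i_0}^{(n)}\leq t_j^{(n)}+\varepsilon/\lambda$, hence $t^*_{i_0}\le t^*_j+\varepsilon/\lambda$ in the limit. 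By Lemma \ref{lemma:dense} such $j$ exist for every $\varepsilon>0$ (recall $\um\in\ltwoord\setminus\loneord$ here), so $\sup\{t^*_j:Y_j>Y_{i_0}\}=t^*_{i_0}$; if $t^*_{i_0}>t$ this produces $j$ with $Y_j>y^*$ and $t^*_j>t$, contradicting $\{Y_j>y^*\}\subseteq S(t)$. With that inserted your proof is complete and matches the paper's route.
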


 \begin{proof}
 Note that 
$\nu^{(n)}= \sum_{i=1}^n m_i \cdot \delta_{t^{(n)}_i}$ for every $n \in \N$, where
$Y_i \geq Y_j$ implies $t^{(n)}_i \leq t^{(n)}_j$ for every $1 \leq i,j \leq n$.
Since $\nu^{(n)}$ is an atomic measure with masses $(m_i)_{i=1}^n$ located at  
$\left( t^{(n)}_i \right)_{i=1}^n$   and $\nu^{(n)} \to \nu$ vaguely,
we can conclude that $\nu$ is also an atomic measure with masses $(m_i)_{i=1}^\infty$
located at  $\left( t_i \right)_{i=1}^\infty$ where $ \lim_{n \to \infty} t_i^{(n)} = t_i$, thus
$Y_i \geq Y_j$ implies $t_i \leq t_j$ for every $ i,j \in \N$. 
From this \eqref{nu_mu_corresp} readily follows.
\end{proof}

\subsubsection{Tilt-and-shift continuity lemmas}
\label{section_tilt_shift_cont_lemmas}

We will prove Theorem \ref{thm:mcld_extension_introduction}\eqref{mcld_rep_tilt_shift_statement_i} 
in Section \ref{section_proof_of_tilt_and_shift_mcld_extension}
by replicating
 the argument given in Section \ref{subsection_extension_mc_tilt_to_l2}. 
 In Section \ref{section_tilt_shift_cont_lemmas} we collect some ingredients of this proof.

 \medskip

Given some $\um \in \ltwoord \setminus \loneord$ we defined $g_0(\cdot)=f_0(\cdot)$
 by \eqref{f0def_b}, $\nu$  by 
Lemma \ref{lemma_vague_conv}, $\Phi(\cdot)$ by \eqref{eq_Phi_n_conv_to_Phi} and $g_t(\cdot)$  by \eqref{g_t_from_g_0}.
The next lemma is the tilt-and-shift version of Lemma \ref{lemma:cadlag_tilt}.

\begin{lemma}\label{lemma_g_t_ORDX_cadlag}
 If $\um \in \ltwoord \setminus \loneord$ then the function
$ t \mapsto \ORDX(g_t)$ is \cadlag{} with respect to the product topology on $\linford$.
\end{lemma}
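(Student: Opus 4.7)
My plan is to reduce the claim to the pure-tilt setting of Lemma \ref{lemma:cadlag_tilt}. Expanding \eqref{g_t_from_g_0} and collecting the $x$-free terms one finds
$g_t(x) = f_t\bigl(x+\Phi(t)\bigr) + \lambda t - \int_0^t \Phi(s)\, ds$,
so $g_t$ differs from $\tilde g_t(x) := f_t\bigl(x+\Phi(t)\bigr)$ only by a constant in $x$; the running minimum shifts by the same constant, hence $\ORDX(g_t)=\ORDX(\tilde g_t)$ and it suffices to prove that $t\mapsto\ORDX(\tilde g_t)$ is \cadlag{}.

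The second step is to establish the \cadlag{} behaviour of $t\mapsto\tilde g_t$ in the stronger sense of uniform convergence on bounded $x$-intervals. Local finiteness of $\nu$ (Lemma \ref{lemma_vague_conv}) makes $\Phi(t)=\nu[0,t]$ non-decreasing, right-continuous, and equipped with left limits $\Phi(t_0-)$. Combined with the joint continuity of $f_t(x)=f_0(x)+tx$ in $(t,x)$ (via Lemma \ref{lemma:ucbi}), this yields $\tilde g_{t_n}\to\tilde g_{t_0}$ uniformly on bounded intervals for $t_n\downarrow t_0$, and $\tilde g_{t_n}\to \tilde g_{t_0-}$ uniformly on bounded intervals for $t_n\uparrow t_0$, where $\tilde g_{t_0-}(x):=f_{t_0}(x+\Phi(t_0-))$. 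The uniformly-good property (Definition \ref{def_uniformly_good}) of $\{\tilde g_t: t\in[0,T]\}$ then follows by the same tail argument as in Lemma \ref{lemma:tailsmall}.

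To convert this uniform convergence into coordinate-wise convergence of $\ORDX$, I will partition time by atoms of $\nu$. On any maximal interval $[\tau,\tau')$ on which $\Phi$ is constant, $t\mapsto\tilde g_t$ is a pure tilt of $\tilde g_\tau$, since $\tilde g_t(x)-\tilde g_\tau(x)=(t-\tau)(x+\Phi(\tau))$ is affine in $x$; so Lemma \ref{lemma:cadlag_tilt} applied to $\{\tilde g_t: t\in[\tau,\tau')\}$ delivers the \cadlag{} property of $\ORDX(\tilde g_t)$ on this interval. At an atom $t_0$ of $\nu$ of mass $a=\nu(\{t_0\})>0$, the identity $\tilde g_{t_0}(x)=\tilde g_{t_0-}(x+a)$ realises a discrete deletion of the leftmost excursion of length $a$; the left limit is then supplied by the preceding tilt interval and the right limit by the subsequent one. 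The principal obstacle is accommodating times $t_0$ that are not atoms of $\nu$ but at which atoms accumulate, so that $t_0$ lies outside every positive-length tilt interval; right-continuity of $\Phi$ still holds at such $t_0$ (since $\nu((t_0,t_0+\varepsilon])\downarrow 0$), so the uniform-convergence and uniform-goodness arguments above allow Lemma \ref{lemma_unif_good_conv_ordx} to deliver right-continuity of $\ORDX(\tilde g_t)$ at $t_0$ whenever $\tilde g_{t_0}$ has all strict excursions. The exceptional countable set of non-strict times (cf.\ Corollary \ref{corollary_countably_many_double_arches}) is handled by the partial-sum monotonicity argument of the last paragraph of the proof of Lemma \ref{lemma:cadlag_tilt}, and left limits follow symmetrically using $\tilde g_{t_0-}$.
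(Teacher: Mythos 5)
Your reduction to $\tilde g_t(x)=f_t(x+\Phi(t))$ is correct (it is exactly \eqref{g_t_is_shifted_f_t}), and the uniform-convergence and uniform-goodness claims are fine. However, the structural core of your argument does not close. The ``partition time by atoms of $\nu$'' step is vacuous in the regime of the lemma: for $\um\in\ltwoord\setminus\loneord$ the measure $\nu$ is purely atomic with atoms at the death times of infinitely many particles, these are dense, and $\Phi$ is strictly increasing (cf.\ Remark \ref{remark_Phi_strictly_increasing}). There are no positive-length intervals of constancy of $\Phi$, so Lemma \ref{lemma:cadlag_tilt} is never applicable on a time interval and every $t_0$ is of the type you call the ``principal obstacle''. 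The whole proof therefore rests on your fallback via Lemma \ref{lemma_unif_good_conv_ordx}.

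That fallback yields right-continuity (and left limits) only at times where $\tilde g_{t_0}$ (resp.\ $\tilde g_{t_0-}$) has all strict excursions, and your patch for the remaining countably many times is invalid: the ``partial-sum monotonicity argument'' of Lemma \ref{lemma:cadlag_tilt} relies on $\sum_{i=1}^k m_i(t)$ being non-decreasing in $t$, which holds for a pure tilt (excursions only merge) but fails for tilt-and-shift, since the shift deletes excursions and strictly decreases the partial sums — and it does so on a dense set of times. Right-continuity off a countable set does not give the \cadlag{} property. This is precisely the difficulty the paper's proof is built to handle: it sandwiches $\ORDX(g_{t+\Delta t})$ between $\ORDX(g^{*}_{t+\Delta t})$ (shift only) and $\ORDX(g^{**}_{t+\Delta t})$ (tilt only) in the partial-sum order $\preceq$, and shows both bounds converge to $\ORDX(g_t)$ as $\Delta t\searrow 0$ — the shift bound via Lemma \ref{lemma_only_excursions_burn} (only whole excursions, of total length $\Phi(t+\Delta t)-\Phi(t)\to 0$, are removed) and the tilt bound via Lemma \ref{lemma:cadlag_tilt}. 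That comparison requires no strictness assumption and works at every $t$; to repair your proof you would need to import it (or an equivalent two-sided device) for the exceptional times.
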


\begin{proof}
Let us fix $t \geq 0$ and define the auxiliary functions
\[g^{*}_{t + \Delta t}(x)=g_t(x+\Phi(t+\Delta t)-\Phi(t) ), \qquad
g^{**}_{t + \Delta t}(x)=g_t(x)+ \Delta t \cdot x.
\]
From \eqref{g_t_from_g_0} we obtain
\begin{align}\label{g_t_plus_delta_from_g_t_1}
g_{t +\Delta t}(x) &=
g^{*}_{t + \Delta t}(x)  + \lambda \Delta t + 
\int_{t}^{t + \Delta t} ( x +\Phi(t + \Delta t)-\Phi(s)) \, \mathrm{d}s,\\
\label{g_t_plus_delta_from_g_t_2}
g_{t +\Delta t}(x) &=g^{**}_{t + \Delta t}(x +\Phi(t+\Delta t)-\Phi(t) )
+\lambda \Delta t - \int_t^{t + \Delta t} \Phi(s) \, \mathrm{d}s.
\end{align}

If $\um,\um' \in \linford$, we say that $\um \preceq \um'$ if 
$ \sum_{j=1}^i m_j \leq \sum_{j=1}^i m'_j$ 
for any $i \in \N$.

We are going to show
\begin{gather}
\label{g_t_bound_lower}
\ORDX(g^*_{t+\Delta t}) \preceq \ORDX(g_{t+\Delta t}), \\
\label{g_t_bound_upper}
\ORDX(g_{t+\Delta t}) \preceq \ORDX(g^{**}_{t+\Delta t}),\\
\label{g_t_conv_lower}
\ORDX(g^*_{t+\Delta t}) \to \ORDX(g_t) \quad \text{ in }\; \linford \quad \text{ as } \quad \Delta t \searrow 0,\\
\label{g_t_conv_upper}
\ORDX(g^{**}_{t+\Delta t}) \to \ORDX(g_t) \quad \text{ in }\; \linford \quad \text{ as } \quad \Delta t \searrow 0.
\end{gather}

As soon as we show \eqref{g_t_bound_lower}--\eqref{g_t_conv_upper}, we immediately obtain 
\begin{equation*}\label{g_t_ORDX_right_continuous}
 \ORDX(g_{t+\Delta t}) \to \ORDX(g_t) \quad 
\text{in} \quad \linford \quad  
\text{as} \quad \Delta t \searrow 0,
\end{equation*}
i.e., the right-continuity of $ t \mapsto \ORDX(g_t)$ with respect to the $\linford$ topology.
The proof of the existence of left limits is similar and we omit it.

\medskip

 \eqref{g_t_bound_lower} follows from the fact that $g_{t +\Delta t}$ is obtained from
$g^*_{t+\Delta t}$ by adding an increasing function (see \eqref{g_t_plus_delta_from_g_t_1}),
thus the collection of excursions of 
$g_{t + \Delta t}$ are obtained by merging some excursions of $g^*_{t+\Delta t}$.

\eqref{g_t_bound_upper} follows from the fact that $g_{t +\Delta t}$ is obtained from
$g^{**}_{t+\Delta t}$ by a shift to the left plus an addition of a constant (see \eqref{g_t_plus_delta_from_g_t_2}),
thus the excursions of 
$g_{t + \Delta t}$ are obtained by deleting/splitting some excursions of $g^*_{t+\Delta t}$.

If we apply Lemma \ref{lemma_only_excursions_burn} with $\mu_t$ in place of $\mu_0$ then
 it follows that for every $\Delta t \geq 0$ there exists some $y \in (-\infty,0]$
such that $\Phi(t+\Delta t)-\Phi(t)=\mu_t[y,0]$ (see \eqref{def_eq_mu_t}), thus the collection of excursions of 
$g^{*}_{t + \Delta t}$ is obtained by removing some excursions of $g_{t}$ whose total length
is $\Phi(t+\Delta t)-\Phi(t)$. From this 
\eqref{g_t_conv_lower}  follows, since $\Phi(t+\Delta t)-\Phi(t) =\nu(t, t+\Delta t ] \to 0$ as  $\Delta t \searrow 0$.

  From \eqref{g_t_from_g_0} and Lemma
\ref{lemma:ftgood} it follows that $g^{**}_{t + \Delta t}(x) \in \cG$ for any $\Delta t \geq 0$, thus
  Lemma \ref{lemma:cadlag_tilt} implies \eqref{g_t_conv_upper}. The proof of Lemma \ref{lemma_g_t_ORDX_cadlag} is
  complete.
\end{proof}

\begin{corollary}\label{corollary_at_most_countable_double_arch_g_t}
 If $\um \in \ltwoord \setminus \loneord$ then the set of times $t$ such that $g_t$ has a non-strict excursion (c.f.\ Definition \ref{def:excursion}) is countable.
\end{corollary}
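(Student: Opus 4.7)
The strategy is to mirror the proof of Corollary \ref{corollary_countably_many_double_arches} (the $f_t$ version) using Lemma \ref{lemma_g_t_ORDX_cadlag} in place of Lemma \ref{lemma:cadlag_tilt}, but we must first neutralise the extra ``shift'' ingredient in the evolution of $g_t$. The first key observation is that $\Phi$ is a step function with countably many jumps: by Lemma \ref{lemma_only_excursions_burn} its range is contained in $\{\mu_0[y,0]:y\leq 0\}$, which is countable since $\mu_0 = \sum_i m_i \delta_{Y_i}$ is purely atomic with countably many atoms. A non-decreasing right-continuous function with countable range is automatically a step function, so its jump set $J$ is countable.

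Next I would show that for any $t_0\notin J$ there exists $a<t_0$ with $\Phi\equiv\Phi(t_0)$ on $(a,t_0]$: the preimage $\Phi^{-1}(\Phi(t_0))$ must be a right-open interval $[a,b)$, and $t_0\notin J$ forces $a<t_0$. On such an interval, formula \eqref{g_t_from_g_0} collapses (after cancellation of the common constants coming from $\int_0^t\Phi$) to
\[
g_t(x)=g_{t_0}(x)-(t_0-t)(\lambda+x),\qquad t\in(a,t_0],
\]
so $g_t$ is a pure inverse tilt of $g_{t_0}$, and we are exactly in the setting of Corollary \ref{corollary_countably_many_double_arches}.

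With this reduction, the plan is to replicate the argument of that corollary essentially verbatim. Given a non-strict excursion $[l,r)$ of $g_{t_0}$ with $g_{t_0}(x^*)=g_{t_0}(l)$ for some $x^*\in(l,r)$, the tilt identity will give $g_t(x^*)<g_t(l)$ while keeping $l$ a minimum of $g_t$ for all $t\in(a,t_0)$; hence $\bar g_t$ strictly decreases between $l$ and $x^*$ and $[l,r)$ must split into at least two excursions of $g_t$ separated by a point in $(l,x^*]$. As $t\to t_0^-$ these excursions merge, producing a jump at $t_0$ of some non-decreasing partial sum $\sum_{i=1}^{k_0-1}m_i(t)$, and hence a discontinuity of $t\mapsto\ORDX(g_t)$. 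By Lemma \ref{lemma_g_t_ORDX_cadlag}, this map is \cadlag{} into the separable metrizable space $(\linford,\text{product topology})$, so it has only countably many discontinuities; combining this with the countable set $J$ yields the desired countable bound.

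The main obstacle is the first step, establishing the pure-jump structure of $\Phi$. Without it one could have shifts accumulating arbitrarily close to $t_0$ from the left, in which case the identity reducing $g_t$ to a pure tilt of $g_{t_0}$ fails and the clean sign control on $g_t(x^*)-g_t(l)$ can be destroyed by an uncontrolled non-differentiable correction of the form $g_0(x^*+\Phi(t))-g_0(l+\Phi(t))-g_0(x^*+\Phi(t_0))+g_0(l+\Phi(t_0))$; since $g_0$ is only continuous, there is no \emph{a priori} bound showing this correction is $o(t_0-t)$. The atomicity of $\mu_0$, channelled through Lemma \ref{lemma_only_excursions_burn}, is precisely what bypasses this difficulty.
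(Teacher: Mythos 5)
There is a genuine gap at the very first step, and the rest of the argument collapses with it. You claim that the range of $\Phi$ is countable because, by Lemma \ref{lemma_only_excursions_burn}, it is contained in $\{\mu_0[y,0]:y\leq 0\}$, and $\mu_0$ is purely atomic with countably many atoms. But atomicity does not make this set countable: for $\um\in\ltwoord\setminus\loneord$ the atoms of $\mu_0$ are dense in $(-\infty,0)$ (Lemma \ref{lemma:dense}), so $y\mapsto\mu_0[y,0]$ is injective (any two values of $y$ are separated by positive mass) and its range is uncountable. Indeed the paper notes explicitly that $\Phi$ is \emph{strictly increasing} (Remark \ref{remark_Phi_strictly_increasing}) --- the exact opposite of the step-function structure your reduction requires. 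Consequently there is no interval $(a,t_0]$ on which $\Phi$ is constant, the identity $g_t(x)=g_{t_0}(x)-(t_0-t)(\lambda+x)$ never holds, and the reduction to the pure-tilt Corollary \ref{corollary_countably_many_double_arches} fails. The scenario you flag as the ``main obstacle'' --- shifts accumulating arbitrarily close to $t_0$ from the left --- is not a pathology to be excluded; it is what happens at every $t_0$.

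The paper's proof deals with this differently. It discards only the countably many atoms of $\nu$, and for a time $t_0$ with $\nu(\{t_0\})=0$ at which $g_{t_0}$ has a non-strict excursion it runs the splitting/merging argument of Corollary \ref{corollary_countably_many_double_arches}, adding the observation that the upward jump of the relevant partial sum $\sum_{i=1}^{k_0-1}m_i(t)$ created by the tilt-induced merger at $t_0$ cannot be cancelled by the shift, because $\Phi$ is continuous from the left at $t_0$ and hence the total excursion length deleted over $(t,t_0]$ tends to $0$ as $t\uparrow t_0$. Combined with the countability of the discontinuities of the \cadlag{} map $t\mapsto\ORDX(g_t)$ from Lemma \ref{lemma_g_t_ORDX_cadlag} --- the one ingredient you do use correctly --- this yields the corollary. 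To repair your write-up, replace the false ``locally constant $\Phi$'' reduction by this left-continuity argument.
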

\begin{proof}

By  Lemma \ref{lemma_g_t_ORDX_cadlag} the function $t \mapsto \ORDX(g_t)$ 
is \cadlag, therefore it has countably many jumps, c.f.\ \cite[Section 12, Lemma 1]{billingsley}.
Also, the measure $\nu$ has countably many atoms.

Thus we only need to show that if $g_{t_0}$ has a non-strict excursion for some $t_0>0$ and if $\nu(\{ t_0 \})=0$
 then $t \mapsto \ORDX(g_t)$
jumps at $t_0$. 

The rest of the proof is identical to that of Corollary \ref{corollary_countably_many_double_arches}, with the additional observation that the jump at time $t_0$ created by the merger of excursions coming from the ``tilt'' operation cannot be cancelled by the deletion of excursions coming from the ``shift'' operation, since $t \mapsto \Phi(t)$ is continuous from the left at time $t_0$.

\end{proof}

\begin{claim}\label{claim_g_t_n_unif_conv}
 Let $\um \in \ltwoord \setminus \loneord$ and 
 define $g_t(\cdot)$  using  \eqref{g_t_from_g_0} and
$g^{(n)}_t(\cdot)$ using \eqref{g_t_from_g_0_n}. Then $g_t(\cdot)$ is continuous and
if $t \in [0, \infty)$ satisfies $\nu(\{t\})=0$ then 
\begin{equation}\label{g_n_t_converges_to_g_t}
g^{(n)}_t(\cdot) \to g_t(\cdot) \quad \text{ uniformly on compacts.}
\end{equation}
\end{claim}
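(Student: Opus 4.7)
The plan is to combine the already-established uniform convergence $f_0^{(n)}\to f_0$ on bounded intervals (Lemma \ref{lemma:ucbi}) with the pointwise convergence $\Phi^{(n)}(t)\to\Phi(t)$ (Corollary \ref{corollary_portemanteau}), feeding both into the representation \eqref{g_t_from_g_0} / \eqref{g_t_from_g_0_n}.

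First I would dispose of the continuity assertion. Since $\um\in\ltwoord\setminus\loneord$, Lemma \ref{lemma:ucbi} gives that $g_0=f_0$ is continuous on $[0,\infty)$. By \eqref{g_t_from_g_0} one has
\begin{equation*}
g_t(x)=g_0\bigl(x+\Phi(t)\bigr)+\bigl(x+\Phi(t)+\lambda\bigr)t-\int_0^t\Phi(s)\,\mathrm{d}s,
\end{equation*}
which is the composition of $g_0$ with the continuous function $x\mapsto x+\Phi(t)$ plus an affine function of $x$, and is therefore continuous in $x$.

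Next I would treat the convergence of the two ``time'' ingredients. By hypothesis $\nu(\{t\})=0$, so Corollary \ref{corollary_portemanteau} yields $\Phi^{(n)}(t)\to\Phi(t)$. Since $\nu$ is a locally finite measure on $[0,\infty)$ it has at most countably many atoms, so $\Phi^{(n)}(s)\to\Phi(s)$ for Lebesgue-a.e.\ $s\in[0,t]$; the sequence $\Phi^{(n)}(s)$ is moreover bounded uniformly in $n$ and $s\in[0,t]$ by $\sup_n\Phi^{(n)}(t)<\infty$ (a finite bound because the convergent sequence $\Phi^{(n)}(t)$ is bounded). Dominated convergence then gives
\begin{equation*}
\int_0^t\Phi^{(n)}(s)\,\mathrm{d}s\longrightarrow\int_0^t\Phi(s)\,\mathrm{d}s.
\end{equation*}
Hence the affine-in-$x$ correction terms in \eqref{g_t_from_g_0_n} converge, uniformly in $x$ on any compact, to those of \eqref{g_t_from_g_0}.

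It remains to show $g_0^{(n)}\bigl(x+\Phi^{(n)}(t)\bigr)\to g_0\bigl(x+\Phi(t)\bigr)$ uniformly for $x$ in a fixed compact $[0,K]$. I would split
\begin{equation*}
\bigl|g_0^{(n)}(x+\Phi^{(n)}(t))-g_0(x+\Phi(t))\bigr|\le\bigl|g_0^{(n)}(x+\Phi^{(n)}(t))-g_0(x+\Phi^{(n)}(t))\bigr|+\bigl|g_0(x+\Phi^{(n)}(t))-g_0(x+\Phi(t))\bigr|.
\end{equation*}
Choose $U>K+\sup_n\Phi^{(n)}(t)$, which is finite. The first summand is bounded by $\sup_{y\in[0,U]}|g_0^{(n)}(y)-g_0(y)|$, which tends to $0$ by Lemma \ref{lemma:ucbi}. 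The second summand is at most $\sup_{|y-y'|\le|\Phi^{(n)}(t)-\Phi(t)|}|g_0(y)-g_0(y')|$ with $y,y'\in[0,U]$, which tends to $0$ by uniform continuity of $g_0$ on $[0,U]$ together with $\Phi^{(n)}(t)\to\Phi(t)$. Both bounds are uniform in $x\in[0,K]$, proving \eqref{g_n_t_converges_to_g_t}.

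The only delicate point is that the shift $\Phi^{(n)}(t)$ varies with $n$, so one cannot directly quote ``uniform convergence of $g_0^{(n)}$ on compacts''; the trick of adding and subtracting $g_0(x+\Phi^{(n)}(t))$ and using the global continuity of $g_0$ (which is available precisely because $\um\notin\loneord$) is what makes the argument go through.
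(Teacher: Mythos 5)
Your proof is correct and follows essentially the same route as the paper, whose own proof is a one-line citation of Corollary \ref{corollary_portemanteau}, Lemma \ref{lemma:ucbi} and the formulas \eqref{g_t_from_g_0}, \eqref{g_t_from_g_0_n}. You have simply filled in the details that the paper leaves implicit (dominated convergence for the integral term, and the add-and-subtract argument combining uniform convergence of $g_0^{(n)}$ with uniform continuity of $g_0$ to handle the $n$-dependent shift).
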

\begin{proof}
\eqref{g_n_t_converges_to_g_t} follows from Corollary \ref{corollary_portemanteau},
 Lemma \ref{lemma:ucbi} and  \eqref{g_t_from_g_0}, \eqref{g_t_from_g_0_n}.
\end{proof}


\begin{definition}\label{def_good_times_g_t}
By  Corollary \ref{corollary_at_most_countable_double_arch_g_t} 
the (random) set of times $t$ such that either $g_t$ has a non-strict excursion
or $\nu(\{t\})>0$ is
countable.
Hence for all but countably many $t$, 
the probability that $\nu(\{t\})=0$ and all excursions of $g_t$  are strict is equal to $1$.
 Let $\cT^*$ denote this (deterministic)  set of ``good" times $t$.
\end{definition}

\begin{lemma}\label{lemma_approximation_ORDX_tilt_shift}
 If  $\um \in \ltwoord \setminus \loneord$, $t \in \mathcal{T}^*$ 
then almost surely $\ORDX(g^{(n)}_t) \to \ORDX(g_t)$ in the product topology on $\linford$.
\end{lemma}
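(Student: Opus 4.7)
The plan is to apply Lemma \ref{lemma_unif_good_conv_ordx} with $f = g_t$ and $f^{(n)} = g_t^{(n)}$. The three hypotheses to verify are: (i) $g_t \in \cG$ is continuous with all excursions strict; (ii) $g_t^{(n)} \to g_t$ uniformly on bounded intervals; and (iii) the family $\{g_t\} \cup \{g_t^{(n)} : n \in \mathbb{N}\}$ is uniformly good. Parts (i) and (ii) are essentially free: continuity of $g_t$ and uniform convergence on bounded intervals are provided by Claim \ref{claim_g_t_n_unif_conv}, while the strictness of all excursions of $g_t$ is built into the definition of $\mathcal{T}^*$ (Definition \ref{def_good_times_g_t}). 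Membership of $g_t, g_t^{(n)}$ in $\cG$ will follow from Lemma \ref{lemma:ftgood} via the identity derived below.

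The main work is condition (iii). Rewriting \eqref{g_t_from_g_0} using $f_t(y) = f_0(y) + ty$ yields the identity
\[
g_t(x) = f_t(x + \Phi(t)) + \lambda t - \int_0^t \Phi(s)\,\mathrm{d}s,
\]
and an analogous identity expresses $g_t^{(n)}(x)$ as a constant-in-$x$ additive shift of $f_t^{(n)}(x + \Phi^{(n)}(t))$. Since additive constants do not alter the excursion structure, it suffices to check that a leftward translation $\tilde{f}(y) = f(y + c)$ with $c \geq 0$ preserves uniform goodness with the same threshold. This follows from a direct check at the level of excursions above the running minimum: each excursion $[l, r)$ of $\tilde{f}$ corresponds either to an excursion $[l + c, r + c)$ of $f$ (when $l + c$ happens to be a minimum of $f$ starting from $0$) or to a sub-interval of some excursion $[l^*, r^*)$ of $f$ with $l^* < l + c < r^*$; in the second case $r^* > l + c$ and $r - l \leq r^* - l^*$. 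In either case, an excursion of $\tilde f$ intersecting $[K, \infty)$ is contained in a (no shorter) excursion of $f$ that also intersects $[K, \infty)$. Applying this with $c = \Phi(t)$ and $c = \Phi^{(n)}(t)$ respectively, the uniform goodness of $\{f_t\} \cup \{f_t^{(n)}\}$ provided by Lemma \ref{lemma:tailsmall} transfers to uniform goodness of $\{g_t\} \cup \{g_t^{(n)}\}$ with the same threshold $K_\varepsilon$.

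With all three hypotheses verified, Lemma \ref{lemma_unif_good_conv_ordx} immediately yields $\ORDX(g_t^{(n)}) \to \ORDX(g_t)$ in the product topology on $\linford$, completing the proof. The main obstacle, such as it is, is the verification that leftward translation preserves uniform goodness, but this amounts to nothing more than unwinding the definition of excursions above the running minimum. I remark that the convergence $\Phi^{(n)}(t) \to \Phi(t)$ (guaranteed by Corollary \ref{corollary_portemanteau}, since $t \in \mathcal{T}^*$ ensures $\nu(\{t\}) = 0$) enters only implicitly, through the invocation of Claim \ref{claim_g_t_n_unif_conv} for the uniform convergence hypothesis.
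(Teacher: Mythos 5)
Your proposal is correct and follows essentially the same route as the paper: uniform goodness of $\{g_t\}\cup\{g_t^{(n)}\}$ is deduced from Lemma \ref{lemma:tailsmall} via the identity $g_t(x)=f_t(x+\Phi(t))+\lambda t-\int_0^t\Phi(s)\,\mathrm{d}s$ (and its truncated analogue), strictness of excursions comes from Definition \ref{def_good_times_g_t}, uniform convergence on compacts from Claim \ref{claim_g_t_n_unif_conv}, and the conclusion from Lemma \ref{lemma_unif_good_conv_ordx}. Your explicit verification that a leftward translation preserves uniform goodness is a welcome elaboration of a step the paper leaves implicit.
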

\begin{proof}
First note that the functions $g_t(\cdot)$ and $g^{(n)}_t(\cdot), n \in \N$ are uniformly good 
(c.f.\ Definition \ref{def_uniformly_good}): this follows from Lemma \ref{lemma:tailsmall} and
 the fact that $g_t$ is a ``shifted'' version of $f_t$:
 \begin{equation}\label{g_t_is_shifted_f_t}
  g_t(x)\stackrel{ \eqref{def_eq_f_t_from_f_0}, \eqref{g_t_from_g_0} }{=}
 f_t(x+\Phi(t)) + \lambda t - \int_0^t \Phi(s)\, \mathrm{d}s, 
 \end{equation}
 and similarly, $g^{(n)}_t$ is a shifted version of $f^{(n)}_t$.
  Now Lemma \ref{lemma_approximation_ORDX_tilt_shift} follows from Definition \ref{def_good_times_g_t},
  Claim \ref{claim_g_t_n_unif_conv}
and Lemma \ref{lemma_unif_good_conv_ordx}.
\end{proof}

\subsubsection{Proof of Theorem \ref{thm:mcld_extension_introduction}}
\label{section_proof_of_tilt_and_shift_mcld_extension}

\begin{proof}[Proof of Theorem \ref{thm:mcld_extension_introduction}\eqref{mcld_rep_tilt_shift_statement_i}]

Given $\um \in \ltwoord \setminus \loneord$, we constructed two stochastic processes:
\begin{itemize}
\item the graphical construction in \cite[Section 3]{james_balazs_mcld_feller} of the $\mathrm{MCLD}(\lambda)$ process
 $\bm_t, t\geq 0$,
 
 \item  the process $\ORDX(g_t), t \geq 0$, defined by \eqref{g_t_from_g_0},
 where the initial state $g_0(\cdot):= f_0(\cdot)$ is defined by \eqref{f0def_b} and the control function $\Phi(\cdot)$ is defined by \eqref{eq_Phi_n_conv_to_Phi}.
\end{itemize}
 We now want to show that these two $\linford$-valued processes have the same law.
Both processes are \cadlag{} with respect to the product topology on $\linford$ by
 \cite[Proposition 1.1]{james_balazs_mcld_feller}  and Lemma \ref{lemma_g_t_ORDX_cadlag}.

Hence, since the set $\cT^*$ introduced in Definition \ref{def_good_times_g_t} is dense, 
if we can show that for any finite 
collection $t_1, \dots, t_r\in \cT^*$, 
we have 
\begin{equation}\label{tilt_shift_conclusion}
(\ORDX(g_{t_i}), 1\leq i\leq r)
\isd
(\bm_{t_i}, 1\leq i\leq r),
\end{equation}
then indeed Theorem \ref{thm:mcld_extension_introduction}\eqref{mcld_rep_tilt_shift_statement_i} will follow.

By Proposition \ref{prop:tilt_and_shift_l0} we have
 \begin{equation}\label{finite_dim_agree_for_truncated_mcld}
 (\ORDX(g_{t_i}^{(n)}, 1\leq i\leq r)\isd (\bm^{(n)}_{t_i}, 1\leq i\leq r),
 \end{equation}
where $\bm^{(n)}_t, t \geq 0$ is the $\mathrm{MCLD}(\lambda)$ process started from $\um^{(n)}$.
By the Feller property of $\mathrm{MCLD}(\lambda)$ 
(see \cite[Theorem 1.2]{james_balazs_mcld_feller}) we have
\begin{equation}\label{eq_truncated_conv_feller_mcld}
(\bm^{(n)}_{t_i}, 1\leq i\leq r)\tod (\bm_{t_i}, 1\leq i\leq r), \qquad n \to \infty 
\end{equation}
(with respect to the topology of $\ltwoord$ and hence also coordinatewise).
Putting together \eqref{finite_dim_agree_for_truncated_mcld}, Lemma \ref{lemma_approximation_ORDX_tilt_shift} and
\eqref{eq_truncated_conv_feller_mcld}
we obtain \eqref{tilt_shift_conclusion}.
The proof of Theorem \ref{thm:mcld_extension_introduction}\eqref{mcld_rep_tilt_shift_statement_i} is complete.
\end{proof}

\begin{proof}[Proof of Theorem \ref{thm:mcld_extension_introduction}\eqref{g_t_zero_is_zero}]
It is enough to show that for any $K>0$ and $\varepsilon>0$ 
we almost surely have $-\varepsilon \leq g_t(0) \leq 0$ for any $0 \leq t \leq K$.
Let us fix $K, \varepsilon>0$.
Recall from \eqref{eq_sparse_weight}-\eqref{far_starting_point_late_death} that 
 there exists $Y<0$ such that if $Y_i(0)<Y$ then $t_i^{(n)}>K$ for any $i \leq n$.
 By Lemma \ref{lemma:dense} there exists an almost surely finite $n_0$ such that
$\mu_0^{(n_0)}[y-\varepsilon,y]>0$ for any $Y \leq y \leq 0$, thus for
 any $n \geq n_0$ and any $x \geq 0$ such that $Y \leq g_0^{(n)}(x) \leq 0$ we have
$g_0^{(n)}(x_-) - g_0^{(n)}(x) \leq \varepsilon$. In words: the gaps between consecutive particles initially located
in $[Y,0]$ are smaller than or equal to $\varepsilon$. By Definition \ref{def_particle_rep}, these  gaps
can only decrease with time, thus for any $n \geq n_0$ and $t \leq K$ there is a particle in 
$[-\varepsilon,0]$, i.e., we have $-\varepsilon \leq g_t^{(n)}(0) \leq 0$.
Now $g_t^{(n)}(0) \to g_t(0)$ as $n \to \infty$ for all except countably many values of $t \in [0,K]$
 by Claim \ref{claim_g_t_n_unif_conv}, 
moreover $g_t(0)$ is a \cadlag{} function of $t$ by \eqref{g_t_from_g_0}, therefore
$-\varepsilon \leq g_t(0) \leq 0$ holds for every $0 \leq t \leq K$.
\end{proof}

\begin{remark}\label{remark_Phi_strictly_increasing}
 Note that Theorem \ref{thm:mcld_extension_introduction}\eqref{g_t_zero_is_zero} also
implies that the function $\Phi(\cdot)$ is strictly increasing.
 Indeed, if we indirectly assume that $\Phi(s)=\Phi(t)$
for all $s \in [t, t+\Delta t]$, where $\Delta t>0$, then by \eqref{g_t_from_g_0} we obtain $g_{t+\Delta t}(0)=g_t(0)+\lambda \Delta t$, which
contradicts Theorem \ref{thm:mcld_extension_introduction}\eqref{g_t_zero_is_zero}.

\end{remark}

\begin{proof}[Proof of Theorem \ref{thm:mcld_extension_introduction}\eqref{Phi_stopping_time_ii}]



\medskip

Let us assume that $\um=(m_1,m_2,\dots) \in \ltwoord \setminus \loneord$. 
We recursively  define
\[ n_1=1,   \quad  \quad n_k=\min \{ \, i \, : \, m_i<m_{n_{k-1}}  \, \}, \; k \geq 2, \qquad
\widetilde{m}_k=m_{n_k}. \]
Thus we have $\{m_1,m_2,\dots \}=\{ \widetilde{m}_1,\widetilde{m}_2, \dots \}$ and $\widetilde{m}_1>\widetilde{m}_2> \dots$.

Let us fix $x \geq 0$ and let $y=g_0(x)$. Definition \ref{def:f_zero_from_exponentials} and Lemma \ref{lemma:dense} imply
\begin{equation}\label{measure_mu_property_at_x}
\mu_0[y,0) \geq x, \qquad \mu_0[y+\varepsilon,0)<x \qquad \text{ for any } \quad \varepsilon>0.
\end{equation}
The  restriction of the measure $\mu_0$ to $(y,0)$ can be determined by
looking at
$\mathcal{F}_x:=\sigma \left( g_0(x'), 0 \leq x' \leq x \right)$, moreover for any $k \geq 1$ the restriction of
 $\mu_0^{(n_k-1)}=\sum_{i=1}^{n_k-1} m_i \cdot \delta_{Y_i(0)}$ to $(y,0)$ is also determined by $\mathcal{F}_x$,
  simply by removing all atoms with a weight strictly
smaller than $\widetilde{m}_{k-1}$.

As we have already discussed in the proof of Lemma \ref{lemma_only_excursions_burn},
$\nu$ is an atomic measure with masses $(m_i)_{i=1}^\infty$
located at  $\left( t_i \right)_{i=1}^\infty$ where $ \lim_{n \to \infty} t_i^{(n)} = t_i$.
 Now if $Y_i(0) \in [y,0)$ for some $i \geq 1$ and $n_k \geq i$ then $t_i^{(n_k-1)}$ is
 $\mathcal{F}_x$-measurable, because the value of the death time $t_i^{(n_k-1)}$ can be determined by \eqref{particle_dynamics_lambda} if we know the initial particle configuration strictly above $Y_i(0)$
and the value of $Y_i(0)$ (we do not need to know the mass $m_i$ of particle $i$). 
 Therefore $\lim_{k \to \infty} t_i^{(n_k)} = t_i$ is also $\mathcal{G}_x$-measurable.
If we define $t_{y'}=\sup \{\, t_i \, : \, Y_i \in [y',0)\, \}$, then $y' \mapsto t_{y'}$ is continuous,
 $t_y$ is $\mathcal{F}_x$-measurable; moreover
by \eqref{measure_mu_property_at_x} we have
\begin{equation}
\{ \Phi(t) \geq x \} =\{ t \geq t_y \} \in \mathcal{F}_x.
\end{equation}
Hence $\{ \Phi(t) \leq x \} \in \bigcap_{\varepsilon>0} \mathcal{F}_{x+\varepsilon}=\mathcal{F}_x^+$. 
This completes the proof of Theorem \ref{thm:mcld_extension_introduction}(iii).
\end{proof}

\section{Applications}
\label{section_applications}

\subsection{Eternal multiplicative coalescents}
\label{sec:eternal}

In Section \ref{sec:eternal} we restrict to the case $\lambda=0$,
i.e.\ the multiplicative coalescent (with no deletion).

We  showed in Theorem \ref{thm:tilt} that a multiplicative coalescent
$\bm_t, t\geq 0$
started from any initial condition $\bm_0$ in $\ltwoord$
has a ``tilt" representation, 
as $\bm_t=\ORDX(f_t)$, where $f_0$ is a random 
function and $f_t(x)=f_0(x)+xt$. 

This leaves the question of \textit{eternal} coalescents,
i.e.\ those defined for all times  $t \in (-\infty, \infty)$.

Recall from Definition \ref{brownian_parabolic_drift} the notion of Brownian motion with parabolic drift,
or briefly $\BMPD(u)$, where $u \in \R$ is the ``tilt parameter". 
\begin{equation}\label{def_eq_mathcal_u}
\text{If $h \sim \BMPD(u)$, denote by $\mathcal{M}(u)$ the law of $\ORDX(h)$.}
\end{equation}

In \cite[Corollary 24]{aldous_mc}
 Aldous showed that there exists an eternal version of the MC, the 
 \emph{standard multiplicative coalescent}, such that the marginal distribution 
of the coalescent at any given time $t \in \R$ is given by 
$\cM(t)$.
Armend\'ariz \cite{armendariz}, and then also Broutin-Marckert \cite{broutin-marckert}, showed that
if $h_0\sim \BMPD(u)$, and $h_t(x)=h_0(x)+tx$ for all $t\in\R$, then the process $\ORDX(h_t), t\in \R$ is in fact the
standard MC. We will provide an alternative proof of this result in Corollary \ref{cor:eternal_tilt} below.

\medskip

Aldous and Limic \cite{aldous_limic}
described the set of all eternal multiplicative coalescents.
They showed that the marginal distributions of any of these
coalescents can be given by the set of excursion lengths
of a suitable stochastic process. 
To state their results we need to recall some more notation from \cite{aldous_limic}. 
\begin{definition}\label{def:levy_without_replacement}
Given $\kappa\geq 0$, $\tau\in\R$ and $\uc\in \lthreeord$,
define
\begin{equation}
\label{Wdef}
W^{\kappa,\tau,\uc}(x)=\kappa^{1/2} W(x) - \frac12\kappa x^2 
+ \sum_{i=1}^\infty \left(c_i \ind [E_i\leq x] - c_i^2 x\right) 
+\tau x
\end{equation}
for $x\geq 0$, where $W$ is a standard Brownian motion, 
and where, for each $i$, $E_i\sim \Exp(c_i)$, independently
of each other and of $W$.
\end{definition}

If $\uc=\underline{0}$ and $\kappa=1$, then 
$W^{\kappa,\tau, \uc}\sim \BMPD(\tau)$. 
For general $\uc$, the processes defined in \eqref{Wdef}
have been called \textit{L\'evy processes without replacement};
each jump of size $c_i$ occurs at rate $c_i$, but once 
such a jump has occurred it does not happen again. 
Define also the parameter space
\begin{equation}\label{Idef}
\mathcal{I}=
\Big[ (0,\infty)\times(-\infty, \infty)\times \lthreeord\Big]
\cup
\Big[ \{0\}\times (-\infty, \infty) \times \big(\lthreeord\setminus\ltwoord\big)\Big].
\end{equation}

We can phrase the results of  
\cite[Theorems 2 and 3]{aldous_limic} as follows:
\begin{theorem}\label{thm:aldous_limic}
\begin{itemize}
\item[(i)]
For each $(\kappa, \tau, \uc)\in \cI$, 
there exists an eternal multiplicative coalescent
$\bm_t, t\in\R$ such that for each $t$,
$\bm_t\isd \ORDX(W^{\kappa, \tau+t, \uc})$.
\item[(ii)]
Let $\mu(\kappa,\tau,\uc)$ be the distribution
of the MC in (i). 
Then the extreme points of the set of 
eternal MC distributions
are $\mu(\kappa, \tau, \uc)$ for $(\kappa, \tau, \uc)\in\cI$,
together with the distributions of ``constant" 
processes (such that $\bm_t=(y,0,0,0,\dots)$ 
for all $t$, for some $y\geq 0$).
\end{itemize}
\end{theorem}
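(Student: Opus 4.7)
The plan for part (i) is to extend the tilt framework of Theorem \ref{thm:tilt} to the ``initial state'' $W^{\kappa,\tau,\uc}$, and for part (ii) to appeal to the classification \cite[Theorem 3]{aldous_limic}. The central technical input is the \emph{exponential excursion levels property} of $W^{\kappa,\tau,\uc}$: conditional on the sequence of lengths of its excursions above the running minimum, these excursions have independent levels, and an excursion of length $m$ lies at level $-E$ where $E\sim \Exp(m)$. Granting this property, the function $f_0$ built via Definition \ref{def:f_zero_from_exponentials} from $\um := \ORDX(W^{\kappa,\tau,\uc})$ shares the joint law of excursion lengths and levels with $W^{\kappa,\tau,\uc}$, so by Remark \ref{remark_how_we_extend_mcld_to_l2}\eqref{equivalent_start} tilting these two starting functions produces the same $\ORDX$ process.

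To establish the exponential excursion levels property, I would proceed in stages. For $\kappa=1$, $\uc=\underline{0}$ this is Aldous's observation for $\BMPD$ recorded in \cite[eq.~(72)]{aldous_mc}. The pure jump case $\kappa=0$ follows from the particle construction of Section \ref{section_particle_representation}: the associated $f_0$ of Definition \ref{def:f_zero_from_exponentials} built from $\uc$ truncated at $n$ manifestly has the exponential excursion levels property, and a limit argument as in Section \ref{section:extension_to_ltwo} identifies the $n \to \infty$ limit in law with $W^{0,\tau,\uc}$ once the compensator $\sum_i c_i^2 x$ has been absorbed into the drift $\tau x$. The general case is then handled by superposing an independent $\kappa^{1/2} W(x) - \frac{1}{2}\kappa x^2$ component and verifying, via a conditioning argument on the jump locations, that the exponential structure of the levels persists.

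With the exponential excursion levels property in hand, part (i) follows by the argument used to prove Theorem \ref{thm:tilt}: setting $f_t(x) := W^{\kappa,\tau,\uc}(x) + tx$ for all $t \in \R$, the process $\ORDX(f_t), t \geq t_0$ is, for every $t_0 \in \R$, an MC started from $\ORDX(f_{t_0})$, so the whole family is an eternal MC; since $f_t$ is distributed as $W^{\kappa,\tau+t,\uc}$, its marginals agree with $\ORDX(W^{\kappa,\tau+t,\uc})$, as required. Part (ii) is the Aldous--Limic classification \cite[Theorem 3]{aldous_limic}, for which our tilt framework provides an alternative existence argument but not a new uniqueness proof.

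The main obstacle is the exponential excursion levels property for $W^{\kappa,\tau,\uc}$ in full generality, especially when $\uc \in \lthreeord \setminus \ltwoord$ so that the jump component fails to be locally finite in the sense of Lemma \ref{lemma:exponential}. The interplay between the continuous $\BMPD$ component and the jump-without-replacement component is the only genuinely novel analytical ingredient, and the truncation and superposition steps will each need careful justification along the lines of Section \ref{section:extension_to_ltwo}.
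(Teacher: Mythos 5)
The statement you are proving is presented in the paper purely as a restatement of \cite[Theorems 2 and 3]{aldous_limic}; the paper offers no proof of it, and in particular part (ii) (the classification of extreme points) is cited there exactly as you cite it. So the comparison really concerns part (i), where your route is genuinely different from Aldous--Limic's original argument (which constructs the eternal coalescents as weak limits of finite coalescents via breadth-first walks) but coincides with the route the paper itself takes to the \emph{stronger} Corollary \ref{cor:eternal_tilt}: establish the exponential excursion levels property of $W^{\kappa,\tau,\uc}$ (Claim \ref{claim:Levy_excursions}), identify $\bar{W}^{\kappa,\tau,\uc}$ in law with the $f_0$ of Definition \ref{def:f_zero_from_exponentials}, invoke Theorem \ref{thm:tilt} and Remark \ref{remark_how_we_extend_mcld_to_l2}\eqref{equivalent_start}, and extend to an eternal process by letting $\tau\to-\infty$ and applying Kolmogorov extension (a step you gloss over but which is needed to pass from ``MC on $[t_0,\infty)$ for every $t_0$'' to a single eternal process). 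What this buys, relative to Aldous--Limic, is a rigid construction in which the whole eternal process is a deterministic functional of one realisation of $W^{\kappa,\tau,\uc}$; what it costs is that everything hinges on Claim \ref{claim:Levy_excursions}, which the paper itself only sketches and explicitly says would take many more pages to prove in detail, chiefly because one must extend the excursion-convergence machinery of Section \ref{subsection_good_functions} to functions with positive jumps and to convergence of levels as well as lengths.

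One concrete divergence worth flagging: for the mixed case $\kappa>0$, $\uc\neq\underline{0}$, you propose to \emph{superpose} an independent $\kappa^{1/2}W(x)-\tfrac12\kappa x^2$ onto the jump component and argue by conditioning on jump locations that the exponential level structure persists. This step is dubious as stated: the exponential excursion levels property concerns the excursion decomposition of the running minimum of the \emph{sum}, and the excursions of a sum of two independent processes are not expressible in terms of the excursions of the summands, so there is no obvious conditioning argument that transfers the property. The paper's sketch avoids this by using a single combined discrete approximation (initial blocks of sizes $n^{-1/3}c_1,\dots,n^{-1/3}c_{k(n)}$ together with $n$ blocks of size $n^{-2/3}$, tilted to time $t_n=\Vert\um^{(n)}\Vert_2^{-1}$), so that the exponential excursion levels property holds exactly at the prelimit level by Corollary \ref{corollary_exp_measure_mcld} and is inherited in the limit. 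If you intend your argument to be completed, you should replace the superposition step by such a joint approximation; otherwise the general case of Claim \ref{claim:Levy_excursions}, and hence part (i) for $\kappa>0$, $\uc\neq\underline{0}$, is not established. Your treatment of part (ii) adds nothing beyond the citation, which is also all the paper does, and the tilt framework indeed gives no purchase on the extremality statement.
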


Our results  can be applied to show that all these 
eternal coalescents also have a ``tilt" representation.
First, we make a definition:
\begin{definition}\label{def_exp_exc_levels}
Let $W(x), x\geq 0$ be a random function. 
We say that $W$ has the \textit{exponential excursion levels}
property if the following holds: conditional on 
the sequence of excursion lengths $\ORDX(W)=(m_1,m_2,\dots)$,
the levels of the excursions of $W$ are independent, 
with the excursion of length $m_i$
occurring at level $-E_i$ where $E_i\sim \Exp(m_i)$. 
\end{definition}

Now we can state the key property that we need:
\begin{claim}
\label{claim:Levy_excursions}
Let $(\kappa,\tau,\uc)\in\cI$.
Then  
$W^{\kappa, \tau, \uc}$
has the exponential excursion levels property.
\end{claim}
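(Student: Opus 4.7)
The plan is to reduce Claim \ref{claim:Levy_excursions} to the known case of Brownian motion with parabolic drift, propagate the property through the addition of finitely many jumps, and then pass to the limit by truncation.

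As a base case I would take $\uc=\underline{0}$, where $W^{\kappa,\tau,\underline{0}}$ is a scaled version of $\BMPD(\tau)$. For $\BMPD$ itself, the exponential excursion levels property was observed by Aldous (equation (72) of \cite{aldous_mc}) and further elaborated by Bertoin \cite{bertoin_frag} and Armend\'ariz \cite{armendariz}; the statement for arbitrary $\kappa>0$ follows by Brownian scaling, since this scaling multiplies both the excursion lengths and the excursion depths by compatible factors, and the exponential distribution is preserved under the corresponding rescaling of its rate.

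Next, for $\uc^{(n)}=(c_1,\dots,c_n,0,0,\dots)$ I would proceed by induction on $n$. The process $W^{\kappa,\tau,\uc^{(n)}}$ is obtained from $W^{\kappa,\tau+c_n^2,\uc^{(n-1)}}$ by subtracting the linear drift $c_n^2 x$ and inserting a single upward jump of size $c_n$ at the independent time $E_n\sim\Exp(c_n)$. Conditioning on the position $E_n=e$ and using the strong Markov property of the BMPD-like pieces on either side, the effect of this single jump on the excursion structure can be analysed by case-splitting: either $e$ lies strictly inside some excursion of the pre-jump process (in which case that excursion may split, and the memoryless property of $E_n$ combined with the conditional exponential law of the pre-jump excursion level yields the required exponential law for the split pieces), or $e$ lies outside all excursions of the pre-jump process (in which case a new excursion is created, whose length and level are determined by the first subsequent hitting of the running minimum, and a direct computation using independence of $E_n$ from the driving Brownian motion gives the exponential excursion levels structure). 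The correct size-biased book-keeping is provided by Claim \ref{claim:sb} and Claim \ref{claim_exp_ord_size_biased}. This is the step where the claim becomes genuinely non-trivial and is the main obstacle I expect; it is exactly the bookkeeping that a ``sketch proof'' is meant to convey without full technical detail.

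Finally, for general $\uc$, truncation gives $W^{\kappa,\tau,\uc^{(n)}}\to W^{\kappa,\tau,\uc}$ uniformly on compact sets almost surely (this is the content of the Aldous--Limic construction of $W^{\kappa,\tau,\uc}$ as a uniform limit). Tail bounds analogous to Lemma \ref{lemma:tailsmall} ensure that the family $\{W^{\kappa,\tau,\uc^{(n)}}\}_{n\geq 1}$ together with its limit is uniformly good in the sense of Definition \ref{def_uniformly_good}, so Lemma \ref{lemma_unif_good_conv_ordx} yields coordinatewise convergence of the excursion-length vectors. Joint weak convergence of the lengths and the associated excursion levels (for the finitely many largest excursions, for each fixed threshold) then passes the exponential excursion levels property from the finite approximations to $W^{\kappa,\tau,\uc}$.
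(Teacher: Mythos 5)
Your route is genuinely different from the paper's. The paper does not induct on the jumps of $W^{\kappa,\tau,\uc}$: it realises $W^{\kappa,\tau,\uc}$ as a limit of tilted step functions $f^{(n)}_{t_n}$ built from finite initial conditions $\um^{(n)}\in\lzeroord$ (three separate regimes for $\kappa=1,\uc=\underline{0}$; $\kappa=0$; and the mixed case), notes that each $f^{(n)}_{t_n}$ has the exponential excursion levels property automatically because it is a time-$t_n$ state of the finite particle system (Corollary \ref{corollary_exp_measure_mcld}\eqref{cor_exp_meas_mcld_continuous} via \eqref{eq_particle_tilt_shift_equiv}), and then passes the property to the limit using an extension of Lemma \ref{lemma_unif_good_conv_ordx} that tracks levels as well as lengths and tolerates positive jumps in the limit function. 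Your base case ($\uc=\underline{0}$, scaling in $\kappa$) coincides with the paper's Remark \ref{remark_bmpd_exp_exc} and Case 1, and your final truncation step is in the same spirit as the paper's limiting argument (and needs the same extension of Lemma \ref{lemma_unif_good_conv_ordx}, which you cannot cite as stated since the approximands and the limit have positive jumps).

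The genuine gap is the induction step, which is the entire content of your argument and is asserted rather than proved. First, a bookkeeping error: $W^{\kappa,\tau,\uc^{(n)}}(x)=W^{\kappa,\tau-c_n^2,\uc^{(n-1)}}(x)+c_n\ind[E_n\le x]$, so the drift parameter decreases, not increases. More seriously, the effect of inserting the single jump on the excursion decomposition is nonlocal: if $E_n$ falls in an excursion $[l,r)$ of $V:=W^{\kappa,\tau-c_n^2,\uc^{(n-1)}}$ at level $V(l)$, that excursion does not merely "split or extend" — it absorbs \emph{every} subsequent excursion of $V$ whose level lies in $[V(l)-c_n,\,V(l)]$ (the new excursion ends at the first $x>E_n$ with $\bar V(x)<V(l)-c_n$), and all later excursion levels are shifted up by $c_n$. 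One must then verify that this merging-and-shifting map, applied at an excursion selected by the horizontal position $E_n\sim\Exp(c_n)$, carries the $\exmeasure$-type law of the levels of $V$ to the corresponding law for $W^{\kappa,\tau,\uc^{(n)}}$ (whose block masses are not even obtained from those of $V$ in the naive "add a block of mass $c_n$ and coalesce" way, since the jump contributes no horizontal length). That is a real computation — essentially the continuum analogue of the rate calculation in Lemma \ref{lemma_two_procedures} run in the jump-size direction — and it does not follow from the memoryless property, the strong Markov property, and Claims \ref{claim:sb} and \ref{claim_exp_ord_size_biased} as invoked; nor can it follow from the exponential excursion levels property of $V$ alone, since which excursions get absorbed depends on the joint level data in a window of width $c_n$. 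Until this step is carried out, the proposal is a plausible programme rather than a proof; the paper's detour through the finite particle system is precisely what lets it avoid this computation.
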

\begin{remark}\label{remark_bmpd_exp_exc}
Equation (72) of \cite{aldous_mc} states that $\BMPD(t)$ 
has the exponential excursion levels
property for any $t \in \R$. From this fact the statement of Claim \ref{claim:Levy_excursions}
follows for $W^{\kappa, \tau, \underline{0}}$  for any $\kappa>0$ and $\tau \in \R$
by scaling. For a short sketch proof of this fact, see 
the $\uc=\underline{0}$ case of 
Section \ref{subsection_sketch_proof_levy_excursions} below.
\end{remark}

Among other things, Claim \ref{claim:Levy_excursions} implies
that the excursions of $W^{\kappa,\tau,\uc}$ 
occur in size-biased order, c.f.\ Claim \ref{claim:sb}. 
From Claim \ref{claim:Levy_excursions}, we can deduce the following result:
\begin{corollary}
\label{cor:eternal_tilt}
Let $(\kappa, \tau,\uc)\in\cI$. 
Then the process $\ORDX(W^{\kappa, \tau+t, \uc}), t\in\R$
is an eternal multiplicative coalescent.
\end{corollary}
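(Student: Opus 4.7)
The plan is to reduce Corollary \ref{cor:eternal_tilt} to Theorem \ref{thm:tilt} by noting that the L\'evy processes without replacement satisfy two key structural properties: (a) the \emph{translation property}
\begin{equation*}
W^{\kappa,\tau,\uc}(x)+rx=W^{\kappa,\tau+r,\uc}(x)\qquad (r\in\R),
\end{equation*}
which is immediate from the definition \eqref{Wdef}, and (b) the exponential excursion levels property, granted by Claim \ref{claim:Levy_excursions}. Fix $(\kappa,\tau,\uc)\in\cI$ and an arbitrary base time $s\in\R$; set $h_s:=W^{\kappa,\tau+s,\uc}$. By Theorem \ref{thm:aldous_limic}(i), $\um:=\ORDX(h_s)\in\ltwoord$ almost surely.

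First I would argue that the conditional law of the running minimum $\bar{h}_s$ given $\ORDX(h_s)=\um$ coincides with the law of the step function $f_0$ associated to $\um$ through Definition \ref{def:f_zero_from_exponentials}. Indeed, $\bar{h}_s$ is a non-increasing \cadlag{} step function whose constant pieces are the excursions of $h_s$, with the level of each excursion of length $m$ distributed as $-E$, $E\sim\Exp(m)$, independently across excursions, by Claim \ref{claim:Levy_excursions}. This is exactly the description of $f_0$ in Remark \ref{remark:f_0}.

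Next I would apply Theorem \ref{thm:tilt} to this $\um$: the process
\begin{equation*}
\bigl(\ORDX(f_0+r\cdot),\ r\geq 0\bigr)
\end{equation*}
is a MC started from $\um$. I then invoke the equivalent-start observation in Remark \ref{remark_how_we_extend_mcld_to_l2}\eqref{equivalent_start} (with $\lambda=0$): since $\bar{h}_s=f_0$ in distribution, applying the tilt to $h_s$ and to $f_0$ yields processes whose running minima coincide, and hence so do their $\ORDX$-processes. Combining this with the translation property (a), I deduce that
\begin{equation*}
\bigl(\ORDX(W^{\kappa,\tau+s+r,\uc}),\ r\geq 0\bigr)
\end{equation*}
is a MC started from $\ORDX(W^{\kappa,\tau+s,\uc})$.

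Finally, since this holds for every $s\in\R$, the one-parameter family $(\ORDX(W^{\kappa,\tau+t,\uc}), t\in\R)$ has transition kernels consistent with the MC semigroup over every interval $[s,s+r]$, so it is an eternal MC. The principal obstacle is the identification of the conditional law of $\bar{h}_s$ with that of $f_0$; this is entirely governed by Claim \ref{claim:Levy_excursions} together with the observation that $f_0$ is determined, given its excursion-length sequence, by the i.i.d.\ exponential levels. Everything else is routine: the translation property is a computation, and the transfer from $f_0$ to $h_s$ under tilting is supplied by \eqref{ordx_of_bar_is_ordx} and Remark \ref{remark_how_we_extend_mcld_to_l2}\eqref{equivalent_start}.
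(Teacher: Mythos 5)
Your proposal is correct and follows essentially the same route as the paper: identify $\bar h_s$ with $f_0$ via Claim \ref{claim:Levy_excursions}, apply Theorem \ref{thm:tilt}, transfer back via Remark \ref{remark_how_we_extend_mcld_to_l2}\eqref{equivalent_start} and the translation property, and then let the base time range over all of $\R$ (the paper phrases this last step as letting $\tau\to-\infty$ and invoking Kolmogorov extension, but it is the same consistency argument).
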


We describe below the straightforward way in which 
Corollary \ref{cor:eternal_tilt} follows from
Claim \ref{claim:Levy_excursions}. We then give a sketch
of the proof of Claim \ref{claim:Levy_excursions} in Section \ref{subsection_sketch_proof_levy_excursions}. 
The proof is not difficult but a detailed
account would occupy many more pages,
and require some more technical extensions of earlier
results (in particular the material in Section \ref{subsection_good_functions}) to the setting 
of functions with positive jumps. 

Corollary \ref{cor:eternal_tilt} is also the subject
of recent work by Vlada Limic \cite{limic_new}.
 Limic gives a proof of this result using a construction that involves
\textit{simulateous breadth-first walks} (see Section \ref{subsec:related} for a more detailed discussion).
Even without giving the full proof,
we feel that abstracting the property in 
Claim \ref{claim:Levy_excursions} gives valuable 
insight in complement to the 
different approach of \cite{limic_new}. 

\begin{proof}[Proof of Corollary \ref{cor:eternal_tilt}
(using Claim \ref{claim:Levy_excursions})]

Fix any $\tau\in\R$. Let $h_t=W^{\kappa,\tau+t,\uc}$
for $t\geq 0$. Note that 
\[
h_t(x)=h_0(x)+tx
\]
for $x\geq 0$, $t\geq 0$. 

Condition on $\ORDX(W^{\kappa,\tau\uc})=\um$. 
Define $f_0=\bar{h}_0$. Then Claim \ref{claim:Levy_excursions}
tells us precisely that the distribution of $f_0$ is the same 
as the one defined at \eqref{f0def_b}.

Define $f_t$ by $f_t(x)=f_0(x)+tx$ as at \eqref{def_eq_f_t_from_f_0}.
Then by the $\lambda=0$ case of Remark \ref{remark_how_we_extend_mcld_to_l2}\eqref{equivalent_start} we have 
$\ORDX(h_t)=\ORDX(f_t)$ for all $t \geq 0$.

But Theorem \ref{thm:tilt} says that $\ORDX(f_t), t\geq 0$ 
is a MC. 
Hence the same is true of $\ORDX(h_t), t\geq 0$.
That is, $\ORDX(W^{\kappa, \tau+t, \uc}), t\geq 0$ is
a MC for all $\tau$. 
But by considering values of $\tau$ tending to $-\infty$,
and applying Kolmogorov extension, the statement 
of Corollary \ref{cor:eternal_tilt} follows. 
\end{proof}

\subsubsection{Sketch of proof of Claim \ref{claim:Levy_excursions}}
\label{subsection_sketch_proof_levy_excursions}

One necessary tool is an extension of Lemma \ref{lemma_unif_good_conv_ordx}, in two directions. 
First, we need not just 
that the collection of excursion lengths
converges, but also that their levels converge; 
this is straightforward. Second, we need to cover
the case where the limit $f$ is allowed
to have positive jumps (since this is true of
the functions $W^{\kappa, \tau, \uc}$ whenever 
$\uc$ is not identically zero); this introduces a few 
technicalities (but doesn't require new ideas). 

\medskip

Now the idea is to take an appropriate sequence of initial conditions
$\um^{(n)}\in \lzeroord$, and times $t_n$. 
Define $f^{(n)}_0$ based on $\um^{(n)}$ according to
\eqref{f0def_for_lzeroord}, and define $f^{(n)}_{t_n}$
as in \eqref{def_eq_f_t_from_f_0}. 
Then we want to show convergence, in an appropriate
sense, of $f^{(n)}_{t_n}$ to $W_{\kappa, \tau, \uc}$; 
then, to use the extension of Lemma \ref{lemma_unif_good_conv_ordx}
to deduce that the lengths and levels of the excurions 
of $f^{(n)}_{t_n}$ converge in distribution to those 
of $W_{\kappa, \tau, \uc}$. Finally, observe that for any $n$,
the function $f^{(n)}_{t_n}$  satisfies the exponential 
excursion lengths property (this is 
a translation of Corollary 
\ref{corollary_exp_measure_mcld}\eqref{cor_exp_meas_mcld_continuous}
from the particle system context into the tilt representation 
context using the identity \eqref{eq_particle_tilt_shift_equiv}). This property is then 
inherited by the limit $W_{\kappa, \tau, \uc}$ of 
the sequence $f^{(n)}_{t_n}$ and Claim \ref{claim:Levy_excursions} follows.

The main part of the work here is showing 
the convergence of $f^{(n)}_{t_n}$ to $W_{\kappa, \tau, \uc}$
in such a way that we can deduce the convergence 
of the lengths and levels of the excursions. 

For convenience, take $\tau=0$, and $\kappa=0$ or $\kappa=1$;
other cases are almost identical. 

\medskip 

{\bf Case 1: $\uc=\underline{0}$ 
and $\kappa=1$.} The distribution of 
$W^{1,0,\underline{0}}$ is simply $\BMPD(0)$. The fact
that $\BMPD(u)$ satisfies the exponential excursion heights
property was already discussed in Remark \ref{remark_bmpd_exp_exc}. Alternatively, let the initial 
condition $\um^{(n)}$ consist of $n$ blocks each of size
$n^{-2/3}$, and let $t_n=n^{1/3}$. The function $f^{(n)}_0$
has $n$ excursions each of length $n^{-2/3}$. 
The gaps between the levels of these excursions are given 
by independent exponential random variables; 
for $0\leq k<n$, let $F_k$ be the $(k+1)$st such gap, 
with rate $n^{1/3}\frac{n-k}n$.

Then the increments of  $f^{(n)}_{n^{1/3}}$ 
 on the intervals  $[(kn^{-2/3}, (k+1)n^{-2/3}]$
 are independent over $1\leq k< n$; the $k$th such increment 
is given by $n^{-1/3}-F_k$.
Let us denote \[x=n^{-2/3}k, \quad \mathrm{d}x=n^{-2/3}, \quad
\mathrm{d} f^{(n)}_{n^{1/3}}(x)=f^{(n)}_{n^{1/3}}(x+\mathrm{d}x)-f^{(n)}_{n^{1/3}}(x).\]
 We have
\begin{align*} \mathbb{E}( \mathrm{d} f^{(n)}_{n^{1/3}}(x)) &= -kn^{-4/3}(1+O(k/n)) \approx - x \mathrm{d}x , \\
\mathrm{Var}( \mathrm{d} f^{(n)}_{n^{1/3}}(x))&=n^{-2/3}(1+O(k/n)) \approx \mathrm{d}x.
\end{align*}
  In this way we obtain 
a functional limit theorem; the distribution of $f^{(n)}_{n^{1/3}}$ 
converges (in the sense of uniform convergence on finite intervals) to $\BMPD(0)$. 

(By this method we get an alternative, self-contained proof of the results of
\cite{armendariz, broutin-marckert} claiming  that the tilt procedure applied
to the $\BMPD$ family gives the (eternal) standard MC.)

\medskip

 {\bf Case 2: $\kappa=0$.} 
Now the process in \eqref{Wdef} is given just by the
compensated jumps according to the vector $\uc$. 
In keeping with \eqref{Idef}, we now need 
$\uc\in\lthreeord\setminus\ltwoord$.

Here, for an appropriate initial condition
take $t_n=c_1^2+\dots+c_n^2$ and 
$\um^{(n)}=t_n^{-1}(c_1, c_2,\dots, c_n, 0, 0, \dots)$.
We can couple the sequence of initial functions $f_0^{(n)}$, $n \in \N$ with $W^{0,0,\underline{c}}$
in the following way.
For $k\leq n$, let $f_0^{(n)}$ have an excursion
on an interval of length $t_n^{-1} c_k$ at level $-t_n E_k$ 
where $E_k\sim \Exp(c_k)$ independently for each $k$. 

Consider what happens on this interval in $f_{t_n}^{(n)}$.
Because of the tilt, the function increases by
$t_n t_n^{-1}c_k=c_k$ over the course of the interval. 
The length $t_n^{-1} c_k$ goes to $0$ as $n\to\infty$.

By Remark \ref{remark:f_0}, the horizontal location of the start of the interval 
is 
\begin{equation}\label{horizontal}
\sum_{j=1}^n t_n^{-1} c_j \ind(E_j<E_k);
\end{equation}
conditional on $E_k$, this has mean converging to $E_k$ 
and variance converging to 0, and
(using simple martingale calculations)
can be shown to converge almost surely as $n\to\infty$ to $E_k$.
Hence in the limit process this interval produces a 
jump of size $c_k$ occurring at time $E_k$,
matching the term on the right of (\ref{Wdef}).

The vertical location of the start of the interval
is $-t_n E_k$ in the function $f_0^{(n)}$; applying the tilt
corresponding to the horizontal distance in 
\eqref{horizontal}, 
its vertical location in $f_{t_n}^{(n)}$ is
\[
-t_n E_k + t_n \sum_{j=1}^n t_n^{-1} c_j \ind(E_j<E_k),
\]
which, after simplification, is
$\sum_{j=1}^n \big(c_j\ind(E_j<E_k)-c_j^2 E_k\big)$.
Comparing with \eqref{Wdef}, this converges to
$W^{0,0,\uc}(E_k-)$ as $n\to\infty$, so in the 
limit process the jump indeed appears at the correct height. 

In the limit process, these jumps are dense. 
In this case we do not have uniform convergence of
$f_{t_n}^{(n)}$ to $W^{0,0,\uc}$, 
but by considering suitable time-changes one obtains that 
$f_{t_n}^{(n)}$ has the same
excursions as a function $h^{(t_n)}$ which 
converges to $W^{0,0,\uc}$
in the Skorohod topology, and this is enough 
to give convergence of the lengths and levels of excursions
as required. 

\medskip 

{\bf Case 3: $\uc \neq \underline{0}$ and $\kappa=1$.}
Now  the process $W^{1,0,\uc}$ has both a Brownian part and 
positive jumps. Here a suitable sequence of initial conditions
is given by taking $\um^{(n)}$ to consist 
of blocks of sizes $n^{-1/3}(c_1, c_2, \dots, c_{k(n)})$
along with $n$ blocks of size $n^{-2/3}$, where $k(n)$ 
is chosen such that $\sum_{i=1}^{k(n)} c_i^2 \ll n^{1/3}$ 
as $n\to\infty$. (This is the same regime
used in the proof of Lemma 8 of \cite{aldous_limic}). 
Similarly to the two previous cases, choose
$t_n=  \Vert \um^{(n)} \Vert_2^{-1}$.
The ideas of the two previous cases can be combined
to give the desired result for $W^{1,0,\uc}$ also.

\subsection{Tilt-and-shift of $\BMPD$}
\label{sec:BMPD}

Recall the definition of $\BMPD(u)$ from Definition \ref{brownian_parabolic_drift}.
 We show that applying the tilt-and-shift procedure
starting from an initial state $h_0$ which is a $\BMPD$ results in a MCLD process. Furthermore, the function
$h_t$ remains in the class of $\BMPD$ processes
(with a random parameter). 

\begin{proposition}\label{proposition_mcld_brownian}
Let $u\in\R$, and
let $h_0 \sim \BMPD(u)$. 
Let $g_0=\bar{h}_0$. 
Let $\Phi(t), t \geq 0$ and $g_t, t\geq 0$
be given by the tilt-and-shift procedure
in Theorem \ref{thm:mcld_extension_introduction},
and let $h_t$ be given by \eqref{h_tilt_shift}.
\begin{enumerate}[(i)]
\item \label{tilt-and-shift-Brownian}
The process $\ORDX(h_t), t\geq 0$ is a $\mathrm{MCLD}(\lambda)$ process. 
\item \label{bmpd_stays_bmpd} 
Given $\Phi(t)$ and 
$\left(h_0(x), x\leq \Phi(t)\right)$, 
the conditional law of  
$h_t$ is 
\begin{equation}\label{window_proc_appears}
\BMPD(u+t-\Phi(t)).
\end{equation}
\end{enumerate}
\end{proposition}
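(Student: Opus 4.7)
For part~\eqref{tilt-and-shift-Brownian}, the plan is to reduce to Theorem~\ref{thm:mcld_extension_introduction}\eqref{mcld_rep_tilt_shift_statement_i} by identifying $g_0=\bar h_0$ in law with the $f_0$ of Definition~\ref{def:f_zero_from_exponentials}. Set $\um:=\ORDX(h_0)\in\ltwoord$. By the exponential excursion levels property of $\BMPD(u)$ (Remark~\ref{remark_bmpd_exp_exc}), conditional on $\um$ the excursions of $h_0$ sit at independent levels $-E_i$ with $E_i\sim\Exp(m_i)$. Since $g_0=\bar h_0$ is the non-increasing \cadlag{} function that is constant on each excursion of $h_0$ (equal to the excursion's level), the characterisation in Remark~\ref{remark:f_0}\eqref{remark_f0_i_lebesgue} shows that $g_0$ has exactly the law of the $f_0$ built from $\um$ via Definition~\ref{def:f_zero_from_exponentials}. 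Theorem~\ref{thm:mcld_extension_introduction}\eqref{mcld_rep_tilt_shift_statement_i} therefore makes $(\ORDX(g_t))_{t\ge0}$ an $\mathrm{MCLD}(\lambda)$ started from $\um$, and Remark~\ref{remark_how_we_extend_mcld_to_l2}\eqref{equivalent_start} (applied with the common control $\Phi$ appearing in \eqref{h_tilt_shift}) gives $\bar h_t=\bar g_t$, whence $\ORDX(h_t)=\ORDX(g_t)$.

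For part~\eqref{bmpd_stays_bmpd}, the idea is to combine Theorem~\ref{thm:mcld_extension_introduction}\eqref{Phi_stopping_time_ii} with the strong Markov property of the Brownian motion $B$ underlying $h_0$ in Definition~\ref{brownian_parabolic_drift}. Let $\mathcal{F}^{B,+}_x:=\bigcap_{\varepsilon>0}\sigma\bigl(B(y):y\le x+\varepsilon\bigr)$. Because $g_0=\bar h_0$ is a measurable functional of $(h_0(y):y\le x)$, which in turn is a measurable functional of $(B(y):y\le x)$, we have the inclusion $\mathcal{F}^+_x\subseteq\mathcal{F}^{B,+}_x$, and hence $\Phi(t)$ is a stopping time of the filtration $(\mathcal{F}^{B,+}_x)_{x\ge0}$. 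Applying strong Markov at $\Phi(t)$, the process $\widetilde B(y):=B(\Phi(t)+y)-B(\Phi(t))$ is a standard Brownian motion independent of $\mathcal{F}^{B,+}_{\Phi(t)}$, and the elementary computation
\[
h_0(\Phi(t)+y)-h_0(\Phi(t))=\widetilde B(y)-\tfrac12 y^2+(u-\Phi(t))y,
\]
substituted into~\eqref{h_tilt_shift} and then grouping terms, yields
\[
h_t(y)=h_t(0)+\widetilde B(y)-\tfrac12 y^2+(u+t-\Phi(t))y.
\]

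The remaining step, and the one I expect to need the most care, is to verify that $h_t(0)=0$ almost surely; once this is in hand, the displayed formula is exactly $\BMPD(u+t-\Phi(t))$, and, since $\widetilde B$ is independent of the conditioning $\sigma$-algebra $\sigma\bigl(\Phi(t),(h_0(x):x\le\Phi(t))\bigr)\subseteq\mathcal{F}^{B,+}_{\Phi(t)}$ while $\Phi(t)$ is measurable with respect to it, part~\eqref{bmpd_stays_bmpd} follows. To obtain $h_t(0)=0$, note that $\um\in\ltwoord\setminus\loneord$ almost surely for $\BMPD(u)$, so Theorem~\ref{thm:mcld_extension_introduction}\eqref{g_t_zero_is_zero} gives $g_t(0)=0$; combined with $\bar h_t=\bar g_t$ from Remark~\ref{remark_how_we_extend_mcld_to_l2}\eqref{equivalent_start} and the trivial identities $h_t(0)=\bar h_t(0)$ and $g_t(0)=\bar g_t(0)$, one gets $h_t(0)=0$, completing the plan.
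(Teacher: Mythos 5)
Your proposal is correct and follows essentially the same route as the paper: part (i) via the exponential excursion levels property of $\BMPD(u)$, Theorem \ref{thm:mcld_extension_introduction}\eqref{mcld_rep_tilt_shift_statement_i} and Remark \ref{remark_how_we_extend_mcld_to_l2}\eqref{equivalent_start}; part (ii) via the explicit algebraic identity for $h_t$, the fact that $h_t(0)=g_t(0)=0$ from Theorem \ref{thm:mcld_extension_introduction}\eqref{g_t_zero_is_zero}, and the strong Markov property at the stopping time $\Phi(t)$ supplied by Theorem \ref{thm:mcld_extension_introduction}\eqref{Phi_stopping_time_ii}. Your version is in fact slightly more careful than the paper's on two points it leaves implicit — the filtration inclusion $\mathcal{F}^+_x\subseteq\mathcal{F}^{B,+}_x$ needed to invoke strong Markov for $B$, and the justification of $h_t(0)=g_t(0)$ via $\bar h_t=\bar g_t$.
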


\begin{proof} 
We first note that by Remark \ref{remark_bmpd_exp_exc} the function $h_0$ (and thus $g_0$)
has the exponential excursion levels property (c.f.\ Definition \ref{def_exp_exc_levels}). 
Together with Definition \ref{def:f_zero_from_exponentials} this implies that $g_0$ is
 a suitable 
initial state of the tilt-and-shift representation.
Then by Theorem \ref{thm:mcld_extension_introduction},
$\ORDX(g_t)$ is a MCLD($\lambda$) process,
and, as observed in 
Remark \ref{remark_how_we_extend_mcld_to_l2}(\ref{equivalent_start}),
so is $\ORDX(h_t)$. This completes the proof of part \eqref{tilt-and-shift-Brownian}.

For part \eqref{bmpd_stays_bmpd}, since $h_0 \sim \BMPD(u)$, we
have
\begin{multline}\label{h_t_bmpd}
h_t(x) \stackrel{\eqref{h_tilt_shift}, \eqref{def_eq_bmpd} }{=}
B(x+\Phi(t))-\frac12(x+\Phi(t))^2+u \cdot (x+\Phi(t)) +\\
\lambda t + \int_0^t \left(x + \Phi(t)-\Phi(s) \right) \, \mathrm{d}s.
\end{multline}
Since $\ORDX(g_0)=\ORDX(h_0)$ is in $\ltwoord \setminus \loneord$
with probability 1, we have $h_t(0)=g_t(0)=0$ by 
Theorem \ref{thm:mcld_extension_introduction}\eqref{g_t_zero_is_zero}, and if we combine this with
\eqref{h_t_bmpd}, we obtain
\begin{equation}\label{h_t_zero_bmpd}
0 = B(\Phi(t))-\frac12(\Phi(t))^2+u\Phi(t) +\\
\lambda t + \int_0^t \left( \Phi(t)-\Phi(s) \right) \, \mathrm{d}s.
\end{equation}
Subtracting \eqref{h_t_zero_bmpd} from \eqref{h_t_bmpd} we get
\begin{equation}\label{bmpd_h_t_identity}
h_t(x)=\left( B(x+\Phi(t))-B(\Phi(t))\right) -\frac12x^2+(u+t-\Phi(t))  x, \quad x \geq 0.
\end{equation}

Now Theorem \ref{thm:mcld_extension_introduction}(iii) states that
$\Phi(t)$ is a stopping time w.r.t.\ the filtration
$(\mathcal{F}_x^+)$.
Then by the strong Markov property for Brownian motion w.r.t.\ $(\mathcal{F}_x^+)$ (see
 \cite[Theorem 2.14]{moerters-peres-book}),
we have that $B\big(x+\Phi(t)\big)-B\big(\Phi(t)\big)$ is a
standard Brownian motion independent of 
$\Phi(t)$ and $\big(B(x), 0\leq x\leq \Phi(t)\big)$.
Using \eqref{bmpd_h_t_identity} then gives 
that the conditional distribution of $h_t$ is \eqref{window_proc_appears}.
\end{proof}

\begin{remark}
Proposition \ref{proposition_mcld_brownian}\eqref{bmpd_stays_bmpd} gives
a $\mathrm{MCLD}$ process
whose marginal distributions are all given by mixtures 
of distributions $\cM(u)$ (c.f.\ Definition \eqref{def_eq_mathcal_u}). 
In \cite{james_balazs_window_process} we 
find eternal processes with this property, 
which may be stationary or non-stationary. We
show that such processes arise naturally as 
scaling limits of discrete models such as 
frozen percolation processes (see Definition \ref{def_frozen_percolation_model} below)
or forest fire processes (see Section \ref{section_particle_forest}).

In Section \ref{section_scaling_lim_of_frozen_perc} below we observe a simple case of such a scaling limit. 
\end{remark}

\subsubsection{Scaling limit of frozen percolation started from a critical \ER graph}
\label{section_scaling_lim_of_frozen_perc}

First we recall the notion of \emph{mean-field frozen percolation process} from \cite{br_frozen_2009} (using slightly different notation).

\begin{definition}[$\mathrm{FP}(n,\lambda(n))$]
\label{def_frozen_percolation_model}
We start with a graph $F^{(n)}_0$ on $n$ vertices. Between each pair of unconnected vertices an edge appears with rate $1/n$; also,
every connected component of size $k$ is deleted with rate $\lambda(n)\cdot k$. (When a component is deleted, its vertices as well as its 
edges are removed from the graph.) 
Let $F^{(n)}_t$ be the graph at time $t$.  
Denote by 
\[ 
\mathbf{M}^{(n)}(t)= \left(M_1^{(n)}(t), M_2^{(n)}(t), \dots \right) \in \lzeroord 
\] 
the sequence of component sizes of $F^{(n)}_t$, arranged in decreasing order. 
\end{definition} 
    
Then $\mathbf{M}^{(n)}(t), t \geq 0$ is a Markov process -- let us call it here the
frozen percolation component process on $n$ vertices with lightning rate $\lambda(n)$,
or briefly $\mathrm{FP}(n,\lambda(n))$.

 In order to achieve self-organized criticality, one chooses
$n^{-1} \ll \lambda(n) \ll 1$, c.f.\ \cite[Theorem 1.2]{br_frozen_2009}. The case 
$\lambda(n) \asymp n^{-1/3}$ holds special significance, c.f.\ \cite[Conjecture 1.1]{br_frozen_2009}.

The next result gives a scaling limit for $\mathrm{FP}(n,\lambda(n))$, in a setting where
$\lambda(n) \asymp n^{-1/3}$ and
 the 
initial state has the distribution of an \ER random graph 
at some point within the ``critical window".

\begin{proposition}\label{prop:FPlimit}
Fix $u \in \R$ and let $F^{(n)}_0$ be an \ER graph $\mathcal{G}(n,p)$ with edge probability $p=\frac{1+u n^{-1/3}}{n}$.
Let $\lambda>0$ and let
$\mathbf{M}^{(n)}(t), t \geq 0$ be the 
$\mathrm{FP}(n,\lambda n^{-1/3})$ process 
with initial state $F^{(n)}_0$.

Define $\mathbf{m}^{(n)}(t), t \geq 0$ by 
\begin{equation}\label{frozen_mcld_scaling}
 \mathbf{m}^{(n)}(t):=
\left( n^{-2/3} M_1^{(n)}(n^{-1/3}t), n^{-2/3}M_2^{(n)}(n^{-1/3} t), \dots \right). 
\end{equation}

Then as $n\to\infty$ 
the sequence of processes $\mathbf{m}^{(n)}(t), t \geq 0$ converge in law to the
$\mathrm{MCLD}(\lambda)$ process $\ORDX(h_t), t\geq 0$ given by 
Proposition \ref{proposition_mcld_brownian}.
\end{proposition}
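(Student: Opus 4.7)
The plan is to combine three ingredients: (a) Aldous's scaling limit \cite[Corollary 2]{aldous_mc} for the component sizes of the critical \ER graph at time zero; (b) the fact that the rescaled process $\mathbf{m}^{(n)}(t)$ is, after the time change in \eqref{frozen_mcld_scaling}, \emph{exactly} an $\mathrm{MCLD}(\lambda)$ process on $\ltwoord$; and (c) the Feller property of $\mathrm{MCLD}(\lambda)$ on $\ltwoord$ from \cite[Theorem 1.2]{james_balazs_mcld_feller}. Once all three are in place, the limit process is a.s.\ $\mathrm{MCLD}(\lambda)$ started from $\ORDX(h_0)$, and Proposition \ref{proposition_mcld_brownian}\eqref{tilt-and-shift-Brownian} identifies this with $\ORDX(h_t),t\ge 0$.

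First I would check the dynamics match. Two distinct components of $F^{(n)}_s$ of sizes $k,\ell$ merge at rate $k\ell/n$ (the number of unconnected pairs between them, each becoming connected at rate $1/n$), while a component of size $k$ is deleted at rate $\lambda n^{-1/3}\cdot k$. Writing $a=n^{-2/3}k$, $b=n^{-2/3}\ell$ for the rescaled masses, the merging rate becomes $n^{1/3}ab$ and the deletion rate becomes $\lambda n^{1/3}a$ in the original time. After the time acceleration $t\mapsto n^{-1/3}t$ in \eqref{frozen_mcld_scaling}, these become $ab$ and $\lambda a$ respectively. Hence $\mathbf{m}^{(n)}(t)$ is exactly an $\mathrm{MCLD}(\lambda)$ process on $\ltwoord$ (more precisely, on $\lzeroord\subset\ltwoord$), started from $\mathbf{m}^{(n)}(0)=n^{-2/3}\Down\!\big(\,|C_1(F_0^{(n)})|,|C_2(F_0^{(n)})|,\dots\,\big)$.

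Second, I would invoke \cite[Corollary 2]{aldous_mc}, which gives convergence of the rescaled component sizes of the critical \ER graph $\mathcal{G}(n,(1+un^{-1/3})/n)$ to $\ORDX(h_0)$ with $h_0\sim\BMPD(u)$, in the $\ell_2^\downarrow$ metric $\dist(\cdot,\cdot)$ of \eqref{eq_def_d_metric}. Thus $\mathbf{m}^{(n)}(0)\tod \ORDX(h_0)$ in $\ltwoord$. Combining this with the Feller property \cite[Theorem 1.2]{james_balazs_mcld_feller}, the processes $\mathbf{m}^{(n)}(\cdot)$ converge in law, in the Skorohod topology on $D([0,\infty),\ltwoord)$, to an $\mathrm{MCLD}(\lambda)$ process $\bm(\cdot)$ started from $\bm(0)=\ORDX(h_0)$. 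On the other hand, Proposition \ref{proposition_mcld_brownian}\eqref{tilt-and-shift-Brownian} (applied to the tilt-and-shift construction from $h_0$) says that $\ORDX(h_t),t\ge 0$ is also an $\mathrm{MCLD}(\lambda)$ started from $\ORDX(h_0)$, so the two processes have the same law and we are done.

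The main obstacle I anticipate is bookkeeping rather than conceptual: making sure the initial-state convergence really does hold in the $\ltwoord$ topology rather than merely coordinatewise (so that the Feller property applies), and handling carefully the minor discrepancy between the standard dynamic \ER graph and the ``unconnected-pairs'' version in Definition \ref{def_frozen_percolation_model} (since only new edges between distinct components matter for the component process, this is immediate, but it should be remarked on). The strong form of Aldous's result does give convergence in $(\ltwoord,\dist)$, so this obstacle is soluble by citation.
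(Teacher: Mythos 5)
Your proposal is correct and follows essentially the same route as the paper: convergence of the initial state via Aldous's Corollary 2, the exact identification of the rescaled, time-changed frozen percolation component process as an $\mathrm{MCLD}(\lambda)$ process, and the Feller property of $\mathrm{MCLD}(\lambda)$ combined with Proposition \ref{proposition_mcld_brownian} to identify the limit. The explicit rate computation and the remark about the initial-state topology are just more detailed versions of steps the paper treats as immediate.
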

\begin{proof} 
Corollary 2 of Aldous \cite{aldous_mc} (or, alternatively, the method sketched in the $\uc=\underline{0}$
case of Section \ref{subsection_sketch_proof_levy_excursions})
implies that the sequence $\mathbf{m}^{(n)}(0)$ converges in distribution as
$n \to \infty$ in the $(\ltwoord, \dist(\cdot,\cdot))$ space to $\ORDX(h_0)$, where
$h_0 \sim \BMPD(u)$. 

Further, the process $\mathbf{m}^{(n)}(t)$ defined 
by (\ref{frozen_mcld_scaling}) is a
$\mathrm{MCLD}(\lambda)$ process
(as can be readily seen
by comparing the definition \eqref{mcld_informal_def} of the $\mathrm{MCLD}(\lambda)$ process with 
Definition \ref{def_frozen_percolation_model}).

Then the statement of Proposition \ref{prop:FPlimit} follows by applying
Proposition \ref{proposition_mcld_brownian} together with the
Feller property of $\mathrm{MCLD}(\lambda)$ (see  
\cite[Theorem 1.2]{james_balazs_mcld_feller}).
\medskip

\end{proof}

\subsection{Forest fire model}
\label{section_particle_forest}

This section contains a particle representation of the mean-field forest fire model of \cite{bt_br_forest};
see Section \ref{section_part_rep_ff}. 
The representation is an adaptation of the one  in Section \ref{section_particle_representation}, and
we will briefly explain in Section \ref{section_controlled_burgers} how it
sheds some new light on a certain controlled non-linear PDE problem (see \eqref{burgers_controlled} below)
 which played a central role in the 
theory developed in \cite{bt_br_forest} and \cite{ec_nf_bt_forest}.

\medskip
 In \cite{bt_br_forest} R\'ath and T\'oth modify the dynamical Erd\H{o}s-R\'enyi model to obtain the mean-field forest fire model:
\begin{definition}[$\mathrm{FF}(n,\lambda(n))$]
\label{def_forest_fire_model}

 We start with a graph on $n$ vertices. Between each pair of unconnected vertices
 an edge appears with rate $1/n$; moreover each
 connected component of size $k$ ``burns''  with rate $\lambda(n)\cdot k$, i.e., the edges of the component are deleted.
  The total number of vertices remains $n$.
 
  Denote by $\mathcal{C}^n(i,t)$
 the connected component of vertex $i$ at time $t$.
 
We define the empirical component size densities by
\begin{equation}\label{vknt}
\mathbf{v}_k^n(t)=\frac{1}{n}\sum_{i=1}^n
\ind [ |\mathcal{C}^n(i,t)|= k ], \qquad \mathbf{v}^n(t)= \left( \mathbf{v}_k^n(t) \right)_{k=1}^n.
\end{equation}
With the above definitions $\mathbf{v}^n(t), t \geq 0$ is a Markov process, let us call it here the
forest fire component size density Markov process on $n$ vertices with lightning rate $\lambda(n)$,
or briefly $\mathrm{FF}(n,\lambda(n))$.
 \end{definition}

\subsubsection{Particle representation of the forest fire model}
\label{section_part_rep_ff}

In Definition \ref{def_particle_rep_forest} and Proposition \ref{prop:forest_particle_rep} below 
we are going to give a novel particle representation of $\mathrm{FF}(n,\lambda(n))$
by slightly modifying Definition \ref{def_particle_rep}.

\begin{definition}\label{def_dictionary}
 If $n \in \N_+$ we let
\begin{align*}
\mathcal{V}^n &= \left\{ \; \underline{v}^n  = \left( v_k^n \right)_{k=1}^n 
\; : \; 
\sum_{k=1}^n v_k^n=1 
\; \text{ and } \;
\frac{n}{k} v_k^n \in \N
\;  \text{ for all } \; k \; \right\}, \\
\mathcal{M}^n &=\left\{ \; \um^n= \left( m_j^n \right)_{j=1}^N \in \lzeroord
\; : \;
\sum_{j=1}^N m_j^n=n 
\; \text{ and } \;
 m_j^n \in \N_+
\;  \text{ for all } \; j \; \right\}.
\end{align*}
We say that the component size density vector $\underline{v}^n \in \mathcal{V}^n$ 
and the ordered list of component sizes
 $\um^n \in \mathcal{M}^n$ correspond to each other if
\begin{equation}
 v_k^n=\sum_{j=1}^N \frac{k}{n} \ind[\, m^n_j=k \, ] \;\; \text{ for all }\; k. 
\end{equation}
Note that this correspondence is one-to-one. 
\end{definition}
In plain words, $\underline{v}^n$ and $\um^n$ correspond to each other if there is a graph $G$ on $n$ vertices such that
 $\underline{v}^n$ and $\um^n$ both arise from $G$.
\begin{definition}\label{def_dictionary2}
If $\widetilde{\mu}^n$ is a finite point measure on $\R_-$ such that $\widetilde{\mu}^n(\R_-)=1$ and
the masses of the atoms of $n \widetilde{\mu}^n$ are integers then we define 
$\mathbf{v}(\widetilde{\mu}^n)$ to be the element of $\mathcal{V}^n$ corresponding to the element of  $\mathcal{M}^n$
which consists of the ordered list of masses of the atoms of $n \widetilde{\mu}^n$.
\end{definition}

Now we  define the particle representation of the $\mathrm{FF}(n,\lambda(n))$ model.

 \begin{definition}\label{def_particle_rep_forest}
 Given  $\underline{v}^n(0)=\left( v_k^n(0) \right)_{k=1}^n \in \mathcal{V}^n$
  and the corresponding  $\um^n=\left( m_j^n \right)_{j=1}^N \in \mathcal{M}^n $, 
we define the initial heights of the particles $\widetilde{Y}_i(t), 1 \leq i \leq n$ 
  by letting
  $\widetilde{Y}_i(0)=-E_j$, where $E_j \sim \mathrm{Exp}( m^n_j)$, $1 \leq j \leq N$ are independent
  and vertex $i$ initially belongs to component $j$ in the forest fire model.
       We define 
 \begin{equation}\label{mu_n_forest_particles}
 \widetilde{\mu}^n_t=\sum_{i=1}^{n} \frac{1}{n} \delta_{\widetilde{Y}_i(t)} \quad 
 ( \text{ Note: } \mathbf{v}(\widetilde{\mu}^n_0) = \underline{v}^n(0) \; ).
 \end{equation}
 If $\widetilde{Y}_i(t_-)<0$ then we let
\begin{equation}\label{particle_dynamics_lambda_forest}
\frac{\mathrm{d}}{\mathrm{d}t} \widetilde{Y}_i(t)= 
\lambda(n) +\widetilde{\mu}^n_t( \widetilde{Y}_i(t),0),
\end{equation}
and if $\widetilde{Y}_i(t_-)=0$ then we say that vertex $i$ burns and we let $-\widetilde{Y}_i(t)$ have $\mathrm{Exp}(1)$ distribution, independently from everything else.
\end{definition}
In words, a clustered family of particles with  mass $1/n$ start at negative locations,
 move up and merge with other particle clusters, but if a time-$t$ block of particles with total mass $k/n$ reaches
 $0$, then this block burns and gets replaced by $k$ particles of mass $1/n$ with i.i.d.\
 locations with negative
 $\mathrm{Exp}(1)$ distribution.

\begin{figure}\begin{center}
\includegraphics[width=\textwidth]{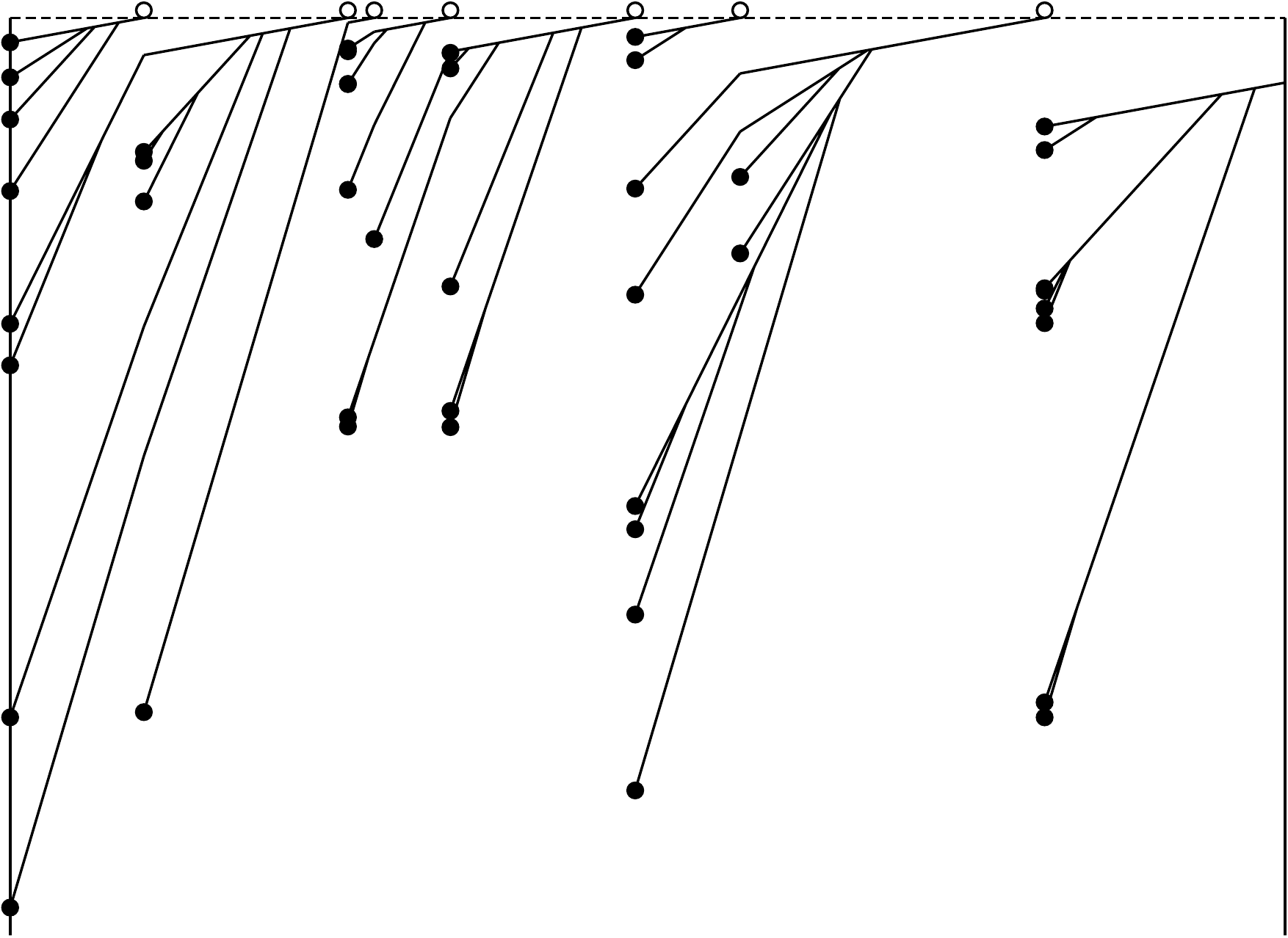}
\caption{
\label{fig:FFparticles}
A simulation of the particle system realising the
forest fire model. The system has $n=8$ particles
and $\lambda=0.4$, and is shown on the time interval $[0,2]$.
Compare to the systems realising the MCLD and the multiplicative
coalescent in Figures \ref{fig:MCLDparticles}
and \ref{fig:MCparticles}. The burning events 
(which occur when a block reaches 0) here involve
blocks of sizes 4, 7, 1, 3, 8, 2, 8 respectively. 
Note that in this case the process of burnings is a Poisson 
process of rate $n\lambda$. 
}
\end{center}
\end{figure}

\begin{proposition}\label{prop:forest_particle_rep}
\begin{enumerate}[(i)]
\item \label{forest_fire_particle_statement_i} For any $n \in \N_+$ and any initial state
$\underline{v}^n(0) \in \mathcal{V}^n$,
the process $\mathbf{v}(\widetilde{\mu}^n_t), t \geq 0$ is a $\mathrm{FF}(n, \lambda(n))$ process
with initial state $\underline{v}^n(0)$ (see  Definitions \ref{def_forest_fire_model}, \ref{def_particle_rep_forest} and \ref{def_dictionary2} for the definitions of $\mathrm{FF}(n, \lambda(n))$,  $\widetilde{\mu}^n_t$ and $\mathbf{v}(\widetilde{\mu}^n)$, respectively). 
\item \label{forest_particle_mu_exp_dist} For any $t \geq 0$, the conditional 
distribution of $n\widetilde{\mu}^n_t$  given the $\sigma$-algebra  $\sigma(\bm^n_{s}, 0 \leq s \leq t)$  is 
$\exmeasure(\bm^n_t)$ (c.f.\ Definition \ref{def_exp_measure_law}), where $\bm^n_t$ is the 
$\mathcal{M}^n$-valued random variable 
corresponding to  the $\mathcal{V}^n$-valued random variable $\mathbf{v}(\widetilde{\mu}^n_t)$ (c.f.\ Definition \ref{def_dictionary}).
\end{enumerate}
\end{proposition}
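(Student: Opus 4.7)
The plan is to prove (i) and (ii) simultaneously by induction on the jump times $(\tau_k)_{k\geq 0}$ of $\widetilde{\mu}^n_t$, following the template of Proposition~\ref{prop:particles} and Corollary~\ref{corollary_exp_measure_mcld}, with the only genuinely new argument occurring at burning events. The inductive statement is that the restriction of $\widetilde{\mu}^n$ to $[0,\tau_k]$ has the same law as the analogous restriction of the $\mathrm{FF}(n,\lambda(n))$ jump chain started from $\underline{v}^n(0)$, and that, conditional on $\sigma(\bm^n_s,\,0\leq s\leq\tau_k)$, we have $n\widetilde{\mu}^n_{\tau_k}\sim\exmeasure(\bm^n_{\tau_k})$. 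The base case $k=0$ is immediate from Definition~\ref{def_particle_rep_forest}: the initial heights are $-E_j$ with $E_j\sim\Exp(m_j^n)$ independently, so $n\widetilde{\mu}^n_0\sim\exmeasure(\bm^n_0)$, which by Claim~\ref{claim_exp_ord_size_biased} is equivalent to the ordered block heights satisfying $\expheightgaps$ for a size-biased reordering of $\bm^n_0$.

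For the inductive step from an $\expheightgaps$-distributed configuration at time $\tau_k$, I would replay the jump analysis of Proposition~\ref{prop:particles}. Writing $\ub=(b_1,\ldots,b_N)$ for the size-biased reordering of $\bm^n_{\tau_k}$, the top block has no particles above it and moves at constant speed $\lambda(n)$, so it reaches height zero after an $\Exp(\lambda(n)n)$ waiting time, while the $j$-th gap closes at speed $b_j/n$, giving an independent $\Exp\bigl(b_j\sum_{l>j}b_l/n\bigr)$ waiting time to the corresponding coalescence. The minimum of these exponentials has total rate $\lambda(n)n+\tfrac1n\sum_{i<j}m_i^n m_j^n$, which agrees with the total $\mathrm{FF}(n,\lambda(n))$ rate out of $\bm^n_{\tau_k}$; moreover the conditional probability of a given coalescence matches $m_i^n m_j^n/n$ (via the coalescence case of Lemma~\ref{lemma_two_procedures}) and of a given burning matches $\lambda(n)m_j^n$ (by the defining property of size-biased orderings, since the block that hits zero is always the top one). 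In the coalescence case the argument of Proposition~\ref{prop:particles} carries over verbatim and shows that the post-jump configuration again has the $\expheightgaps$ (equivalently $\exmeasure$) distribution.

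The main novelty, and in my view the main obstacle, is propagating the inductive hypothesis across a burning event. Just before the burn at time $\tau_{k+1}$, conditioning on the size $b_1$ of the burning block and on $\tau_{k+1}$, the memoryless property applied to the $\expheightgaps$ distribution shows that the height gaps among the surviving blocks are jointly distributed as $\expheightgaps(b_2,\ldots,b_N)$; equivalently, by Claim~\ref{claim_exp_ord_size_biased}, the point measure obtained from the surviving blocks alone has law $\exmeasure(\bm^n_{\tau_k}\setminus\{b_1\})$. At the instant of the burn, Definition~\ref{def_particle_rep_forest} places $b_1$ fresh particles at i.i.d.\ $-\Exp(1)$ heights independent of everything else, contributing an independent copy of $\exmeasure((1,\ldots,1))$ with $b_1$ entries; since $\exmeasure$ of a multiset is the independent superposition of one atom per component, we conclude $n\widetilde{\mu}^n_{\tau_{k+1}}\sim\exmeasure(\bm^n_{\tau_{k+1}})$, as required. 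Iterating via the strong Markov property at the jump times gives parts (i) and (ii) at every jump time; to extend (ii) to arbitrary $t\geq 0$, I would argue exactly as in the proof of Corollary~\ref{corollary_exp_measure_mcld}(ii), since between jumps all height gaps decrease at constant rate and the $\expheightgaps$ (hence $\exmeasure$) distribution is preserved by the memoryless property.
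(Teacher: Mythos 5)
Your proof is correct and follows essentially the same route as the paper's: an induction over burning events in which the inter-burning evolution is handled by the already-established $\mathrm{MCLD}$ particle-system results, and the one genuinely new point is that reinserting the $k$ burnt vertices at i.i.d.\ $-\mathrm{Exp}(1)$ heights is exactly what restores the $\exmeasure$ property, so the induction can continue. The only (cosmetic) difference is that the paper applies the time change $t \mapsto nt$ so that between burnings the system is literally the particle system of Definition \ref{def_particle_rep} with parameter $n\lambda(n)$, allowing Corollary \ref{corollary_particle_mcld} and Corollary \ref{corollary_exp_measure_mcld} to be cited wholesale, whereas you re-derive the jump rates directly in the forest-fire time scale.
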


See Figure \ref{fig:FFparticles} for a simulation
of the particle system realising the forest fire model.

\begin{proof}[Proof of Proposition \ref{prop:forest_particle_rep}]
Recalling Definition \ref{def_exp_measure_law} we observe that by Definition \ref{def_particle_rep_forest} we have
 $n\widetilde{\mu}_0^n \sim \exmeasure(\underline{m}^n)$,
 where $\underline{m}^n \in \mathcal{M}^n$  corresponds to $\underline{v}^n(0) \in \mathcal{V}^n$.
 
Denote by $\tau$ the first burning time of the particle system $\widetilde{Y}_i(t), 1 \leq i \leq n$.

Denote by $Y_i(t):=\widetilde{Y}_i(nt)$ and $\mu_t:=n \widetilde{\mu}^n_{nt}=\sum_{i=1}^n \delta_{Y_i(t)}$ so that
\[ \frac{\mathrm{d}}{\mathrm{d}t} Y_i(t) \stackrel{ \eqref{particle_dynamics_lambda_forest} }{=} 
n\lambda(n) +\mu_t( Y_i(t),0), \quad 0 \leq t < \frac{\tau}{n}, \]
thus the evolution of the particle system $Y_i(t), 1 \leq i \leq n$ satisfies
Definition \ref{def_particle_rep} (with $\lambda=n \lambda(n)$)
up to time $\tau/n$, including the time-$\tau/n$ block that burns.
 
 Likewise, if $\mathbf{v}^n(t),t \geq 0$ is a $\mathrm{FF}(n,\lambda(n))$ process, then the
 $\mathcal{M}^n$-valued process corresponding to the $\mathcal{V}^n$-valued process
 $\mathbf{v}^n(nt),0 \leq t \leq \tau/n$  satisfies the definition of a $\mathrm{MCLD}(n\lambda(n))$ process,
  including the time of the first
deletion event and the component that gets deleted.

 Therefore by Corollary \ref{corollary_particle_mcld}
 the statement of  
 Proposition \ref{prop:forest_particle_rep}\eqref{forest_fire_particle_statement_i} holds for 
 $\mathbf{v}(\widetilde{\mu}^n_t), 0 \leq t \leq \tau$ (including time $\tau$, since in both Definitions
 \ref{def_forest_fire_model} and \ref{def_particle_rep_forest} we add $k$ singletons of mass $1/n$ if a block
 of size $k$ is deleted).
 
 Next we observe that the conditional 
distribution of $n\widetilde{\mu}^n_\tau$  given the $\sigma$-algebra  
$\sigma(\bm^n_{s}, 0 \leq s \leq \tau)$  is 
$\exmeasure(\bm^n_\tau)$. Indeed, this follows from
  Corollary \ref{corollary_exp_measure_mcld}\eqref{cor_exp_meas_mcld_discrete} 
  and the fact that we insert
$k$ particles of mass $1/n$ with i.i.d.\ $-\mathrm{Exp}(1)$ distribution
 if a block of $k$ particles burn at time $\tau$, which is exactly what we have to do to maintain the 
 property required by Definition \ref{def_exp_measure_law}.

 Therefore we can inductively repeat this argument using
Corollary \ref{corollary_exp_measure_mcld}\eqref{cor_exp_meas_mcld_discrete} again and again to show that
Proposition \ref{prop:forest_particle_rep}\eqref{forest_fire_particle_statement_i} holds for $\mathbf{v}(\widetilde{\mu}^n_t), 0 \leq t \leq \tau_i$,
where $\tau_i$ is the $i$'th burning time. The proof of
Proposition \ref{prop:forest_particle_rep}\eqref{forest_particle_mu_exp_dist} similarly follows from
Corollary \ref{corollary_exp_measure_mcld}\eqref{cor_exp_meas_mcld_continuous}.
\end{proof}

\begin{remark}\label{remark_merle_normand_particle}
Let us recall a closely related dynamic random graph model of self-organized criticality, studied 
by Fournier and Laurencot in 
\cite{fournier_laurencot_smol} and by Merle and Normand in \cite{merle_normand}.
One starts with a graph on $n$ vertices, the coagulation mechanism is the same as in
 Definition \ref{def_frozen_percolation_model} and  Definition \ref{def_forest_fire_model} 
 (between each pair of unconnected vertices
 an edge appears with rate $1/n$), but the deletion mechanism is different:
 connected components are deleted forever when their size exceeds a threshold  $\omega(n)$.
 In order to achieve self-organized criticality, one chooses
 $1 \ll \omega(n) \ll n$. 

Let us remark that this model also admits a particle representation:
the initial weights and heights of particles should be the same as in Definition \ref{def_particle_rep_forest}, and
the particle dynamics between deletion times should be given by \eqref{particle_dynamics_lambda_forest}. 
The deletion mechanism is obvious: if
the weight of a time-$t$ block exceeds the threshold $\omega(n)$, we remove that time-$t$ block. The proof of the validity
of this particle representation can be carried out analogously to the proof of Proposition \ref{prop:forest_particle_rep}.

This is a ``rigid'' representation, i.e., all of the randomness is encoded in the initial state. However,
in contrast to the case of the frozen percolation model (i.e., $\mathrm{MCLD}$), the above described threshold-deletion model does not admit a clean tilt-and-shift representation, since
particles from the middle can be deleted (as opposed to the case of linear deletion, where only the top particle
can be deleted).

\end{remark}

\subsubsection{The controlled Burgers equation}
\label{section_controlled_burgers}

In this section we use the particle representation of the $\mathrm{FF}(n,\lambda(n))$ process to
give a new interpretation of a certain controlled non-linear PDE problem (see \eqref{burgers_controlled} below)
 which played a central role in the 
theory developed in \cite{bt_br_forest} and \cite{ec_nf_bt_forest}. This new interpretation will be presented
in Remark \ref{remark_forest_particle_PDE}.

\medskip

One investigates the $\mathrm{FF}(n,\lambda(n))$ when $\frac{1}{n} \ll \lambda(n) \ll 1$ as
 $n \to \infty$. This is called the \emph{self-organized critical regime} of the lightning rate $\lambda(n)$.
 We assume that $\mathbf{v}_k^n(0) \to v_k(0)$ for all $k \in \N$ as $n \to \infty$, where $\sum_k k^3 v_k(0)<+\infty$.
 
Under these assumptions \cite[Theorem 2]{bt_br_forest} states that 
 \begin{equation}\label{eq_forest_densities_converge}
  \mathbf{v}_k^n(t) \to v_k(t) \quad \text{in probability as} \quad n \to \infty,
  \end{equation}
   where
$\left(v_k(t)\right)_{k=1}^{\infty}$ is the unique solution of the following system of ODE's:
 \begin{equation}\label{smolcontr}
  \forall\, k \geq 2\;\;\;
   \frac{\partial}{\partial t} v_k(t)= \frac{k}{2} \sum_{l=1}^k v_l(t) v_{k-l}(t) -k v_k(t), 
    \qquad \sum_{k=1}^{\infty} v_k(t) \equiv 1.
   \end{equation}
   This system of equations is a modification of Smoluchowski's 
   coagulation equations with multiplicative kernel (c.f.\ \cite[Section 2.1]{aldous_coag_survey}).

In order to prove that \eqref{smolcontr} is well-posed (c.f.\  \cite[Theorem 1]{bt_br_forest}), 
one looks at the 
Laplace transform 
\begin{equation}\label{V_t_x_laplace_def}
V(t,x)=\sum_{k=1}^{\infty} v_k(t) e^{-kx} -1
\end{equation}
 which satisfies the following controlled PDE
(c.f.\ \cite[(43)]{bt_br_forest}):
\begin{equation}
\label{burgers_controlled} 
\frac{\partial}{\partial t} V(t,x)=-V(t,x) \frac{\partial}{\partial x} V(t,x) + \varphi(t) e^{-x},
\qquad V(t,0)\equiv 0,
\end{equation}
where the control function $\varphi(t)$ measures the intensity of fires at time $t$:
 \begin{equation}\label{def_eq_varphi_burning_rate}
  \varphi(t)= \frac{\partial}{\partial t} r(t),\quad 
r(t)=\lim_{n \to \infty} \mathbf{r}^n(t), \quad
\mathbf{r}^n(t)=\frac{1}{n} \sum_{i=1}^n B^n(i,t),
\end{equation}
 and $B^n(i,t)$ denotes the number of times vertex $i$ 
has burnt before time $t$. Note that \eqref{burgers_controlled} is a controlled variant
of the Burgers equation.

Given a solution $V(\cdot,\cdot)$ of \eqref{burgers_controlled} one defines
the corresponding characteristic curves (c.f.\ \cite[(66)]{bt_br_forest})  as the solutions of the ODE 
\begin{equation}\label{characteristics_forest}
 \frac{\mathrm{d}}{\mathrm{d}s} \xi(s)=V(s,\xi(s)).
 \end{equation}
These curves are useful because
by \eqref{burgers_controlled} they satisfy 
$\frac{\mathrm{d}^2}{\mathrm{d}s^2} \xi(s)=\varphi(s)e^{-\xi(s)}$,
hence given $\varphi(\cdot)$ they can be  constructed  (c.f.\ \cite[(65)]{bt_br_forest})
 without solving \eqref{burgers_controlled}.

\begin{remark} \label{remark_forest_particle_PDE}
Let us assume that  $\widetilde{\mu}^n_t$  converges weakly in probability to some measure $\widetilde{\mu}_t$ as $n \to \infty$.
Denote by 
\[\widetilde{V}(t,y)=\widetilde{\mu}_t(y,0),\qquad y \leq 0.\]
We will non-rigorously derive a PDE for $\widetilde{V}(t,y)$, see \eqref{pde_for_particle_density} below.
 
We have $\widetilde{\mu}_t[y-\mathrm{d}y, y]=-\frac{\partial}{\partial y}\widetilde{V}(t,y)\mathrm{d}y$, moreover
by \eqref{particle_dynamics_lambda_forest}, each ``particle'' near the location $y$ moves
 with speed $\widetilde{V}(t,y)$ (since $\lambda(n) \ll 1$), thus $\widetilde{V}(t,y)$ increases
 by $\widetilde{\mu}_t[y-\mathrm{d}y,y]$ on the time interval $[t, t+\mathrm{d}t]$, where 
 $\mathrm{d}y=\widetilde{V}(t,y) \mathrm{d}t$. The mass $\widetilde{V}(t,y)$ also decreases by 
 $\varphi(t)\mathrm{d}t$ because of burning (see \eqref{def_eq_varphi_burning_rate})
  and increases by $(1-e^y)\varphi(t)\mathrm{d}t$ because of the re-insertion of burnt mass 
  with distribution $-\mathrm{Exp}(1)$. Putting these effects together we obtain
  \begin{equation}\label{pde_for_particle_density}
   \frac{\partial}{\partial t}\widetilde{V}(t,y)=-\widetilde{V}(t,y)\frac{\partial}{\partial y}\widetilde{V}(t,y)-
  e^y \varphi(t).
  \end{equation}
By comparing \eqref{burgers_controlled} and \eqref{pde_for_particle_density},
we observe that $V(t,x)$ solves the same PDE as $-\widetilde{V}(t,-x)$.
 Indeed, by \eqref{eq_forest_densities_converge} and Proposition \ref{prop:forest_particle_rep}\eqref{forest_particle_mu_exp_dist} we have
$\widetilde{V}(t,y) = \sum_k v_k(t)(1-e^{ky})$, which is equal to $-V(t,-y)$ by \eqref{V_t_x_laplace_def}.

Moreover, if $1 \ll n$ then $\widetilde{\mu}^n_t(y,0)\approx \widetilde{V}(t,y)$, thus by
comparing \eqref{characteristics_forest} and \eqref{particle_dynamics_lambda_forest} we see that
the trajectories $-\widetilde{Y}_i(s), s \geq 0$ of  particles can be viewed as discrete approximations of
 characteristic curves.
\end{remark}

\section*{Acknowledgments}
We thank Christina Goldschmidt for several 
very helpful discussions at various stages of this work, 
Vlada Limic for sending us a preliminary version
of \cite{limic_new} and for helpful feedback on an earlier version of this manuscript, and Ines Armend\'ariz for sending
us the paper \cite{armendariz}.

This work was supported by EPSRC grant 
EP/J019496/1. JBM was supported by EPSRC Fellowship 
EP/E060730/1, and BR was supported by OTKA (Hungarian
National Research Fund) grant K100473, the Postdoctoral Fellowship of the Hungarian Academy of Sciences and the Bolyai Research Scholarship of the Hungarian Academy of Sciences.

\bibliographystyle{abbrv}
\bibliography{mcld2}
\end{document}